\definecolor{BrilliantRose}{rgb}{1.0, 0.33, 0.64}
\definecolor{denim}{rgb}{0.08, 0.38, 0.74}
\definecolor{amethyst}{rgb}{0.6, 0.4, 0.8}
\newcommand{\cref}[1]{\autoref{#1}}
\long\def\reviewresponseaux#1#2{%
  \expandafter\gdef\csname reviewresponse@#1\endcsname{#2}%
}
\long\def\reviewstore#1#2{%
  \begingroup
    \protected@write\@auxout{}%
      {\string\reviewresponseaux{#1}{\unexpanded{#2}}}%
  \endgroup
}
\long\def\reviewcommentaux#1#2{%
  \expandafter\gdef\csname reviewcomment@#1\endcsname{#2}%
}
\long\def\reviewcommentstore#1#2{%
  \begingroup
    \protected@write\@auxout{}%
      {\string\reviewcommentaux{#1}{\unexpanded{#2}}}%
  \endgroup
}
\newcommand{\reviewcommentget}[1]{%
  \ifcsname reviewcomment@#1\endcsname
    \csname reviewcomment@#1\endcsname
  \fi
}
\newcommand{\reviewresponseget}[1]{%
  \ifcsname reviewresponse@#1\endcsname
    \csname reviewresponse@#1\endcsname
  \fi
}
\newcommand{\reviewresponsemargin}[2]{%
  \if\relax\detokenize{#2}\relax
  \else
    \\[.5ex]\protect\hyperlink{reviewresponsefront-#1}{\reviewresponseprefix\ \reviewresponsestyle{#2}}%
  \fi
}
\definecolor{BrilliantRose}{rgb}{1.0, 0.33, 0.64}
\definecolor{denim}{rgb}{0.08, 0.38, 0.74}
\definecolor{amethyst}{rgb}{0.6, 0.4, 0.8}
\newcommand{\revcommentref}[1]{\hyperref[#1]{\textbf{Comment~\ref*{#1}:}}}
\newcommand{\revissueref}[1]{\hyperref[#1]{\textbf{Issue~\ref*{#1}:}}}
\newcommand{\revminorref}[1]{\hyperref[#1]{\textbf{Minor issue~\ref*{#1}:}}}
\newcommand{\remove}[1]{}
\newcounter{func}
\newcommand{\funref}[1]{\hyperref[#1]{f_{\ref*{#1}}}} 
\tikzset{black node/.style={draw, circle, fill = black, minimum size = 5pt, inner sep = 0pt}}
\tikzset{white node/.style={draw, circlternary_treese, fill = white, minimum size = 5pt, inner sep = 0pt}}
\tikzset{normal/.style = {draw=none, fill = none}}
\tikzset{lean/.style = {draw=none, rectangle, fill = none, minimum size = 0pt, inner sep = 0pt}}
\tikzset{diam/.style={draw, diamond, fill = black, minimum size = 7pt, inner sep = 0pt}}
\newcommand{\Acal}{\mathcal{A}}
\newcommand{\Ccal}{\mathcal{C}}
\newcommand{\Ecal}{\mathcal{E}}
\newcommand{\Gcal}{\mathcal{G}}
\newcommand{\Ocal}{\mathcal{O}}
\newcommand{\Scal}{\mathcal{S}}
\newcommand{\Nbbb}{\mathbb{N}}
\newcommand{\Sbbb}{\mathbb{S}}
\newcommand{\eqdef}{\stackrel{{\scriptsize\rm def}}{=}}
\definecolor{darkred}{rgb}{.5, 0, 0}
\definecolor{DarkTangerine}{rgb}{1.0, 0.66, 0.07}
\definecolor{darkyellow}{rgb}{.7, .6, 0.0}
\definecolor{CornflowerBlue}{rgb}{0.39, 0.58, 0.93}
\definecolor{DarkGoldenrod}{rgb}{0.72, 0.53, 0.04}
\definecolor{BritishRacingGreen}{rgb}{0.0, 0.26, 0.15}
\definecolor{AO}{rgb}{0.0, 0.5, 0.0}
\definecolor{MidnightBlack}{rgb}{0.1,0.1,.34}
\definecolor{MidnightBlue}{rgb}{0.1,0.1,0.43}
\definecolor{Black}{rgb}{0,0, 0}
\definecolor{Blue}{rgb}{0, 0 ,1}
\definecolor{Red}{rgb}{1, 0 ,0}
\definecolor{White}{rgb}{1, 1, 1}
\definecolor{DeepMagenta}{rgb}{0.8, 0.0, 0.8}
\definecolor{grey}{rgb}{.6, .6, .6}
\definecolor{darkgrey}{rgb}{.33, .33, .33}
\definecolor{Mygreen}{rgb}{.0, .7, .0}
\definecolor{Yellow}{rgb}{.55,.55,0}
\definecolor{Mustard}{rgb}{1.0, 0.86, 0.35}
\definecolor{applegreen}{rgb}{0.55, 0.71, 0.0}
\definecolor{darkturquoise}{rgb}{0.0, 0.81, 0.82}
\definecolor{celestialblue}{rgb}{0.29, 0.59, 0.82}
\definecolor{green_yellow}{rgb}{0.68, 1.0, 0.18}
\definecolor{crimsonglory}{rgb}{0.75, 0.0, 0.2}
\definecolor{darkmagenta}{rgb}{0.30, 0.0, 0.30}
\definecolor{magenta}{rgb}{0.50, 0.0, 0.50}
\definecolor{internationalorange}{rgb}{1.0, 0.31, 0.0}
\definecolor{darkorange}{rgb}{1.0, 0.55, 0.0}
\definecolor{ao}{rgb}{0.0, 0.5, 0.0}
\definecolor{awesome}{rgb}{1.0, 0.13, 0.32}
\definecolor{darkcyan}{rgb}{0.0, 0.50, 0.50}
\definecolor{violet}{rgb}{0.93, 0.51, 0.93}
\definecolor{brown}{rgb}{0.65, 0.16, 0.16}
\definecolor{orange}{rgb}{1.0, 0.65, 0.0}
\definecolor{DarkGreen}{rgb}{0,.5,0}
\definecolor{BostonUniversityRed}{rgb}{0.8, 0.0, 0.0}
\definecolor{chromeyellow}{rgb}{1.0, 0.65, 0.0}
\newcommand{\darkgreen}[1]{{\color{DarkGreen}#1}}
\newcommand{\blue}[1]{{\color{Blue}#1}}
\newcommand{\red}[1]{{\color{Red}#1}}
\newcommand{\darkorange}[1]{{\color{darkorange}#1}}
\newcommand{\violet}[1]{{\color{violet}#1}}
\definecolor{Red}{rgb}{1, 0 ,0}
\definecolor{Blue}{rgb}{0, 0 ,1}
\newtheorem{theorem}{Theorem}[section]
\newaliascnt{question}{theorem}
\newaliascnt{lemma}{theorem}
\newtheorem{lemma}[lemma]{Lemma}
\newaliascnt{claim}{theorem}
\newtheorem{claim}[claim]{Claim}
\newaliascnt{invariant}{theorem}
\newaliascnt{proposition}{theorem}
\newtheorem{proposition}[proposition]{Proposition}
\newaliascnt{observation}{theorem}
\newtheorem{observation}[observation]{Observation}
\newaliascnt{corollary}{theorem}
\newtheorem{corollary}[corollary]{Corollary}
\newaliascnt{definition}{theorem}
\newaliascnt{conjecture}{theorem}
\newaliascnt{counterexample}{theorem}
\newcommand{\hh}{\end{document}}
\newcommand{\p}{{\sf p}}
\newcommand{\sobs}{{\sf sobs}}
\newcommand{\gall}{\mathcal{G}_{{\text{\rm  \textsf{all}}}}}
\newcommand{\Gall}{\mathcal{G}_{{\text{\rm  \textsf{all}}}}}
\newcommand{\hw}{{\sf hw}\xspace}
\newcommand{\tw}{{\sf tw}\xspace}
\newcommand{\size}{{\sf size}\xspace}
\newcommand{\bg}{{\sf bg}\xspace}
\newcommand{\bdim}{{\sf bdim}\xspace}
\newcommand{\cupall}{\pmb{\pmb{\bigcup}}}
\newenvironment{cproof}{\proof[Proof of claim]}{\endproof}
\newcommand{\torso}{\ensuremath{\mathsf{torso}}}
\newcommand{\poly}{\text{$\mathsf{poly}$}\xspace}
\newcommand{\eg}{{\sf eg}\xspace}
\title{The Graph Minor Structure Theorem through Bidimensionality\thanks{Emails of authors: 
  \texttt{sedthilk@thilikos.info}, \texttt{wiederrecht@kaist.ac.kr}}}
\author{\bigskip\large Dimitrios M. Thilikos\thanks{LIRMM, Univ Montpellier, CNRS, Montpellier, France.}~$^{,}$\thanks{Supported by the ANR projects ESIGMA (ANR-17-CE23-0010) and  GODASse (ANR-24-CE48-4377),  the French-German Collaboration ANR/DFG Project UTMA (ANR-20-CE92-0027),  by the MEAE and the MESR, via the Franco-Norwegian project PHC
Aurora project n. 51260WL (2024-5), and the France 2030 grant reference number ANR-24-RRII-0002 operated by the
Inria Quadrant Program.}\and\large 
 \and\large 
 Sebastian Wiederrecht\thanks{School of Computing, KAIST, Daejeon, South Korea.}}
\date{}
\begin{document}

\maketitle

\begin{abstract}  
\noindent The \emph{bidimensionality} of a set of vertices $X$ in a graph $G$ is the maximum $k$ for which $G$ contains as a $X$-rooted minor the $(k \times k)$-grid. 
This notion allows for the following version of the Graph Minors Structure Theorem (GMST) that avoids the use of  apices and vortices:
\textsl{$K_k$-minor free graphs are those that admit tree decompositions whose torsos   contain sets of bounded bidimensionality   whose removal yield a graph embeddable in some surface $\Sigma$ of bounded Euler-genus.}
We next fix the target condition by demanding that $\Sigma$ is some particular surface. 
This defines a ``surface extension'' of treewidth, where $\Sigma\mbox{-}\tw(G)$ is the minimum $k$ for which $G$ admits a tree decomposition whose torsos become  embeddable in $\Sigma$
after the removal of a set of bidimensionality at most $k$.   
We identify a finite collection $\mathfrak{D}_{\Sigma}$ of parametric graphs and prove that the minor-exclusion of the graphs in $\mathfrak{D}_{\Sigma}$  determines the behavior of ${\Sigma}\mbox{-}\tw,$ for every surface $\Sigma.$ It follows that the collection $\mathfrak{D}_{\Sigma}$ \textsl{bijectively  corresponds} to the ``surface obstructions'' for $\Sigma,$ i.e., surfaces that are minimally non-contained in $\Sigma.$
Our results are tight in the sense that ${\Sigma}\mbox{-}\tw$ cannot be bounded 
for all parametric graphs in $\mathfrak{D}_{\Sigma}$.
\end{abstract}
\medskip

\noindent \textbf{Keywords:} Treewidth, Graph minor, Surface embedding, Graph modulator, Graph Genus,  Parametric graph,  Universal obstruction, Graph parameter, Graph structure theorem.

\newpage

\pagenumbering{arabic}

\section{Introduction}\label{mentalization}

An underlying fact in many results in structural graph theory is that graphs that do not ``contain'' some  graph can be seen as the result of gluing together simpler graphs of some particular property in a tree-like fashion.
When the containment relation is the minor relation\footnote{We say that a graph $H$ is a \emph{minor} of a graph $G$ if $H$ can be obtained from some subgraph of $G$ by contracting edges.} this property is typically enjoying certain topological characteristics.
Moreover, in some cases, such structural theorems may serve as the combinatorial base for algorithmic applications.

Two celebrated, and highly related, theorems of this type are the \textsl{Grid Theorem} and the \textsl{Graph Minors Structure Theorem} (GMST), proven by Robertson and Seymour in {\cite{robertson1986graph}} and \cite{robertson2003graph} respectively.

The Grid Theorem asserts that every graph which cannot be obtained by gluing together small graphs in a tree-like fashion must contain all planar graphs up to a certain size as minors.
Vice-versa, if some graph $G$ does not contain a fixed planar graph $H$ as a minor, $G$ can indeed be obtained by gluing together small graphs in a tree-like fashion where the notion of ``small'' now depends on the excluded graph $H.$

Taking this insight as the base case, the GMST gives the most general version of a similar statement.
Here, the excluded minor is no longer planar, but a clique of fixed size.
Any graph excluding such a large clique minor can be obtained by gluing together graphs that ``almost'' embed in some surface of bounded Euler-genus in, again, a tree-like fashion.
This is a property that is not enjoyed by large complete graphs.

The name ``Grid Theorem'' stems from the way Robertson and Seymour prove the above result:
They identify a single \textsl{parametric graph}, namely the $(n\times n)$-grid, and show that

\begin{enumerate}
  \item every planar graph is a minor of the $(n\times n)$-grid for some $n\in\mathbb{N},$
  \item the treewidth of the $(n\times n)$-grid depends on $n,$ and
  \item there exists a function $f$ such that for every $n\in\mathbb{N},$ every graph with treewidth more than $f(n)$ contains the $(n\times n)$-grid as a minor.
\end{enumerate}\medskip

So, the grid fulfills two roles:
It acts as a \textsl{universal obstruction} for the parameter treewidth and it is an {\textsl{omnivore}\footnote{We say that 
a parametric graph $\mathscr{G}=\langle \mathscr{G}_{k}\rangle_{k\in \mathbb{N}}$ is 
a (minor) \emph{omnivore} of some graph class $\Gcal$ if every graph $G\in\Gcal$ is a minor 
of $\mathscr{G}_{k}$ for some $k\in\Nbbb$.} for the class of planar graphs; that is, the class of all minors of grids is exactly the class of planar graphs while the grids themselves are planar.
This means that embeddability on the sphere is tightly captured by the decomposability into small pieces, defined by the parameter of treewidth.
\medskip

This paper is motivated by the following question:
\begin{eqnarray}
\begin{minipage}{14cm}
\textsl{Given a fixed surface $\Sigma,$ is there a graph structure parameter $\emph{$\p$}_{\Sigma}$ that expresses the minor-exclusion of $\Sigma$-embeddable graphs in the same way as treewidth does for planar graphs?}
\end{minipage}
\label{constraint}
\end{eqnarray}

\subsection{Our contribution}
\label{recommendation}

Our strategy to resolve the motivating question above follows the three ingredients given for treewidth.
We identify families of grid-like graphs, we call them \textsl{Dyck-grids} (or \textsl{surface-grids} if we are talking about their less streamlined version).
As shown in \cite{thilikos2026excluding}, each Dyck-grid is a parametric graph which is an omnivore for all graphs that embed in the respective surface.
We then proceed by proving that the minor exclusion of the Dyck-grids corresponding to some set of surfaces $\Sbbb$ leads to a ``surface'' generalization of treewidth, namely  ${\Sbbb}\mbox{-}\tw,$ defined 
via tree decompositions where all torsos ``almost embed'' into some surface in $\Sbbb.$
Finally, we prove that, with respect to our notion of ``almost embeddability'', the Dyck-grids themselves cannot be decomposed in this way.
With this, we obtain an asymptotic  characterization of the resulting width parameters ${\Sbbb}\mbox{-}\mathsf{\tw}$ in form of the exclusion of  parametric graphs.

\paragraph{Bidimensionality.} 

To capture what we mean by ``almost embeddable'' we introduce the notion of \textsl{bidimensionality}.
Given a graph $H$ and a set $X\subseteq V(G),$ we say that $H$ is an \emph{$X$-rooted minor} of $G$ (or, simply, an \emph{$X$-minor of $G$}) 
if there is a collection $\mathcal{S}=\{S_{v}\mid v\in V(H)\}$ of pairwise vertex-disjoint {connected}\footnote{A set $X\subseteq V(G)$ is \emph{connected} in $G$ if the induced subgraph $G[X]$ is a connected graph.} subsets of $V(G),$ each containing at least one vertex of $X$ and such that, for every edge $xy\in E(H),$ the set $S_{x}\cup S_{y}$ is connected in $G.$ %
$H$ is a \emph{minor} of $G,$ denoted by $H\leq  G,$ if $H$ is a $V(G)$-minor of $G.$

We say that $X$ is a \emph{modulator} to some \emph{target} graph property $\Gcal$
in a graph $G$ if $G-X\in\Gcal$.
The modulator/target scheme has been used extensively in order to define  graph modification problems (see e.g., \cite{AgrawalKLPRS21elim,BulianD16graph,BulianD17fixe,BougeretJS20bridg,DLindermayrSV20elimi,EibenGHK21,FominGT22param,SridharanPASK21anftp,JansenK021verte,AgrawalKLPRSZ22delet}).
For instance, the \textsl{vertex cover} of a graph is defined as the minimum size 
of a modulator to being edgeless  and the \textsl{feedback vertex set} of 
a graph is defined as the minimum size of a modulator to acyclicity. Clearly, 
the size of the modulator is not the only 
measure one may define on it. 
Several measures, different than the modulator size, have been introduced and studied (see \cite{EibenGHK21,AgrawalKLPRSZ22delet,JansenK021verte}).  
We next introduce a new measure of this kind.

\smallskip

The \emph{bidimensionality} of a vertex set $X$ of a $G,$ denoted 
by $\bdim(G,X)$, is the maximum
$k$ for which $G$ contains a $(k\times k)$-grid as an $X$-minor. Intuitively, 
bidimensionality measures to what extend 
$X$ can be ``spread'' as a two dimensional mess inside the graph $G$.
We say that a set $X$ is a \emph{$k$-bidimensional modulator} of 
 $G$ to some graph class $\mathcal{G}$ if $\bdim(G,X)\leq k$ and  $G-X\in\mathcal{G}.$
In   our results, the target graph class will be the class $\Ecal_{k}$ containing all graphs of Euler-genus at most $k.$ \medskip

As a first step, we display how the GMST can be restated using the modulator-target duality above.
For this, we consider the \emph{Hadwiger number} of a graph, that is the graph parameter $\hw,$  where $\hw(G)$  is the maximum $t$ such that $G$ contains $K_t$ as a minor.
We define the parameter $\hw'$ as follows
\begin{eqnarray}
\begin{minipage}{14cm}
$\hw'(G)$ is the minimum $k$ for which $G$ is in the {clique-sum closure} of the class containing all graphs with a  $k$-bidimensional modulator to the class of   graphs of Euler-genus at most $k$. 
\end{minipage}\label{stereotypically}
\end{eqnarray}

The \textsl{clique-sum closure}, defined formally in \cref{inphasized},  formalizes the ``tree-decomposability'' notion that underlines structural theorems such as the Grid Theorem and the GMST. 
Our first result is 
the following  variant of the GMST that is proved in \cref{beneficial} (\cref{retribution}).

%

%

%

%

%
\begin{theorem}\label{main_mainl}
There exists a function $f_{\ref{main_mainl}}:\mathbb{N}\to\mathbb{N}$ such that  every $K_{k}$-minor free graph is the clique sum closure of the class of graphs with $f(k)$-bidimensional modulators to the embeddability is surfaces of Euler genus at most $f(k)$. Moreover, it holds that $f_{\ref{main_mainl}}(k)=2^{\poly(k)}$.\footnote{We use $\poly(k)$ as a shortcut of $k^{\Ocal(1)}$.}
\end{theorem}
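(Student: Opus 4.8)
The plan is to derive this ``bidimensionality'' reformulation of the GMST from the classical Robertson--Seymour structure theorem (in its polynomial-bound refinement, e.g.\ the version with $2^{\poly(k)}$ apex/vortex parameters), by showing that the combination of \emph{apices} and \emph{vortices} appearing in the classical statement can be absorbed into a single $k'$-bidimensional modulator, where $k' = 2^{\poly(k)}$. Concretely, I would start from the statement: every $K_k$-minor free graph $G$ admits a tree-decomposition in which each torso is $h$-almost-embeddable in a surface of Euler-genus at most $h$, where $h = 2^{\poly(k)}$; that is, after deleting a set $A$ of at most $h$ apex vertices, the remainder embeds in $\Sigma$ up to at most $h$ vortices, each of depth at most $h$ attached along the boundary of a disc.

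The key step is the \textbf{bidimensionality bound on the modulator}. Given such an almost-embeddable torso, let $X$ be the union of the apex set $A$ together with the vertices of all the vortices (the ``perturbed'' part); then $G - X$ is genuinely embeddable in $\Sigma$, so the target condition holds with Euler-genus $\le h$. It remains to bound $\bdim(G, X)$, i.e.\ to show $G$ cannot contain an arbitrarily large grid as an $X$-rooted minor. Here I would argue: $|A| \le h$ contributes at most $h$ to any rooted-grid dimension in a crude way (deleting $h$ vertices drops grid-minor size by at most $O(h)$), and each vortex has depth $\le h$ and is attached along a disc boundary, so its vertex set, while possibly large, is ``linearly structured'' — it lies along a cyclic sequence of bags of width $\le h$. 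A large $(t \times t)$-grid rooted at $X$ would force many branch sets, each meeting $X$, arranged in a $2$-dimensional pattern; but the vortex vertices sit on a $1$-dimensional (path/cycle-like) skeleton of bounded width, and the embeddable part $G-X$ contributes no roots, so the rooted grid must ``factor through'' the bounded-width vortex skeletons plus the $\le h$ apices. A counting/topology argument (each of the $\le h$ vortices can host a rooted grid of dimension $O(h)$ through its width-$h$ path-decomposition, and these combine additively over the $\le h$ vortices plus the apices) then gives $\bdim(G,X) \le \poly(h) = 2^{\poly(k)}$. Finally, since the classical GMST already furnishes the tree-decomposition whose torsos clique-sum back to $G$, this exhibits $G$ in the clique-sum closure of graphs admitting a $2^{\poly(k)}$-bidimensional modulator to $\mathcal{E}_{2^{\poly(k)}}$, which is exactly $\hw'(G) \le 2^{\poly(k)}$; setting $f_{\ref{main_mainl}}(k)$ to this bound completes the proof. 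For the converse-type sanity check implicit in the statement (that this $f$ genuinely works), note that a $k$-bidimensional modulator to bounded Euler-genus forces $K_{O(f(k))}$-minor-freeness of each torso by the grid/clique trade-off, and the clique-sum closure preserves a (weaker) clique-minor bound, which is all that is needed.

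The \textbf{main obstacle} I anticipate is making the ``vortices have bounded bidimensionality'' step precise and quantitatively tight with a $2^{\poly(k)}$ bound rather than something worse. The subtlety is that a vortex can contain \emph{many} vertices, so one cannot bound $\bdim$ just by $|X|$; one must genuinely use that a depth-$h$ vortex along a disc boundary admits a path-decomposition of width $O(h)$ whose bags follow the cyclic boundary order, and that an $X$-rooted grid minor would have to route $\Omega(t^2)$ disjoint connected branch sets — each hitting $X$ — consistently with both this linear structure and the planar (surface) structure of $G-X$ outside. Formalizing the interaction — essentially showing that a high-dimensional rooted grid in a ``planar part plus bounded-width linear strips plus few apices'' configuration forces high treewidth in one of the bounded-width strips, a contradiction — is the technical heart, and likely relies on a rooted-minor analogue of the linear-vs-grid dichotomy together with careful bookkeeping of how the $\le h$ apices and $\le h$ vortices can each only contribute $O(h)$ to the attainable grid dimension. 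A secondary nuisance is verifying that the polynomial in the exponent is preserved through these reductions (no hidden exponential blow-up when translating apex/vortex parameters to a bidimensionality parameter), which should follow from the fact that all the transformations above are polynomial in $h$.
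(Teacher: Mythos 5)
Your overall route is the same as the paper's: invoke the classical GMST, take $X$ to be the apex set together with the vortex vertices of each torso, observe that $G_d-X$ then embeds in the surface, and reduce everything to bounding $\bdim(G_d,X)$. You also correctly locate the entire difficulty in that last step, since $|X|$ can be unbounded. The paper does exactly this in \cref{retribution}, with the bidimensionality bound supplied by \cref{abstraction}.

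However, the mechanism you sketch for the key bound has a genuine gap. You assert that ``each of the $\le h$ vortices can host a rooted grid of dimension $O(h)$ through its width-$h$ path-decomposition, and these combine additively.'' Neither half survives scrutiny. An $X$-rooted grid minor lives in all of $G$: its branch sets need only \emph{contain} a vertex of $X$ and may spread arbitrarily through the surface part $G-X$, so the rooted-grid dimension attributable to one vortex is not controlled by that vortex's pathwidth alone, and aiming for a contradiction with ``high treewidth in one of the bounded-width strips'' targets the wrong object. The missing idea is the paper's radial-radius argument: a large wall rooted at the neighbourhood of a single vortex forces all of its roots to lie within bounded radial distance of that vortex's face, so after capping each vortex with a boundary cycle the surface part of the wall has radial radius $O(b)$ and hence treewidth $O(bg)$ (\cref{compensation}); re-inserting the width-$w$ linear decompositions of the vortices multiplies this by $O(w)$, contradicting the large treewidth of the wall. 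The contradiction is thus with the treewidth of the strips \emph{together with} the radially bounded surface region around them, not of a strip in isolation. Moreover, combining the per-vortex bounds is not additive: the paper needs \cref{acknowledged}, which via the rooted-grid sampling lemma (\cref{subjection}) yields $\bdim(G,X_1\cup\dots\cup X_r)=O(r^4\ell^2)$ --- still polynomial, which is all you need for the $2^{\poly(k)}$ claim, but it is a real lemma rather than bookkeeping. With the radial-radius ingredient supplied, the remainder of your plan (including the clique-side sanity check, which the paper settles via $\bdim(K_k,X)\ge\lfloor\sqrt{|X|}\rfloor$ and $\eg(K_{k-|X|})=\Omega(\sqrt{k-|X|})$) matches the paper's proof.
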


%

%
%
%
%
%
%
The above implies that 
for every graph $G$, $\hw'(G)≤f_{\ref{main_mainl}}(\hw(G))$. Also, it easily follows that $\hw(G)=\Ocal((\hw'(G))^2)$ (see \cref{retribution}).
This implies that the two parameters $\hw$ and $\hw'$ are \textsl{equivalent}\footnote{We use this concept also to define equivalence of any pair of parameters $\mathsf{p}$ and $\mathsf{p}'$. See \cref{inphasized} for a formal definition.}
and we denote this fact by  $\hw\sim \hw'.$  
This  provides an alternative   decomposition-based statement of the GMST where no adhesion bounds are imposed and where apices and vortices\footnote{See \cref{inphasized}
for the definition of the adhesion of a tree decomposition and \cref{all_minors} for a  definition of apices and vortices.} are replaced by a modulator measure:
\textsl{the removal of a ``low-bidimensionality'' modulator}.

\paragraph{Dyck-grids.}
Observe that the definition of $\hw'$ in \eqref{stereotypically} is simultaneously minimizing two quantities:
the bidimensionality of the modulator and the Euler-genus of the resulting graph.
The min-max duality of the GMST is linking this entity with the exclusion of a parametric graph as a minor.
In this case, the parametric graph is the complete graph on $k$ vertices. 
\medskip\medskip

But what about fixing the surface?
What should replace cliques as the ``excluded entity'' to provide a min-max duality for some particular surface? 
In the ``bottom'' case of such a theorem we should position the case where the target condition is void. 

%
%
%
%
%
%
\begin{figure}[ht]
  \begin{center}
  \scalebox{.9}{\includegraphics{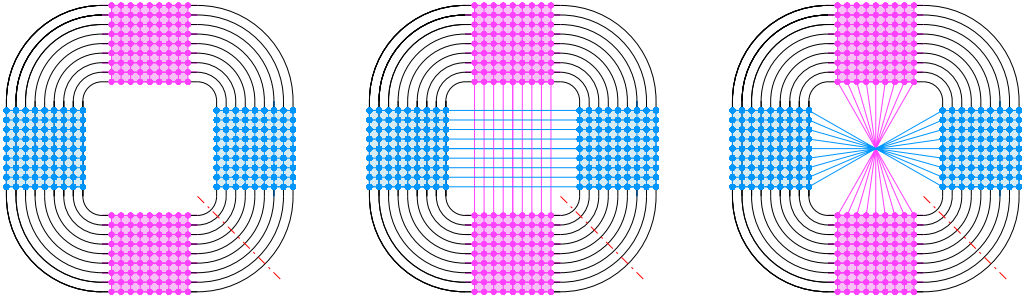}}
  \end{center}
    \caption{The annulus grid $\mathscr{A}_{9},$ the handle grid $\mathscr{H}_{9}$ and the crosscap grid $\mathscr{C}_{9}$ in order from left to right. {Notice that both $\mathscr{H}_{9}$ and $\mathscr{C}_{9}$ contain two 
    $(18\times 9)$ grids as vertex-disjoint subgraphs (depicted in different colors). Moreover, $\mathscr{C}_{9}$ contains a $(18\times 18)$-grid as a spanning subgraph}.}
  \label{supplanted}
\end{figure}

To present our main result and give an answer to the question above, we define three parametric graphs $\mathscr{A}=\langle \mathscr{A}_{k}\rangle_{k\in \mathbb{N}},$ $\mathscr{H}=\langle \mathscr{H}_{k}\rangle_{k\in \mathbb{N}},$  and  $\mathscr{C}=\langle \mathscr{C}_{k}\rangle_{k\in \mathbb{N}}$ as follows: 

An $(n \times  m)$-cylindrical grid is the Cartesian product of a cycle on $n$ vertices and a path on $m$ vertices.
The  \emph{annulus grid} $\mathscr{A}_{k}$ is the $(4k \times k)$-cylindrical grid depicted in the left of \cref{supplanted}.
The \emph{handle grid} $\mathscr{H}_{k}$ (resp. \emph{crosscap grid} $\mathscr{C}_{k}$) is obtained adding in $\mathscr{A}_{k}$ edges as indicated in the middle (resp. right) part of \cref{supplanted}. We refer to the added edges as \emph{transactions} of the handle grid $\mathscr{H}_{k}$ or the crosscap grid $\mathscr{C}_{k}.$

Let now $h\in\mathbb{N}$ and $c\in[0,2].$
We define the parametric graph $\mathscr{D}^{(\mathsf{h},\mathsf{c})}=\langle \mathscr{D}_{k}^{(\mathsf{h},\mathsf{c})}\rangle_{k\in \mathbb{N}}$ by taking one copy of $\mathscr{A}_{k},$ $h$ copies of $\mathscr{H}_{k},$ and  $c\in[0,2]$ copies of $\mathscr{C}_{k},$ then ``cut'' them along the dotted \textcolor{BostonUniversityRed}{red} line, as in \cref{supplanted}, and join them together in the cyclic order $\mathscr{A}_{k},\mathscr{H}_{k},\ldots,\mathscr{H}_{k},\mathscr{C}_{k},\ldots,\mathscr{C}_{k},$ as indicated in \cref{perniciously}.
We call the graph $\mathscr{D}_{k}^{(\mathsf{h},\mathsf{c})}$ the \emph{Dyck-grid} of \emph{order} $k$ \emph{with} $h$ \emph{handles and} $c$ \emph{crosscaps}.
For technical reasons, we make the convention that $\mathscr{D}^{(-1,2)}_k=\mathscr{D}^{(0,0)}_k.$

Let $g\in\mathbb{N}\cup\{-1\}$ be an integer.
If $g\geq  0,$ we define $\Ecal_{g}$ as the class of graphs that are embeddable in some surface $\Sigma$ of Euler-genus at most $g.$
We also set $\Ecal_{-1}$ to be the class containing only the empty graph and we use $\mathcal{G}_{\text{all}}$ to denote the class of all graphs. 
We define the graph parameter $g\mbox{-}\tw\colon\mathcal{G_{\text{all}}}\to\mathbb{N}$ as follows
\begin{eqnarray}
\begin{minipage}{14cm}
$g\mbox{-}\tw(G)$ is the minimum $k$ for which $G$ is in the clique-sum closure of the graphs that have a $k$-bidimensional modulator to $\Ecal_{g}.$
\end{minipage}\label{physiognomy}
\end{eqnarray}

\begin{figure}[ht]
  \begin{center}
  \scalebox{0.88}{\includegraphics{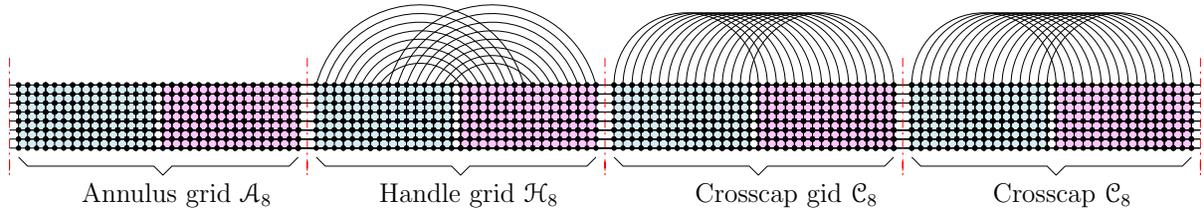}}
  \end{center}
    \caption{The Dyck-grid  of order $8$ with one handle and two crosscaps, i.e., the graph $\mathscr{D}_{8}^{1,2}.$}
  \label{perniciously}
\end{figure}

We stress that the only difference between \eqref{stereotypically} and \eqref{physiognomy}
is that, in  $\eqref{physiognomy},$ the target property is fixed to $\Ecal_{g}.$ Notice that $g\mbox{-}\tw(G)$ can be seen as a parametric extension of surface embeddability in the sense that if $g\mbox{-}\tw(G)\leq k$, then $G$ can be decomposed to graphs that are close ``up to some $k$-bidimensional modulator'' to being embeddable in a surface of Euler-genus $g$.
Using the Grid Theorem, it is easy to verify 
that, in the base case where $g=-1,$  $(-1)\mbox{-}\tw(G)$ is equivalent   to the parameter of treewidth.

Our next step is to define the graph parameter ${g}\mbox{-}\bg$ for $g\in\mathbb{N}$ as follows

\begin{equation}
{g}\mbox{-}\bg(G)\coloneqq
    \begin{cases}
    \max\{k\mid \mathscr{D}^{(0,0)}_{k}\leq  G\} &  \mbox{if $g=0$} \\
     \max\{k\mid {\mathscr{D}^{(\mathsf{h}+1,0)}_{k}\leq  G \mbox{~or~} } \mathscr{D}^{(\mathsf{h},1)}_{k}\leq  G\}   &  \mbox{if $g=2\mathsf{h}+1\geq  1$}\\
    \max\{k\mid \mathscr{D}^{(\mathsf{h}+1,0)}_{k}\leq  G \mbox{~or~} \mathscr{D}^{(\mathsf{h},2)}_{k}\leq  G \} & \mbox{if $g=2\mathsf{h}+2\geq  2$}   \end{cases}\label{commandments} 
\end{equation}

Notice that, in case $g=0$,  ${g}\mbox{-}\bg(G)$ is the maximum $k$ for which 
$G$ contains the annulus $\mathscr{A}_{k}$ as a minor which is equivalent 
to the size of the \textsl{biggest grid} minor of $G$. This motivated the choice of  $\bg$ for the name of this parameter.

We are now ready to state our main result that is the following min-max 
equivalence between the parameters defined in \eqref{physiognomy} and \eqref{commandments}.

\begin{theorem}\label{consoudation}
For every $g\in\mathbb{N}\cup\{-1\},$ the parameters ${g}\mbox{-}\tw$ and ${(g+1)}\mbox{-}\bg$ are  equivalent.
That is, ${g}\mbox{-}\tw\sim{(g+1)}\mbox{-}\bg.$
\end{theorem}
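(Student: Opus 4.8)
\textbf{Proof plan for \cref{consoudation}.}
The plan is to prove the two inequalities implicit in the equivalence ${g}\mbox{-}\tw\sim{(g+1)}\mbox{-}\bg$ separately, following the three-ingredient template of the Grid Theorem. For the base case $g=-1$, the claim is exactly (a reformulation of) the Grid Theorem together with the observation made after \eqref{physiognomy} that $(-1)\mbox{-}\tw$ is equivalent to treewidth; note $\mathscr{D}^{(0,0)}_k=\mathscr{A}_k$ is, up to a constant factor in $k$, the $(k\times k)$-grid, so $0\mbox{-}\bg$ is equivalent to the largest-grid-minor parameter. So assume $g\geq 0$.

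\textbf{The easy direction: ${(g+1)}\mbox{-}\bg\preceq {g}\mbox{-}\tw$ (lower bound).}
First I would show that the Dyck-grids in the definition \eqref{commandments} of $(g+1)\mbox{-}\bg$ are \emph{not} decomposable in the sense of \eqref{physiognomy} with bounded parameter, i.e. that ${g}\mbox{-}\tw(\mathscr{D}^{(\mathsf{h},\mathsf{c})}_k)$ grows with $k$ whenever the surface assembled from $h$ handles and $c$ crosscaps has Euler-genus exceeding $g$. The mechanism: a Dyck-grid of order $k$ with genus $>g$ embeds in a surface $\Sigma'$ of Euler-genus $g+1$ (or $g+2$) but, being ``grid-like'', it is highly connected in a measured sense — it has no small separators carrying low-bidimensionality modulators down to $\Ecal_g$. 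Concretely, suppose ${g}\mbox{-}\tw(\mathscr{D}^{(\mathsf{h},\mathsf{c})}_k)\leq t$; then there is a tree-decomposition whose torsos, after deleting a $t$-bidimensional modulator, lie in $\Ecal_g$. Because the Dyck-grid has no useful clique-sum decomposition (a large grid-like graph is ``internally $4$-connected up to the grid structure''), essentially one torso must be (almost) the whole graph, so some $X$ with $\mathsf{bdim}(\mathscr{D}^{(\mathsf{h},\mathsf{c})}_k,X)\leq t$ has $\mathscr{D}^{(\mathsf{h},\mathsf{c})}_k-X\in\Ecal_g$. But deleting a set of bidimensionality $t$ from the Dyck-grid cannot drop its Euler-genus below $g+1$ once $k$ is large compared to $t$: a bounded-bidimensionality set cannot ``hit'' the genus-carrying transactions spread across the cylinder, because hitting them would require a set that spreads two-dimensionally (one would exhibit a $(t'\times t')$-grid $X$-minor for $t'>t$). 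This is exactly where the definition of bidimensionality earns its keep, and I expect this to be \textbf{the main obstacle}: making precise that a modulator to $\Ecal_g$ inside a genus-$(g{+}1)$ Dyck-grid of large order is forced to have large bidimensionality. The right tool is a Menger/Euler-formula argument combined with the omnivore property from \cite{thilikos2023excluding} — the Dyck-grid is an omnivore for $\Ecal_{g+1}$, so if $\mathscr{D}^{(\mathsf{h},\mathsf{c})}_k-X\in\Ecal_g$ for small $X$ we would be able to route a large grid into $X$ using the many disjoint paths the cylindrical structure provides, contradicting $\mathsf{bdim}\leq t$.

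\textbf{The hard direction: ${g}\mbox{-}\tw\preceq {(g+1)}\mbox{-}\bg$ (upper bound / structure).}
Here I would invoke \cref{main_mainl} (the bidimensionality GMST) as the engine. Fix $G$ with ${(g+1)}\mbox{-}\bg(G)\leq t$, meaning $G$ excludes the relevant Dyck-grids $\mathscr{D}^{(\mathsf{h}+1,0)}_{t+1}$ and $\mathscr{D}^{(\mathsf{h},1\text{ or }2)}_{t+1}$ as minors. Excluding a Dyck-grid of bounded order in particular excludes a grid of bounded order, hence (Grid Theorem) $G$ has a tree-decomposition of bounded adhesion into pieces of bounded ``local'' genus-type structure; more to the point, $G$ is $K_m$-minor-free for $m=m(t)$, so \cref{main_mainl} gives a clique-sum decomposition into graphs with an $f(m)$-bidimensional modulator to $\Ecal_{f(m)}$. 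The work is to \emph{improve the target surface from $\Ecal_{f(m)}$ down to $\Ecal_g$} using the extra hypothesis that the specific higher-genus Dyck-grids are excluded. For each torso $H$ with modulator $X$, $\mathsf{bdim}(H,X)\le f(m)$ and $H-X$ embeds in some surface $\Sigma_H$; if $\mathrm{eg}(\Sigma_H)>g$ then $\Sigma_H$ contains one of the ``surface obstructions'' for $\Sigma_g$, which (by the correspondence advertised in the abstract, and the omnivore property of Dyck-grids) forces a large copy of the corresponding higher-genus Dyck-grid inside $H-X$, and then inside $G$ — provided the order $k$ of that Dyck-grid can be taken larger than $t$, which holds once the original grid-minor bound is large enough, i.e. once ${(g+1)}\mbox{-}\bg(G)$ were claimed small but the structure is rich. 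To make the quantification go through one iterates: either every torso can be taken with target $\Ecal_g$ (done, giving ${g}\mbox{-}\tw(G)$ bounded by a function of $t$), or some torso forces an excluded Dyck-grid minor of order $>t$, contradicting ${(g+1)}\mbox{-}\bg(G)\le t$. A subtlety to handle carefully is that lowering the genus of a torso may require \emph{adding} vertices to the modulator (cutting a handle costs a few vertices), so one must check the bidimensionality of the enlarged modulator stays bounded — this is fine because we add only $O(1)$ vertices per handle/crosscap removed and $g$ is fixed, and a bounded additive change cannot blow up bidimensionality by more than an additive constant. Finally I would assemble the per-torso bounds along the clique-sum tree (the clique-sum closure is designed so that a uniform per-torso bound yields the global parameter bound) to conclude ${g}\mbox{-}\tw(G)\leq f_{\ref{consoudation}}({(g+1)}\mbox{-}\bg(G))$, and combine with the easy direction and the base case to get ${g}\mbox{-}\tw\sim{(g+1)}\mbox{-}\bg$.
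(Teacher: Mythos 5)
Your overall architecture matches the paper's: \cref{consoudation} is obtained there as a specialization of \cref{unrelatedly} to the closed family $\Sbbb_g$ of all surfaces of Euler-genus at most $g$, and \cref{unrelatedly} is proved by exactly the two directions you describe (a structure theorem for graphs excluding the relevant Dyck-grids, plus a proof that the Dyck-grids themselves are not decomposable). However, your ``hard direction'' has a genuine gap at the step where you improve the target surface from $\Ecal_{f(m)}$ down to $\Ecal_g$. You claim that if a torso $H$ with modulator $X$ has $H-X$ embeddable only in a surface $\Sigma_H$ of Euler-genus exceeding $g$, then the omnivore property forces a large copy of a genus-$(g{+}1)$ Dyck-grid inside $H-X$. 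The omnivore property (\cref{territorial}) goes the wrong way: Dyck-grids contain all graphs embeddable in the corresponding surface as minors, but a graph of genus $g+1$ need not contain any large Dyck-grid of genus $g+1$ --- the excess genus can be carried by a constant-size, or low-treewidth, portion of $H-X$. In that case your dichotomy (``either target $\Ecal_g$ works, or we find an excluded Dyck-grid'') fails, and one must further decompose the torso; carrying this out relative to Dyck-grid exclusion is precisely the content of the local structure theorem of \cite{thilikos2023excluding} (\cref{thm_pre_globalstructure}), which the paper imports as a black box in place of the generic GMST. Your plan as written would have to re-derive that theorem, which is not a routine iteration.

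Two smaller remarks on the ``easy direction.'' First, your main obstacle is correctly identified and is the paper's \cref{antisthenes} (a modulator of a genus-$(g{+}1)$ Dyck-grid to $\Ecal_g$ must be hit by every member of a half-integral packing of small Dyck-grids, and a counting-plus-\cref{subjection} argument then roots a large grid at it). Second, you dismiss the clique-sum issue with ``essentially one torso must be almost the whole graph''; making this precise is a substantial part of the paper (Lemmas~\ref{letas_s}, \ref{vegetables}, \ref{blackbox} and \ref{victorious}): one must pass to a $4$-connected variant $\widehat{\mathscr{D}}^{(\mathsf{h},\mathsf{c})}_k$, prove a tight-connectivity property bounding all but one side of every adhesion, and then show that the small pendant pieces cannot absorb the bidimensionality of the modulator (via the crossed-grid/Hadwiger-number argument). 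This is a correction of emphasis rather than of approach, but a complete proof cannot skip it.
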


\cref{consoudation} can be seen as an infinite hierarchy of parametric dualities 
starting from the Grid Theorem, that is the base case where $g=-1$, and moving towards the GMST where no fixed bound on $g$ is imposed.


\subsection{Surfaces and their obstructions}
\label{undergoing}

\cref{consoudation} is formulated in terms of the Euler-genus, so it does not give an answer to our initial question in \eqref{constraint} on the parameter $\mathsf{p}_{\Sigma}.$
{In fact}, we obtain a much more general result that allows us to answer this question precisely.
To explain our result in full generality, we shift our focus to (families of) surfaces.

\paragraph{Surfaces.}
Given a pair $(\mathsf{h},\mathsf{c})\in\mathbb{N}\times[0,2]$ we define $\Sigma^{(\mathsf{h},\mathsf{c})}$ to be the two-dimensional surface without boundary created from the sphere by adding $h$ handles and $c$ crosscaps.
If $c=0,$ the surface $\Sigma^{(\mathsf{h},\mathsf{c})}$ is an \emph{orientable} surface, otherwise it is a \emph{non-orientable} surface.
By Dyck's theorem \cite{Dyck1888Beitrage,Francis99ConwayZIP},  two crosscaps are equivalent to a handle in the presence of a (third) crosscap. 
This implies that the notation $\Sigma^{(\mathsf{h},\mathsf{c})}$  is sufficient to denote all  two-dimensional surfaces without boundary. 
The \emph{Euler-genus}, denoted by $\eg(\Sigma),$ of the surface $\Sigma=\Sigma^{(\mathsf{h},\mathsf{c})}$ is $2h+c.$
Notice that if a surface has odd Euler-genus $g=2t+1$ then it is always homeomorphic to the non-orientable surface $\Sigma^{(t,1)},$ while there are two surfaces, up to homeomorphism, of even, say $2t,$ Euler-genus: 
the orientable one, namely $\Sigma^{(t,0)},$ and the non-orientable one, that is $\Sigma^{(t-1,2)}.$



We  say that a surface $\Sigma$ is \emph{contained} in some surface $\Sigma'$ if $\Sigma'$ can be obtained from $\Sigma$ by adding handles and crosscaps.
We denote this containment relation by $\Sigma'\preceq \Sigma.$ We say that two surfaces $\Sigma_{1}$ and $\Sigma_{2}$ are \emph{non-comparable}
if neither $\Sigma_{1}\preceq\Sigma_{2}$ nor $\Sigma_{2}\preceq\Sigma_{1}$.
We use $\Sigma^{\varnothing}$ for the empty surface and, as a convention, we agree that $\Sigma^{\varnothing}$ is contained in all other surfaces.
Clearly, only the empty graph is embeddable in $\Sigma^{\varnothing}.$
We also fix the Euler-genus of the empty surface $\Sigma^{\varnothing}$ to be $-1.$

Let $\Sbbb$ be a set of surfaces. We say that $\Sbbb$ is {\em closed under containment}, or simply \emph{closed}, if for every $\Sigma\in\Sbbb,$ every surface contained in $\Sigma$ also belongs to $\Sbbb.$ We also define $\eg(\Sbbb)=\max\{\eg(\Sigma)\mid \sigma\in\Sbbb\}$.
\medskip

\paragraph{Surfaces and Dyck-grids.}
Based in a recent result of Gavoille and Hilaire \cite{gavoille2023minor}, the following was proven in  \cite{thilikos2026excluding}
and reveals the correspondence between surfaces and Dyck graphs.

\begin{proposition}\label{territorial}
There exists a function $f_{\ref{territorial}}:\Nbbb^3\to \Nbbb$ such that 
for every pair $(\mathsf{h},\mathsf{c})\in\mathbb{N}\times[0,2],$ a graph $K$ on $n$ vertices is embeddable in $\Sigma^{(\mathsf{h},\mathsf{c})}$ if and only if it is a minor of $\mathscr{D}^{(\mathsf{h},\mathsf{c})}_{k},$ for some $k=f_{\ref{territorial}}(\mathsf{h},\mathsf{c},n)$.
Moreover, $f_{\ref{territorial}}(\mathsf{h},\mathsf{c},n)\in 2^{\mathcal{O}(\mathsf{h}+\mathsf{c})}n^2$.
\end{proposition}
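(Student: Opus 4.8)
The assertion is an equivalence whose two implications have very different depth. The ``if'' direction holds for \emph{every} $k$: the Dyck-grid $\mathscr{D}^{(\mathsf{h},\mathsf{c})}_{k}$ is itself embeddable in $\Sigma^{(\mathsf{h},\mathsf{c})}$, since the annulus grid $\mathscr{A}_{k}$ is planar, the transactions promoting it to a handle grid $\mathscr{H}_{k}$ (resp.\ a crosscap grid $\mathscr{C}_{k}$) can be drawn without crossings exactly on one additional handle (resp.\ crosscap), and joining the $1+\mathsf{h}+\mathsf{c}$ blocks in the prescribed cyclic order realises the corresponding connected sum; this gives a drawing of $\mathscr{D}^{(\mathsf{h},\mathsf{c})}_{k}$ on the sphere with $\mathsf{h}$ handles and $\mathsf{c}$ crosscaps, i.e.\ on $\Sigma^{(\mathsf{h},\mathsf{c})}$. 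As embeddability in a fixed surface is minor-closed, every minor of $\mathscr{D}^{(\mathsf{h},\mathsf{c})}_{k}$ embeds in $\Sigma^{(\mathsf{h},\mathsf{c})}$, settling this direction for any $k$. All the content --- and the choice of $f_{\ref{territorial}}$ --- therefore lies in the converse: that for a suitable $k$ depending only on $\mathsf{h},\mathsf{c},n$, the graph $\mathscr{D}^{(\mathsf{h},\mathsf{c})}_{k}$ is \emph{minor-universal} for the class of $n$-vertex graphs embeddable in $\Sigma^{(\mathsf{h},\mathsf{c})}$.

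For the converse the plan is to compose two minor relations. The first is the minor-universal graph of Gavoille and Hilaire \cite{gavoille2023minor}: for every $n$ and every surface $\Sigma$ of Euler-genus $2\mathsf{h}+\mathsf{c}$ there is a single graph $\mathcal{U}_{n}^{\Sigma}$ of order $2^{\mathcal{O}(\mathsf{h}+\mathsf{c})}n^{2}$ that contains every $n$-vertex $\Sigma$-embeddable graph as a minor. The second --- this is the step carried out in \cite{thilikos2023excluding} --- is that $\mathcal{U}_{n}^{\Sigma^{(\mathsf{h},\mathsf{c})}}$ is a minor of $\mathscr{D}^{(\mathsf{h},\mathsf{c})}_{k}$ for some $k=2^{\mathcal{O}(\mathsf{h}+\mathsf{c})}n^{2}$. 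Composing these two facts yields $f_{\ref{territorial}}(\mathsf{h},\mathsf{c},n)\in 2^{\mathcal{O}(\mathsf{h}+\mathsf{c})}n^{2}$, which together with the ``if'' direction proves the statement.

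Both minor relations can also be produced by hand from an embedding of a graph $K$ in $\Sigma=\Sigma^{(\mathsf{h},\mathsf{c})}$. First normalise: make $K$ connected and cellularly embedded by adding $\mathcal{O}(n+\mathsf{h}+\mathsf{c})$ edges and vertices (it suffices to realise the resulting supergraph of $K$ as a Dyck-grid minor), so that Euler's formula yields $\mathcal{O}(n+\mathsf{h}+\mathsf{c})$ edges and faces. Using a tree--cotree decomposition, cut $\Sigma$ open along $\mathcal{O}(\mathsf{h}+\mathsf{c})$ loops through a common base vertex whose removal leaves a disk $D$; this turns the drawing of $K$ into a plane drawing inside $D$ of a planar graph $\widehat K$, whose ``cut-copies'' of the vertices of $K$ lie on $\partial D$ following some polygonal schema of $\Sigma^{(\mathsf{h},\mathsf{c})}$. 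Normalise that schema, by $\mathcal{O}(\mathsf{h}+\mathsf{c})$ cut-and-paste moves, into the ``Dyck form'' in which $\partial D$ reads one plain arc, then $\mathsf{h}$ stacked handle pairs, then $\mathsf{c}$ stacked crosscap pairs; each move at most doubles $|V(\widehat K)|$, so afterwards $\widehat K$ has $2^{\mathcal{O}(\mathsf{h}+\mathsf{c})}n^{\mathcal{O}(1)}$ vertices. Grid-draw $\widehat K$ --- a planar $N$-vertex graph is a minor of an $\mathcal{O}(N)\times\mathcal{O}(N)$-grid, and a prescribed co-facial vertex set can be made to appear in its cyclic order on the outer cycle --- and then, rather than re-identifying the arcs of $\partial D$, route each identification as a bundle of parallel paths through a handle grid $\mathscr{H}_{k}$ (handle pair), a crosscap grid $\mathscr{C}_{k}$ (crosscap pair), or directly (the plain arc), with the planar grid in the role of $\mathscr{A}_{k}$. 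The outcome is visibly a minor of $\mathscr{D}^{(\mathsf{h},\mathsf{c})}_{k}$ with $k=2^{\mathcal{O}(\mathsf{h}+\mathsf{c})}n^{2}$, and it contains the re-glued $\widehat K=K$ as a minor.

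The easy direction, Euler's formula and the planar grid-drawing step are routine; the real difficulty is the topological bookkeeping in the converse --- finding a cut system that both realises a bounded-size polygonal schema and stays short relative to $K$, normalising it to Dyck form while controlling the blow-up of $\widehat K$, and checking that re-gluing across one handle (resp.\ crosscap) is absorbed, without interference from the other blocks, by exactly one copy of $\mathscr{H}_{k}$ (resp.\ $\mathscr{C}_{k}$). It is the $\mathcal{O}(\mathsf{h}+\mathsf{c})$ schema-normalisation moves, each potentially doubling the complexity, that account for the $2^{\mathcal{O}(\mathsf{h}+\mathsf{c})}$ factor; keeping it no worse than singly exponential is the crux, and is exactly why routing through the Gavoille--Hilaire universal graph is the more economical presentation.
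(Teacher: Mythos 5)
Your proposal is correct and matches the paper's treatment: the paper does not prove \cref{territorial} at all, but imports it from \cite{thilikos2023excluding}, which in turn composes the Gavoille--Hilaire minor-universal graph for a fixed surface with a realisation of that universal graph inside a Dyck-grid --- exactly the two ingredients you identify, with the easy ``if'' direction following from the embeddability of $\mathscr{D}^{(\mathsf{h},\mathsf{c})}_{k}$ in $\Sigma^{(\mathsf{h},\mathsf{c})}$ and minor-closure. Your hands-on sketch via cutting along a schema and normalising to Dyck form is a reasonable reconstruction of what the cited works do, but within this paper the proposition is simply cited, so your citation-based presentation is the faithful one.
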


The above indicates that  Dyck-grids can be seen as ``omnivores'' of the graphs embeddable in surfaces: if a graph contains a large enough Dyck-grid for some surface $\Sigma$ as a minor, it also contains every graph embeddable in $\Sigma$ up to a fixed number of vertices as a minor. \medskip

\paragraph{Surface obstructions.}
Let us fix some finite and closed set $\Sbbb$ of surfaces.
The \emph{surface obstruction set} of $\Sbbb$, denoted by $\mathsf{sobs}(\Sbbb),$ is the set of all $\preceq$-minimal surfaces that do not belong to $\Sbbb.$ In other words,
\begin{eqnarray}
\begin{minipage}{12cm}
$\mathsf{sobs}(\Sbbb)\coloneqq \{\Sigma\mid \mbox{$\Sigma$ is a surface where $\Sigma\not\in \Sbbb$ and such that every surface}$\\
$\mbox{\hspace{17mm} that is different from $\Sigma$ and contained in $\Sigma$ belongs to $\Sbbb$}\}.$
\end{minipage}
\label{vivisection}
\end{eqnarray}

Clearly, $\sobs(\Sbbb)$ contains non-comparable surfaces.
For some examples, notice that 
$\mathsf{sobs}(\emptyset)=\{\Sigma^{\varnothing}\},$ 
$\mathsf{sobs}(\{\Sigma^{\varnothing}\})=\{\Sigma^{0,0}\},$  and 
$\mathsf{sobs}(\{\Sigma^{\varnothing},\Sigma^{(0,0)}\})=\{\Sigma^{(1,0)},\Sigma^{(0,1)}\}$.

See \cref{sovereignty} for the lattice corresponding to the surface containment relation and some indicative examples of finite and  closed sets of surfaces and their obstruction sets.
It is important to observe that every orientable surface $\Sigma$ is contained in some non-orientable surface, while the opposite is not correct.
By $\Ecal_{\Sbbb}$ we denote the class of graphs that are embeddable in some surface in $\Sbbb.$
We stress that $\Ecal_{\{\Sigma^{\varnothing}\}}$ is the graph class containing only the empty graph, while $\Ecal_{\emptyset}=\emptyset.$
Notice that there is a one-to-one correspondence between surfaces and Dyck-grids.
For this, given that $\Sigma=\Sigma^{(\mathsf{h},\mathsf{c})},$ we define $\mathscr{D}^{\Sigma}=\langle\mathscr{D}^{\Sigma}_{k}\rangle_{k\in\mathbb{N}}$ as the parametric Dyck-grid $\mathscr{D}^{(\mathsf{h},\mathsf{c})}=\langle\mathscr{D}^{(\mathsf{h},\mathsf{c})}_{k}\rangle_{k\in\mathbb{N}}.$
This permits us to see $\mathscr{D}^{\Sigma}_{k}$ as a parameterization capturing all graphs of $\Ecal_{\{\Sigma\}},$ via minor-containment, as indicated in \cref{territorial}.
For completeness, we define $\mathscr{D}^{\Sigma_{\varnothing}}$ as the sequence containing only empty graphs.

\begin{figure}[ht]
\begin{center}
 \scalebox{.81}{\includegraphics{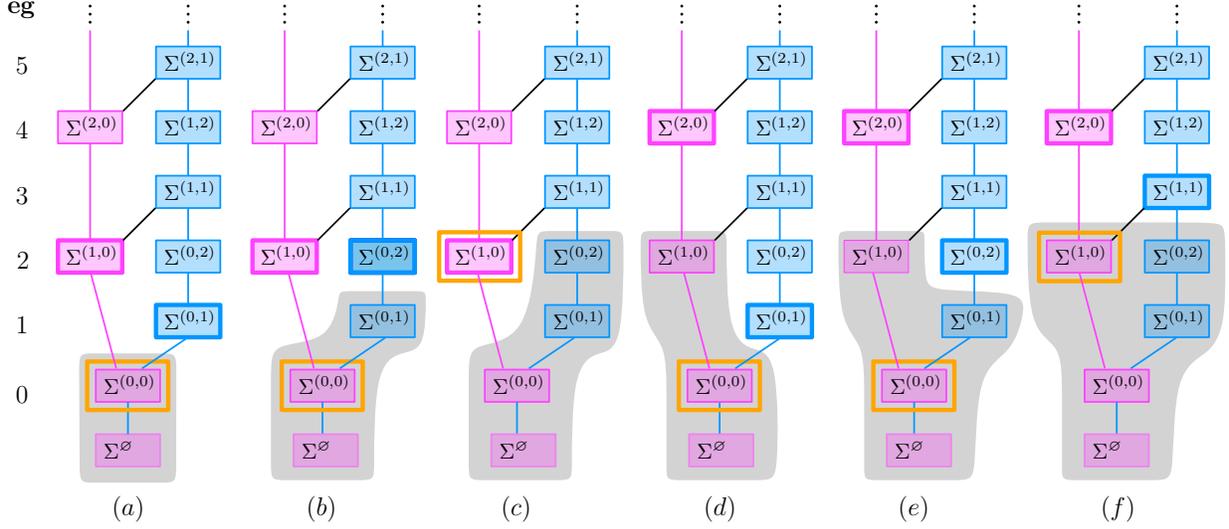}}
\end{center}
  \caption{The lattice of surface containment, along with some indicative closed sets of surfaces $\Sbbb$ (in grey) and their surface obstructions $\mathsf{sobs}(\Sbbb)$ (depicted in bold frames).
  Notice that, in case (c),  $\Sbbb$ is the set of surfaces contained in the Klein bottle $\Sigma^{(0,2)}$ and its the surface obstruction set
  contains only the torus, i.e.,  $\mathsf{sobs}(\Sbbb)=\{\Sigma^{(1,0)}\}.$
For each set of surfaces, the orange box indicates the corresponding prevalent surface.}
\label{sovereignty}
\end{figure}

\paragraph{Surface-based extensions of treewidth.}
We are now ready to state our result in its most general and compact form.
We define the parameter ${\Sbbb}\mbox{-}\tw \colon \mathcal{G_{\text{all}}}\to\mathbb{N}$ such that
\begin{eqnarray}
{\Sbbb}\mbox{-}\tw(G)\coloneqq\min\{k\mid \mbox{$G$ belongs in the clique-sum closure of the graphs}\label{attempting} \\
\mbox{with  a $k$-bidimensional modulator to $\Ecal_{\Sbbb}\}$}.\!\!\!\nonumber
\end{eqnarray} 
Notice that the above definition can be seen as a more refined version of \eqref{stereotypically}.
Again, as a consequence of the  Grid Theorem, it is easy to see
that ${\{\Sigma^{\varnothing}\}}\mbox{-}\tw$
is equivalent to treewidth. Also, as a convention, we agree that ${\emptyset}\mbox{-}\tw = \infty.$ 
Let $\Sbbb'$ be a set of surfaces which is not necessarily closed.
Next we define the graph parameter ${\Sbbb'}\mbox{-}\bg \colon \mathcal{G_{\text{all}}}\to\mathbb{N}$ such that 
\begin{eqnarray}
\Sbbb'\mbox{-}\bg(G) & \coloneqq & \max\{k\mid \mbox{$G$ contains $\mathscr{D}^{\Sigma}_k$ as a minor for some $\Sigma\in\Sbbb'$}\}.\label{travestied}
\end{eqnarray}
We also agree that ${\{\Sigma^{\varnothing}\}}\mbox{-}\bg=\infty,$ while ${\emptyset}\mbox{-}\bg$ is the empty function.\footnote{The empty function is the function corresponding to the empty set, i.e., is the empty relation.}
Similar to before one may observe that \eqref{travestied}
is a more refined version of \eqref{commandments}.
In the following, we are mostly concerned with the case where $\Sbbb'=\mathsf{sobs}(\Sbbb).$ 

The main result of this paper relates the three concepts defined in \eqref{vivisection}, \eqref{attempting}, and \eqref{travestied} as follows.

\begin{theorem}\label{unrelatedly}
For every  finite and closed set of surfaces $\Sbbb,$ the parameters ${\Sbbb}\mbox{-}\mathsf{\tw}$ and ${\mathsf{sobs}(\Sbbb)}\mbox{-}\bg$ are equivalent, i.e., $\Sbbb\mbox{-}\mathsf{\tw}\sim \mathsf{sobs}(\Sbbb)\mbox{-}\bg.$
\end{theorem}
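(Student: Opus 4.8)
\textbf{Proof plan for \cref{unrelatedly}.}
The plan is to reduce \cref{unrelatedly} to the Euler-genus-indexed statement \cref{consoudation} by a careful ``closest surface'' analysis, handling the subtlety that a closed set $\Sbbb$ need not be of the form $\{\Sigma : \eg(\Sigma) \le g\}$. First I would establish the easy direction, namely that $\mathsf{sobs}(\Sbbb)\mbox{-}\bg(G) = \mathcal{O}(\Sbbb\mbox{-}\tw(G))$ (up to the functions hidden in $\sim$). This follows the template already used for \cref{consoudation}: if $\Sbbb\mbox{-}\tw(G)\le k$, then $G$ sits in the clique-sum closure of graphs admitting a $k$-bidimensional modulator to $\Ecal_{\Sbbb}$, and one argues that for each $\Sigma\in\mathsf{sobs}(\Sbbb)$ the Dyck-grid $\mathscr{D}^{\Sigma}_{m}$ for $m$ large cannot be a minor of such a graph --- because (i) by \cref{territorial} a big $\mathscr{D}^{\Sigma}_m$ minor would force every bounded-size graph embeddable in $\Sigma$ as a minor, in particular graphs of genus exceeding $\eg(\Sbbb)$, and (ii) a large Dyck-grid is highly connected and ``two-dimensional'', so it cannot be torn along small-adhesion clique-sums nor killed by a low-bidimensionality modulator without dropping below the surface threshold. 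The non-decomposability of Dyck-grids claimed in the introduction (the analogue of step 2 for the Grid Theorem) is exactly what powers this direction, so I would invoke it here.

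For the hard direction --- $\Sbbb\mbox{-}\tw(G) = \mathcal{O}(\mathsf{sobs}(\Sbbb)\mbox{-}\bg(G))$ --- the idea is: set $k = \mathsf{sobs}(\Sbbb)\mbox{-}\bg(G)$, so $G$ contains \emph{no} $\mathscr{D}^{\Sigma}_{k+1}$ minor for any $\Sigma\in\mathsf{sobs}(\Sbbb)$. Let $g = \eg(\Sbbb)$. Since every surface not in $\Sbbb$ contains (is $\preceq$-below... careful: ``contains'' here means obtained by adding handles/crosscaps, so $\Sigma'\preceq\Sigma$ means $\Sigma'$ is \emph{larger}) some element of $\mathsf{sobs}(\Sbbb)$, excluding all $\mathscr{D}^{\Sigma}_{k+1}$ for $\Sigma\in\mathsf{sobs}(\Sbbb)$ should let me apply \cref{consoudation} at level $g$: $(g-1)\mbox{-}\tw(G) \le f(k)$ would give a clique-sum decomposition of $G$ into graphs with an $f(k)$-bidimensional modulator to $\Ecal_{g-1}\subseteq\Ecal_{g}$. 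But $\Ecal_{g}$ may be strictly larger than $\Ecal_{\Sbbb}$ (e.g.\ $g$ could be even and realized only by an orientable surface while $\Sbbb$ omits it, as in case (c) of \cref{sovereignty} where $\Sbbb$ is everything below the Klein bottle but the torus $\Sigma^{(1,0)}$ is excluded). So the real work is a \emph{refinement} step: starting from a clique-sum decomposition whose torsos almost-embed in \emph{some} surface of genus $\le g$, I must further decompose or re-route so that each torso almost-embeds in a surface actually lying in $\Sbbb$. Here I would argue locally at each torso $W$: $W$ has a low-bidimensional modulator $X_W$ with $W - X_W$ embeddable in some $\Sigma_W$ with $\eg(\Sigma_W)\le g$; if $\Sigma_W\notin\Sbbb$, then $\Sigma_W\succeq\Sigma_0$ for some $\Sigma_0\in\mathsf{sobs}(\Sbbb)$, and I must show $W$ in fact contains a large $\mathscr{D}^{\Sigma_0}$ minor (contradicting the hypothesis) \emph{unless} $W-X_W$ can be re-embedded, after enlarging the modulator by a bounded amount, into a surface genuinely in $\Sbbb$ --- this is the point where one uses that a graph embedded in a ``too large'' surface but with bounded representativity-type obstructions can be cut along a bounded number of curves (absorbed into the modulator) to drop into a smaller surface, and if it genuinely needs the large surface then it carries a big Dyck-grid by \cref{territorial}.

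The main obstacle I anticipate is precisely this last refinement: translating ``$W - X_W$ embeds in $\Sigma_W\notin\Sbbb$ but $W$ has no big $\mathscr{D}^{\Sigma_0}_{k}$ minor for $\Sigma_0\in\mathsf{sobs}(\Sbbb)$'' into ``$W - X_W'$ embeds in some $\Sigma'\in\Sbbb$ for a slightly larger modulator $X_W'$''. Morally this is a statement that the Dyck-grids $\mathscr{D}^{\Sigma}$ are \emph{universal obstructions} not just for genus but for each individual surface simultaneously, i.e.\ one needs the ``omnivore'' property of \cref{territorial} combined with a redrawing/cutting argument on surfaces (in the spirit of the representativity and disk-cut lemmas from Robertson--Seymour's graph-minors series, or from \cite{thilikos2023excluding}). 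I would organize this as a separate lemma: \emph{for every surface $\Sigma$ there is a function $b_\Sigma$ such that any graph $H$ with $\bdim(H, X)\le t$ and $H - X$ embeddable in a surface $\succeq\Sigma$ either contains $\mathscr{D}^{\Sigma}_{r}$ as a minor or has $\bdim(H, X')\le b_\Sigma(t,r)$ with $H - X'$ embeddable in a proper subsurface.} Iterating this lemma over the (bounded) ``excess genus'' $g - \eg(\Sbbb)$, and then re-assembling the finitely many local redecompositions into a global clique-sum decomposition (using that clique-sum closures compose), yields the bound $\Sbbb\mbox{-}\tw(G)\le f_{\ref{unrelatedly}}(\mathsf{sobs}(\Sbbb)\mbox{-}\bg(G))$ and completes the equivalence. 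Finally I would note that the parametric graphs in $\mathfrak{D}_\Sigma = \{\mathscr{D}^{\Sigma_0} : \Sigma_0\in\mathsf{sobs}(\Sbbb)\}$ are pairwise minor-incomparable (since $\mathsf{sobs}(\Sbbb)$ consists of non-comparable surfaces and \cref{territorial} is a biconditional), giving the claimed bijection between $\mathfrak{D}_\Sigma$ and the surface obstructions.
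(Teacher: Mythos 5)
Your high-level shape (two inequalities, a structure theorem for one direction and non-decomposability of Dyck-grids for the other) matches the paper, but there are genuine gaps in both directions. For the direction $\Sbbb\mbox{-}\tw(G)\leq f(\mathsf{sobs}(\Sbbb)\mbox{-}\bg(G))$, your reduction to \cref{consoudation} is circular: in the paper, \cref{consoudation} is a \emph{corollary} of \cref{unrelatedly} (obtained by instantiating $\Sbbb=\Sbbb_g$), so there is no independent genus-level statement to bootstrap from. More importantly, the ``refinement step'' you identify as the main obstacle --- re-embedding a torso from a surface of genus $\leq g$ into a surface actually in $\Sbbb$ by cutting along curves --- never arises in the paper's argument and would itself be a theorem as hard as the structure theorem. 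The paper instead applies the structure theorem of \cite{thilikos2023excluding} (\cref{thm_pre_globalstructure}, repackaged as \cref{thm_globalstructure}) directly to the exclusion of $\{\mathscr{D}^{\Sigma}_{k}\mid\Sigma\in\mathsf{sobs}(\Sbbb)\}$: that theorem already outputs torsos with $\Sigma$-decompositions over surfaces in which the excluded Dyck-grids do not embed, and by \cref{territorial} those are exactly the surfaces in $\Sbbb$. Your plan is also missing the step that this direction actually requires: the structure theorem produces apices and \emph{vortices}, not bidimensional modulators, and the conversion --- showing that the union of the apex set with the vertices drawn in the vortices has bounded bidimensionality (\cref{abstraction}, yielding \cref{tiko_news}) --- is the paper's main new technical contribution for the upper bound and is nowhere in your proposal.

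For the other direction, which you call ``easy,'' you have in fact deferred the most technical part of the paper to an unproven claim. The ``non-decomposability of Dyck-grids'' is not an external fact one can invoke; it is one of the paper's own results, occupying all of \cref{progressively}. One must show two separate things: first, that no low-bidimensionality modulator $X$ can make $\mathscr{D}^{(\mathsf{h},\mathsf{c})}_{k}-X$ embeddable in a surface of $\Sbbb$ (\cref{antisthenes}, proved via half-integral packings of small Dyck-grids and the grid-rerooting result \cref{subjection}); and second, that clique-sums do not help (\cref{victorious}, which needs the tight connectivity of an enhanced $4$-connected Dyck-grid, the crossed-grid/Hadwiger-number machinery of \cref{commoditized}, and the star-decomposition reduction of \cref{letas_s}). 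Your two-line sketch --- ``highly connected and two-dimensional, so it cannot be torn along small-adhesion clique-sums nor killed by a low-bidimensionality modulator'' --- names the right intuition but supplies none of the argument, and in particular does not explain why a modulator of bidimensionality $o(k^{1/5})$ must miss an entire packed copy of a small non-embeddable Dyck-grid, which is the crux.
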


The equivalence above indicates that the min-max duality between surface versions of treewidth and biggest grid is bijectively mapped to the surface obstruction-duality.
\smallskip

In the context of \cref{unrelatedly}, we may view the parametric family $\mathfrak{D}_{\mathsf{sobs}(\Sbbb)}\coloneqq \{ \mathscr{D}^{\Sigma}\mid \Sigma\in\mathsf{sobs}(\Sbbb)\}$ as a \textsl{universal obstruction} for ${\Sbbb}\mbox{-}\mathsf{\tw},$ in the sense that the behavior of the parameter ${\Sbbb}\mbox{-}\mathsf{\tw}$ is determined, up to equivalence, by excluding some graph from each of the parametric graphs in $\mathfrak{D}_{\mathsf{sobs}(\Sbbb)}$  as a minor (see \cref{inphasized} for the formal definition of universal obstructions).

A graph parameter $\mathsf{p}\colon\mathcal{G}_{\text{all}}\to\mathbb{N}$ is \emph{minor-monotone} if for every graph $G$ and every minor $H$ of $G$ we have $\mathsf{p}(H)\leq  \mathsf{p}(G).$
We say that a graph minor-monotone parameter $\p\colon\mathcal{G}_{\text{all}}\to\mathbb{N}$ \emph{is bounded in $\mathcal{G}$} if the set $\{\p(G)\mid G\in \mathcal{G}\}$ is finite.
By \cref{unrelatedly}, $\mathsf{sobs}(\Sbbb)$ precisely describes the asymptotic behavior of ${\Sbbb}\mbox{-}\tw$ in minor-closed graph classes, as indicated by the following result (which, essentially, is a restatement of {\cref{unrelatedly}}).

\begin{theorem}\label{ilemigoddesscs}
Let $\Sbbb$ be a finite and closed set of surfaces. 
Given some minor-closed graph class  $\mathcal{G},$ the parameter ${\Sbbb}\mbox{-}\tw$ is bounded in $\mathcal{G}$ if and only if, for every $\Sigma\in \mathsf{sobs}(\Sbbb),$ $\mathcal{G}$ excludes some graph embeddable in $\Sigma$.
\end{theorem}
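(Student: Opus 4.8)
\textbf{Proof plan for \cref{ilemigoddesscs}.} The plan is to derive this statement as a direct corollary of \cref{unrelatedly} combined with \cref{territorial}, using the fact that both $\Sbbb\mbox{-}\tw$ and $\mathsf{sobs}(\Sbbb)\mbox{-}\bg$ are minor-monotone parameters (which should be checked first, since boundedness in a minor-closed class is a notion tailored to minor-monotone parameters). Once equivalence is in hand, $\Sbbb\mbox{-}\tw$ is bounded in $\mathcal{G}$ if and only if $\mathsf{sobs}(\Sbbb)\mbox{-}\bg$ is bounded in $\mathcal{G}$, so the whole task reduces to characterizing when the ``biggest Dyck-grid'' parameter is bounded on a minor-closed class.

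\medskip

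First I would record minor-monotonicity. For $\Sbbb\mbox{-}\tw$ this follows from the standard fact that the clique-sum closure of a minor-closed class is minor-closed, together with the observation that if $G-X$ embeds in a surface of $\Sbbb$ and $\mathsf{bdim}(G,X)\le k$, then after taking a minor the image of $X$ still induces (a minor of) an embeddable graph and its bidimensionality cannot increase — the $(\ell\times\ell)$-grid $X$-minors of a minor of $G$ lift to $X'$-minors of $G$ for the appropriate vertex set $X'$. For $\mathsf{sobs}(\Sbbb)\mbox{-}\bg$ minor-monotonicity is immediate from \eqref{travestied}, since ``contains $\mathscr{D}^{\Sigma}_k$ as a minor'' is preserved under taking supergraphs in the minor order. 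Then \cref{unrelatedly} gives $\Sbbb\mbox{-}\tw \sim \mathsf{sobs}(\Sbbb)\mbox{-}\bg$, and equivalence of minor-monotone parameters preserves boundedness on any fixed class: if $\p \sim \p'$ via $\p \le f\circ \p'$ and $\p' \le g\circ \p$, then $\{\p(G)\mid G\in\mathcal{G}\}$ finite $\iff$ $\{\p'(G)\mid G\in\mathcal{G}\}$ finite.

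\medskip

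The remaining step is the genuinely combinatorial one: show that $\mathsf{sobs}(\Sbbb)\mbox{-}\bg$ is bounded on $\mathcal{G}$ if and only if for every $\Sigma\in\mathsf{sobs}(\Sbbb)$ the class $\mathcal{G}$ excludes some graph embeddable in $\Sigma$. For the forward direction, suppose for contradiction that some $\Sigma\in\mathsf{sobs}(\Sbbb)$ is such that $\mathcal{G}$ contains \emph{every} graph embeddable in $\Sigma$. Since $\mathscr{D}^{\Sigma}_k$ is itself embeddable in $\Sigma$ (it is a surface-grid for $\Sigma$), we get $\mathscr{D}^{\Sigma}_k\in\mathcal{G}$ for all $k$, and $\mathscr{D}^{\Sigma}_k$ trivially contains $\mathscr{D}^{\Sigma}_k$ as a minor, so $\mathsf{sobs}(\Sbbb)\mbox{-}\bg(\mathscr{D}^{\Sigma}_k)\ge k$, contradicting boundedness. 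For the reverse direction, suppose that for each $\Sigma\in\mathsf{sobs}(\Sbbb)$ there is a graph $H_{\Sigma}$ embeddable in $\Sigma$ with $H_{\Sigma}\notin\mathcal{G}$; as $\mathsf{sobs}(\Sbbb)$ is finite (a finite closed set of surfaces has a finite obstruction set), let $n$ bound $\max_\Sigma |V(H_\Sigma)|$. By \cref{territorial}, there is a $K = f_{\ref{territorial}}(\mathsf{h}_\Sigma,\mathsf{c}_\Sigma,n)$ such that every graph on at most $n$ vertices embeddable in $\Sigma$ — in particular $H_\Sigma$ — is a minor of $\mathscr{D}^{\Sigma}_K$. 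Hence if some $G\in\mathcal{G}$ had $\mathsf{sobs}(\Sbbb)\mbox{-}\bg(G)\ge K^{*}$ for $K^{*}:=\max_{\Sigma\in\mathsf{sobs}(\Sbbb)} f_{\ref{territorial}}(\mathsf{h}_\Sigma,\mathsf{c}_\Sigma,n)$, then $G$ would contain $\mathscr{D}^{\Sigma}_{K^{*}}$, and therefore $\mathscr{D}^{\Sigma}_{K}$, and therefore $H_\Sigma$, as a minor for some $\Sigma$ — contradicting $\mathcal{G}$ being minor-closed and $H_\Sigma\notin\mathcal{G}$. So $\mathsf{sobs}(\Sbbb)\mbox{-}\bg$ is bounded by $K^{*}-1$ on $\mathcal{G}$, completing the equivalence.

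\medskip

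I expect the main (and essentially only nontrivial) obstacle to be dispatching the degenerate boundary cases cleanly rather than any deep argument: when $\Sbbb=\emptyset$ one has $\Sbbb\mbox{-}\tw=\infty$ and $\mathsf{sobs}(\emptyset)=\{\Sigma^{\varnothing}\}$ with $\mathscr{D}^{\Sigma^{\varnothing}}$ the all-empty sequence, so the ``excludes some graph embeddable in $\Sigma^{\varnothing}$'' condition asks whether $\mathcal{G}$ excludes the empty graph, i.e. $\mathcal{G}=\emptyset$; and when $\Sigma^{\varnothing}\in\Sbbb$ one must use the conventions ${\{\Sigma^\varnothing\}}\mbox{-}\tw\sim\tw$ and the Grid Theorem to see that the $\Sigma^{\varnothing}$-component of $\mathsf{sobs}(\Sbbb)$ never contributes an obstruction unless $\mathcal{G}$ is finite. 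Modulo a careful but routine check of these conventions, the theorem is a formal consequence of \cref{unrelatedly} and \cref{territorial}.
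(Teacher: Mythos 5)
Your proposal is correct and takes essentially the same route as the paper: both directions reduce to the equivalence of \cref{unrelatedly} combined with \cref{territorial}, which converts exclusion of a graph embeddable in $\Sigma$ into exclusion of a sufficiently large Dyck-grid $\mathscr{D}^{\Sigma}_k$ and hence into boundedness of $\mathsf{sobs}(\Sbbb)\mbox{-}\bg$ on $\mathcal{G}$. The additional checks you flag (minor-monotonicity and the degenerate conventions for $\Sbbb=\emptyset$ and $\Sigma^{\varnothing}$) are harmless but not part of the paper's short argument.
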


\begin{proof}
Suppose that, for every $\Sigma\in \mathsf{sobs}(\Sbbb),$ the class $\mathcal{G}$ excludes some graph $H_{\Sigma}$ that is embeddable in $\Sigma.$
Because of  \cref{territorial},  there is some $k$ such that $\mathcal{G}$ excludes the graph $\mathscr{D}^{\Sigma}_{k},$ for every $\Sigma\in \mathsf{sobs}(\Sbbb).$
Because of \eqref{travestied}, this implies that $\mathsf{sobs}(\Sbbb)\mbox{-}\bg(G)\leq k$.
Therefore, by \cref{unrelatedly}, for every graph $G$ in $\mathcal{G},$ it holds that $\Sbbb\mbox{-}\mathsf{\tw}(G)\leq f(k)$, for some function $f\colon\mathbb{N}\to\mathbb{N}$.
Hence, ${\Sbbb}\mbox{-}\tw$ is bounded in $\mathcal{G}.$
The inverse direction follows  the same steps but the in opposite direction.
\end{proof}

\paragraph{Derivation of \cref{consoudation}.}
By the definition of the Euler-genus of a surface, it follows that ${g}\mbox{-}\tw=\Sbbb_{g}\mbox{-}\tw$ where
\begin{equation*}
\Sbbb_{g}\coloneqq 
    \begin{cases}
    \{\Sigma^\varnothing\} &  \mbox{if $g=-1$}\\
    \{\Sigma\mid  \Sigma\preceq \Sigma^{(\mathsf{h},0)} \mbox{~or~} \Sigma\preceq \Sigma^{(\mathsf{h}-1,2)}\}  &  \mbox{if $g=2\mathsf{h}\geq  0$} \\
    \{\Sigma\mid  \Sigma\preceq \Sigma^{(\mathsf{h},1)}\}   & \mbox{if $g=2\mathsf{h}+1\geq  1$}   \end{cases},
\end{equation*}
where, as we did for Dyck-grids, we make the convention that $\Sigma^{(-1,2)}=\Sigma^{(0,0)}.$
In other words, $\Sbbb_{g}$ is the set of all surfaces of Euler genus $g$, agreeing that  $\Sbbb_{-1}$ contains only the ``empty'' surface $\Sigma^\varnothing$.

Using  the definition of the surface containment relation, it is straightforward to verify the following (see for instance the cases (a), (b), and (f) in \cref{sovereignty}, corresponding to $g=0,1,$ and $2$ respectively):
\begin{equation*}
\mathsf{sobs}(\Sbbb_{g})=
    \begin{cases}
    \{\Sigma^{(0,0)}\} &  \mbox{if $g=-1$}  \\
     \{\Sigma^{(t+1,0)},\Sigma^{(t,1)}\}   &  \mbox{if $g=2t\geq  0$}\\
     \{\Sigma^{(t+1,0)}, \Sigma^{(t,2)}\}  & \mbox{if $g=2t+1\geq  1.$}  \end{cases}
\end{equation*}
Recall that ${\mathsf{sobs}(\Sbbb_{g})}\mbox{-}\bg$ is the maximum $k$ such that $G$ contains the $k$-th member of one of the parametric graphs in $\mathfrak{D}_{\mathsf{sobs}(\Sbbb_{g})}$ as a minor.
From the the definition of ${(g+1)}\mbox{-}\bg$  (by replacing $g$ by $g+1$ in \eqref{commandments}), it follows that ${(g+1)}\mbox{-}\bg={\mathsf{sobs}(\Sbbb_{g})}\mbox{-}\bg.$
Therefore, \cref{consoudation} follows directly from \cref{unrelatedly}, as ${g}\mbox{-}\tw={\Sbbb_g}\mbox{-}\mathsf{\tw}\sim {\mathsf{sobs}(\Sbbb_g)}\mbox{-}\bg={(g+1)}\mbox{-}\bg.$

\paragraph{Prevalent surfaces.}
Let $\Sbbb$ be a finite and closed set of surfaces.
We define the \emph{prevalent surface} of $\mathbb{{S}},$ denoted by $\Sigma_{\Sbbb},$ as the maximal element with respect to the surface containment relation
of the set   
\begin{eqnarray}
\{\Sigma'\mid \text{for all } \Sigma\in \mathsf{sobs}(\Sbbb): \Sigma'\preceq \Sigma\}.\label{sfuibd}
\end{eqnarray}
\begin{observation}
The set in \eqref{sfuibd} has only one maximal element and this element  
  $\Sigma_{\Sbbb}$ is always an orientable surface.
\end{observation}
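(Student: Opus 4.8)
The statement has two parts: (i) the set in \eqref{sfuibd} has a unique maximal element under $\preceq$, and (ii) that element $\Sigma_{\Sbbb}$ is orientable. For (i), the plan is to show that the set $\mathcal{L}\coloneqq\{\Sigma'\mid \forall\,\Sigma\in\mathsf{sobs}(\Sbbb)\colon \Sigma'\preceq\Sigma\}$ is closed under a natural ``join'' operation on surfaces, so that it is a (finite, nonempty — it contains $\Sigma^{\varnothing}$) meet-subsemilattice of the surface-containment lattice and hence has a unique maximal element. Concretely, I would first recall that a surface $\Sigma^{(\mathsf h',\mathsf c')}$ is contained in $\Sigma^{(\mathsf h,\mathsf c)}$ (i.e. $\Sigma^{(\mathsf h,\mathsf c)}\preceq\Sigma^{(\mathsf h',\mathsf c')}$) exactly when $\Sigma^{(\mathsf h,\mathsf c)}$ is obtained from $\Sigma^{(\mathsf h',\mathsf c')}$ by adding handles and crosscaps; using Dyck's theorem to put pairs $(\mathsf h,\mathsf c)$ into the normal form $\mathsf c\in[0,2]$, this containment relation becomes an explicit numerical order (roughly: compare Euler-genus, and for equal even Euler-genus the orientable surface is $\preceq$-above the non-orientable one, i.e. the orientable surface is contained in the non-orientable one). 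Then I would define, for any two surfaces $\Sigma_1,\Sigma_2$, their \emph{meet} $\Sigma_1\wedge\Sigma_2$ as the $\preceq$-largest surface contained in both, and check it exists and is well-defined by the explicit description of $\preceq$ (this is the one genuinely case-based computation: orientable/orientable, orientable/non-orientable, non-orientable/non-orientable, keeping track of the even-Euler-genus ambiguity). Taking the meet over all $\Sigma\in\mathsf{sobs}(\Sbbb)$ (a finite set) then yields a single surface $\Sigma_{\Sbbb}=\bigwedge_{\Sigma\in\mathsf{sobs}(\Sbbb)}\Sigma$, which by construction lies in $\mathcal{L}$ and is $\preceq$-above every member of $\mathcal{L}$; hence it is the unique maximal element.

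For part (ii) — orientability of $\Sigma_{\Sbbb}$ — the key structural fact is the observation flagged in the text just before \cref{sovereignty}: every orientable surface is contained in some non-orientable surface, but not conversely; equivalently, the orientable surfaces form a $\preceq$-downward-closed ``spine'' of the lattice in the relevant sense. I would argue that the meet of \emph{any} family of surfaces is orientable unless that family is ``degenerate.'' More precisely, note $\mathsf{sobs}(\Sbbb)$ is an antichain of size $1$ or $2$ (from the examples and the lattice structure, obstruction sets of closed surface families have at most two elements, of the shape $\{\Sigma^{(t+1,0)},\Sigma^{(t,1)}\}$, $\{\Sigma^{(t+1,0)},\Sigma^{(t,2)}\}$, or a singleton). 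In each case one checks directly that the meet is an orientable surface $\Sigma^{(s,0)}$: a singleton non-orientable obstruction $\Sigma^{(\mathsf h,\mathsf c)}$ with $\mathsf c>0$ has meet equal to itself only if we allow non-orientable answers — but here I should be careful, so the cleaner route is the general lemma: $\Sigma_1\wedge\Sigma_2$ is orientable whenever $\Sigma_1\ne\Sigma_2$, because any common sub-surface of two distinct surfaces has Euler-genus strictly below $\min(\eg\Sigma_1,\eg\Sigma_2)$ or is forced into the orientable branch by the even-Euler-genus splitting; and a singleton obstruction set arises only for $\Sbbb\in\{\emptyset,\{\Sigma^{\varnothing}\}\}$, where $\Sigma_{\Sbbb}=\Sigma^{\varnothing}$ (which we may regard as orientable by convention) or $\Sigma_{\Sbbb}=\Sigma^{\varnothing}$ again. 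So in all cases $\Sigma_{\Sbbb}$ is orientable.

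The main obstacle I anticipate is bookkeeping around the even-Euler-genus ambiguity and Dyck's theorem: the surface-containment relation $\preceq$ is \emph{not} simply ``larger Euler-genus,'' because at each even genus there are two incomparable surfaces (the orientable one and the non-orientable one), and $\Sigma^{(\mathsf h,\mathsf c)}$ with $\mathsf c\ge 1$ can absorb handles into crosscaps but not vice versa. Getting the meet operation right — in particular verifying that the meet of two non-orientable surfaces, or of a torus-type and a Klein-bottle-type obstruction, lands on the correct orientable surface — is where a careful case analysis is needed. Everything else (finiteness, nonemptiness via $\Sigma^{\varnothing}$, uniqueness of the maximal element once the meet-closure is established) is routine. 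I would therefore structure the proof as: (1) explicit description of $\preceq$ in normal-form coordinates; (2) existence and formula for the binary meet; (3) $\mathcal{L}$ is meet-closed, hence has a unique top $\Sigma_{\Sbbb}$; (4) case-check on the possible shapes of $\mathsf{sobs}(\Sbbb)$ that this top is orientable.
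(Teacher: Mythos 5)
The paper states this as an unproved observation, so there is nothing to compare against except the facts themselves; judged on that basis, your lattice-theoretic plan for the first assertion (normal form for $\preceq$, binary meets, the meet of the finite antichain $\mathsf{sobs}(\Sbbb)$ as the unique maximal element of \eqref{sfuibd}) is sound, but your argument for orientability has two concrete errors. First, the ``general lemma'' that $\Sigma_1\wedge\Sigma_2$ is orientable whenever $\Sigma_1\neq\Sigma_2$ is false: the meet of the projective plane $\Sigma^{(0,1)}$ and the Klein bottle $\Sigma^{(0,2)}$ is the projective plane, since the former is contained in the latter. The statement is true only for \emph{incomparable} pairs (the only incomparable pairs are one orientable and one non-orientable surface, and their largest common sub-surface is orientable), which is what you need since $\mathsf{sobs}(\Sbbb)$ is an antichain --- but you must say so. Second, and more seriously, your claim that a singleton obstruction set arises only for $\Sbbb\in\{\emptyset,\{\Sigma^{\varnothing}\}\}$ is wrong: case (c) of \cref{sovereignty} has $\mathsf{sobs}(\Sbbb)=\{\Sigma^{(1,0)}\}$ for $\Sbbb$ the set of surfaces contained in the Klein bottle. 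Because of this, you never actually exclude the one configuration that would break the claim, namely $\mathsf{sobs}(\Sbbb)$ being a singleton consisting of a non-orientable surface; you explicitly flag this case and then leave it hanging. (A smaller inconsistency: your opening parenthetical asserts that for equal even Euler-genus the orientable surface is contained in the non-orientable one, which contradicts your own later, correct, remark that the two are incomparable --- the torus is not contained in the Klein bottle.)

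The missing idea, which also collapses the whole proof to a few lines, is this: since $\Sbbb$ is finite, there is a smallest $t$ with $\Sigma^{(t,0)}\notin\Sbbb$, and since the surfaces properly contained in $\Sigma^{(t,0)}$ are exactly $\Sigma^{\varnothing},\Sigma^{(0,0)},\ldots,\Sigma^{(t-1,0)}$ (a surface contained in an orientable surface is orientable, and orientable surfaces form a chain), all of which lie in the closed set $\Sbbb$, we get $\Sigma^{(t,0)}\in\mathsf{sobs}(\Sbbb)$. Hence every member of the set in \eqref{sfuibd} is contained in this orientable surface, so every member is orientable; as orientable surfaces are totally ordered by containment and the set is finite and contains $\Sigma^{\varnothing}$, it has a unique maximal element, which is orientable. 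This handles both assertions simultaneously, with no need for the meet machinery or the case analysis on the shape of $\mathsf{sobs}(\Sbbb)$.
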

In \cref{sovereignty}, for each choice of $\Sbbb,$ the prevalent surface is surrounded by an orange  box.
For the most simple cases, it holds that $\Sigma_{\emptyset}=\{\Sigma^{\varnothing}\}$, $\Sigma_{\{\Sigma^{\varnothing}\}}=\{\Sigma^{(0,0)}\}$, and $\Sigma_{\{\Sigma^{\varnothing},\Sigma^{(0,0)}\}}=\Sigma^{(0,0)}$.
One may observe the following (see \cref{sovereignty}).

\begin{observation}
Let $\Sbbb$ be a finite and closed set of surfaces and let $g\coloneqq \max\{\eg(\Sigma) \mid \Sigma\in \Sbbb\}.$
The surface obstruction set $\mathsf{sobs}(\Sbbb)$ has exactly one obstruction if and only if one of the following conditions holds:
\begin{itemize}
\item    all surfaces of Euler-genus $g$ in $\Sbbb$ are non-orientable and $g$ is even and positive, or
\item  $\Sbbb\subseteq \{\Sigma^{\varnothing}\}.$
\end{itemize}
Moreover,  $\Sigma_{\Sbbb}\in\mathsf{sobs}(\Sbbb)$ if and only if $\mathsf{sobs}(\Sbbb)=\{\Sigma_{\Sbbb}\}.$
\end{observation}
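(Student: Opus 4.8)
The plan is to work out $\sobs(\Sbbb)$ explicitly, exploiting the very rigid shape of the poset of surfaces under $\preceq$, and then read off both assertions. First I would record the classical facts I need. Every surface is $\Sigma^{\varnothing}$, or an orientable surface $\Sigma^{(h,0)}$ of Euler-genus $2h$, or a non-orientable surface which, by Dyck's theorem, is uniquely determined by its Euler-genus $c\ge 1$; the orientable surfaces form a $\preceq$-chain ordered by Euler-genus, the non-orientable surfaces likewise form a $\preceq$-chain ordered by Euler-genus, $\Sigma^{(h,0)}$ is contained in the non-orientable surface of Euler-genus $c$ if and only if $c\ge 2h+1$, and no non-orientable surface is ever contained in an orientable one. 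It follows that a finite closed $\Sbbb$ with $\Sbbb\not\subseteq\{\Sigma^{\varnothing}\}$ is exactly the set $\Sbbb(a,b)$ consisting of $\Sigma^{\varnothing}$, the orientable surfaces $\Sigma^{(h,0)}$ with $h\le a$, and all non-orientable surfaces of Euler-genus at most $b$, for unique integers $a\ge 0$ and $b\ge 0$ (with $b=0$ meaning that $\Sbbb$ has no non-orientable surface); that being closed is equivalent to the compatibility condition $b\ge 1\Rightarrow a\ge\lfloor(b-1)/2\rfloor$ (since the non-orientable surface of Euler-genus $b$ contains $\Sigma^{(\lfloor(b-1)/2\rfloor,0)}$, which must then lie in $\Sbbb$); and that $\eg(\Sbbb)=\max\{2a,b\}$.

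Next I would compute $\sobs(\Sbbb(a,b))$, i.e.\ the $\preceq$-minimal surfaces outside $\Sbbb(a,b)$. These outside surfaces are the orientable $\Sigma^{(h,0)}$ with $h\ge a+1$ and the non-orientable surfaces of Euler-genus $\ge b+1$; the $\preceq$-minimal orientable one is $\Sigma^{(a+1,0)}$ and the $\preceq$-minimal non-orientable one is the surface, call it $N_{b+1}$, of Euler-genus $b+1$. Now $\Sigma^{(a+1,0)}$ is always $\preceq$-minimal outside $\Sbbb$ (everything strictly below it is $\Sigma^{\varnothing}$ or some $\Sigma^{(h,0)}$ with $h\le a$, all in $\Sbbb$), so $\Sigma^{(a+1,0)}\in\sobs(\Sbbb)$; whereas $N_{b+1}$ fails to be minimal precisely when some orientable surface outside $\Sbbb$ lies below it, i.e.\ when some $\Sigma^{(h,0)}$ with $a<h\le b/2$ exists, i.e.\ when $\lfloor b/2\rfloor>a$. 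Since $\Sigma^{(a+1,0)}\ne N_{b+1}$, this gives $|\sobs(\Sbbb)|=1$ if and only if $\lfloor b/2\rfloor>a$, and feeding in $a\ge\lfloor(b-1)/2\rfloor$ shows this happens exactly when $b$ is even and $a=b/2-1$, that is, when $b=2a+2$.

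It then remains to rephrase $b=2a+2$ in the statement's language and dispose of the degenerate cases. If $b=2a+2$ then $g:=\eg(\Sbbb)=2a+2$ is even and positive, and the two Euler-genus-$g$ surfaces are $\Sigma^{(a+1,0)}$, which is orientable and not in $\Sbbb$, and the non-orientable surface of Euler-genus $2a+2$, which is in $\Sbbb$; hence every Euler-genus-$g$ surface of $\Sbbb$ is non-orientable. Conversely, if $g=\eg(\Sbbb)=2t$ is even and positive and every Euler-genus-$g$ surface of $\Sbbb$ is non-orientable, then $\Sigma^{(t,0)}\notin\Sbbb$, forcing $a=t-1$, while the non-orientable surface of Euler-genus $2t$ lies in $\Sbbb$, forcing $b=2t$; so $b=2a+2$. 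And if $\Sbbb\subseteq\{\Sigma^{\varnothing}\}$ then $\sobs(\emptyset)=\{\Sigma^{\varnothing}\}$ and $\sobs(\{\Sigma^{\varnothing}\})=\{\Sigma^{(0,0)}\}$, a single obstruction in each case; this settles the first equivalence. For the ``moreover'', the argument is purely order-theoretic: by definition of the prevalent surface, $\Sigma_{\Sbbb}\preceq\Sigma$ for every $\Sigma\in\sobs(\Sbbb)$, and the members of $\sobs(\Sbbb)$ are pairwise non-comparable, so if $\Sigma_{\Sbbb}\in\sobs(\Sbbb)$ then for each $\Sigma\in\sobs(\Sbbb)$ the comparability $\Sigma_{\Sbbb}\preceq\Sigma$ inside an antichain forces $\Sigma=\Sigma_{\Sbbb}$, whence $\sobs(\Sbbb)=\{\Sigma_{\Sbbb}\}$; the converse implication is trivial.

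The main obstacle I anticipate is the bookkeeping in the middle step: correctly identifying which surfaces sit below $N_{b+1}$ (the parity of $b$ there is exactly what produces the ``$g$ even'' clause) and checking that the boundary values $b=0$ and $g=0$ fall out of the general count as the degenerate cases they should be.
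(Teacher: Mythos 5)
Your proof is correct. The paper states this as an observation with no proof beyond a pointer to the surface-containment lattice of \cref{sovereignty}, and your argument is exactly the intended verification: the reduction of a closed set to the pair $(a,b)$ of its orientable and non-orientable thresholds, the computation $\sobs(\Sbbb)=\{\Sigma^{(a+1,0)}\}\cup\{N_{b+1}\ \text{if}\ \lfloor b/2\rfloor\leq a\}$, the translation of $b=2a+2$ into the ``all Euler-genus-$g$ surfaces of $\Sbbb$ are non-orientable and $g$ even and positive'' clause, and the antichain argument for the ``moreover'' part are all sound, including the degenerate cases $\Sbbb\subseteq\{\Sigma^{\varnothing}\}$.
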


The next result indicates  that all graphs embedded in the prevalent surface of $\Sbbb$ are contained as minors of a graph $G,$ with ${\Sbbb}\mbox{-}\tw(G)$ large enough.
\begin{theorem}
\label{farsighted}
For every finite and closed set of surfaces $\Sbbb$ there exists a function $f_{\Sbbb}\colon \mathbb{N}\to\mathbb{N}$ such that for every $t\in\mathbb{N},$ every graph $G$ with ${\Sbbb}\mbox{-}\tw(G)\geq  f_{\Sbbb}(t)$ contains every graph on at most $t$ vertices that is embeddable in the prevalent surface $\Sigma_{\Sbbb}$ as a minor.
\end{theorem}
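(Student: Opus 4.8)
The plan is to exploit the fact that the prevalent surface $\Sigma_{\Sbbb}$ is, by definition, contained in \emph{every} surface $\Sigma\in\mathsf{sobs}(\Sbbb)$, and then leverage \cref{unrelatedly} together with the omnivore property of Dyck-grids from \cref{territorial}. First I would fix $t\in\mathbb{N}$ and a graph $G$ with ${\Sbbb}\mbox{-}\tw(G)$ large. By \cref{unrelatedly} we have ${\Sbbb}\mbox{-}\tw\sim\mathsf{sobs}(\Sbbb)\mbox{-}\bg$, so there is a function $g_1$ such that ${\Sbbb}\mbox{-}\tw(G)\geq g_1(k)$ forces $\mathsf{sobs}(\Sbbb)\mbox{-}\bg(G)\geq k$; by \eqref{travestied} this means $G$ contains $\mathscr{D}^{\Sigma}_{k}$ as a minor for \emph{some} $\Sigma\in\mathsf{sobs}(\Sbbb)$. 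The point is that we do not control \emph{which} obstruction $\Sigma$ we land in, but we don't need to: since $\Sigma_{\Sbbb}\preceq\Sigma$, the surface $\Sigma$ is obtained from $\Sigma_{\Sbbb}$ by adding handles and crosscaps, hence every graph embeddable in $\Sigma_{\Sbbb}$ is also embeddable in $\Sigma$.

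Next I would bring in \cref{territorial}: write $\Sigma=\Sigma^{(\mathsf{h},\mathsf{c})}$ and let $H$ be an arbitrary graph on at most $t$ vertices embeddable in $\Sigma_{\Sbbb}$; then $H$ is embeddable in $\Sigma$, so by \cref{territorial} $H$ is a minor of $\mathscr{D}^{(\mathsf{h},\mathsf{c})}_{k}=\mathscr{D}^{\Sigma}_{k}$ whenever $k\geq f_{\ref{territorial}}(\mathsf{h},\mathsf{c},t)$. The only subtlety is uniformity: $\mathsf{sobs}(\Sbbb)$ is a \emph{finite} set (since $\Sbbb$ is finite and closed), so I can take $k_0(t)\coloneqq\max\{f_{\ref{territorial}}(\mathsf{h},\mathsf{c},t)\mid \Sigma^{(\mathsf{h},\mathsf{c})}\in\mathsf{sobs}(\Sbbb)\}$, a single threshold that works no matter which obstruction $G$ realizes. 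Then setting $f_{\Sbbb}(t)\coloneqq g_1(k_0(t))$ and chaining the two implications: ${\Sbbb}\mbox{-}\tw(G)\geq f_{\Sbbb}(t)$ gives a minor $\mathscr{D}^{\Sigma}_{k_0(t)}\leq G$ for some $\Sigma\in\mathsf{sobs}(\Sbbb)$, which in turn contains every $\leq t$-vertex graph embeddable in $\Sigma\supseteq\Sigma_{\Sbbb}$ (in the containment sense) as a minor, and minor-containment is transitive, so $H\leq G$.

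I would then double-check the degenerate cases to make sure the statement is not vacuous or ill-typed: if $\Sbbb=\emptyset$ then ${\emptyset}\mbox{-}\tw=\infty$ and $\Sigma_{\emptyset}=\Sigma^{\varnothing}$, whose only embeddable graph is the empty graph, so the claim holds trivially; if $\Sbbb=\{\Sigma^{\varnothing}\}$ then $\mathsf{sobs}(\Sbbb)=\{\Sigma^{(0,0)}\}$, $\Sigma_{\Sbbb}=\Sigma^{(0,0)}$ is the sphere, and the statement reduces to the Grid Theorem direction ``large treewidth forces a large grid minor, hence every small planar graph as a minor'', which is consistent with \cref{territorial} applied to the annulus/planar case. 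The main obstacle is essentially packaging rather than mathematical depth: one must be careful that \cref{unrelatedly} is only an \emph{equivalence} of parameters (a two-sided functional bound), so extracting ``${\Sbbb}\mbox{-}\tw(G)$ large $\Rightarrow$ $\mathsf{sobs}(\Sbbb)\mbox{-}\bg(G)$ large'' uses the direction of the equivalence that bounds $\mathsf{sobs}(\Sbbb)\mbox{-}\bg$ from below in terms of ${\Sbbb}\mbox{-}\tw$ — and one should confirm this is the direction actually provided (it is, since both parameters are unbounded together and the ``hard'' content of \cref{unrelatedly} is precisely that large ${\Sbbb}\mbox{-}\tw$ forces a large Dyck-grid minor). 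Everything else is a finite maximum over the obstruction set and an application of the omnivore property, so no new technical machinery is needed beyond what the excerpt already grants.
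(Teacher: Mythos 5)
Your proposal is correct and uses exactly the same ingredients as the paper's proof: the prevalent surface is contained in every $\Sigma\in\mathsf{sobs}(\Sbbb)$, so \cref{territorial} makes every $t$-vertex graph embeddable in $\Sigma_{\Sbbb}$ a minor of each $\mathscr{D}^{\Sigma}_{k}$ for $k$ past a threshold uniform over the finite set $\mathsf{sobs}(\Sbbb)$, and \cref{unrelatedly} converts large ${\Sbbb}\mbox{-}\tw$ into a large Dyck-grid minor. The only (immaterial) difference is that the paper argues contrapositively — $H$-minor-freeness bounds $\mathsf{sobs}(\Sbbb)\mbox{-}\bg$ and hence ${\Sbbb}\mbox{-}\tw$ — whereas you argue in the forward direction.
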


\begin{proof}
Let $H$ be a graph embeddable in the prevalent surface $\Sigma_{\Sbbb}$ and let $|H|=t.$
By the definition of $\Sigma_{\Sbbb},$  $\Sigma_{\Sbbb}$ is contained in every surface $\Sigma'\in\mathsf{sobs}(\Sbbb),$ therefore $H$ is also embeddable in $\Sigma'.$  By \cref{territorial}, there is some $k_{t}$ such that $H$ is a minor  of $\mathscr{D}^{\Sigma'}_{k_t}.$ 
If now a graph $G$ is $H$-minor free, then it is also $\mathscr{D}^{\Sigma'}_{k_t}$-minor free, therefore, from \eqref{travestied}, $\mathsf{sobs}(\Sbbb)\mbox{-}\bg(G)\leq k_{t}.$ This, by \cref{unrelatedly}, implies that 
$\Sbbb\mbox{-}\mathsf{\tw}(G)\leq f(k_t)$ (for some function 
$f\colon\mathbb{N}\to\mathbb{N}$). 
\end{proof}

In light of our motivating question \ref{constraint}, \cref{farsighted} provides a slightly different perspective.
That is, let $H$ be some graph and $\mathbb{S}_H$ be a finite and closed set of surfaces such that the prevalent surface of $\mathbb{S}_H$ is a surface of smallest Euler-genus where $H$ embeds.
Then $\mathbb{S}_H\text{-}\tw$ is bounded on all $H$-minor-free graphs.

\subsection{The Graph Minors Structure Theorem for fixed surfaces}
\label{labyrinths}

Let us now go back to the original statement of the GMST. For this, we define the parameter $\hw''\colon\mathcal{G}_{\text{all}}\to\mathbb{N}$ where 
\begin{eqnarray}
\begin{minipage}{14cm}
$\hw''(G)$ is the minimum $k$ for which $G$ is in the clique-sum closure of  the graphs that contain a set of at most $k$ vertices whose removal yields a graph admitting a $\Sigma$-decomposition, of breadth at most $k$ and depth at most $k,$ where $\Sigma$ is a surface of Euler-genus at most $k.$
\end{minipage}\label{differences}
\end{eqnarray}

The precise definitions of the concepts of ``$\Sigma$-decomposition'', ``breadth'', and ``depth'' are given in \cref{all_minors}.
Intuitively, a graph $H$ has a $\Sigma$-decomposition with $k$ vortices if $H=H^{(0)}\cup H^{(1)}\cup\cdots\cup H^{(k)}$ where $H^{(0)}$ is a graph embedded in $\Sigma$ and each vortex $H^{(i)}$ is a graph ``attached around'' the vertices of some face of the embedding of $H^{(0)}.$ 
The GMST implies that $\hw''\sim \hw$ (see \cref{retribution}).
As we already mentioned, we prove that $\hw''$ can be replaced by $\hw'$ in this relation.
Our results have also their counterpart in the setting of $\hw''.$
For this we define, given a finite and  closed set of surfaces $\Sbbb,$ the parameter ${\Sbbb}\mbox{-}\hw\colon\mathcal{G}_{\text{all}}\to\mathbb{N}$ where
\begin{eqnarray}
\begin{minipage}{14cm}
$\Sbbb\mbox{-}\hw(G)$ is the minimum $k$ for which $G$ is in the clique-sum closure of the graphs that contain a set of at most $k$ vertices whose removal yields a graph admitting a $\Sigma$-decomposition, of breadth at most $k$ and depth at most $k,$ where $\Sigma\in\Sbbb.$
\end{minipage}\label{eradication}
\end{eqnarray}

Our results imply that the above surface-oriented variant of $\hw''$
is equivalent to the corresponding surface variants of $\hw$ and $\hw'.$

\begin{theorem}
\label{annihilation}
For every  finite and   closed set of surfaces $\Sbbb,$
it holds that the parameters ${\Sbbb}\mbox{-}\mathsf{\tw},$ ${\Sbbb}\mbox{-}\mathsf{\hw}$ and ${\mathsf{sobs}(\Sbbb)}\mbox{-}\bg$ are equivalent, i.e., $\Sbbb\mbox{-}\mathsf{\tw}\sim{\Sbbb\mbox{-}\mathsf{\hw}}\sim {\mathsf{sobs}(\Sbbb)\mbox{-}\bg}.$
\end{theorem}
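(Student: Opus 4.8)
By \cref{unrelatedly} we already have $\Sbbb\mbox{-}\mathsf{\tw}\sim\mathsf{sobs}(\Sbbb)\mbox{-}\bg$, so by transitivity of $\sim$ it suffices to prove $\Sbbb\mbox{-}\mathsf{\tw}\sim\Sbbb\mbox{-}\mathsf{\hw}$. Both parameters have the shape ``minimum $k$ such that $G$ lies in the clique-sum closure of $\mathcal{H}_k$'': for $\Sbbb\mbox{-}\mathsf{\tw}$ the class $\mathcal{H}^{\mathsf{tw}}_k$ consists of the graphs with a $k$-bidimensional modulator to $\Ecal_{\Sbbb}$, and for $\Sbbb\mbox{-}\mathsf{\hw}$ the class $\mathcal{H}^{\mathsf{hw}}_k$ consists of the graphs with at most $k$ apices whose deletion admits a $\Sigma$-decomposition of breadth and depth at most $k$ with $\Sigma\in\Sbbb$. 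Using the monotonicity of the clique-sum closure with respect to its generating class (\cref{inphasized}), the whole statement reduces to producing a function $f\colon\mathbb{N}\to\mathbb{N}$ with $\mathcal{H}^{\mathsf{hw}}_k\subseteq\mathcal{H}^{\mathsf{tw}}_{f(k)}$ and $\mathcal{H}^{\mathsf{tw}}_k\subseteq\mathcal{H}^{\mathsf{tw}}_{f(k)}$\,---\,that is, to a purely local, single-torso translation between the two notions of ``almost embeddability''. In both inclusions the surface produced will be (a surface obtained by cutting, hence contained in) the one we started from, so $\Sbbb$ being closed under containment keeps us inside $\Sbbb$ throughout.

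\medskip\noindent\textbf{From $\mathsf{hw}$-pieces to $\mathsf{tw}$-pieces.}
Let $H\in\mathcal{H}^{\mathsf{hw}}_k$ with apex set $A$, $|A|\le k$, and let $H-A=H^{(0)}\cup H^{(1)}\cup\cdots\cup H^{(k')}$ be a $\Sigma$-decomposition with $H^{(0)}$ embedded in $\Sigma\in\Sbbb$, with $k'\le k$ vortices, each of pathwidth at most $k$ and attached along a single face. Take $X\coloneqq A\cup\bigcup_{i\ge 1}\bigl(V(H^{(i)})\setminus V(H^{(0)})\bigr)$; then $H-X=H^{(0)}\in\Ecal_{\{\Sigma\}}\subseteq\Ecal_{\Sbbb}$, and it remains to bound $\mathsf{bdim}(H,X)$ by a function of $k$. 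In an $X$-rooted $(m\times m)$-grid model at most $k$ branch sets can meet $A$, so discarding the at most $k$ rows and at most $k$ columns through them leaves an $(m-k)\times(m-k)$-subgrid whose branch sets all meet the union $Y$ of the vortex interiors; a branch set confined to a single vortex interior is impossible once $m-k$ exceeds the pathwidth bound, so each of them meets the boundary of one of the $\le k'$ vortex faces of the embedding $H^{(0)}\hookrightarrow\Sigma$. But a grid minor all of whose branch sets are incident to boundedly many faces of a bounded-Euler-genus embedding has bounded order: cutting $\Sigma$ along the boundary walks of these faces realises the grid inside a graph embedded in a bounded-genus surface with all its vertices on boundedly many boundary components (an ``essentially outerplanar'' situation), whose treewidth, and hence whose largest grid minor, is bounded by a function of $\eg(\Sigma)$ and $k$. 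Combining the three bounds caps $m$ and thus $\mathsf{bdim}(H,X)$, giving $H\in\mathcal{H}^{\mathsf{tw}}_{f(k)}$.

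\medskip\noindent\textbf{From $\mathsf{tw}$-pieces to $\mathsf{hw}$-pieces, and the main obstacle.}
Conversely, given $H\in\mathcal{H}^{\mathsf{tw}}_k$ with $\mathsf{bdim}(H,X)\le k$ and $H-X$ embedded in some $\Sigma\in\Sbbb$, we must repackage $X$ as $\le f(k)$ apices together with a $\Sigma'$-decomposition of breadth and depth $\le f(k)$ for a surface $\Sigma'$ cut out of $\Sigma$, hence still in $\Sbbb$. This is the surface-faithful form of the passage from $\hw'$ to $\hw''$ carried out in \cref{beneficial} (see \cref{retribution}), and it is where the structural work sits: one runs the vortex-extraction heart of the Graph Minors Structure Theorem, but in the favourable regime where the host $H-X$ is \emph{already} embedded rather than merely ``almost'' embedded, so that the only thing to control is how $X$ attaches to $\Sigma$. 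The crux is that if $X$ attached to more than boundedly many pairwise far-apart, non-separating regions of $\Sigma$ one could route an $X$-rooted grid of unbounded order through them, contradicting $\mathsf{bdim}(H,X)\le k$; consequently all but boundedly many vertices of $X$ attach inside boundedly many disc-like regions, and within each such region they attach in a bounded-pathwidth (linear) fashion, which yields the vortices, while the remaining vertices of $X$ become apices. I expect this to be the principal difficulty, since it is exactly here that the quantitative equivalence ``small bidimensionality $\Leftrightarrow$ small breadth and depth'' has to be made honest; in the write-up I would invoke it from the surface-embedding machinery of \cref{all_minors} and \cite{thilikos2023excluding} rather than reprove it. Putting the two inclusions together gives $\Sbbb\mbox{-}\mathsf{\tw}\sim\Sbbb\mbox{-}\mathsf{\hw}$, and combined with \cref{unrelatedly} we conclude $\Sbbb\mbox{-}\mathsf{\tw}\sim\Sbbb\mbox{-}\mathsf{\hw}\sim\mathsf{sobs}(\Sbbb)\mbox{-}\bg$.
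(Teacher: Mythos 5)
Your reduction to proving $\Sbbb\mbox{-}\mathsf{\tw}\sim\Sbbb\mbox{-}\mathsf{\hw}$ (taking \cref{unrelatedly} as given) is legitimate, and your first inclusion — from $\hw$-pieces to $\tw$-pieces — is the direction the paper actually proves locally (\cref{abstraction}, packaged as \cref{tiko_news}): take $X$ to be the apices together with everything drawn in the vortices and bound $\bdim(H,X)$. Two of your justifications there do not hold as stated, though. A single branch set certainly can be confined to a vortex interior regardless of the vortex's depth (what is bounded is the order of a sub\emph{grid} living entirely inside the $\leq k$ bounded-depth vortices, not the existence of individual branch sets there); and cutting $\Sigma$ along the vortex faces does not place ``all vertices of the grid'' on boundedly many boundary components — only one root per branch set lies there, while the branch sets themselves wander over the whole surface, so the ``essentially outerplanar'' conclusion needs the radial-distance argument, not a cutting argument. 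The paper's \cref{abstraction} closes exactly this hole: it augments the rooted wall with cycles around the vortex boundaries, shows the resulting embedded graph has radial radius $\mathcal{O}(b)$ and hence treewidth $\mathcal{O}(bg)$ (\cref{compensation}), then re-inserts the vortices via their linear decompositions to get treewidth $\mathcal{O}(bgw)$, capping the wall order; \cref{acknowledged} aggregates over the $b$ vortices.

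The genuine gap is your second inclusion. The paper never performs a local ``vortex extraction'' from a bidimensional modulator, and your sketch of one does not work: the dichotomy ``either $X$ attaches to boundedly many disc-like regions or one routes a large $X$-rooted grid'' fails when $H-X$ is embedded but sparse (say tree-like), in which case $X$ may attach at arbitrarily many pairwise far-apart places without creating any rooted grid, and nothing then forces a bounded-breadth vortex structure. Moreover, the machinery you propose to invoke from \cref{all_minors} (namely \cref{thm_globalstructure}) takes as hypothesis the exclusion of Dyck-grids as minors, not the existence of a bidimensional modulator, so it cannot be applied to a single $\tw$-piece in the way you describe. The paper closes this direction cyclically: $\Sbbb\mbox{-}\mathsf{\hw}\preceq\Sbbb\mbox{-}\mathsf{rhw}\preceq\mathsf{sobs}(\Sbbb)\mbox{-}\bg\preceq\Sbbb\mbox{-}\mathsf{\tw}$ (\cref{abominations}), where the middle step is \cref{thm_globalstructure} applied to the whole graph and the last is the lower bound \cref{predominant}. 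Since you already assume \cref{unrelatedly}, the repair costs you nothing new: from $\Sbbb\mbox{-}\mathsf{\tw}(G)\leq k$ deduce $\mathsf{sobs}(\Sbbb)\mbox{-}\bg(G)\leq g(k)$ via \cref{unrelatedly}, then apply \cref{thm_globalstructure} to $G$ to obtain the apex/vortex tree-decomposition witnessing $\Sbbb\mbox{-}\mathsf{\hw}(G)\leq f(k)$ — but note this is a global re-decomposition of $G$, not the single-torso translation your plan sets up.
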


\cref{annihilation} is stated in a more extensive form (as \cref{abominations}) in \cref{beneficial}, where we make further assumptions on the way the vortices are intersecting the adhesions of the clique-sums.
Such assumptions do not (asymptotically) alter the parameters in question and are useful in  applications of the decomposition theorems emerging from \cref{annihilation}.
\smallskip

We finally wish to mention that, based on the results of \cite{thilikos2026excluding},  all our proofs are algorithmic 
and can be resumed to the following: 

\begin{theorem}\label{substitutes}
For every  finite and closed set of surfaces $\Sbbb,$  where $\eg(\Sbbb)=g$, there exist functions $f_{\ref{substitutes}}^1,f_{\ref{substitutes}}^2:\mathbb{N}^2\to\mathbb{N}$ and an algorithm  that, given a graph $G$ and a $k\in\mathbb{N}$, outputs either a  Dyck-grid in $\mathfrak{D}_{\mathsf{sobs}(\Sbbb)}\coloneqq \{ \mathscr{D}^{\Sigma}\mid \Sigma\in\mathsf{sobs}(\Sbbb)\}$ 
of order $k$ as a minor of $G$ or a tree decomposition of $G$ where each torso has an $f_{\ref{substitutes}}^1(g,k)$-bidimensional modulator to $\Ecal_{\Sbbb}$.
Moreover, the algorithm runs in time $\Ocal(f_{\ref{substitutes}}^2(g,k)\cdot |V(G)|^{4}\log (|V(G)|)),$ where $f_{\ref{substitutes}}^1(g,k)= 2^{2^{\mathcal{O}(g)}\mathsf{poly}(k)}$ and $f_{\ref{substitutes}}^2(g,k)= 2^{2^{2^{\mathcal{O}(g)}\mathsf{poly}(k)}}$. 
\end{theorem}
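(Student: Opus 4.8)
\textbf{Proof proposal for \cref{substitutes}.}
The plan is to assemble the algorithmic statement from the pieces already available in the excerpt, combined with the algorithmic machinery of \cite{thilikos2023excluding}. The backbone is an induction on the Euler-genus $g$ that mirrors the structure of the surface-containment lattice. For the base case $g=-1$ (i.e., $\Sbbb\subseteq\{\Sigma^{\varnothing}\}$), the statement reduces to the algorithmic Grid Theorem: given $G$ and $k$, we either find an $\mathscr{A}_k=\mathscr{D}^{(0,0)}_k$ minor (equivalently a large grid minor) or a tree-decomposition of bounded width, and this has the claimed running time by the known linear-in-$n^{O(1)}$ grid-minor algorithms. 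For the inductive step, I would run the ``single surface'' decomposition procedure derived from \cite{thilikos2023excluding}: given the target surface(s) that generate $\Sbbb$, the algorithm either locates a Dyck-grid $\mathscr{D}^{\Sigma}_k$ for one of the obstruction surfaces $\Sigma\in\mathsf{sobs}(\Sbbb)$ as a minor of $G$, or it finds a vertex set $X$ in a torso that is a modulator to $\Ecal_{\Sbbb}$; the bidimensionality bound on $X$ comes from the fact that if $\mathsf{bdim}(G',X)$ were too large then, by \cref{territorial} applied in reverse, $G'$ (and hence $G$) would contain the relevant Dyck-grid minor, contradicting the first outcome.

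The key steps, in order, would be: (1) normalize $\Sbbb$ to its prevalent surface $\Sigma_{\Sbbb}$ and its obstruction set $\mathsf{sobs}(\Sbbb)$, recording that $|\mathsf{sobs}(\Sbbb)|\le 2$ and that $\eg$ of the obstructions is $O(g)$; (2) invoke the recursive surface-decomposition routine of \cite{thilikos2023excluding}, which at each node of the tree-decomposition under construction tries to embed the torso (minus a small modulator) into a surface of $\Sbbb$, and on failure produces either a wall/grid-like witness pushing toward a larger surface or certifies progress; (3) convert the surface witnesses into an actual Dyck-grid minor $\mathscr{D}^{\Sigma}_k$ using \cref{territorial} (the omnivore property gives the translation between ``contains all $\Sigma$-embeddable graphs up to size $f_{\ref{territorial}}(\cdot)$'' and ``contains $\mathscr{D}^{\Sigma}_k$''); (4) bound the bidimensionality of the extracted modulators: a $k'$-bidimensional modulator $X$ with $k'$ larger than a threshold $f^1_{\ref{substitutes}}(g,k)$ would force a $(k'\times k')$-grid as an $X$-minor, which combined with the near-embedding of $G-X$ yields a $\mathscr{D}^{\Sigma}_k$ minor for some obstruction $\Sigma$, so the ``no Dyck-grid'' branch guarantees $k'\le f^1_{\ref{substitutes}}(g,k)$; (5) track the running time and the growth of the functions through the recursion, which has depth $O(g)$, giving the stated towers $f^1_{\ref{substitutes}}(g,k)=2^{2^{O(g)}\poly(k)}$ and $f^2_{\ref{substitutes}}(g,k)=2^{2^{2^{O(g)}\poly(k)}}$, with the $|V(G)|^4\log|V(G)|$ factor coming from the flat-wall / minor-testing subroutines used once per node.

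The main obstacle, as I see it, is step (4) together with the bookkeeping in step (5): one must show that the \emph{same} bound $k$ on the order of the excluded Dyck-grid can be maintained for \emph{all} obstruction surfaces simultaneously (there can be two non-comparable ones, a handle-type and a crosscap-type), and that the modulator produced by the decomposition has bidimensionality controlled by a function of $g$ and $k$ only — not of $|V(G)|$. This requires carefully interleaving the two recursive branches (adding a handle versus adding a crosscap, cf.\ the case distinction in \eqref{commandments}) and using Dyck's theorem to collapse ``two crosscaps plus a crosscap'' into ``a handle plus a crosscap'' so that the recursion genuinely decreases a single integer potential (essentially $\eg$). The second delicate point is ensuring the bidimensionality bound is \emph{tight enough} to be meaningful: this is exactly where \cref{territorial}'s quadratic dependence $f_{\ref{territorial}}(\mathsf{h},\mathsf{c},n)\in 2^{O(\mathsf{h}+\mathsf{c})}n^2$ enters, converting a grid of side $k'$ into a Dyck-grid of order roughly $\sqrt{k'}/2^{O(g)}$, which is why $f^1_{\ref{substitutes}}$ picks up the $2^{2^{O(g)}}$ outer tower after iterating through the $O(g)$ levels of recursion. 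Everything else — the clique-sum closure handling, the passage from torsos back to $G$, and the conversion between the $\hw$, $\hw'$, and $\Sbbb\mbox{-}\tw$ formulations — is routine given \cref{unrelatedly} and \cref{annihilation}.
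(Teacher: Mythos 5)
There is a genuine gap in your step (4), which is precisely the step the paper devotes \cref{extrahuman} to. You justify the bidimensionality bound on the extracted modulator $X$ by a contrapositive: ``if $\bdim(G',X)$ were too large then, by \cref{territorial} applied in reverse, $G'$ would contain the relevant Dyck-grid minor.'' This implication is false. A modulator to $\Ecal_{\Sbbb}$ with huge bidimensionality does not force a Dyck-grid of an obstruction surface: take $\Sbbb=\{\Sigma^{\varnothing},\Sigma^{(0,0)}\}$, let $G$ be the $(k'\times k')$-grid and $X=V(G)$; then $G-X\in\Ecal_{\Sbbb}$, $\bdim(G,X)=k'$ is unbounded, yet $G$ is planar and contains no $\mathscr{D}^{\Sigma}_{2}$ for $\Sigma\in\mathsf{sobs}(\Sbbb)=\{\Sigma^{(1,0)},\Sigma^{(0,1)}\}$. \cref{territorial} relates surface embeddability of a graph to Dyck-grid minors; it says nothing about the bidimensionality of a modulator. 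The bound you need is not a generic consequence of excluding Dyck-grids but a geometric property of the \emph{specific} modulator handed to you by the structure theorem, namely the apex set $A_d$ together with the vertices drawn in the vortices. The paper proves it directly in \cref{abstraction}: attach an apex to the roots of a rooted wall certifying large bidimensionality, add the cycles bounding the vortices, observe that the resulting embedded part has radial radius $\Ocal(b)$ (all roots lie on or near vortex boundaries), hence treewidth $\Ocal(bg)$ by \cref{compensation}, hence treewidth $\Ocal(bgw)$ after re-inserting the vortices via their linear decompositions, which caps the order of the wall. Without this argument (or an equivalent), your outline does not establish that the ``no Dyck-grid'' branch yields a modulator of bidimensionality bounded in terms of $g$ and $k$ only.

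A secondary, non-fatal difference: you architect the proof as an explicit induction on Euler-genus with an $O(g)$-depth recursion interleaving handle and crosscap branches. The paper instead invokes the structure algorithm of \cite{thilikos2023excluding} as a single black box (\cref{thm_pre_globalstructure}, translated into the $\mathsf{sobs}(\Sbbb)$ terminology in \cref{thm_globalstructure}), inheriting both outcomes, the adhesion/width/breadth bounds $2^{2^{\Ocal(g)}k^{\Ocal(1)}}$, and the running time $2^{2^{2^{\Ocal(g)}k^{\Ocal(1)}}}\cdot|V(G)|^{4}\log|V(G)|$ directly; the only additional work is the polynomial conversion of ``apices plus vortices'' into a bidimensional modulator via \cref{abstraction} and \cref{tiko_news}. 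Re-deriving the recursion yourself is legitimate in principle, but it does not repair the missing bidimensionality argument.
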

In the above, we use the term $\mathsf{poly}(k)$ as a shortcut for $k^{\Ocal(1)}$.

\subsection{Overview of the proof and the structure of the paper}

Towards establishing \cref{annihilation} 
we  prove, (in reverse order) that  $${\mathsf{sobs}(\Sbbb)\mbox{-}\bg}\preceq \Sbbb\mbox{-}\mathsf{\tw}\preceq \Sbbb\mbox{-}\mathsf{\hw}\preceq {\mathsf{sobs}(\Sbbb)\mbox{-}\bg}.$$

The  relation 
$\Sbbb\mbox{-}\mathsf{\hw}\preceq {\mathsf{sobs}(\Sbbb)\mbox{-}\bg}$ 
is essentially a reinterpretation of the 
 results of \cite{thilikos2026excluding} and is presented in  \cref{all_minors}. 
We refer to the fact that  $\Sbbb\mbox{-}\mathsf{\tw}\preceq \Sbbb\mbox{-}\mathsf{\text{$\hw$}}$ as ``the upper bound''
and this follows
from \cref{abstraction}, which is the main outcome of \cref{extrahuman}.
We finally refer to the relation  ${\mathsf{sobs}(\Sbbb)\mbox{-}\bg}\preceq \Sbbb\mbox{-}\mathsf{\tw}$, see \cref{predominant}, as ``the lower bound''  and we dedicate \cref{progressively} to its proof.
The results of   
\cref{all_minors}, \cref{extrahuman}, and \cref{predominant}
are assembled together in \cref{beneficial} in order to give a proof of \cref{annihilation} and \cref{main_mainl}.

Towards improving the readability of the paper we depict in \cref{tab_graph_parameters} all the parameters that are defined in this paper.

\begin{table}[ht]
\scalebox{.95}{{
\begin{minipage}{17cm} 
{\small 
\renewcommand{\arraystretch}{1}
\begin{tabular}{p{1.6cm}p{7.5cm}p{6.8cm}}
\hline
\textbf{Parameter} & \textbf{Definition} & \textbf{Interpretation} \\
\hline
$\bdim(G,X)$ &
Maximum $k$ such that $G$ contains the $(k\times k)$-grid as an $X$-rooted minor. &
Bidimensionality of the annotated set $X$ in $G$. \\

$\hw(G)$ &
Maximum $t$ such that $G$ contains $K_t$ as a minor. &
Hadwiger number. \\

$\hw'(G)$ &
Minimum $k$ such that $G$ belongs to the clique-sum closure of graphs with a $k$-bidimensional modulator to graphs of Euler-genus at most $k$. &
Decomposition-based counterpart of $\hw$. \\


$g\mbox{-}\tw(G)$ &
Minimum $k$ such that $G$ belongs to the clique-sum closure of graphs with a $k$-bidimensional modulator to $\mathcal E_g$. &
Treewidth relative to surfaces of Euler-genus  $\leq g$. \\

$g\mbox{-}\bg(G)$ &
Maximum order $k$ of the relevant Dyck-grid minors prescribed by the Euler-genus case distinction. &
Biggest-grid analogue for Euler-genus $g$. \\

$\mathbb S\mbox{-}\tw(G)$ &
Minimum $k$ such that $G$ belongs to the clique-sum closure of graphs with a $k$-bidimensional modulator to $\mathcal E_{\mathbb S}$. &
Treewidth relative to a closed set of surfaces $\mathbb S$. \\

$\mathbb S\mbox{-}\bg(G)$ &
$\max\{k \mid G \text{ contains } \mathscr D_k^\Sigma \text{ as a minor for some } \Sigma\in\mathbb S\}$. &
Biggest Dyck-grid minor from a prescribed set of surfaces. \\
%

%
%
%
%
\hline
\end{tabular}
}
\end{minipage}
}}
\caption{Main graph parameters.}
\label{tab_graph_parameters}
\end{table}

\section{Excluding a surface}
\label{all_minors}

In this subsection we introduce the results from \cite{thilikos2026excluding} that provide a refinement of the GMST where, instead of excluding a clique, we exclude 
a Dyck-grid corresponding to some fixed surface.

In \cref{inphasized}, give a short introduction to graph parameters and the concept of universal obstructions. 
In order to present the main result of this section, we first introduce 
the concept of  $\Sigma$-decompositions in \cref{s_decompo}.
For this, we use the terminology introduced by Kawarabayashi, Thomas, and Wollan in their proof of the GMST in \cite{KawarabayashiTW20Quicklyexcluding}.
We present the main outcome of \cite{thilikos2026excluding} in \cref{exc_grid} as well as a suitable translation of it according to the terminology that we introduce  in this paper.

\subsection{Graph parameters and Dyck-grids}\label{inphasized}

We denote by $\mathbb{Z}$ the set of integers and by $\mathbb{R}$ the set of reals.
Given two integers $a,b\in\mathbb{Z}$ we denote the set $\{z\in\mathbb{Z} \mid a\leq  z\leq  b\}$ by $[a,b].$
In case $a>b$ the set $[a,b]$ is empty. For an integer $p\geq  1,$ we set $[p]=[1,p]$ and $\mathbb{N}_{\geq  p}=\mathbb{N}\setminus [0,p-1].$
If $\Scal$ is a collection  of objects where the union operation is defined, then  
we define $\cupall\Scal=\bigcup_{S\in \Scal}S$.

\smallskip

All graphs considered in this paper are undirected, finite, and without loops or multiple edges.
We use standard graph-theoretic notation, and we refer the reader to~\cite{diestel2016graph} for any undefined terminology. 
Recall that we use $\mathcal{G}_{\text{all}}$ for the set of all graphs.

We continue by revisiting the notions and ideas touched upon in the introduction in a more formal way.

\paragraph{Graph parameters.}
  A \emph{graph parameter} is a function mapping $\mathcal{G}_{\text{all}}$ to non-negative integers.
 Let $\p$ and $\p'$ be two graph parameters. 
We write  $\p\preceq \p'$ in order to denote that there exists some function $f \colon \mathbb{N} \to \mathbb{N}$ such that, for every graph $G,$ it holds that $\p(G) \leq  f(\p'(G))$

 We say that $\p$ and $\p'$ are \emph{equivalent}, which we denote by $\p \sim \p'$, if   $\p\preceq \p'$ and $\p'\preceq \p$. If the functions involved in this definition are polynomial (resp. linear)  we say that $\p_{1}$ and $\p_{2}$ are \emph{polynomially equivalent} (resp. \emph{linearly equivalent}) and we denote this by $\p \sim_{\mathsf{P}} \p'$ (resp. $\p \sim_{\mathsf{L}} \p'$).
 \smallskip

Next, we introduce tree decompositions and the concept of clique-sum closures, which can be seen as another way to say that a graph has a tree decomposition where the torso of every bag satisfies some property.

\paragraph{tree decompositions}
  Let $G$ be a graph. 
  A \emph{tree decomposition} of $G$ is a tuple $\mathcal{T}=(T,\beta)$ where $T$ is a tree and $\beta\colon V(T)\to 2^{V(G)}$ is a function, whose images are called the \emph{bags} of $\mathcal{T},$ such that 
  \medskip
  \begin{enumerate}
  \item $\bigcup_{t\in V(T)} \beta (t)= V (G),$ 
  \item for every $e\in E(G)$ there exists $t\in V(T)$ with $e\subseteq \beta(t),$ and 
  \item for every $v\in V(G)$ the set $\{t\in V(T) \mid v\in \beta(t)\}$ induces a subtree of $T.$
  \end{enumerate} 
  \medskip
  We refer to the vertices of $T$ as the \emph{nodes} of the tree decomposition $\mathcal{T}.$

  For each $t\in V(T),$ we define the \emph{adhesions of $t$} as the sets in $$\{ \beta(t)\cap\beta(d) \mid d\text{ is adjacent with }t \}$$ and the maximum size of them is called the  \emph{adhesion of $t$}.
  The \emph{adhesion} of $\mathcal{T}$ is the maximum adhesion of a node of $\mathcal{T}.$
  The \emph{torso} of $\mathcal{T}$ on a node $t$ is the graph, denoted by $G_{t},$ obtained by adding edges between every pair of vertices of $\beta(t)$ which belongs to a common adhesion of $t.$
   The \emph{clique-sum closure} of a graph class $\mathcal{G}$ is the graph class containing every graph that has a tree decomposition whose torsos belong to $\mathcal{G}.$

  The \emph{width} of a $(T,\beta)$ is the value $\max_{t\in V(T)}|\beta(t)|-1.$
  The \emph{treewidth} of $G,$ denoted by $\tw(G),$ is the minimum width over all tree decompositions of $G.$ In other words a graph $G$ has treewidth at most $k$ iff $G$ belongs in the clique-sum closure of the class of graphs containing at most $k+1$ vertices.
 
\paragraph{Parametric graphs and universal obstructions.}
Next, we formalize the concept of parametric graphs that can act as asymptotic obstructions to graph parameters.

A \emph{parametric graph} is a sequence of graphs $\mathscr{H}=\langle \mathscr{H}_{i}\rangle_{i\in\mathbb{N}}$ indexed by non-negative integers.
We say that $\mathscr{H}$ is \emph{minor-monotone} if for every $i\in\mathbb{N}$ we have $\mathscr{H}_i \leq  \mathscr{H}_{i+1}.$
All parametric graphs considered in this paper are minor-monotone.

Let $\mathscr{H}^1=\langle \mathscr{H}_{k}^1\rangle_{k\in \mathbb{N}}$ and $\mathscr{H}^2=\langle \mathscr{H}_{k}^2\rangle_{k\in \mathbb{N}}$ be two parametric graphs.
We write $\mathscr{H}^1\lesssim \mathscr{H}^2$ if there is a function $f\colon\mathbb{N}\to\mathbb{N}$ such that, for every $k\in\mathbb{N},$ $\mathscr{H}_{k}^1 \leq   \mathscr{H}_{f(k)}^2.$
If $f$ is a polynomial (resp. linear) function, then we write  
$\mathscr{H}^1\lesssim_{\mathsf{P}} \mathscr{H}^2$ (resp. $\mathscr{H}^1\lesssim_\mathsf{L} \mathscr{H}^2$).
We say that $\mathscr{H}^1$ and $\mathscr{H}^2$ are 
\emph{equivalent} if $\mathscr{H}^1 \lesssim \mathscr{H}^2$
and $\mathscr{H}^2 \lesssim \mathscr{H}^1$, and we denote this by $\mathscr{H}^1 \approx \mathscr{H}^2.$ The notion of \emph{polynomial (resp. linear)
 equivalence} is denoted $\mathscr{H}^1 \approx_{\mathsf{P}} \mathscr{H}^2$ (resp. $\mathscr{H}^1 \approx_{\mathsf{L}} \mathscr{H}^2$) and is  defined in the obvious way.

For instance, the parametric 
graphs of the grids or of the walls (defined in \cref{seb_calls_def}) are both linearly equivalent to annulus grids.

A \emph{minor}-\emph{parametric family} is a finite collection $\mathfrak{H}=\{ \mathscr{H}^{(j)} \mid j\in[r] \}$ of (minor-monotone) parametric graphs with $\mathscr{H}^{(j)}=\langle \mathscr{H}^{(j)}_{i}\rangle_{i\in\mathbb{N}}$ for all $j\in[k],$ and for all distinct $i,j\in[r],$ $\mathscr{H}^{(i)}\not\lesssim\mathscr{H}^{(j)}$ and $\mathscr{H}^{(j)}\not\lesssim\mathscr{H}^{(i)}.$
All parametric families considered in this paper are minor-monotone.
The notions $\lesssim_{\mathsf{L}},$  $\lesssim_{\mathsf{P}},$ $\equiv_{\mathsf{L}},$  and $\equiv_{\mathsf{P}}$ are defined analogously. 
\medskip

For every minor-parametric family $\mathfrak{H}$, we define the parameter $\p_{\mathfrak{H}}$ such that $\p_{\mathfrak{H}}(G)$ is the maximum $k$ for which, there exists some $j\in[r],$ such that $\mathscr{H}^{j}_{k}\leq G.$

We say that  a {{minor}-{parametric family}} $\mathfrak{H}=\{\mathscr{H}^{(j)} \mid j\in[r] \}$ is a \emph{minor-universal obstruction} (or \emph{universal obstruction} if it is clear from the context that we are working with the minor relation) of some graph parameter $\mathsf{p}$ if $\p\sim\p_{\mathfrak{H}}.$

Recall that in \cref{undergoing} we defined the {minor}-{parametric family} $\mathfrak{D}_{\mathsf{sobs}(\Sbbb)} = \{\mathscr{D}^{\Sigma}\mid \Sigma\in\mathsf{sobs}(\Sbbb)\}.$ 
Given the above notation, \cref{unrelatedly} can be restated as follows.
\begin{corollary}
\label{chaotically}
For every  finite and   closed set of surfaces $\Sbbb,$ the set 
$\mathfrak{D}_{\mathsf{sobs}(\Sbbb)}$ is a minor-universal obstruction for ${\Sbbb}\mbox{-}\mathsf{\tw}.$
\end{corollary}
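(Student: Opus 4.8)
The plan is to recognise that the corollary is merely a restatement of \cref{unrelatedly} in the vocabulary of universal obstructions, so the proof reduces to matching definitions plus one short auxiliary check. Throughout, abbreviate $\mathfrak{D}\coloneqq\mathfrak{D}_{\mathsf{sobs}(\Sbbb)}=\{\mathscr{D}^{\Sigma}\mid \Sigma\in\mathsf{sobs}(\Sbbb)\}$.

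First I would verify that $\mathfrak{D}$ is a \emph{minor-parametric family} in the precise technical sense required by the definition of a minor-universal obstruction. Finiteness holds because $\Sbbb$ is finite, hence the Euler-genera of its members are bounded, hence so are those of the $\preceq$-minimal surfaces outside $\Sbbb$, so $\mathsf{sobs}(\Sbbb)$ is finite. Each $\mathscr{D}^{\Sigma}$ is minor-monotone, as are all parametric graphs in this paper. For the pairwise $\lesssim$-incomparability, suppose towards a contradiction that $\mathscr{D}^{\Sigma_{1}}\lesssim\mathscr{D}^{\Sigma_{2}}$ for two distinct $\Sigma_{1},\Sigma_{2}\in\mathsf{sobs}(\Sbbb)$, witnessed by a function $f$. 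For every $n\in\mathbb{N}$, \cref{territorial} yields a $k$ such that every $n$-vertex graph embeddable in $\Sigma_{1}$ is a minor of $\mathscr{D}^{\Sigma_{1}}_{k}$, hence of $\mathscr{D}^{\Sigma_{2}}_{f(k)}$; since $\mathscr{D}^{\Sigma_{2}}_{f(k)}$ is embeddable in $\Sigma_{2}$ (by construction of the Dyck-grid, cf.\ \cref{territorial}) and embeddability is minor-closed, every such graph embeds in $\Sigma_{2}$. Letting $n\to\infty$ gives $\Ecal_{\{\Sigma_{1}\}}\subseteq\Ecal_{\{\Sigma_{2}\}}$, which forces $\Sigma_{2}\preceq\Sigma_{1}$ and contradicts the non-comparability of the surfaces in $\mathsf{sobs}(\Sbbb)$; the symmetric assumption $\mathscr{D}^{\Sigma_{2}}\lesssim\mathscr{D}^{\Sigma_{1}}$ is excluded identically.

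Next I would identify the induced parameter $\mathsf{p}_{\mathfrak{D}}$ with $\mathsf{sobs}(\Sbbb)\mbox{-}\bg$: unwinding the definition of $\mathsf{p}_{\mathfrak{H}}$ for $\mathfrak{H}=\mathfrak{D}$ shows that $\mathsf{p}_{\mathfrak{D}}(G)$ is the maximum $k$ with $\mathscr{D}^{\Sigma}_{k}\leq G$ for some $\Sigma\in\mathsf{sobs}(\Sbbb)$, which is word-for-word the definition of $\mathsf{sobs}(\Sbbb)\mbox{-}\bg(G)$ in \eqref{travestied}. Hence $\mathsf{p}_{\mathfrak{D}}=\mathsf{sobs}(\Sbbb)\mbox{-}\bg$ as graph parameters. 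Combining this with \cref{unrelatedly}, which gives $\Sbbb\mbox{-}\mathsf{\tw}\sim\mathsf{sobs}(\Sbbb)\mbox{-}\bg$, we get $\Sbbb\mbox{-}\mathsf{\tw}\sim\mathsf{p}_{\mathfrak{D}}$, which is exactly the statement that $\mathfrak{D}$ is a minor-universal obstruction for $\Sbbb\mbox{-}\mathsf{\tw}$. The degenerate cases $\Sbbb=\emptyset$ and $\Sbbb=\{\Sigma^{\varnothing}\}$ I would handle separately, noting that under the conventions fixed earlier both parameters are then ``infinite'' in the same trivial manner, so the equivalence is vacuous.

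I do not anticipate a genuine obstacle: essentially all the mathematical content is already carried by \cref{unrelatedly}, and the only part of the present argument that requires an actual argument rather than bookkeeping is the incomparability check, which is short and relies only on \cref{territorial} together with the classical fact that a surface is determined, up to the containment order $\preceq$, by the class of graphs it embeds.
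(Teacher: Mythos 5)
Your proposal is correct and matches the paper's treatment: the paper states \cref{chaotically} as an immediate restatement of \cref{unrelatedly} after introducing the notation $\p_{\mathfrak{H}}$, which is exactly your identification of $\p_{\mathfrak{D}}$ with $\mathsf{sobs}(\Sbbb)\mbox{-}\bg$. Your additional verification that $\mathfrak{D}_{\mathsf{sobs}(\Sbbb)}$ is a bona fide minor-parametric family (in particular the $\lesssim$-incomparability check via \cref{territorial} and the non-comparability of surfaces in $\mathsf{sobs}(\Sbbb)$) is sound and merely makes explicit what the paper leaves implicit.
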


For more on the foundation of universal obstructions of parameters, see \cite{paul2023graph} (see also \cite{paul2023universal} for a recent survey).

\subsection{$\Sigma$-decompositions}
\label{s_decompo}


\paragraph{Societies.}
Let $\Omega$ be a cyclic permutation of the elements of some set which we denote by $V(\Omega)$.
A \emph{society} is a pair $(G,\Omega)$, where $G$ is a graph and $\Omega$ is a cyclic permutation with $V(\Omega)\subseteq V(G)$.
A \emph{linear decomposition} of $(G,\Omega)$ is a sequence  $\mathcal{L}= \langle X_1,X_2,\dots,X_n,v_1,v_2,\dots,v_n \rangle$ such that $v_1,v_2,\dots,v_n\in V(\Omega)$ occur in that order on $\Omega$ and $X_1,X_2,\dots,X_n$ are subsets of $V(G)$ such that
  \begin{itemize}[itemsep=-2pt]
  \setlength\itemsep{0em}
    \item  for all $i\in[n]$, $v_i\in X_i$,
    \item $\bigcup_{i\in[n]}X_i=V(G)$, 
    \item for every $uv\in E(G)$, there exists $i\in [n]$ such that $u,v\in X_i$, and 
    \item for all $x\in V(G)$, the set $\{ i\in[n] \mid x\in X_i \}$ forms an interval in $[n]$.
  \end{itemize}
The \emph{width} of a linear decomposition is $\max\{|X_i|\mid {i\in [n]}\}$.

\paragraph{Drawings in a surface.}
  A \emph{drawing} (with crossings) \emph{in a surface $\Sigma$} is a triple $\Gamma=(U,V,E)$ such that
  \begin{itemize} 
  \item $V$ and $E$ are finite, and  $V\subseteq U\subseteq\Sigma$, 
  \item $V\cup\bigcup_{e\in E}e=U$ and $V\cap (\bigcup_{e\in E}e)=\emptyset$, 
  \item for every $e\in E$,  $e=h((0,1))$, where $h\colon[0,1]_{\mathbb{R}}\to U$ is a homeomorphism onto its image with $h(0),h(1)\in V$, and
  \item if $e,e'\in E$ are distinct, then $|e\cap e'|$ is finite.
  \end{itemize}
  We call the set $V$  the \emph{nodes} of $\Gamma$ and the set $E$ the \emph{arcs} of $\Gamma$.
  If $G$ is graph and $\Gamma=(U,V,E)$ is a drawing with crossings in a surface $\Sigma$ such that $V$ and $E$ naturally correspond to $V(G)$ and $E(G)$ respectively, we say that $\Gamma$ is a \emph{drawing of $G$ in $\Sigma$}. If no two edges in $E$ have a common point, we say that $\Gamma$ is an \emph{embedding} of $G$ in $\Sigma$. 
  In this last case, the connected components of $\Sigma\setminus U$, are the \emph{faces} of $\Gamma$.

\paragraph{$\Sigma$-decomposition.} 
  Let $\Sigma$ be a surface. 
  A \emph{$\Sigma$-decomposition} of a graph $G$ is a pair $\delta =(\Gamma,\mathcal{D})$, where $\Gamma=(U,V,E)$ is a drawing of $G$ in $\Sigma$ with crossings, and $\mathcal{D}$ is a collection of closed disks, each a subset of $\Sigma$ such that 
  \begin{enumerate}
  \item the disks in $\mathcal{D}$ have pairwise disjoint interiors, 
  \item the boundary of each disk in $\mathcal{D}$ intersects $\Gamma$ in nodes only, 
  \item if $\Delta_1,\Delta_2\in\mathcal{D}$ are distinct, then $\Delta_1\cap\Delta_2\subseteq V$, and 
  \item every arc of $\Gamma$ belongs to the interior of one of the disks in $\mathcal{D}$. 
  \end{enumerate}  
  If $\Delta\in\mathcal{D}$,  
%
%
%
%
  and $c$ is the set created 
 from $\Delta$ by removing all nodes 
 drawn on its boundary,   
 then we say that $c$ is a \emph{cell} of $\delta$ and we use $\widetilde{c}$ to denote the set of  points where these nodes are drawn. A cell $c\in C(\delta )$ is called a \emph{vortex} if $|\widetilde{c}|\geq  4$.

 Given a cell $c$, we use $G_{c}$ for the subgraph of $G$ corresponding to the nodes and the arcs drawn in $c$ and we also use $\Omega_{c}$ for the cyclic ordering of the vertices drawn in the nodes of $\widetilde{c}$ respecting the order they appear.
 The width of $\delta$ is the maximum width of a society in $\{(G_{c},\Omega_{c})\mid \text{$c$ is a vortex  of $\delta$}\}$. The \emph{breadth} of $\delta$ is the number of its vortices.

\subsection{Excluding Dyck-grids}
\label{exc_grid}

Given two graphs $Z$ and $H$ 
we say that $Z$ is an \emph{$H$-expansion} if $Z$ contains $H$ as a minor and, moreover, any proper subgraph of $Z$ does not contain $H$ as a minor.
\medskip

We now present the following structural result that is an abbreviated version of \cite[Theorem 6.2]{thilikos2026excluding} also including its running time.

\begin{proposition}\label{thm_pre_globalstructure}
There exist functions $f_{\ref{thm_pre_globalstructure}}^1:\Nbbb^{5}\to\Nbbb$ and  $f_{\ref{thm_pre_globalstructure}}^1:\Nbbb^{5}\to\Nbbb$ such that for every $k\in\Nbbb$, $\mathsf{h}\in\Nbbb$, $\mathsf{h}'\in[\mathsf{h}-1]$, $\mathsf{c}\in[2]$, and for every graph $G$, 
given that 
 \begin{itemize}
 \item $g_1\coloneqq 2\mathsf{h}$ and 
 \item $g_2\coloneqq
 \begin{cases}  2\mathsf{h}'+\mathsf{c}
 &\mbox{if }  \mathsf{c}\neq 0 \\ 
 0 & \mbox{otherwise}
 \end{cases}$.
 \end{itemize} 
then one of the  following is true.
\begin{enumerate}
    \item $G$ contains a \textsl{(a)} $\mathscr{D}^{(\mathsf{h},0)}_k$ or a \textsl{(b)} $\mathscr{D}^{(\mathsf{h}',\mathsf{c})}_k$-expansion $D$ where case \textsl{(b)} is allowed as an outcome if and only if $g_2\neq 0$, or
    \item $G$ has a tree decomposition $(T,\beta)$ of adhesion less than $f_{\ref{thm_pre_globalstructure}}^{1}(\mathsf{h},k)$ such that  for every $d\in V(T)$,
     $|\beta(d)|\leq 4f_{\ref{thm_pre_globalstructure}}^1(\mathsf{h},k)$ or
   there exists a set $A_d\subseteq \beta(d)$   and     a surface $\Sigma$ where 
\begin{enumerate}      
      \item $|A_d|≤4f_{\ref{thm_pre_globalstructure}}^1(\mathsf{h},k)$
\item         neither 
    $\mathscr{D}^{(\mathsf{h},0)}_i$ 
    nor  (in case $\mathsf{c}'\neq 0$) $\mathscr{D}^{(\mathsf{h}',\mathsf{c})}_i$  embeds in $\Sigma$ for all $i\in\Nbbb$
    \item  for the torso $G_d$ of $G$ at $d$, the graph $G_d-A_d$ has a $\Sigma$-decomposition $\delta$ where 
	\begin{enumerate}
	    \item $\delta$ has width at most $f_{\ref{thm_pre_globalstructure}}^1(\mathsf{h},k)$ and breadth at most $f_{\ref{thm_pre_globalstructure}}^2(\mathsf{h},k)$,
    	    \item there is no vertex of $G_d-A_d$ which is drawn in the interior of a non-vortex cell of $\delta$,
   	     \item $G_d-A_d$ is a minor of $G$, and
   	     \item for every {neighbor} $d'$ of $d$ in $T,$ it holds that $|\big(\beta(d)\cap\beta(d')\big)\setminus A_d|\leq 3$.
	\end{enumerate}
\end{enumerate}
\end{enumerate}
Moreover, given that $g\coloneqq\max\{ g_1,g_2\}$, it holds that $f_{\ref{thm_pre_globalstructure}}^1(\mathsf{h},k)=2^{2^{\Ocal(g)}k^{\Ocal(1)}}$ and that 
$f_{\ref{thm_pre_globalstructure}}^2(\mathsf{h},k)=2^{\Ocal(g)}k^{\Ocal(1)}$.
Also, there is an algorithm, that, given $k$, $\mathsf{h}, \mathsf{h}', \mathsf{c}$, and $G$ as above, provides one of the above outcomes in $2^{2^{2^{\Ocal(g)}\cdot k^{\Ocal(1)}}}\cdot |V(G)|^4\cdot \log(|V(G)|)$ time.
\end{proposition}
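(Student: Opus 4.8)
As indicated in its statement, this proposition is a reformulation of \cite[Theorem~6.2]{thilikos2023excluding}, enriched with the running-time bound established there; so the plan is not to reprove the structure theorem but to record the dictionary between the two formalisms and to read off the quantitative bounds.

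The first step is to fix the correspondence between the Dyck-grids $\mathscr{D}^{(\mathsf{h},\mathsf{c})}$ used here and the surface-grids of \cite{thilikos2023excluding}, via the one-to-one correspondence between surfaces and Dyck-grids together with \cref{territorial}: a graph $K$ embeds in $\Sigma^{(\mathsf{h},\mathsf{c})}$ if and only if $K\leq\mathscr{D}^{(\mathsf{h},\mathsf{c})}_{k}$ for $k$ sufficiently large. Under this dictionary, the requirement in outcome~2 that the host surface $\Sigma$ admits no $\mathscr{D}^{(\mathsf{h},0)}_i$ and, when $\mathsf{c}\neq 0$, no $\mathscr{D}^{(\mathsf{h}',\mathsf{c})}_i$ as a subgraph, is equivalent to $\Sigma$ not containing $\Sigma^{(\mathsf{h},0)}$ and (when $\mathsf{c}\neq 0$) not containing $\Sigma^{(\mathsf{h}',\mathsf{c})}$ in the surface-containment order, which is exactly the ``the host surface is genuinely smaller'' conclusion of the structure theorem of \cite{thilikos2023excluding}. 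Next I would align the two dichotomy branches: outcome~1 here is the ``contains a large Dyck-grid minor'' branch (the passage from an arbitrary minor model to a minor-minimal one, i.e., an expansion, is immediate by deleting superfluous vertices and edges); outcome~2 here is the ``bounded-adhesion tree-decomposition with near-embeddable torsos'' branch, and I would check term by term that the apex set $A_d$ with $|A_d|\le 4f^1_{\ref{thm_pre_globalstructure}}(\mathsf{h},k)$, the $\Sigma$-decomposition $\delta$ of $G_d-A_d$, its width (the maximum width of the vortex societies $(G_c,\Omega_c)$) and its breadth (the number of vortices), and the three refinements 2(a)--2(c) --- no vertex drawn in the interior of a non-vortex cell, $G_d-A_d$ a minor of $G$, and each adhesion meeting $\beta(d)$ outside $A_d$ in at most three vertices --- are precisely the conclusions furnished by \cite[Theorem~6.2]{thilikos2023excluding}, which is phrased in the same $\Sigma$-decomposition language (borrowed from \cite{KawarabayashiTW20Quicklyexcluding}) that we use in \cref{s_decompo}. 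Finally, the bounds $f^1_{\ref{thm_pre_globalstructure}}(\mathsf{h},k)=2^{2^{\Ocal(g)}k^{\Ocal(1)}}$ and $f^2_{\ref{thm_pre_globalstructure}}(\mathsf{h},k)=2^{\Ocal(g)}k^{\Ocal(1)}$, as well as the running time $2^{2^{2^{\Ocal(g)}k^{\Ocal(1)}}}\cdot|V(G)|^{4}\log|V(G)|$, are read directly off the effective version of that theorem; the boundary conventions ($\mathscr{D}^{(-1,2)}_k=\mathscr{D}^{(0,0)}_k$, and the case where $g_2=0$, outcome~1(b) is suppressed, and only the orientable target $\Sigma^{(\mathsf{h},0)}$ is tracked) are handled separately and match the corresponding conventions in \cite{thilikos2023excluding}.

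The step requiring the most care is the simultaneous bookkeeping of the two genus values $g_1=2\mathsf{h}$ and $g_2=2\mathsf{h}'+\mathsf{c}$: a surface of Euler-genus $g=\max\{g_1,g_2\}$ may be excluded either along the orientable route (a handle-grid $\mathscr{D}^{(\mathsf{h},0)}$) or along the non-orientable route (a crosscap-type grid $\mathscr{D}^{(\mathsf{h}',\mathsf{c})}$), and one must verify that outcome~2 of \cite[Theorem~6.2]{thilikos2023excluding} produces a \emph{single} host surface $\Sigma$ that simultaneously fails both targets, with all bounds controlled by $g$ rather than by $g_1,g_2$ separately. The only other genuine check is that the running time recorded in \cite{thilikos2023excluding} is indeed the triple-exponential $2^{2^{2^{\Ocal(g)}k^{\Ocal(1)}}}$ times the near-quartic factor $|V(G)|^{4}\log|V(G)|$; the remainder of the argument is a routine translation.
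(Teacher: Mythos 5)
Your proposal is correct and matches the paper's treatment: the paper states this as a \emph{proposition} imported verbatim (in abbreviated form, with its running time) from \cite[Theorem~6.2]{thilikos2023excluding} and offers no proof of its own, so your role as translator --- matching the Dyck-grid/surface dictionary via \cref{territorial}, the two dichotomy branches, and the quantitative bounds --- is exactly what is intended. The only slip is cosmetic: in outcome~2 the condition on $\Sigma$ is that the graphs $\mathscr{D}^{(\mathsf{h},0)}_i$ (resp.\ $\mathscr{D}^{(\mathsf{h}',\mathsf{c})}_i$) do not all \emph{embed} in $\Sigma$, not that $\Sigma$ ``admits them as subgraphs''; your subsequent rephrasing in terms of surface containment is the right one.
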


Our next aim is to rewrite \cref{thm_pre_globalstructure} using the  terminology of surface obstruction introduced in this paper.

\begin{lemma}\label{thm_globalstructure}
There exists functions $f_{\ref{thm_globalstructure}}^1$ and $f_{\ref{thm_globalstructure}}^2$  
such that for every 
 $k\in\Nbbb$, for every closed set of surfaces $\Sbbb$, where $g:=\eg(\Sbbb)$, 
and every graph $G$, 
one of the following holds: 
\begin{enumerate}
    \item   $G$ contains as a subgraph an  $\mathscr{D}^{\Sigma'}_k$-{expansion},  for some $\Sigma'\in\sobs({\Sbbb})$ or 
    \item $G$ has a tree decomposition $(T,\beta)$ of adhesion at most  $f_{\ref{thm_globalstructure}}^1(g,k)$ such that, for every $d\in V(T)$, there exists a set $A_d\subseteq \beta(d)$, $|A_d| ≤f_{\ref{thm_globalstructure}}^1(g,k)$ and  a surface $\Sigma\in\Sbbb$ such that, given that $G_d$ is the torso of $G$ at $d$, the graph $G-A_d$ has a $\Sigma$-decomposition $\delta$ of width at most $f_{\ref{thm_globalstructure}}^1(g,k)$ and breadth at most $f_{\ref{thm_globalstructure}}^2(g,k)$ such that, if $X_d$ is the set of all vertices of $G_d-A_d$ drawn in the interior of the vortices of $\delta$ together with the vertices from $A_{d}$,
    \begin{enumerate}
        \item there is no vertex of $G_d-A_d$ which is drawn in the interior of a non-vortex cell of $\delta$,
        \item $G_d-X_d$ is a minor of $G$, and
        \item for every {neighbor} $d'$ of $d$ in $T$ it holds that $|\big(\beta(d)\cap\beta(d')\big)\setminus\big(A_d\cup X_d\big)|\leq 3$.
    \end{enumerate}
\end{enumerate}
Moreover it holds that $f_{\ref{thm_globalstructure}}^1(\mathsf{h},k)=2^{2^{\Ocal(g)}k^{\Ocal(1)})}$ and that 
$f_{\ref{thm_globalstructure}}^2(\mathsf{h},k)=2^{\Ocal(g)}k^{\Ocal(1)}$.
Also, there is an algorithm, that, given $k$, $\mathsf{h}, \mathsf{h}', \mathsf{c}$, and $G$ as above, provides one of the above outcomes in $2^{2^{2^{\Ocal(g)}\cdot k^{\Ocal(1)}}}\cdot |V(G)|^4\cdot \log(|V(G)|)$ time. 
\end{lemma}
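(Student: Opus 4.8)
The plan is to derive \cref{thm_globalstructure} from \cref{thm_pre_globalstructure} by a purely combinatorial translation, where the main task is to pass from the three numerical parameters $(\mathsf{h},\mathsf{h}',\mathsf{c})$ of the latter to the surface-obstruction language of the former. First I would recall that, for a finite closed set of surfaces $\Sbbb$ with $g=\eg(\Sbbb)$, the obstruction set $\sobs(\Sbbb)$ consists of pairwise non-comparable surfaces; by the case analysis recalled in \cref{sovereignty} (and in the ``Derivation of \cref{consoudation}'' paragraph) the relevant obstructions are of the form $\Sigma^{(\mathsf{h}+1,0)}$ together with one of $\Sigma^{(\mathsf{h},1)}$ or $\Sigma^{(\mathsf{h},2)}$, where $\mathsf{h}$ is determined by $g$. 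The key point is that excluding all of $\mathfrak{D}_{\sobs(\Sbbb)}$ is, via the convention $\mathscr{D}^{\Sigma}=\mathscr{D}^{(\mathsf{h},\mathsf{c})}$, exactly the hypothesis that neither $\mathscr{D}^{(\mathsf{h}+1,0)}_k$ nor $\mathscr{D}^{(\mathsf{h}',\mathsf{c})}_k$ appears as a minor, for the appropriate choice of $\mathsf{h}'\in[\mathsf{h}-1]$ and $\mathsf{c}\in[2]$ reading off the second obstruction. So I would instantiate \cref{thm_pre_globalstructure} with these values: set $\mathsf{h}\leftarrow \mathsf{h}+1$ there so that outcome~1(a) becomes an $\mathscr{D}^{(\mathsf{h}+1,0)}_k$-expansion, and choose $\mathsf{h}',\mathsf{c}$ so that outcome~1(b) is the second obstruction in $\sobs(\Sbbb)$; the genus bookkeeping $g_1,g_2,g$ in \cref{thm_pre_globalstructure} then matches $\eg(\Sbbb)$ up to an additive constant, which is harmless since all bounds are $2^{2^{\Ocal(g)}k^{\Ocal(1)}}$ and $2^{\Ocal(g)}k^{\Ocal(1)}$.

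Next I would handle outcome~2. In \cref{thm_pre_globalstructure} the surface $\Sigma$ attached to a bag $d$ is described negatively — no $\mathscr{D}^{(\mathsf{h},0)}_i$ or $\mathscr{D}^{(\mathsf{h}',\mathsf{c})}_i$ embeds in it for any $i$ — and I would translate this, using \cref{territorial}, into the positive statement that $\Sigma$ is contained in (hence embeds into) some surface of $\Sbbb$: indeed, if $\Sigma$ admitted no such Dyck-grid then, by the omnivore property of Dyck-grids (\cref{territorial} applied in the contrapositive), $\Sigma$ cannot contain any surface of $\sobs(\Sbbb)$, so $\Sigma\in\Sbbb$ by closedness of $\Sbbb$. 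One subtlety: \cref{thm_pre_globalstructure} only guarantees that the two specific Dyck-grids do not embed, whereas $\sobs(\Sbbb)$ might in principle be expressed via a different but equivalent pair; here I would invoke \cref{territorial} once more to see that ``no large Dyck-grid of $\Sigma^{(\mathsf{h}',\mathsf{c})}$ as a minor'' is equivalent to ``$\Sigma^{(\mathsf{h}',\mathsf{c})}$ not contained in $\Sigma$'', closing the gap. After replacing $\Sigma$ by a witnessing surface in $\Sbbb$ (a surface containing $\Sigma$ still carries the same $\Sigma$-decomposition, with breadth and width only non-increasing when we allow a larger surface — or, more carefully, we keep the original $\Sigma$-decomposition but simply record that $\Sigma\in\Sbbb$), outcome~2 of \cref{thm_pre_globalstructure} becomes outcome~2 of \cref{thm_globalstructure}.

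The remaining work is cosmetic bookkeeping on the vortex set. \cref{thm_pre_globalstructure} states items (a),(b),(c) in terms of the apex set $A_d$, with $G_d-A_d$ a minor of $G$ and adhesion overlap condition $|(\beta(d)\cap\beta(d'))\setminus A_d|\le 3$; \cref{thm_globalstructure} instead phrases everything relative to $X_d$, the union of $A_d$ with all vertices drawn inside vortices. I would simply set $X_d$ as defined, observe that deleting the interior vortex vertices from $G_d-A_d$ is a further minor operation (so $G_d-X_d$ is still a minor of $G$, giving (b)), that condition (a) is unchanged, and that $|(\beta(d)\cap\beta(d'))\setminus(A_d\cup X_d)|\le |(\beta(d)\cap\beta(d'))\setminus A_d|\le 3$ gives (c). Finally I would carry over the quantitative bounds verbatim, noting $f^1_{\ref{thm_globalstructure}}:=\Ocal(f^1_{\ref{thm_pre_globalstructure}})$ and $f^2_{\ref{thm_globalstructure}}:=\Ocal(f^2_{\ref{thm_pre_globalstructure}})$ absorb the constant shift $\mathsf{h}\mapsto\mathsf{h}+1$, and the algorithm is the same algorithm with the same running time. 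The main obstacle I anticipate is purely one of careful case management: making sure that for \emph{every} finite closed $\Sbbb$ — including the degenerate cases $\Sbbb\subseteq\{\Sigma^\varnothing\}$ and the $g$ even versus $g$ odd dichotomy governing whether the second obstruction is $\Sigma^{(\mathsf{h},1)}$ or $\Sigma^{(\mathsf{h},2)}$ — the instantiation of \cref{thm_pre_globalstructure} is with admissible parameters ($\mathsf{h}'\in[\mathsf{h}-1]$, $\mathsf{c}\in[2]$), and that the convention $\mathscr{D}^{(-1,2)}_k=\mathscr{D}^{(0,0)}_k$ is used correctly in the base cases.
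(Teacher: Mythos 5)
Your proposal is correct and follows essentially the same route as the paper: identify $\sobs(\Sbbb)$ as one orientable obstruction $\Sigma^{(\mathsf{h},0)}$ plus possibly one non-orientable $\Sigma^{(\mathsf{h}',\mathsf{c})}$, instantiate \cref{thm_pre_globalstructure} with those parameters, and use \cref{territorial} to convert "no Dyck-grid of the obstructions embeds in $\Sigma$" into "$\Sigma$ contains no surface of $\sobs(\Sbbb)$, hence $\Sigma\in\Sbbb$ by closedness." Your extra bookkeeping on passing from the $A_d$-based conditions to the $X_d$-based ones (vertex deletion preserves minors; the adhesion bound only improves) is correct and in fact spelled out more carefully than in the paper, which leaves that translation implicit.
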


\begin{proof}
To see how \cref{thm_globalstructure} is derived from \cref{thm_pre_globalstructure}, recall that $\sobs(\Sbbb)=\{\Sigma_{i}\mid i\in[q]\}$
contains $q\in[2]$ surfaces. If $q=1$, then $\Sigma_{1}$ is an orientable 
surface $\Sigma^{(\mathsf{h},0)}$. 
If $q=2$, then we may assume that $\Sigma_{1}$ is an orientable 
surface $\Sigma^{(\mathsf{h},0)}$ and $\Sigma_{2}$ is a non-orientable 
surface $\Sigma^{(\mathsf{h}',\mathsf{c})}$ for some $\mathsf{h}'\in[\mathsf{h}-1]$
and some $\mathsf{c}\in[2]$. As every surface in $\Sbbb$ is contained in some of the surfaces in the surface obstruction $\{\Sigma_{i}\mid i\in[q]\}$, we have that  
they all have Euler genus  smaller than $\eg(\Sbbb)$, therefore 
\cref{thm_pre_globalstructure} can be applied either for $\mathsf{h}$ and $k$ or for $\mathsf{h}$, $\mathsf{h}'$, $\mathsf{c}$, and $k$.
In its second outcome, \cref{thm_pre_globalstructure} gives a surface 
$\Sigma$ where not all graphs in 
$\mathscr{D}^{(\mathsf{h},0)}_i$ or (in case $\mathsf{c}\neq 0$) in 
$\mathscr{D}^{(\mathsf{h}',\mathsf{c})}_i$ embed. This, because of \cref{territorial}, implies that $\Sigma$ does not contain any of the surfaces in $\sobs(\Sbbb)$, therefore $\Sigma\in\Sbbb$. Notice also that 
$A_{d}\subseteq X_{d},$ therefore Conditions (a), (b), (c), follow 
because of conditions ii-iv of \cref{thm_pre_globalstructure}.
\end{proof}

\section{Bidimensionality and $\Sigma$-decompositions}
\label{extrahuman}

In this section we prove that, in a $\Sigma$-decomposition, 
the union of the apices and the vertices that are drawn inside the vortices 
has low bidimensionality.

\subsection{Walls in surfaces and their radii}
\label{seb_calls_def}

Our first aim is to provide some certificate for the big bidimensionality. This is done using the concept of a \textsl{rooted wall}, defined below.

An \emph{$(n \times  m)$-grid} is the Cartesian product of a path on $n$ vertices and a path on $m$ vertices. We denote the vertices of a $(n \times  m)$-grid as pairs $(i,j)$ where $i\in[n]$ and $j\in[m]$

The \emph{elementary $r$-wall} is the graph obtained from the $(r \times 2r)$-grid by deleting all edges of the form
\begin{itemize}
 \item $\{(i,j),(i+1,j) \}$ for $i\in[r-1]$ odd and $j\in[2r]$ even, and
 \item $\{(i,j),(i+1,j) \}$ for $i\in[r-1]$ even and $j\in[2r]$ odd.
\end{itemize}
An \emph{$r$-wall} is any graph that can be obtained from the elementary $r$-wall by subdividing its edges an arbitrary number of times. 
Notice that when $r≥3$ every $r$-wall has a unique, up to homeomorphism, embedding in the sphere.
Also, this embedding has a unique face whose boundary contains more than 6 non-subdivision vertices. We call the cycle corresponding to the boundary of this face \emph{perimeter} of the $r$-wall.
We call a path of a wall \emph{maximally induced} if all its internal vertices 
have degree 2 and its endpoints have degree $> 2$.


%
%
%
%
%
%
A \emph{rooted $r$-wall} is a pair $(W,X)$    defined as follows:
consider an $r$-wall $\overline{W}$ and a set $S\subseteq V(\overline{W}$)
such that every maximally induced path \\ 
 contains exactly one internal vertex from  $S$. Notice that this implies that 
$S$ contains only  subdivision vertices of $\overline{W}$. Next, 
for every $s\in S$ add a new vertex  $x_{s}$  and
add the edge $sx_s$. Finally, for each
such edge $sx_s$, either contract it or subdivide it some arbitrary number of times (possible none). The graph obtained after this path enhancement is the graph $W$ and
the set $X$ consists of every vertex $x_{s}$. We also agree that, in the case of a contraction, $x_{s}=s$. 
We call the newly introduced paths with endpoints $s$ and $x_{s}$ \emph{hairs} of the
resulting pair $({W},X)$. Clearly, 
in case $s=x_{s}$, this path is a trivial path.

The following observation follows easily by the fact that every $r$-wall is a subdivision of a subgraph of a $(2k\times 2k)$-grid.

\begin{observation}\label{uncoordinated}
There is a universal constant $c_{\ref{uncoordinated}}$ such that for every graph $G$ and every set $X\subseteq V(G),$ if $\bdim(G,X)\geq  c_{\ref{uncoordinated}}{k},$ then $G$ contains a rooted $k$-wall $(W,X)$  as a subgraph.
\end{observation}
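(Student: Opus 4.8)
The plan is to exhibit a large grid as an $X$-rooted minor starting from a rooted wall, and conversely produce the rooted wall from high bidimensionality by the standard Grid Theorem machinery applied with the rooting information carried along. First I would recall that, by the Grid Theorem together with the fact that every $r'$-wall contains an $(r \times r)$-grid as a minor for $r = \Omega(r')$ (indeed a wall is a subdivision of a subgraph of a grid, and conversely a slightly larger grid contains a wall topologically), it suffices to work with walls rather than grids up to a constant factor. So the whole statement reduces to: if $\bdim(G,X) \ge ck$ then $G$ contains a rooted $k$-wall $(W,X)$ as a subgraph, where the constant $c$ absorbs the grid-versus-wall conversions in both directions.

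The core observation is that $\bdim(G,X) \ge ck$ means, by definition, that the $(ck \times ck)$-grid $\Gamma$ is an $X$-rooted minor of $G$: there is a family $\{S_v \mid v \in V(\Gamma)\}$ of pairwise disjoint connected branch sets, each meeting $X$, realizing $\Gamma$. Inside this grid minor I would locate a subgrid of side $\Theta(k)$ in ``general position'' — concretely, take every other row and every other column of a $\Theta(k) \times \Theta(k)$ block, so that between any two chosen branch vertices of degree three there is at least one intermediate branch set that is not itself used as a corner/degree-three vertex of the target wall. The branch sets of these intermediate vertices, each of which contains a vertex of $X$, are exactly what will play the role of the set $S$ and its attached hairs in the definition of a rooted wall: contracting each branch set to a single vertex turns the subdivided structure in $G$ into an $r$-wall with a pendant hair (possibly trivial) hanging off the middle of each ``brick edge'', and the pendant vertex is forced to lie in $X$ because the corresponding branch set meets $X$. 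Since a wall is realized inside $G$ as a subdivision, and the branch sets are connected, the hairs are genuine (possibly trivial, when the $X$-vertex coincides with the subdivision vertex) paths in $G$, which is exactly the freedom — contract or subdivide arbitrarily — allowed in the definition of $(W,X)$. Choosing the block size to be a large enough constant multiple of $k$ guarantees the resulting rooted wall has order at least $k$.

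The main obstacle, and the only place requiring care, is bookkeeping the rooting: one must ensure that when passing from the $X$-rooted grid minor to a topological wall-with-hairs \emph{as a subgraph} of $G$, every vertex of $X$ that we designate as a hair endpoint $x_s$ really does appear, and that distinct hairs use distinct $X$-vertices and disjoint paths. This is handled by the disjointness of the branch sets $S_v$ and by picking, within each relevant branch set, a single vertex of $X$ together with a path in $G[S_v]$ connecting it to the attachment point on the wall; disjointness of the $S_v$ makes all these choices independent. A secondary, purely quantitative obstacle is tracking the constant $c$ through the three reductions (grid $\Rightarrow$ wall, subgrid in general position, wall order versus block size); since each step loses only a universal multiplicative constant and the claim only asserts existence of \emph{some} universal $c$, this is routine. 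I would therefore present the argument as: (i) fix the constant from the grid/wall equivalences; (ii) extract the $X$-rooted grid minor; (iii) thin it to general position; (iv) read off the rooted wall, using intermediate branch sets as hairs. No step beyond (i) needs a computation, so the write-up stays short.
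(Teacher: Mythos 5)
Your plan is correct and is exactly the argument the paper leaves implicit (the paper only remarks that the observation "follows easily" since a wall is a subdivision of a subgraph of a grid): extract the $X$-rooted grid minor directly from the definition of $\bdim$, realize a dilated wall pattern through the branch sets so that every brick path traverses a spare branch set, and grow one hair per brick path inside that branch set to its $X$-vertex, with disjointness of the branch sets guaranteeing the hairs are independent. The only cosmetic remark is that the appeal to the Grid Theorem in your opening paragraph is unnecessary, since the hypothesis $\bdim(G,X)\ge ck$ already hands you the rooted grid minor by definition.
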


\cref{uncoordinated} permits us to use rooted $r$-walls as certificates of big bidimensionality. Observe also that every rooted $r$-wall $({W},X)$ contains an  $(r\times r)$-grid as an $X$-minor. Moreover,  it is easy to verify that if  a graph $G$ contains an  $(4r\times 4r)$-grid 
as an $X$-minor for some $X\subseteq V(G)$, then $G$ contains a subgraph $W$ and a set $X'\subseteq X$
such that $(W,X')$ is an $r$-wall of $(G,X)$. Therefore, 
the presence (resp. absence) of big rooted walls can be seen as an alternative, and sometimes more convenient, way to see the big (resp. small) bidimensionality of a set.

\begin{figure}[ht]
\begin{center}
\includegraphics[height = 4cm]{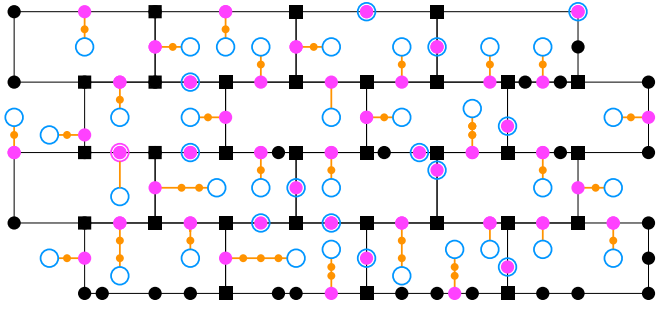}
\end{center}
  \caption{A {rooted $5$-wall} $(\widehat{W},X).$ The edges and the internal nodes of its hairs are \darkorange{orange}. The set $S$ is depicted by the \textcolor{DeepMagenta}{magenta} vertices and the set $X$ is depicted by the \textcolor{CornflowerBlue}{blue} circles. The branch vertices are the square vertices.}
\label{experienced}
\end{figure}

We say that a vertex of a graph is a \emph{branch vertex} if it has degree at least 3.
Given two vertices $x,y,$ we define their \emph{branch distance}, denoted by $\mathsf{bdist}_{G}(x,y)$ as the minimum number of branch vertices of a path between $x$ and $y.$
The  \emph{ branch radius} of $G$ is at most $r$ if there is an $x\in V(G)$ such that for every $y\in V(G),$ $\mathsf{bdist}_{G}(x,y)\leq  r.$

\begin{observation}
\label{obs_critic}
Let $(\widehat{W},X)$ be a rooted $r$-wall and let $Q$ be the graph obtained by adding in $\widehat{W}$ a new vertex $a$ and making it adjacent to all vertices of $X.$
Then $Q$ has branch radius at most two.
\end{observation}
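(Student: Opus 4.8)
\textbf{Proof plan for Observation~\ref{obs_critic}.}
The plan is to analyze which vertices of $Q$ can be branch vertices and then to exhibit a single vertex from which every other vertex is reachable by a path crossing at most two branch vertices (counting endpoints). First I would recall the structure of a rooted $r$-wall $(\widehat W,X)$: the underlying wall $\overline W$ is a subdivision of a subgraph of a grid, so its branch vertices are exactly the original degree-$3$ ``corner'' vertices of the wall; all vertices introduced by subdividing wall-paths have degree $2$. The hairs attach at the vertices of $S$, each of which sits in the interior of a wall-path and thus has degree $2$ in $\overline W$; attaching a hair at such an $s$ raises its degree to $3$, so the vertices of $S$ become branch vertices of $\widehat W$, while the internal hair vertices and the tips in $X$ have degree at most $2$ in $\widehat W$ (a tip $x_s$ has degree $1$, or, if the hair was contracted, $x_s=s\in S$).

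Next I would add the apex vertex $a$ adjacent to all of $X$. In $Q$, the vertex $a$ has large degree, hence is a branch vertex. Each tip $x_s\in X$ now has degree $2$ in $Q$ (one edge along its hair toward $s$, one edge to $a$) \emph{unless} the hair was contracted, in which case $x_s=s$ already lies in $S$ and is a branch vertex; in either case the only new branch vertices created by adding $a$ is $a$ itself. So the branch vertices of $Q$ are: $a$, the corner vertices of $\overline W$, and the vertices of $S$ — and crucially \emph{no internal hair vertex and no non-contracted tip is a branch vertex}.

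Then I would choose $a$ as the center and bound $\mathsf{bdist}_Q(a,y)$ for every $y\in V(Q)$. Take any vertex $y$. If $y$ is an internal vertex or a tip of some hair $H_s$ (with foot $s\in S$), then walk from $a$ directly to the tip $x_s$ (one edge, through the branch vertex $a$) and then along $H_s$ toward $s$; this path passes through branch vertices only at $a$ and possibly at its other end if we go all the way to $s$, and its interior hair vertices are all of degree $\le 2$, so at most two branch vertices are met. If $y$ lies in $\overline W$ (a corner or a subdivision vertex of a wall-path), pick the wall-path segment containing $y$; one of its two endpoints is a vertex $s\in S$ carrying a hair (by the defining property of a rooted wall, every wall-path between two degree-$3$ vertices contains exactly one vertex of $S$), so there is a path $a\,x_s\,H_s\,s\,\cdots\,y$ which meets branch vertices only at $a$, at $s$, and nowhere else along the way — again at most two. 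Finally, $a$ itself and the degenerate cases ($y=x_s=s$) are trivially within branch distance $\le 1$. Hence every vertex is at branch distance at most two from $a$, so the branch radius of $Q$ is at most two.

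The only mildly delicate point — and the one I would state carefully rather than the ``main obstacle'' in any real sense — is the bookkeeping in the two degenerate situations: contracted hairs (where $x_s=s$ and the tip coincides with a branch vertex) and trivial wall-path segments after subdivisions, so that we never accidentally count a third branch vertex or route a path through an unintended degree-$3$ corner. Once one fixes the convention that a path realizing $\mathsf{bdist}$ is chosen minimally and observes that from $a$ one can always enter the wall through a \emph{hair foot} $s\in S$ rather than through a corner, the bound of two is immediate and uniform.
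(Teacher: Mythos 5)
The paper never actually proves this statement: it is left as an unproved observation, so there is no ``paper proof'' to compare against. Your argument is the natural one and its skeleton is right --- you correctly identify the branch vertices of $Q$ (namely $a$, the degree-$3$ vertices of the underlying wall, and the hair feet $S$, with internal hair vertices and non-contracted tips having degree at most $2$), and you correctly observe that from $a$ one always enters the wall through a hair foot $s\in S$.

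There is, however, one case where your count is off, and it is not just bookkeeping. When $y$ is a degree-$3$ vertex of the wall itself (a ``corner''), \emph{every} path from $a$ to $y$ must pass through some $x_s$ and then through its foot $s\in S$ (since $x_s$ has degree $1$ in $\widehat W$, the hair is the only way in), and $s\neq y$ because $s$ has degree $2$ in $\overline W$. Under the paper's literal definition of $\mathsf{bdist}$ --- ``the minimum number of branch vertices of a path between $x$ and $y$'', which counts endpoints --- such a path contains at least the three branch vertices $a$, $s$, and $y$, so $\mathsf{bdist}_Q(a,y)\geq 3$. Your sentence claiming the path ``meets branch vertices only at $a$, at $s$, and nowhere else along the way'' silently ignores that the terminal vertex $y$ is itself a branch vertex. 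So either the bound in the observation should be three, or the definition of branch distance is meant to exclude endpoints; your proof is valid under the latter convention but does not establish the former. In the only place the observation is used (the radial-radius bound in the proof of Lemma 3.7) any $O(1)$ constant suffices, so the discrepancy is harmless there, but a careful write-up should either adjust the constant or state the endpoint convention explicitly rather than assert the two-vertex count for wall corners.
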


Let $\Sigma$ be a surface without boundary and let $G$ be a graph embedded in $\Sigma$. Towards simplifying notation, we do not distinguish the graph and its particular embedding, and we use $F(G)$ to denote the set of faces of $G,$ i.e., the set of connected components of the set $\Sigma\setminus G.$
Given $x,y\in V(G)\cup F(G),$ a \emph{radial $(x,y)$-path} in $G$ of length at most $r$ is a sequence of vertices and faces of length at most $r+1,$ where faces and vertices alternate, which starts from $x$ and finishes in $y,$ and such that if $f\in F(G)$ and $v\in V(G)$ are consecutive in the sequence, then $v$ is a point of the boundary of $f.$
The \emph{radial distance} between $x$ and $y,$ denoted by $\mathsf{rdist}_{G,\Sigma}(x,y)$ is the minimum length of a radial $(x,y)$-path in the embedding of $G$ in $\Sigma.$
The \emph{radial radius} of $G$ is at most $r$ if there is an $x\in V(G)\cup F(G)$ such that for every $y\in V(G)\cup F(G),$ $\mathsf{rdist}_{G,\Sigma}(x,y)\leq  r.$

The following has been proven in different forms, see \cite[Lemma 4.3]{DeHaTh06}, \cite[Lemma 4]{FominGT11contr}, or \cite[Lemma 4]{GeelenRS04embed}.

\begin{proposition}\label{dilettantism}
  Let $G$ be a graph embedded in a surface $\Sigma$ of Euler-genus $g.$
  There is a universal constant $c$ such that if the treewidth of G is more than $cr(g + 1),$ then $G$ contains an $r$-wall $W$ as a subgraph whose embedding is a subset of a  closed disk of $\Sigma$ whose boundary is the perimeter of $W.$ 
\end{proposition}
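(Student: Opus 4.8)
\cref{dilettantism}: a graph $G$ drawn in a surface $\Sigma$ of Euler-genus $g$ with treewidth more than $cr(g+1)$ contains an $r$-wall $W$ as a subgraph that lies inside a contractible closed disk of $\Sigma$ whose boundary is the perimeter of $W$.

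\textbf{Plan.} The approach is the standard ``cut the surface open along short noncontractible curves, then apply the planar Grid/Wall Theorem'' argument. First I would recall the planar case: by the Grid Theorem (\cite{robertson1986graph}, or its wall version), there is a constant $c_0$ such that any planar graph of treewidth more than $c_0 r$ contains an $r$-wall as a subgraph bounded by a contractible disk; on the sphere an $r$-wall subgraph is automatically bounded by a closed disk whose boundary is its perimeter, since the wall's perimeter is a cycle and in a sphere embedding it bounds two disks, one of which we may take. So the real content is reducing the genus-$g$ case to the genus-$0$ case while losing only a multiplicative factor linear in $g$.

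\textbf{Key steps.} (1) If $\Sigma$ is not the sphere, pick a shortest noncontractible closed curve meeting $G$ only in vertices (a ``noncontractible cycle'' in the radial/quadrangulation sense). If $G$ has large treewidth, then either there is a large wall living in a disk region far from this curve, or the curve is short; the dichotomy I want is: cutting $\Sigma$ along this curve yields a surface $\Sigma'$ of strictly smaller Euler-genus, and the resulting graph $G'$ (with the two copies of the cut vertices duplicated, or with the cut vertices and incident structure removed) still has treewidth at least $\tw(G) - O(\ell)$, where $\ell$ is the length of the cutting curve. (2) Iterate: after at most $g$ such cuts we reach a planar piece $G_0$ drawn in a disk with $\tw(G_0) \ge \tw(G) - O(g\cdot \ell_{\max})$. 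The subtle point is bounding $\ell_{\max}$: one uses that a shortest noncontractible curve in a graph of treewidth $t$ embedded in a surface has length $O(t)$ is false in the wrong direction, so instead I would run the argument in the contrapositive — if at some stage the shortest noncontractible curve is long (length $> $ some function of $r$ and $g$), then by a theorem on the structure around such curves (there is a large ``flat'' annular region, or one directly extracts a wall), $G$ already contains the desired disk-embedded $r$-wall; otherwise all cutting curves are short and the telescoping bound on treewidth goes through cleanly. (3) Once we have a planar subgraph $G_0$ of treewidth more than $c_0 r$ drawn in a disk, apply the planar wall theorem to get an $r$-wall inside $G_0$, and argue that the disk it lives in inside $G_0$ is still a disk in the original $\Sigma$ (this is where drawing $G_0$ inside a \emph{contractible} disk of $\Sigma$ matters) bounded by the wall's perimeter.

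\textbf{Main obstacle.} The crux is the genus-reduction lemma with the right quantitative behavior: each cut must cost only $O(1)$ or $O(r)$ in treewidth, not more, so that after $g$ cuts the loss is $O(r \cdot g)$ and matches the claimed bound $cr(g+1)$. Controlling this requires the dichotomy ``either a short noncontractible curve exists (cut it, small treewidth loss) or a large flat wall region exists (done immediately)'' — essentially the surface analogue of the argument that a graph with large treewidth and small genus has a large flat wall. Rather than reprove this from scratch I would invoke it via one of the cited sources (\cite{DeHaTh06,FominGT11contr,GeelenRS04embed}), which contain exactly this style of statement; the role of our \cref{dilettantism} is then mostly to state it in the precise form (``contractible disk whose boundary is the perimeter of $W$'') needed downstream, so the proof would consist of citing the appropriate result and doing the small bookkeeping to pass from ``flat wall'' to ``wall bounded by a contractible disk equal to its perimeter''.
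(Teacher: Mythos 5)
The paper offers no proof of \cref{dilettantism} at all: it is stated as a \emph{Proposition} (the paper's convention for imported results) and is justified only by the sentence ``The following has been proven in different forms, see e.g., \cite{DeHaTh06,FominGT11contr,GeelenRS04embed}.'' Your proposal ends in exactly the same place — invoking those references — and the genus-reduction/face-width dichotomy you sketch as the content behind them is indeed the standard argument those sources use, so your treatment matches the paper's.
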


\begin{lemma}\label{compensation}
  Let $G$ be a graph embedded in a surface $\Sigma$ of Euler-genus at most $g$ with radial radius at most $r.$
  Then $\tw(G)\in\mathcal{O}(g\cdot r)$
\end{lemma}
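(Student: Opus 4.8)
The plan is to prove the contrapositive in effect: show that if $\tw(G)$ is large relative to $g\cdot r$, then the radial radius of $G$ must exceed $r$. The main tool is \cref{dilettantism}: if $\tw(G) > c'\cdot s\cdot(g+1)$ for the universal constant $c'$ there, then $G$ contains an $s$-wall $W$ sitting inside a contractible closed disk $\Delta$ of $\Sigma$ whose boundary is exactly the perimeter of $W$. I will choose $s$ to be a small constant multiple of $r$ — concretely $s = 2r+2$ will be more than enough — so that the hypothesis $\tw(G) \geq C\cdot g\cdot r$ for a suitable universal $C$ forces such a wall $W$ to exist. The key geometric point is that a flat $s$-wall in a disk contains two vertices whose radial distance \emph{in $G$} is already at least $r+1$: the perimeter of $W$ bounds the disk $\Delta$, so any radial path in $G$ connecting a vertex near the ``center'' of $W$ to a vertex in the ``outer part'' of $G$ (outside $\Delta$) — or connecting the two opposite corners of $W$ — must cross through $\Omega(s)$ nested concentric cycles of the wall, and each such crossing costs at least one step of radial distance. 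Hence $\mathsf{rdist}_{G,\Sigma}$ between these two vertices is $\Omega(s) > r$, contradicting the assumption that the radial radius is at most $r$.

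Concretely, the steps are: (1) Fix the universal constant $c'$ from \cref{dilettantism} and set $C := 3c'$ (any constant large enough to guarantee $\tw(G) > c'(2r+2)(g+1)$ whenever $\tw(G) \geq C g r$, using $g \geq 1$, say; the trivial case of planar-ish small genus is absorbed by the constant). Assume for contradiction $\tw(G) > C\cdot g\cdot r$. (2) Apply \cref{dilettantism} with the parameter $s = 2r+2$ to obtain a flat $s$-wall $W\subseteq G$ bounded by a contractible disk $\Delta$ whose boundary is $\mathrm{Perimeter}(W)$. (3) Inside $W$, identify the family of roughly $s/2 \geq r$ vertex-disjoint nested cycles $D_1 \supseteq D_2 \supseteq \cdots$ (the concentric ``bricks cycles'' of the subwall structure), all contained in $\Delta$, with $D_1$ on or near the perimeter and the innermost ones enclosing a central vertex $v^\ast$. (4) Pick $v^\ast$ in the innermost region and any vertex $u$ of $G$ drawn outside $\Delta$ (if $V(G) \subseteq \Delta$ then take $u$ a perimeter vertex and the argument still goes through with a slightly smaller count). (5) Show that any radial $(v^\ast,u)$-path in the embedding of $G$ must, for each $i$, contain a vertex of $D_i$ or a face incident to $D_i$: since each $D_i$ is a cycle bounding a closed sub-disk $\Delta_i \subseteq \Delta$ with $v^\ast$ inside $\Delta_i$ and $u$ outside, the Jordan curve property forces the radial path to ``pass through'' the annular region between consecutive $D_i$'s, and a radial path can advance past at most one such nested cycle per vertex-face step. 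This gives $\mathsf{rdist}_{G,\Sigma}(v^\ast,u) \geq s/2 - O(1) > r$. (6) Conclude that the radial radius of $G$ exceeds $r$, contradiction; therefore $\tw(G) \leq C\cdot g\cdot r = \mathcal{O}(g\cdot r)$.

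The step I expect to be the main obstacle is step (5): making rigorous the claim that ``a radial path must spend at least one step per nested cycle it crosses.'' This requires being careful about the definition of radial path (alternating vertices and incident faces) and about what ``crossing a cycle $D_i$'' means when the path may travel along faces that straddle several bricks. The clean way to handle it is to argue that the closed sets $\bigcup\{\bar{f} : f \text{ incident to } D_i\} \cup D_i$ (the ``radial neighborhood'' of $D_i$) separate $v^\ast$ from $u$ in $\Sigma$, that these neighborhoods for $i$ and $i+2$ (say) are disjoint because the wall is flat and the cycles are well-separated in the brick structure, and hence a radial path must hit $\Omega(s)$ pairwise-disjoint such neighborhoods, costing $\Omega(s)$ length. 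This separation bookkeeping — essentially that radial distance in a flat wall grows linearly with the wall's size — is standard folklore (it underlies the equivalence of treewidth and radial-radius-type parameters on surfaces), but writing it down carefully with the alternating-sequence definition is the delicate part. Everything else is a direct invocation of \cref{dilettantism} plus a choice of constants.
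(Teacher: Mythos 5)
Your proposal is correct and follows essentially the same route as the paper: both arguments reduce the lemma to \cref{dilettantism} together with the geometric fact that a wall of size $s$ drawn inside a disk whose boundary is its perimeter forces radial radius $\Omega(s)$, so that the maximal such wall in $G$ has size $O(r)$ and the contrapositive of \cref{dilettantism} bounds the treewidth by $O(g\cdot r)$. The paper simply states this in the direct (non-contrapositive) form and asserts in one sentence the nested-cycles/radial-distance step that you correctly single out as the delicate point and elaborate in your step (5).
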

  
\begin{proof}
  Let $q$ be the maximum integer such that $G$ contains a $2(q+1)$-wall $W$ as a subgraph that is a subset of a closed disk of $\Sigma$ whose boundary is the perimeter of $W.$
  Notice that the radial radius of $W$ is more than $q,$ which in turn implies that also the radial radius of $G$ is at least $q.$
  As $G$ has radial radius at most $r,$ we have that $q\leq  r.$
  This, combined with \cref{dilettantism} implies that $\tw(G)\in\mathcal{O}(g\cdot r).$
\end{proof}

\subsection{Bounding the bidimensionality}

Let us denote the $(k\times k)$-grid by $\Gamma_{k}.$
The \emph{perimeter} of $\Gamma_{k}, k\geq  3,$ is defined in the same way as it is defined for walls.
We say that the \emph{$1$st layer} of $\Gamma_{k}$ is its perimeter.
Given some $s\leq  \lfloor k/2\rfloor$ we define the \emph{$s$-th layer} of $\Gamma_{k}$ as the perimeter of the $\big((k-2s+2)\times (k-2s+2)\big)$-grid remaining after removing the the first $s-1$ layers of $\Gamma_{k}.$
We refer to the first $s$ layers of $\Gamma_{k}$ as its $s$ \emph{outermost layers}.

Given an $(m\times m)$-grid $\Gamma_{m}, m\geq 3,$ and some $k$ where $3\leq  k \leq  m$ and such that $m-k$ is even, we  define the \emph{central $(k\times k)$-subgrid} of $\Gamma_{m}$ as the unique subgraph of $\Gamma_{m}$ obtained by removing its ${\frac{m-k}{2}}$ outermost layers.

Let $G$ and $H$ be graphs, $X\subseteq V(G)$, and assume that $H$ is an {$X$-minor} of $G,$ certified by  the collection $\mathcal{S}=\{S_{v}\mid v\in V(H)\}$ of pairwise-disjoint {connected} subsets of $V(G).$
If $x\in S_{v},$ then we say that $x$ is a \emph{precursor}
of $v$ in $G$ and that $v$ is the \emph{heir} of $x$ in $H.$ 

\begin{observation}\label{imperiously}
   Let $(H,Y)$ and $(G,X)$ be annotated graphs where $H$ is an $X$-minor of $G$. Then $\bdim(G,X)\geq  \bdim(H,Y).$
\end{observation}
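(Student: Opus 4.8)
The plan is to prove the statement by \emph{composing minor models}: an $X$-minor of $G$ followed by a $Y$-minor of $H$ ought to yield an $X$-minor of $G$. Concretely, I set $k\coloneqq\bdim(H,Y)$, so that the $(k\times k)$-grid $\Gamma_{k}$ is a $Y$-minor of $H$, and fix a certifying family $\{T_{u}\mid u\in V(\Gamma_{k})\}$ of pairwise disjoint connected subsets of $V(H)$, each meeting $Y$, with $H[T_{u}\cup T_{u'}]$ connected whenever $uu'\in E(\Gamma_{k})$. Since $H$ is an $X$-minor of $G$, I fix a certifying family $\{S_{v}\mid v\in V(H)\}$ of pairwise disjoint connected subsets of $V(G)$, each meeting $X$, with $G[S_{v}\cup S_{v'}]$ connected whenever $vv'\in E(H)$. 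The candidate model of $\Gamma_{k}$ inside $G$ is then $S'_{u}\coloneqq\bigcup_{v\in T_{u}}S_{v}$ for $u\in V(\Gamma_{k})$.

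The next step is to check that $\{S'_{u}\mid u\in V(\Gamma_{k})\}$ is an $X$-minor model of $\Gamma_{k}$ in $G$, which amounts to four routine verifications in the natural order. Pairwise disjointness is immediate from disjointness of the $T_{u}$'s together with that of the $S_{v}$'s. Connectivity of $G[S'_{u}]$ follows from the standard ``minor of a minor is a minor'' argument: $H[T_{u}]$ is connected, and consecutive branch sets $S_{v},S_{v'}$ along a path of $H[T_{u}]$ are joined in $G$, so all the pieces $S_{v}$ with $v\in T_{u}$ fuse into a single connected set. Each $S'_{u}$ meets $X$ because $T_{u}$ meets $Y$ and every vertex of $H$ lying in $Y$ has a precursor in $X$. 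Finally, for $uu'\in E(\Gamma_{k})$ there is an edge $vv'\in E(H)$ with $v\in T_{u}$ and $v'\in T_{u'}$, and since $G[S_{v}\cup S_{v'}]$ is connected while $S_{v}\subseteq S'_{u}$ and $S_{v'}\subseteq S'_{u'}$, the set $S'_{u}\cup S'_{u'}$ is connected in $G$. Hence $\Gamma_{k}$ is an $X$-minor of $G$, so $\bdim(G,X)\geq k=\bdim(H,Y)$.

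This argument is essentially routine and I do not anticipate a genuine obstacle; the only point requiring care is the bookkeeping of the two families of branch sets, and in particular verifying that each composite branch set $S'_{u}$ still meets the root set $X$. That is precisely where the hypothesis that $H$ is an $X$-minor of $G$ (and not merely a minor) enters, through the fact that the vertices of $H$ in $Y$ retain a precursor in $X$.
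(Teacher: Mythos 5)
Your proof is correct, and since the paper states this as an observation without proof, your composition-of-minor-models argument is exactly the intended routine justification. The only (harmless) detour is in the rootedness step: each $S'_u$ meets $X$ simply because \emph{every} branch set $S_v$ of the $X$-minor model of $H$ meets $X$ by definition, so the hypothesis $T_u\cap Y\neq\emptyset$ is not actually needed there — which also explains why the inequality holds for an arbitrary $Y\subseteq V(H)$.
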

    
An important tool for the study of bidimensionality was proved in \cite{DemaineFHT04bidim} and  is the following.

\begin{proposition} \label{subjection}
  Let $\Gamma_{m}$ be the $(m\times m)$-grid and let $S$ be a subset of the vertices in the central $(m-2\ell) \times (m-2\ell)$-subgrid of $\Gamma_{m},$ where $\ell=\lfloor \sqrt[4]{|S|} \rfloor.$ Then $G$ contains the $(\ell\times\ell)$-grid as an $S$-minor.
\end{proposition}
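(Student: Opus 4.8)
The plan is to exhibit $\ell^{2}$ pairwise vertex-disjoint connected sets $B_{i,j}\subseteq V(\Gamma_{m})$, one for each pair $(i,j)\in[\ell]\times[\ell]$, such that each $B_{i,j}$ contains a vertex of $S$ and such that $B_{i,j}$ is adjacent, in $\Gamma_{m}$, to $B_{i',j'}$ whenever $|i-i'|+|j-j'|=1$. A rooted minor model only needs adjacency along the edges of the model graph, extra adjacencies being harmless, so contracting the sets $B_{i,j}$ then witnesses $\Gamma_{\ell}$ as an $S$-minor of $\Gamma_{m}$. Set $n\coloneqq|S|$, so that $\ell^{4}\le n$; if $\ell\le 1$ the statement is trivial, so assume $\ell\ge 2$. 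Since $S$ lies in the central $(m-2\ell)\times(m-2\ell)$-subgrid $\Gamma'$ we have $n\le(m-2\ell)^{2}$, whence $m-2\ell\ge\ell^{2}$; in particular $\Gamma'$ has at least $\ell^{2}$ rows and at least $\ell^{2}$ columns. Finally, the $\ell$ outermost layers of $\Gamma_{m}$ are disjoint from $S$; together with the vertices of $\Gamma'$ not used in the construction, they form the free space in which we route.

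The heart of the argument is a one-dimensional routing. First I would select $\ell^{2}$ vertices $q_{1},\dots,q_{\ell^{2}}$ of $S$ and a path $P$ in $\Gamma_{m}$ that visits them in this order, choosing the $q_{t}$ and $P$ so that $P$ stays thin, i.e.\ uses few vertices and leaves ample free space around it. Cutting $P$ into $\ell^{2}$ subpaths, each still containing a vertex of $S$, I assign these subpaths to the vertices of $\Gamma_{\ell}$ along a boustrophedon Hamiltonian path of $\Gamma_{\ell}$ (traversing its first row left to right, its second row right to left, and so on). Then every horizontal edge of $\Gamma_{\ell}$ is already realised along $P$, whereas the vertical edges between two consecutive rows of $\Gamma_{\ell}$ form, in their column band, a family of $\ell-1$ nested arcs; these I would realise as $\ell-1$ pairwise disjoint nested ``rainbow'' arcs pushed through the free space at successive depths. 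One clean special case, worth disposing of first, is when $S$ already admits a guillotine partition --- when there are $\ell-1$ horizontal and $\ell-1$ vertical cuts of $\Gamma'$ for which each of the $\ell^{2}$ resulting rectangles meets $S$ --- in which case the rectangles themselves serve as the sets $B_{i,j}$ and no routing is needed.

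The step I expect to carry the real difficulty is the routing in the case where $S$ is not guillotine-partitionable (for instance when $S$ sits in a thin band, or along a diagonal): one must choose $q_{1},\dots,q_{\ell^{2}}$ so that, along $P$, the $\ell-1$ successive row-gaps of $\Gamma_{\ell}$ occupy column bands that do not interfere; realise the $\ell-1$ missing arcs of each gap as a rainbow occupying $\ell-1$ successive free layers, which is exactly why the $\ell$ reserved layers are enough; and check that all $\ell-1$ rainbow bundles, together with $P$, are pairwise vertex-disjoint inside $\Gamma_{m}$. Here the hypothesis $\ell=\lfloor n^{1/4}\rfloor$ is used in an essential way: although only $\ell^{2}$ vertices of $S$ enter the model, the bound $n\ge\ell^{4}$ --- which, via $\ell^{4}\le n\le(m-2\ell)^{2}$, also gives the width estimate $m-2\ell\ge\ell^{2}$ --- is precisely what leaves enough room to keep $P$ and all $(\ell-1)^{2}$ routing arcs simultaneously disjoint. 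The reduction of the first paragraph and the guillotine case are routine; the disjoint routing and the accompanying choice of the $q_{t}$ are the crux.
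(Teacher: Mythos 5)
The paper itself does not prove \cref{subjection}; it imports it from \cite{DemaineFHT04bidim}, so I can only judge your argument on its own terms. As written it is a plan rather than a proof, and you say so yourself: the choice of $q_{1},\dots,q_{\ell^{2}}$, the construction of the thin path $P$, and the disjoint routing of the $(\ell-1)^{2}$ vertical arcs are all deferred as ``the crux''. That deferred step is exactly where the lemma's content lives, and the one quantitative claim you do make about it --- that the $\ell$ reserved outer layers suffice because each of the $\ell-1$ row-gaps of $\Gamma_{\ell}$ contributes a rainbow of $\ell-1$ nested arcs in its own non-interfering column band --- is not something you can arrange for an arbitrary $S$. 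If $S$ is contained in a single column (or a thin diagonal band), every gap of the boustrophedon order occupies the \emph{same} columns, the rainbows cannot be separated horizontally, and nesting all $(\ell-1)^{2}$ arcs would require width $(\ell-1)^{2}$, far exceeding the $\ell$ free layers. You would then have to reroute through the interior free space of the central subgrid, and no argument is given that this can be done disjointly. So the proposal has a genuine gap: the existence of the required disjoint routing is asserted, not established, and the stated mechanism fails on degenerate but perfectly admissible inputs.

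For comparison, the standard proof (in \cite{DemaineFHT04bidim}) sidesteps the global routing problem with a counting dichotomy: since $|S|\geq\ell^{4}$, either at least $\ell^{2}$ columns of the grid each contain a vertex of $S$, or some single column contains at least $\ell^{2}$ vertices of $S$. In each case one builds the $\ell^{2}$ branch sets explicitly as ``hooks'' (a vertical segment through the chosen $S$-vertex concatenated with a horizontal segment in one of the $\ell$ free boundary rows, or the transposed picture), grouping the chosen columns into $\ell$ consecutive blocks of $\ell$; the adjacencies of $\Gamma_{\ell}$ then come for free from the grid structure and no arc-routing is needed. If you want to salvage your approach, you should either prove the routing claim in full (including the degenerate configurations you flag) or switch to such a dichotomy-plus-explicit-construction argument.
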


\cref{subjection} will be used for the proofs of most of the results in the rest of this paper. A sample of its use is given by the following lemma.

\begin{lemma}\label{acknowledged}
Let $G$ be a graph and let $X_{1},\ldots,X_{r}\subseteq V(G)$ such that, for $i \in[r],$ $\bdim(G,X_{i})\leq  \ell.$
Then $\bdim(G,X_{1}\cup\cdots\cup X_{r})\in\mathcal{O}(r^{4}\ell^2).$
\end{lemma}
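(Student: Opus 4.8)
The plan is to argue by contradiction: assume $\bdim(G, X_1 \cup \cdots \cup X_r) = m$ is large, which means $G$ contains a $(m \times m)$-grid $\Gamma_m$ as an $(X_1 \cup \cdots \cup X_r)$-minor. Fix a witnessing collection $\mathcal{S}$ of branch sets. Each vertex $v$ of $\Gamma_m$ has a branch set $S_v$ meeting $X_1 \cup \cdots \cup X_r$, hence meeting some $X_i$; pick a precursor of $v$ in that $X_i$ and colour $v$ by that index $i \in [r]$. By pigeonhole, some colour class $Y \subseteq V(\Gamma_m)$ has size at least $m^2/r$. First I would restrict attention to the central $(m - 2\ell') \times (m - 2\ell')$-subgrid, where $\ell' = \lfloor \sqrt[4]{|Y \cap \text{central subgrid}|}\rfloor$; by a counting argument, if $m$ is large enough compared to $\ell'$ (the outer layers can only absorb $O(m \cdot \ell')$ vertices), a constant fraction of $Y$ still lies in that central subgrid, so $|Y \cap \text{central}| = \Omega(m^2/r)$ and thus $\ell' = \Omega\bigl((m^2/r)^{1/4}\bigr) = \Omega\bigl(m^{1/2}/r^{1/4}\bigr)$.

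Next I would apply \cref{subjection} to the grid $\Gamma_m$ with the vertex set $Y \cap (\text{central subgrid})$: this yields that $\Gamma_m$ — hence $G$ — contains an $(\ell' \times \ell')$-grid as a $(Y \cap \text{central})$-minor. But every vertex in $Y$ is, by construction, a vertex of $\Gamma_m$ whose branch set $S_v$ contains a vertex of $X_i$; composing the two minor models (the $\Gamma_m$-model of the $(\ell' \times \ell')$-grid inside $G$, and the fact that each relevant branch set reaches $X_i$) shows that $G$ contains an $(\ell' \times \ell')$-grid as an $X_i$-minor. Here I would invoke \cref{imperiously}: passing from the $Y$-rooted minor $\Gamma_m$ of $G$ to the grid, the rooted bidimensionality can only go down, and the $(\ell'\times\ell')$-grid rooted at the heirs of $Y$-vertices lifts to an $X_i$-rooted grid of the same order in $G$. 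Therefore $\bdim(G, X_i) \geq \ell' = \Omega(m^{1/2}/r^{1/4})$.

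Combining with the hypothesis $\bdim(G, X_i) \leq \ell$ gives $\Omega(m^{1/2}/r^{1/4}) \leq \ell$, hence $m = O(r^{1/2} \ell^2)^? $ — more carefully, $m^{1/2} = O(r^{1/4}\ell)$, so $m = O(r^{1/2}\ell^2)$. That already beats the claimed bound $O(r^4 \ell^2)$, so a cleaner but lossier version of the counting (not tracking the central-subgrid loss tightly, just using $|Y| \geq m^2/r$ and $\ell' \geq \sqrt[4]{|Y|}/2$ say) suffices for the stated $O(r^4\ell^2)$; I would present the argument at whatever level of looseness keeps the constants painless, since the statement only asks for $O(r^4\ell^2)$. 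The main obstacle I anticipate is the bookkeeping in the composition-of-minor-models step: one must check that the branch sets of the $(\ell'\times\ell')$-grid-model obtained from \cref{subjection} are genuinely the unions $\bigcup_{v \in T_u} S_v$ over the grid-branch-sets $T_u$, that these remain pairwise disjoint and connected in $G$, and that each contains a vertex of $X_i$ (which holds because each such union contains some $S_v$ with $v \in Y$, and that $S_v$ meets $X_i$). This is exactly the content of \cref{imperiously} applied with the intermediate graph $\Gamma_m$, so the obstacle is really just making sure the annotations line up; once that is granted the lemma follows immediately.
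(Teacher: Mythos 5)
Your proposal is correct and follows essentially the same route as the paper's proof: pigeonhole to pick the majority index $i$, a central-subgrid/margin counting argument resolved by the same fixed-point choice of margin, and then \cref{subjection} together with \cref{imperiously} to conclude $\bdim(G,X_i)\in\Omega(m^{1/2}/r^{1/4})$. The paper's own computation likewise yields the sharper $\mathcal{O}(r^{1/2}\ell^2)$ while only claiming the stated $\mathcal{O}(r^{4}\ell^2)$, so your observation about the slack is consistent with the source.
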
   
   
\begin{proof}
Let $X=X_{1}\cup\cdots\cup X_{r}$ and let $q=\bdim(G,X).$
Let also $\Gamma_{q}$ be a $(q\times q)$-grid $X$-minor in $G,$ certified by a collection $\mathcal{S}=\{S_{v}\mid v\in V(\Gamma_{q})\}$ of pairwise-disjoint {connected} subsets of $V(G).$ We choose  $i$ so that  the cardinality of the set $Z_i\coloneqq \{u\in V(\Gamma_q) \mid X_{i}\cap S_{u}\neq \emptyset\}$ is maximized, i.e., $X_{i}$ has the majority of intersections with the sets of $\mathcal{S}.$
As we fix $i,$ for simplicity we use $Z=Z_{i}.$

Clearly $Z$ is a subset of $V(\Gamma_{q})$ where $|Z|\geq  q^2/r.$ For each (positive) margin $b,$ we define the set $B_{b}$ as the union of the $b$ {outermost} layers of $\Gamma_q.$
We now fix some $b\in[\lfloor q/2\rfloor],$ and observe that at most $4b(q-b)$ of the vertices of $\Gamma_{q}$ are in the {outermost} $b$ layers. 
This means that at least $f(q,b,r)\coloneqq \frac{q^2}{r}-4b(q-b)$ vertices of $Z$ are in the central $(q-2b) \times (q-2b)$-subgrid of $\Gamma_{m}.$

We choose the margin $b_{q,r}$ as the maximum integer $b$ such that $\lfloor \sqrt[4]{f(q,b,r)}\rfloor\geq  2b.$

This implies that there are at least $f(q,b_{q,r},r)$ vertices from $Z$ in the $(q-2b_{q,r}) \times (q-2b_{q,r})$-subgrid of $\Gamma_{q}.$
By \cref{subjection} we obtain, $\bdim(G,X_{i})\geq \bdim(\Gamma_{q},Z_{i})\geq  \lfloor \sqrt[4]{f(q,b_{q,r},r)}\rfloor\geq  2b_{q,r}\in\Omega(\frac{q^{1/2}}{{r}^{1/4}}).$
Therefore, $\bdim(G,X_{1}\cup\cdots\cup X_{r})\in\mathcal{O}(r^4\ell^2).$
\end{proof}

Let $\delta$ be a $\Sigma$-decomposition of a graph $G$ and let $c$ be one of its cells. 
We define
\begin{eqnarray}
\mathsf{neigh}_{\delta}(c)  =  V(G_{c})\cup\{v\mid \mbox{$v$ is  drawn in the boundary of a non-vortex  cell $c'$ of $\delta$ where $\tilde{c}'\cap \tilde{c}\neq\emptyset$}\}.
\label{neigh_plus}\end{eqnarray}

In other words $\mathsf{neigh}_{\delta}(c)$ contains all vertices drawn 
inside $c$ as well as all vertices drawn on the boundaries of the neighboring cells of $c$.

The next observation follows directly from the definition of $\bdim.$

\begin{observation}\label{unrestricted}
  Let $G$ be a graph, $A\subseteq V(G),$ and $X\subseteq V(G)\setminus A,$ where $\bdim(G-A,X)\leq  k.$
  Then $\bdim(G,X\cup A)\leq  k+|A|.$
\end{observation}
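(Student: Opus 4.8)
The plan is to prove the statement by starting from a large grid realised as an $(X\cup A)$-minor of $G$ and extracting an only slightly smaller grid realised as an $X$-minor of $G-A$. Concretely, set $q\coloneqq\bdim(G,X\cup A)$ and fix a model $\mathcal{S}=\{S_{v}\mid v\in V(\Gamma_{q})\}$ of pairwise-disjoint connected subsets of $V(G)$ witnessing that $\Gamma_{q}$ is an $(X\cup A)$-minor of $G$, where each $S_v$ meets $X\cup A$. Let $R\coloneqq\{v\in V(\Gamma_{q})\mid S_{v}\cap A\neq\emptyset\}$. Since the sets in $\mathcal{S}$ are pairwise disjoint, each vertex of $A$ lies in at most one $S_{v}$, so $|R|\leq|A|$. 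The goal is to show that $G-A$ contains $\Gamma_{q-|A|}$ as an $X$-minor, which together with $\bdim(G-A,X)\leq k$ forces $q-|A|\leq k$, i.e. $\bdim(G,X\cup A)=q\leq k+|A|$.

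First I would check that the truncated collection $\{S_{v}\mid v\in V(\Gamma_{q})\setminus R\}$ witnesses that $\Gamma_{q}-R$, the induced subgraph of the grid on $V(\Gamma_{q})\setminus R$, is an $X$-minor of $G-A$. Indeed, for $v\notin R$ the set $S_{v}$ is disjoint from $A$, so $(G-A)[S_{v}]=G[S_{v}]$ is connected, and since $S_{v}$ meets $X\cup A$ but not $A$ it meets $X$; similarly, for an edge $vv'$ of $\Gamma_{q}-R$ the set $S_{v}\cup S_{v'}$ is disjoint from $A$ and connected in $G$, hence connected in $G-A$. Thus all defining properties of an $X$-minor model survive the deletion of $A$.

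Next I would invoke the fact that deleting any set $R$ of $t$ vertices from $\Gamma_{m}$ leaves a graph that still contains $\Gamma_{m-t}$ as a minor. A short self-contained argument: $R$ meets at most $t$ rows and at most $t$ columns of $\Gamma_{m}$, so one can pick $m-t$ rows and $m-t$ columns that are entirely disjoint from $R$; the union $U$ of these full rows and columns is a subgraph of $\Gamma_{m}-R$, and assigning to the grid vertex in position $(a,b)$ the ``L-shaped'' branch set consisting of the chosen $a$-th row between the chosen $b$-th and $(b{+}1)$-st columns together with the chosen $b$-th column between the chosen $a$-th and $(a{+}1)$-st rows exhibits $\Gamma_{m-t}$ as a minor of $U$ (these sets are connected, pairwise disjoint, and consecutive ones touch). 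Applying this with $m=q$ and $t=|R|\leq|A|$ gives $\Gamma_{q-|A|}\leq\Gamma_{q}-R$. Composing this model with $\{S_{v}\mid v\notin R\}$ — replace each branch set $T_{w}$ of the model of $\Gamma_{q-|A|}$ in $\Gamma_{q}-R$ by $\bigcup_{v\in T_{w}}S_{v}$, which stays connected in $G-A$ and still meets $X$ — shows $\Gamma_{q-|A|}$ is an $X$-minor of $G-A$; this composition is exactly the mechanism underlying \cref{imperiously}. Hence $\bdim(G-A,X)\geq q-|A|$ and \cref{unrestricted} follows.

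The only non-routine ingredient is the grid fact used above: one cannot simply delete the offending rows and columns of $\Gamma_{m}$ as a subgraph (that disconnects the grid), so the argument genuinely needs the L-shaped contractions; this is elementary but is the one place where a small amount of care is required, everything else being bookkeeping about minor models.
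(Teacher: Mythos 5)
Your argument is correct, and since the paper states this as an observation ``following directly from the definition'' with no written proof, your write-up simply fills in the details of the one natural argument: pass to the contrapositive, discard the at most $|A|$ branch sets meeting $A$, and recover a $(q-|A|)\times(q-|A)|$-grid via the standard row/column selection with L-shaped contractions. No gaps; this matches what the authors intend.
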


\begin{lemma}\label{abstraction}
  For every graph $G,$ every $k\in \mathbb{N},$ and every surface $\Sigma$ with Euler-genus at most $g,$ if 
\begin{itemize}  
\item   $A\subseteq V(G),$ 
\item $\delta $ is a $\Sigma$-decomposition of $G-A$ of width $w$ and breadth $b$, 
\item ${\displaystyle X\coloneqq A\cup \cupall \{\mathsf{neigh}_{\delta}(c)}\mid \mbox{$c$ is a vortex of $\delta$}\}$
\end{itemize}
  then $\bdim(G,X)\in\mathcal{O}(b^4\cdot (b\cdot g\cdot w)^2+|A|).$
\end{lemma}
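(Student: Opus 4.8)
The plan is to bound $\bdim(G,X)$ by exhibiting, for any large grid $X$-minor in $G$, a way to extract from it a grid minor rooted at a *small* set, and then invoke \cref{acknowledged} to conclude that the original set cannot have been too spread out. More precisely, suppose $\bdim(G,X)=q$, witnessed by a $(q\times q)$-grid $\Gamma_q$ as an $X$-minor of $G$ via branch sets $\mathcal{S}=\{S_v\mid v\in V(\Gamma_q)\}$. Write $X = A \cup \bigcup_{i} \mathsf{neigh}_\delta(c_i)$ where $c_1,\dots,c_b$ are the vortices of $\delta$. By \cref{unrestricted} it suffices to bound $\bdim(G-A, \bigcup_i \mathsf{neigh}_\delta(c_i))$ up to the additive $|A|$ term, so from now on we work in $G' \coloneqq G - A$ with the $\Sigma$-decomposition $\delta$, and by \cref{acknowledged} it further suffices to bound $\bdim(G', \mathsf{neigh}_\delta(c))$ for a single vortex $c$ by $\mathcal{O}((b\cdot g\cdot w)^2)$ — the $b^4$ factor then comes out of \cref{acknowledged} applied to the $b$ vortices.

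So the heart of the matter is the single-vortex bound: if $c$ is a vortex of a $\Sigma$-decomposition $\delta$ of a graph $G'$ of width $w$, with $\eg(\Sigma)\le g$, then $\bdim(G', \mathsf{neigh}_\delta(c)) \in \mathcal{O}((b\cdot g\cdot w)^2)$. Here is the intended argument. Let $Y \coloneqq \mathsf{neigh}_\delta(c)$ and set $m \coloneqq \bdim(G',Y)$; let $\Gamma_m$ be a $(m\times m)$-grid $Y$-minor with branch sets $\mathcal{S}=\{S_v\}$. By \cref{subjection}, taking $\ell = \lfloor \sqrt[4]{|Y\cap(\text{central part})|}\rfloor$, it is enough to show that only $\mathcal{O}((b g w)^2)^{?}$ — actually, the cleaner route: apply \cref{imperiously}-style reasoning together with the observation that $Y$ is ``radially concentrated'' in the drawing. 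The vertices of $Y$ are either in the vortex society $G_c$ (at most $w$ of them, since $\delta$ has width $w$), or drawn on the boundary of a non-vortex cell touching $\widetilde{c}$; all of these lie within radial distance $1$ in the drawing from the disk $\Delta$ carrying $c$. Contract, in the drawing, all vertices and edges drawn inside $\widetilde{c}$ to a single vertex $a$ (this is legitimate because $G_c$ together with its attachment vertices $\Omega_c$ is connected after we add the hub; more carefully, we may need to first delete the interior edges of the vortex and keep only $\Omega_c$, at the cost of only affecting which vertices are "heirs"), obtaining a graph $\widehat{G}$ drawn in $\Sigma$ *without* the vortex, with a new apex-like vertex $a$ adjacent to $\Omega_c$, and every vertex of $Y\setminus\{$ interior of $c\}$ within radial distance $\mathcal{O}(1)$ of $a$ in $\widehat{G}$. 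Deleting $a$ leaves a graph drawn in $\Sigma$ with radial radius $\mathcal{O}(1)$ around the face where $a$ sat — wait, that is false in general, so instead I would argue: the subgraph $\widehat{G}[Y']$ induced by $Y' \coloneqq Y\setminus V(G_c)$ together with $a$ has all its vertices on faces incident to $a$, hence by an argument analogous to \cref{compensation} its "relevant part" has treewidth $\mathcal{O}(g)$, forcing $\bdim$ of that part to be $\mathcal{O}(g)$; the at-most-$w$ vertices inside the vortex contribute at most $w$ more via \cref{unrestricted}.

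The step I expect to be the main obstacle is making precise the claim that the vertices of $\mathsf{neigh}_\delta(c)$ — which after removing the vortex interior live on cell-boundaries around a bounded-radius region of the surface — can only support a grid $X$-minor of dimension $\mathcal{O}(g w)$. The clean way is: in the drawing of $G' - A$, delete all edges and interior vertices of *every* vortex, reroute through the hub vertices of \cref{obs_critic}-style gadgets, and observe that what remains near $c$ is a graph of bounded *radial radius* (radius governed by the constant $1$ from the definition of $\mathsf{neigh}_\delta$ and the fact that non-vortex cells have size $\le 3$), so \cref{compensation} gives it treewidth $\mathcal{O}(g)$, hence (since a grid of side $t$ has treewidth $\ge t$) bidimensionality $\mathcal{O}(g)$ for that piece; then \cref{unrestricted} re-inserts the $\le w$ interior-vortex vertices and the $|A|$ apices, and \cref{acknowledged} assembles the $b$ vortices, yielding $\bdim(G,X) \in \mathcal{O}\!\big(b^4 (b g w)^2 + |A|\big)$ after absorbing constants. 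The delicate points are (i) that deleting vortex interiors only *removes* vertices from the branch sets $S_v$ and cannot increase bidimensionality (this is where \cref{imperiously} and \cref{unrestricted} must be chained carefully, treating the reduced branch sets), and (ii) pinning down the exact radial-radius constant so that the factor comes out as $(b g w)^2$ rather than something worse.
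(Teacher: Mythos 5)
Your outer skeleton is the paper's: handle $A$ by \cref{unrestricted}, reduce to a single vortex by \cref{acknowledged} (with $r=b$, which is where $b^4(\cdot)^2$ comes from), and then argue that $\mathsf{neigh}_\delta(c)$ is ``radially concentrated''. The proposal breaks down, however, exactly at the single-vortex bound, in two concrete places. First, you end up bounding the treewidth of an induced subgraph on $Y'\cup\{a\}$ (``the relevant part near $c$''), and this does not control $\bdim(G',Y)$: a $Y$-rooted grid minor is witnessed by connected branch sets that live in the \emph{whole} graph $G'$ and are merely required to touch $Y$, so they can extend arbitrarily far from the vortex and route through the entire surface. (Your own perimeter-of-a-grid example shows why the correct statement is still true, but the mechanism is not the treewidth of $G'[Y]$.) The missing idea is to first convert large bidimensionality into a \emph{rooted wall} $(W,S)$ via \cref{uncoordinated} --- a concrete subgraph of $G'$ with $\tw(W)\geq z$, each of whose bricks is anchored at a vertex of $S$ --- and then bound the treewidth of \emph{that wall}: replace every vortex interior by the cycle on its society vertices, observe (via \cref{obs_critic} and the anchoring of the bricks at $S$) that the resulting embedded graph $H_1$ has radial radius $\mathcal{O}(b)$ --- not $\mathcal{O}(1)$, since the wall may pass through all $b$ vortices and the added cycles create new branch vertices --- so $\tw(H_1)\in\mathcal{O}(bg)$ by \cref{compensation}, contradicting $\tw(W)\geq z$ once the vortices are reinserted.

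Second, the claim that the vortex contributes ``at most $w$ of them'' and hence an additive $+w$ via \cref{unrestricted} is false: the width $w$ of $\delta$ bounds the bag sizes of the linear decomposition of the society $(G_c,\Omega_c)$, not $|V(G_c)|$, which is unbounded; and $\mathsf{neigh}_\delta(c)\supseteq V(G_c)$. The vortex interiors must instead be paid for multiplicatively: one merges the width-$w$ linear decompositions of the vortices into the tree-decomposition of the surface part, turning $\tw(H_1)\in\mathcal{O}(bg)$ into $\tw(H_2)\in\mathcal{O}(bgw)$, which is where the $w$ in the final bound $\mathcal{O}(b^4(bgw)^2+|A|)$ actually enters. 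Without the wall extraction and the linear-decomposition composition, the argument does not close.
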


\begin{proof}
Let  $c_1,\ldots,c_{b}$ be the vortices of $\delta .$ 
Because of \cref{unrestricted},  we may assume that $A=\emptyset$ and prove that $\bdim(G,X)\in\mathcal{O}(b^{4}\cdot (b\cdot g\cdot w)^2).$
Because of \cref{acknowledged}, it is enough to prove that if $\widehat{c}\in \{c_1,\ldots,c_{b}\}$ and given that $S:=\mathsf{neigh}_{\delta}(\widehat{c}),$ then it holds that $\bdim(G,S)\in\mathcal{O}(bgw).$
Let $z=\lfloor\bdim(G,S)/{c'} \rfloor$ where $c'$ is the constant of \cref{uncoordinated}.

According to \cref{uncoordinated} $G$ contains a rooted $z$-wall $({W},S).$ 
Moreover, $\tw({W})\geq  z.$

As a first step, we enhance $G$ by adding in it, for each vortex $c_i\in \{c_1,\ldots,c_{b}\},$ a cycle ${C}_{i}$ joining every pair of consecutive vertices in $\Omega_{c_i}.$
We further enhance $G$ by adding in it  
a new vertex $a$ and making it adjacent to all vertices of $S$ that are drawn inside $\widehat{c}$.

We also update $\delta$ to a $\Sigma$-decomposition of the enhanced $G$  so that 
each cycle $C_{i}$ is  drawn inside $c_{i}$ and moreover the new vertex $a$ and its incident edges are all drawn inside $\widehat{c}$.

Notice that the above enhancement of $G$ does not increase the width of the vortices of $\delta $ by more than two units.
We now define the pair $(W^+,S)$ where $$W^+\coloneqq{W}\cup\cupall\{C_{i}\mid  i\in[b]\}.$$
Clearly, for every $i\in [b]$, $W^+$ may have vertices outside $G_{c_i}$.  Keep also in mind that all vertices and edges in $V(C_{i})$ are vertices and edges of both $W^+$ and $G_{c_{i}}.$
We now consider the graphs 
\begin{itemize} 
\item $H_{1}=W^{+}\setminus\cupall\{V(G_{c_i})\setminus V(C_{i})\mid  i\in[b]\},$ and 
\item $H_2=H_1\cup\cupall\{G_{c_i}\mid  i\in[b]\}.$
\end{itemize}

Observe that $H_1$ is a subgraph of $W^+$ and $W^+$ is a subgraph of $H_2.$
Also keep in mind that  $\tw(W^+)\geq  \tw(W)\geq  z.$

By construction, $H_1$ is a graph that is embedded in $\Sigma$.
Pick $j\in[b]$ such that $c_{j}=\widehat{c}$ and we denote by $C_{\widehat{c}}$ the cycle $C_{j}$.
Let $f^{\star}$ be the face of this embedding  whose boundary corresponds 
to the vertices and the edges of  $C_{\widehat{c}}.$ 
By the definition of $S\coloneqq \mathsf{neigh}_{\delta}(\widehat{c})$
all vertices of $S\cap V(H_{1})$ are drawn in the boundaries of cells sharing 
some point with $\widehat{c}$. This fact, 
by applying \cref{obs_critic} for the vertex $a$, implies  that, in the embedding of $H_{1}$ in $\Sigma$, all vertices of $S$  are  within radial distance $\Ocal(1)$  from $f^{\star}$. 
Notice also that  every non-branch vertex of $W$ that became a branch vertex of $W^+$
is a vertex of some cycle in $\{C_{1},\ldots,C_{b}\}$, therefore every such vertex
should also be  within radial distance $\mathcal{O}(b)$ from $f^{\star}$.
We just proved that all vertices of $H_{1}$ are within radial distance  $O(1)$ from $f$. This implies that the radius of $H_{1}$ is $\mathcal{O}(b)$ and this, combined with \cref{compensation} implies that  $\tw(H_1)\in\mathcal{O}(bg).$

Our next step is to use the fact that $\tw(H_1)\in \mathcal{O}(bg)$ in order to prove that $\tw(H_2)\in \mathcal{O}(bgw).$
For this, let $(T,\beta)$ be a tree decomposition of $H_1$ and let $L^i=\langle X_1^i,X_2^i,\dots,X_{n_i}^i,x_1^i,x_2^i,\dots,x_{n_i}^i\rangle$ be a linear decomposition of 
$\sigma(c_i),$ assuming that $x_1^i,x_2^i,\dots,x_{n_i}^i$ are the vertices of $V(\Omega_{c_i})$ ordered according to $\Omega_{c_i}.$
We now build a tree decomposition $(T,\beta')$ of $H_{2}$ where for each $t\in V(T),$ we define $\beta'(t)$ to be the union of the set $\beta(t)$ and every set $X_{j}^{i},$ for which $x_{j}^{i}$ is a vertex of $\beta(t)\cap \cupall\{\{x_1^i,x_2^i,\dots,x_{n_i}^i\}\mid i\in [b]\}.$
As each $X_{j}^{i}$ has $\mathcal{O}(w)$ vertices, it follows that $(T,\beta')$ has width $\mathcal{O}(bgw),$ as required.
We conclude that $\tw(H_2)\in \mathcal{O}(bgw).$ 

Finally, recall that $W^+$ is a subgraph of $H_2,$ therefore $z\leq  \tw(W^+)\leq  \tw(H_{2})\in \mathcal{O}(bgw).$
In other words, $\bdim(G,S)=c'z^2\in \mathcal{O}(bgw),$ as required.
\end{proof}




%
%
%
%
%
%
Given a graph $G$, a set $A\subseteq V(G)$, where $|A|≤k$, and 
a $\Sigma$-decomposition
$\delta$ of  $G-A$ of width and breadth at most $k$, for some surface of Euler genus $≤k$, \cref{abstraction}
tells us that the union of $A$ with the set of  vertices drawn in the vortices of $\delta$ 
has bidimensionality $\Ocal(k^{10})$ in $G$. Actually,  using \cref{abstraction}, 
we can also add in this set the vertices drawn in the boundaries of all the non-vortex cells that are neighboring with the vortices (we will use this slightly stronger statement of  \cref{abstraction}
 in further investigations). According to the 
definitions of $\Sbbb\mbox{-}\hw(G)$ in \eqref{eradication} and $\Sbbb\mbox{-}\tw(G)$ in 
\eqref{attempting}, 
the immediate corollary 
of \cref{abstraction} is the following.

\begin{lemma}
\label{tiko_news}
There exists some polynomial function $f_{\ref{tiko_news}}:\Nbbb\to\Nbbb$ such that 
for every graph $G$, if $\Sbbb\mbox{-}\mathsf{\hw}(G)≤k$, then $\Sbbb\mbox{-}\mathsf{\tw}(G)≤f(k)$
\end{lemma}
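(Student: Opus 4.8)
The plan is to derive this lemma as an essentially immediate consequence of \cref{abstraction}, by unwinding the definitions of $\Sbbb\mbox{-}\hw$ in \eqref{eradication} and $\Sbbb\mbox{-}\tw$ in \eqref{attempting}. Suppose $\Sbbb\mbox{-}\hw(G)\leq k$. By definition, $G$ lies in the clique-sum closure of the class of graphs $H$ that contain a set $A$ with $|A|\leq k$ such that $H-A$ admits a $\Sigma$-decomposition $\delta$ of breadth at most $k$ and width (depth) at most $k$, for some $\Sigma\in\Sbbb$; in particular $\eg(\Sigma)\leq\eg(\Sbbb)=:g$, and since $\Sbbb$ is fixed, $g$ is a constant (so this is absorbed into the polynomial $f$). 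Fix one such building block $H$ together with its set $A$, its surface $\Sigma$, and its $\Sigma$-decomposition $\delta$.

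First I would apply \cref{abstraction} to $H$ with this choice of $A$, $\delta$, $w\leq k$, $b\leq k$, and $g=\eg(\Sigma)\leq\eg(\Sbbb)$, taking $X\coloneqq A\cup\cupall\{\mathsf{neigh}_{\delta}(c)\mid c\text{ is a vortex of }\delta\}$. This gives $\bdim(H,X)\in\mathcal{O}(b^{4}\cdot(b\cdot g\cdot w)^{2}+|A|)=\mathcal{O}(k^{4}\cdot(k\cdot g\cdot k)^{2}+k)=g^{2}\cdot k^{\mathcal{O}(1)}$, which is polynomial in $k$ once $g$ is regarded as a constant. Next I would check that $X$ is a genuine bidimensional modulator of $H$ to $\Ecal_{\Sbbb}$: the graph $H-X$ is obtained from the embedded part of $H-A$ by deleting all vertices drawn in the vortex cells (and their immediate neighbourhoods), and since by condition (a) of the relevant decomposition there are no vertices drawn in the interiors of non-vortex cells, what remains is drawn in $\Sigma$ with no crossings, hence $H-X$ is embeddable in $\Sigma\in\Sbbb$, i.e.\ $H-X\in\Ecal_{\Sbbb}$. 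Thus every torso-block $H$ in the clique-sum decomposition of $G$ has an $f(k)$-bidimensional modulator to $\Ecal_{\Sbbb}$ with $f(k)=g^{2}\cdot k^{\mathcal{O}(1)}$ polynomial.

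Finally, since this holds uniformly for every block of the tree-decomposition witnessing $G$ in the clique-sum closure, the same tree-decomposition witnesses that $G$ lies in the clique-sum closure of the class of graphs with an $f(k)$-bidimensional modulator to $\Ecal_{\Sbbb}$; by definition \eqref{attempting} this means $\Sbbb\mbox{-}\tw(G)\leq f(k)$, as required. I do not expect any real obstacle here: the content of the lemma is entirely carried by \cref{abstraction}, and the only things to be careful about are (i) confirming that deleting $X$ really removes \emph{all} non-embeddable parts of each block — this is exactly where condition (a) ``no vertex drawn in the interior of a non-vortex cell'' is used — and (ii) noting that $g=\eg(\Sbbb)$ is a constant so that the bound of \cref{abstraction} is polynomial in $k$ alone, justifying the claim that $f_{\ref{tiko_news}}$ is a polynomial function.
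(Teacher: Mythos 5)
Your proposal is correct and matches the paper's argument exactly: the paper presents \cref{tiko_news} as an immediate corollary of \cref{abstraction}, obtained by applying that lemma to each torso of the clique-sum decomposition with $X=A\cup\cupall\{\mathsf{neigh}_{\delta}(c)\mid c \text{ a vortex}\}$ and absorbing $g=\eg(\Sbbb)$ into the polynomial, which is precisely what you spell out. The one caveat is that the literal definition of $\Sbbb\mbox{-}\hw$ in \eqref{eradication} does not state condition (a); you are right that this condition is what guarantees $H-X\in\Ecal_{\Sbbb}$, and the paper tacitly assumes it here (it is made explicit only in the refined parameter $\Sbbb\mbox{-}\mathsf{rhw}$ and in \cref{thm_globalstructure}), so your flagging of point (i) is a fair and useful clarification rather than a deviation.
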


\section{The lower bound}\label{progressively}

Recall that we defined $\Ecal_{\Sigma}$
as the class of graphs embeddable in $\Sigma$.

For the purposes of our lower bound,  
we introduce the parameter $\bdim_{\Sigma}(G):\mathcal{G}\to\Nbbb$ where
\begin{eqnarray}
\bdim_{\Sigma}(G) & = & \min\{k\mid \text{there exists } X\text{ s.t } \bdim(G,X)\leq k\mbox{~and~}G-X\in \Ecal_{\Sigma}\}\label{bidimsdefpre}
\end{eqnarray}
and the parameter 
$\bdim^{\star}_{\Sigma}(G):\mathcal{G}\to\Nbbb$,
where 
\begin{eqnarray}
    \bdim_{\Sigma}^{\star}(G) & = & \min\{k\mid \mbox{$G$ is the clique-sum closure of the class of }\label{bidimsdef}\\
   & &  \mbox{~~~~~~~graphs where the value of $\bdim_{\Sigma}$ is upper bounded by $k$}\}\label{new_bid_star}.
\end{eqnarray}

\begin{observation}\label{objectivity}
 $\bdim_{\Sigma}$ as well as $\bdim^{\star}_{\Sigma}$ are minor-monotone parameters, for every choice of $\Sigma$
\end{observation}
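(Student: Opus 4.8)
The statement asserts that both $\bdim_{\Sigma}$ and $\bdim^*_{\Sigma}$ are minor-monotone. The plan is to establish minor-monotonicity of $\bdim_{\Sigma}$ first, and then derive it for $\bdim^*_{\Sigma}$ from general properties of the clique-sum closure construction. For the first part, it suffices (by transitivity and induction) to treat the three elementary minor operations: deleting an edge, deleting an isolated vertex, and contracting an edge; in each case I take an optimal witness $X$ for $\bdim_{\Sigma}(G)\le k$ and produce a witness in the minor $H$ of no larger value.

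First I would handle edge and isolated-vertex deletions. If $H=G\setminus e$ for an edge $e$, take $X$ with $\bdim(G,X)\le k$ and $G-X\in\Ecal_{\Sigma}$, and keep the same set $X$ in $H$. Then $H-X=(G-X)\setminus e$ (or $G-X$ itself if $e$ has an endpoint in $X$), which is a subgraph of $G-X$, hence still embeddable in $\Sigma$ since $\Ecal_{\Sigma}$ is closed under subgraphs. Moreover $\bdim(H,X)\le\bdim(G,X)$: any $(q\times q)$-grid that is an $X$-minor of $H$ is also an $X$-minor of $G$, because deleting an edge only removes rooted minors. The same argument works verbatim for deleting an isolated vertex $v$ (here necessarily $v\notin X$ if $X$ was chosen minimally, or one simply drops $v$ from $X$). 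For edge contraction $H=G/e$ with $e=uv$ contracted to a new vertex $w$: again take an optimal $X$ in $G$ and set $X' \coloneqq (X\setminus\{u,v\})\cup\{w\}$ if $\{u,v\}\cap X\neq\emptyset$, and $X'\coloneqq X$ otherwise. One checks $H-X'$ is a minor of $G-X$ (contraction of an edge of $G-X$, or equal to $G-X$), hence in $\Ecal_{\Sigma}$ since surface-embeddability is minor-closed. For the bidimensionality bound, I would invoke \cref{imperiously}: $H$ is an $X'$-minor of $G$ with the branch set of $w$ being $S_u\cup S_v$ (or appropriately merged), so $\bdim(H,X')\le\bdim(G,X)$. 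Hence $\bdim_{\Sigma}(H)\le k=\bdim_{\Sigma}(G)$. The expected mild friction here is bookkeeping the rooted-minor branch sets through contraction when a branch set of the grid-minor already contains $u$ or $v$; this is routine but must be spelled out so that the new rooted set $X'$ still meets every relevant branch set.

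For $\bdim^*_{\Sigma}$, I would use the general principle that taking the clique-sum closure of a graph class $\Ccal$ and then the induced ``minimum-$k$'' parameter preserves minor-monotonicity whenever the underlying per-bag parameter is minor-monotone and $\Ccal$ behaves well under torsos. Concretely: if $H\le G$ and $\bdim^*_{\Sigma}(G)\le k$, fix a tree-decomposition $(T,\beta)$ of $G$ whose torsos all have $\bdim_{\Sigma}$-value at most $k$. A minor $H$ of $G$ inherits a tree-decomposition with the same tree $T$ by the standard fact that minor operations restrict to each bag, and the torso of $H$ at a node is a minor of the torso of $G$ at that node (deletions restrict; contractions either stay inside a bag or identify adhesion vertices, which only adds no new structure to the torso). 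By the minor-monotonicity of $\bdim_{\Sigma}$ already proved, each torso of $H$ has $\bdim_{\Sigma}$-value at most $k$, so $\bdim^*_{\Sigma}(H)\le k$. The main obstacle in this paper's framework is making precise the claim ``the torso of a minor is a minor of the torso''; I expect to lean on the definition of torso and clique-sum closure from \cref{inphasized}, checking that contracting an edge whose endpoints lie in different bags, or deleting an adhesion edge, does not create a torso that fails to be a minor of the original torso. This is the one place where a careful case analysis on where the contracted/deleted edge sits relative to the decomposition is genuinely needed; everything else is a direct application of \cref{imperiously} and closure of $\Ecal_{\Sigma}$ under taking minors.
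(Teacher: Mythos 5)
The paper gives no proof of this observation---it is treated as routine---and your argument correctly supplies exactly the verification that is being taken for granted: the case analysis over elementary minor operations for $\bdim_\Sigma$, and the passage to induced tree-decompositions for $\bdim^*_\Sigma$. The only point genuinely requiring care is the one you flag, namely that for a minor $H\le G$ with model $\{S_v\}_{v\in V(H)}$ the induced decomposition $\beta'(t)\coloneqq\{v\in V(H)\mid S_v\cap\beta(t)\neq\emptyset\}$ has torsos that are minors of the corresponding torsos of $G$; this does hold, because a connected branch set $S_v$ meeting $\beta(t)$ and $\beta(d)$ for adjacent $t,d$ must meet the adhesion $\beta(t)\cap\beta(d)$ (it separates the two sides), so the adhesion cliques of $G_t$ realize every torso edge of $H_t$ and the sets $S_v\cap\beta(t)$ remain connected in $G_t$, which completes your second step.
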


Our proof  has two steps. In Subsection \ref{overturning}
we prove that $\bdim_{\Sigma}(\mathscr{D}_{k}^{(\mathsf{h},\mathsf{c})})\in \Omega(k^{1/5}).$
Based on this result, in Subsection \ref{righteousness} we prove that 
$\bdim_{\Sigma}^{\star}(\mathscr{D}_{k}^{(\mathsf{h},\mathsf{c})})\in \Omega(k^{1/480}).$ In our proofs we do not make any particular effort to optimize the parametric dependences. While we keep them 
being polynomial, it seems that much better polynomial bounds might be achieved
with a more fine-grained analysis.

\subsection{Modulators of Dyck-grids}\label{overturning}
 
 In our proof  we  work with a simpler version of $\mathscr{D}^{(\mathsf{h},\mathsf{c})},$
 where the annulus grid is suppressed. 
The new parametric graph  is  denoted by $\widetilde{\mathscr{D}}^{(\mathsf{h},\mathsf{c})}=\langle \widetilde{\mathscr{D}}_{k}^{(\mathsf{h},\mathsf{c})}\rangle_{k\in\mathbb{N}},$ where the graph $\widetilde{\mathscr{D}}_{k}^{(\mathsf{h},\mathsf{c})}$ is obtained from $\mathscr{D}_{k}^{(\mathsf{h},\mathsf{c})}$ by removing the set $S$ of $4k^2$ vertices corresponding to the annulus grid $\mathscr{A}_{k}$
and, in the resulting graph,  among the $2k$ neighbors of the 
vertices that have been removed,
we join by edges the $k$ pairs that belong to the same cycle of $\mathscr{D}_{k}^{(\mathsf{h},\mathsf{c})}$ (see \cref{perniciously} for an example). Clearly $\widetilde{\mathscr{D}}_{k}^{(\mathsf{h},\mathsf{c})}\leq  \mathscr{D}_{k}^{(\mathsf{h},\mathsf{c})}$
and it is easy to see that ${\mathscr{D}}_{k}^{(\mathsf{h},\mathsf{c})}\leq  \widetilde{\mathscr{D}}_{2k}^{(\mathsf{h},\mathsf{c})}.$ This implies the following observation.
\begin{observation}
\label{mediocrity}
For every $(\mathsf{h},\mathsf{c})\in \mathbb{N}\times[0,2]\setminus\{(0,0)\},$  $\mathscr{D}^{(\mathsf{h},\mathsf{c})}\sim_{{\mathsf{L}}}\widetilde{\mathscr{D}}^{(\mathsf{h},\mathsf{c})}.$
\end{observation}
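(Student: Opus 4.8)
\textbf{Proof proposal for \cref{mediocrity}.}

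The plan is to establish the two one-directional minor relations that together witness $\mathscr{D}^{(\mathsf{h},\mathsf{c})}\sim_{\mathsf{L}}\widetilde{\mathscr{D}}^{(\mathsf{h},\mathsf{c})}$ via linear functions, since both parametric families are minor-monotone. The first direction, $\widetilde{\mathscr{D}}^{(\mathsf{h},\mathsf{c})}\lesssim_{\mathsf{L}}\mathscr{D}^{(\mathsf{h},\mathsf{c})}$, is essentially immediate: by construction $\widetilde{\mathscr{D}}_k^{(\mathsf{h},\mathsf{c})}$ is obtained from $\mathscr{D}_k^{(\mathsf{h},\mathsf{c})}$ by deleting the $4k^2$ vertices of the annulus grid $\mathscr{A}_k$ and then contracting, for each of the $k$ innermost paths of $\mathscr{A}_k$, the (now disconnected) pieces so that the two former neighbours on a common cycle become adjacent. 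Concretely, one realizes each new edge by contracting a path through the deleted annulus region. Hence $\widetilde{\mathscr{D}}_k^{(\mathsf{h},\mathsf{c})}\leq\mathscr{D}_k^{(\mathsf{h},\mathsf{c})}$ with the identity indexing function, which is linear.

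For the reverse direction I would show $\mathscr{D}_k^{(\mathsf{h},\mathsf{c})}\leq\widetilde{\mathscr{D}}_{2k}^{(\mathsf{h},\mathsf{c})}$. The idea is that inside $\widetilde{\mathscr{D}}_{2k}^{(\mathsf{h},\mathsf{c})}$, the ``doubled'' body of the Dyck-grid (the union of the handle-grids, crosscap-grids, and the connecting cycles, now of order $2k$) is roomy enough to host both a copy of the annulus grid $\mathscr{A}_k$ \emph{and} a copy of the full body of $\mathscr{D}_k^{(\mathsf{h},\mathsf{c})}$ attached to it correctly. More precisely: take the outer half of the concentric cycle structure of $\widetilde{\mathscr{D}}_{2k}$ — roughly $k$ of the $2k$ concentric cycles, together with appropriate radial paths — and contract it down to an $\mathscr{A}_k$; use the inner half (the remaining $\approx k$ cycles, which still carry all the transactions defining the handles and crosscaps) as the body of $\mathscr{D}_k$; and identify the interface between the contracted annulus and the retained inner body with the $k$ pairs of vertices that were joined by the new edges in the definition of $\widetilde{\mathscr{D}}$. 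One must check that the transactions (the handle/crosscap edges) survive in the inner half — this is where the factor of $2$ in $2k$ is used, so that after splitting off the annulus part there are still $k$ full cycles' worth of structure, and the transaction pattern of order $2k$ restricts to (a graph containing a subdivision of) the transaction pattern of order $k$.

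The main obstacle I anticipate is the bookkeeping of the transactions under the splitting: one has to verify that when the $2k$-cycle body of $\widetilde{\mathscr{D}}_{2k}^{(\mathsf{h},\mathsf{c})}$ is cut into an ``annulus-donor'' part and a ``body'' part, the handle and crosscap edges land (up to subdivision and contraction) so as to realize exactly the handle grid $\mathscr{H}_k$ and crosscap grid $\mathscr{C}_k$ pattern — in particular that the cyclic order $\mathscr{A}_k,\mathscr{H}_k,\ldots,\mathscr{H}_k,\mathscr{C}_k,\ldots,\mathscr{C}_k$ is respected and that no two transaction endpoints needed for $\mathscr{D}_k$ get merged by the contraction. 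This is best handled by appealing to \cref{supplanted} and \cref{perniciously}, reading off the minor model from the figures, rather than writing coordinates; the key observation (already flagged in the caption of \cref{supplanted}) that $\mathscr{H}_{k}$ and $\mathscr{C}_{k}$ each contain two vertex-disjoint $(2k\times k)$-grids, and $\mathscr{C}_k$ even a $(2k\times 2k)$-grid as a spanning subgraph, is exactly what gives the needed ``doubling room''. Both relations being witnessed by the linear functions $k\mapsto k$ and $k\mapsto 2k$, the equivalence $\mathscr{D}^{(\mathsf{h},\mathsf{c})}\sim_{\mathsf{L}}\widetilde{\mathscr{D}}^{(\mathsf{h},\mathsf{c})}$ follows.
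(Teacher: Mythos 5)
Your overall strategy coincides with the paper's, which proves the observation by exactly the two inequalities you target: $\widetilde{\mathscr{D}}_{k}^{(\mathsf{h},\mathsf{c})}\leq \mathscr{D}_{k}^{(\mathsf{h},\mathsf{c})}$ and $\mathscr{D}_{k}^{(\mathsf{h},\mathsf{c})}\leq \widetilde{\mathscr{D}}_{2k}^{(\mathsf{h},\mathsf{c})}$, with the linear indexing functions $k\mapsto k$ and $k\mapsto 2k$. Your argument for the first inequality (realize each of the $k$ added edges by contracting the corresponding row-path of the deleted annulus onto an endpoint) is correct and is what the paper's ``clearly'' refers to.

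The construction you sketch for the second inequality, however, would not produce $\mathscr{D}_{k}^{(\mathsf{h},\mathsf{c})}$. In a Dyck-grid the annulus piece is \emph{not} nested concentrically around the handle/crosscap pieces; it is spliced into the cyclic order side by side with them, attached to its two neighbouring pieces only along two radial paths of $k$ vertices each. If you take the $k$ outermost concentric cycles of $\widetilde{\mathscr{D}}_{2k}^{(\mathsf{h},\mathsf{c})}$ as an ``annulus'' and the $k$ innermost cycles as the ``body'', the two parts are joined by a radial edge in every column, i.e.\ along entire cycles, which is a different (and much denser) attachment pattern; the result is essentially $\widetilde{\mathscr{D}}_{2k}$ with a shrunken core, not $\mathscr{D}_{k}$. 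The doubling is used differently: each handle (resp.\ crosscap) segment of $\widetilde{\mathscr{D}}_{2k}^{(\mathsf{h},\mathsf{c})}$ spans $8k$ columns and $2k$ concentric rows and carries $2k$ transactions, whereas the corresponding segment of $\mathscr{D}_{k}^{(\mathsf{h},\mathsf{c})}$ needs only $4k$ columns, $k$ rows, and $k$ transactions. Realizing it inside the columns adjacent to a suitable subset of $k$ transactions (restricted to the inner $k$ rows, contracting the outer rows into them) leaves at least $4k$ consecutive transaction-free spare columns in one segment, and these spare columns, again restricted to $k$ rows, form the $(4k\times k)$-cylindrical piece that plays the role of $\mathscr{A}_{k}$ in the cyclic order. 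With this correction the bookkeeping you worry about becomes routine, and the equivalence follows as you state.
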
 
\begin{figure}[ht]
  \begin{center}
  \scalebox{1.1}{\includegraphics{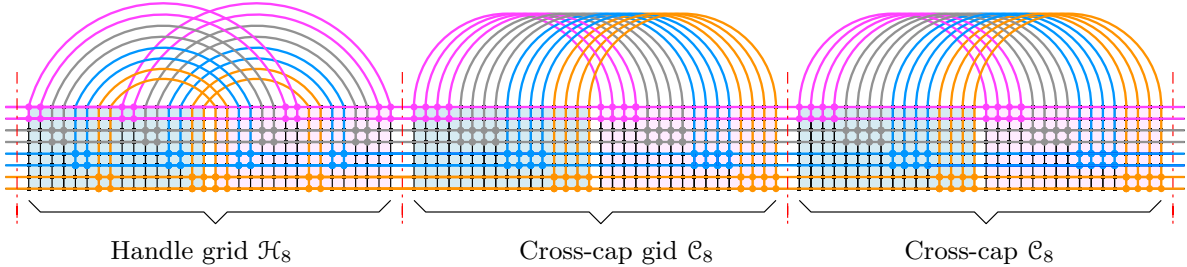}}
  \end{center}
    \caption{The graph $\widetilde{\mathscr{D}}_{8}^{1,2}$ and a half-integral packing of four subdivisions of $\widetilde{\mathscr{D}}_{2}^{1,2},$ colored by different colors.}
  \label{formalistic}
\end{figure}
Notice that there are $\mathsf{h}$ many
$(2k\times 2k)$-grids and $2\mathsf{c}$ many
 $(2k\times k)$-grids in $\widetilde{\mathscr{D}}^{(\mathsf{h},\mathsf{c})}_k$ 
 that are all pairwise disjoint and whose union spans all the vertices of  $\widetilde{\mathscr{D}}^{(\mathsf{h},\mathsf{c})}_k$.

Let $\widetilde{\mathscr{D}}^{(\mathsf{h},\mathsf{c})}_{k}$ be a graph in $\widetilde{\mathscr{D}}^{(\mathsf{h},\mathsf{c})}$.
 Given some non-negative integer $b≤k$, we recursively define the set $b$-\textsf{outer}$(\widetilde{\mathscr{D}}^{(\mathsf{h},\mathsf{c})}_k)$ of the \emph{$b$-outermost cycles} of  $\widetilde{\mathscr{D}}^{(\mathsf{h},\mathsf{c})}_k$ as follows.
We define $0$-\textsf{outer}$(\widetilde{\mathscr{D}}^{(\mathsf{h},\mathsf{c})})=\emptyset$. If $b≥1$, then we define $1$-\textsf{outer}$(\widetilde{\mathscr{D}}^{(\mathsf{h},\mathsf{c})})$ as the union of $(b-1)$-\textsf{outer}$(\widetilde{\mathscr{D}}^{(\mathsf{h},\mathsf{c})})$
and the singleton containing the  unique cycle of 
$$D\coloneqq \widetilde{\mathscr{D}}^{(\mathsf{h},\mathsf{c})}-\bigcup_{C\in \text{$(b-1)$-\textsf{outer}$(\widetilde{\mathscr{D}}^{(\mathsf{h},\mathsf{c})})$}}V(C)$$ 
that contains all  vertices of $D$ that have  degree 3. }

Let $G$ and $H$ be two graphs.
A \emph{half-integral packing of $H$ in $G$}
is a set $\mathcal{P}=\{H_{1},\ldots,H_{r}\}$
of subgraphs  of $G$
such that 
\begin{itemize}
\item for every $i\in[r],$ $H$ is a minor of $ H_{i}$
and 
\item no vertex of $G$ belongs to more than two graphs in $\mathcal{P}.$
\end{itemize}
 
 We will use the following observation (we omit the tedious proof that becomes apparent when considering the half-integral packing of $4$ copies of  $\mathscr{D}_{2}^{(\mathsf{h},\mathsf{c})}$  
 in $\mathscr{D}_{8}^{(\mathsf{h},\mathsf{c})}$  depicted in \cref{formalistic}).
 
 \begin{observation}
 \label{prerequisite}
 Let $(\mathsf{h},\mathsf{c})\in \mathbb{N}\times[0,2]$ and let $x,y\in\mathbb{N}_{\geq 1}.$
Then $\widetilde{\mathscr{D}}_{xy}^{(\mathsf{h},\mathsf{c})}$ contains a half-integral
packing of $x$ copies of $\widetilde{\mathscr{D}}_{y}^{(\mathsf{h},\mathsf{c})}.$
 \end{observation}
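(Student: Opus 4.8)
The plan is to exploit the \emph{self-similarity} of the Dyck-grids. Recall from the constructions in \cref{supplanted} and \cref{perniciously} that $\widetilde{\mathscr{D}}_{k}^{(\mathsf{h},\mathsf{c})}$ is obtained by cutting open $\mathsf{h}$ copies of the handle grid $\mathscr{H}_{k}$ and $\mathsf{c}$ copies of the crosscap grid $\mathscr{C}_{k}$ along their dotted line and gluing the resulting $\mathsf{h}+\mathsf{c}$ ``tiles'' cyclically, each tile being a planar $\Theta(k)\times k$ grid together with a fixed family of \emph{transaction} edges, consecutive tiles sharing a common path on $\Theta(k)$ vertices (the \emph{interface}). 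So producing $x$ half-integrally packed copies of $\widetilde{\mathscr{D}}_{y}^{(\mathsf{h},\mathsf{c})}$ amounts to choosing, inside each big tile, $x$ subdivided copies of the corresponding small tile that align along the interfaces so that, for each $i\in[x]$, the $i$-th small tiles of the big tiles close up cyclically into a subdivision of $\widetilde{\mathscr{D}}_{y}^{(\mathsf{h},\mathsf{c})}$. I would therefore isolate the combinatorial core as a \emph{per-tile packing lemma}: the order-$xy$ handle tile (resp.\ crosscap tile), with its two interface paths $P^{\mathsf{L}},P^{\mathsf{R}}$, contains a half-integral packing of $x$ subdivisions of the order-$y$ handle tile (resp.\ crosscap tile) in which the $i$-th member meets $P^{\mathsf{L}}$ in a prescribed subpath $P^{\mathsf{L}}_{i}$ and $P^{\mathsf{R}}$ in a prescribed subpath $P^{\mathsf{R}}_{i}$, with $P^{\mathsf{L}}_{1},\dots,P^{\mathsf{L}}_{x}$ pairwise disjoint and likewise for the $P^{\mathsf{R}}_{i}$. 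Granting this lemma, the theorem follows by assembly: apply it to each of the $\mathsf{h}+\mathsf{c}$ big tiles with the interface subpaths chosen consistently (the right subpath $P^{\mathsf{R}}_{i}$ used in one big tile being the left subpath $P^{\mathsf{L}}_{i}$ of the next) and let the $i$-th copy of $\widetilde{\mathscr{D}}_{y}^{(\mathsf{h},\mathsf{c})}$ be the cyclic union of the $i$-th small tiles; since the interface subpaths are pairwise disjoint across $i$, any vertex of multiplicity $>1$ lies in the interior of a single big tile, where it is used at most twice, so the packing is half-integral. (When $(\mathsf{h},\mathsf{c})=(0,0)$ the statement is vacuous for $\widetilde{\mathscr{D}}$, and otherwise one may also pass through \cref{mediocrity}.)

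For the per-tile lemma I would picture the order-$xy$ tile as a $\Theta(xy)\times xy$ grid and carve the $x$ small tiles out along a ``staircase'': the $i$-th small tile occupies a block of $\Theta(y)$ consecutive columns together with a band of $\Theta(y)$ consecutive rows, the column blocks pairwise disjoint and the row bands pairwise disjoint, with the staircase arranged to wind once around the tile so that the small tiles collectively sweep across the transaction edges. The rows and columns used by no small tile form a ``slack'' region through which the subdivided transaction edges of each small tile, together with its short links to $P^{\mathsf{L}}_{i}$ and $P^{\mathsf{R}}_{i}$, are routed; routing paths of distinct copies may be forced to cross, and it is precisely at such crossings that a vertex is reused. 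This routing is the substance of the argument, and it is exactly what \cref{formalistic} displays for $x=4$, $y=2$, $(\mathsf{h},\mathsf{c})=(1,2)$; one checks that the depicted pattern repeats verbatim under the refinement $y\mapsto xy$, and that each small tile inherits genuine handle/crosscap transactions (not merely annulus structure) by virtue of the band it traverses.

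The main obstacle is this per-tile routing, not the bookkeeping around it. One cannot make the $x$ copies pairwise vertex-disjoint: a handle or crosscap tile of positive order is nonplanar, and by additivity of Euler-genus over blocks a surface of Euler-genus $2\mathsf{h}+\mathsf{c}$ cannot host $x$ vertex-disjoint nonplanar subgraphs once $2\mathsf{h}+\mathsf{c}<2x$, so the copies are \emph{forced} to overlap in the part of $\widetilde{\mathscr{D}}_{xy}^{(\mathsf{h},\mathsf{c})}$ that realizes the handles and crosscaps. The role of allowing multiplicity two is exactly to let every copy thread through this shared ``topological core'' while confining all overlaps to multiplicity at most two. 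Making this precise means tracking the transaction patterns through the scaling and verifying that the staircase can be drawn without forcing a third copy through any vertex; this is genuinely a finite planar-routing verification, which is why the cleanest exposition is the explicit picture of \cref{formalistic} rather than an abstract argument.
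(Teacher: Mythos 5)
The paper does not actually prove this observation: it explicitly omits the ``tedious proof'' and points to the half-integral packing of four copies of $\widetilde{\mathscr{D}}_{2}^{(\mathsf{h},\mathsf{c})}$ in $\widetilde{\mathscr{D}}_{8}^{(\mathsf{h},\mathsf{c})}$ shown in \cref{formalistic}, to be generalized by self-similarity. Your tile-by-tile decomposition with consistent interface subpaths, culminating in the same figure-driven routing verification (and the correct genus-additivity explanation of why full integrality is impossible), is essentially that same approach, organized a bit more explicitly, and it stops at the same point the paper does.
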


\paragraph{Definition of $\Sbbb_{\mathsf{h},\mathsf{c}}$.}
Let $h\in\mathbb{N}$ and $c\in[0,2]$ be integers where $(\mathsf{h},\mathsf{c})$
is different from $(0,0).$ 
We define $\Sbbb_{\mathsf{h},\mathsf{c}}$ as the set containing every surface where it is not possible to embed all graphs in $\mathscr{D}^{(\mathsf{h},\mathsf{c})}.$
Notice that $\Sbbb_{\mathsf{h},\mathsf{c}}$ contains every surface that does not contain $\Sigma^{(\mathsf{h},\mathsf{c})}.$
Due to \cref{territorial},  if $(\mathsf{h},\mathsf{c})\neq (0,0),$ then $\Sbbb_{\mathsf{h},\mathsf{c}}$ is non-empty.

\begin{lemma}
\label{antisthenes}
Let $(\mathsf{h},\mathsf{c})\in \mathbb{N}\times[0,2]\setminus\{(0,0)\}$
and let $\Sigma\in\Sbbb_{\mathsf{h},\mathsf{c}}.$
Then $\bdim_{\Sigma}(\mathscr{D}_{k}^{(\mathsf{h},\mathsf{c})})\in \Omega_{\mathsf{h},\mathsf{c}}(k^{1/5}).$
\end{lemma}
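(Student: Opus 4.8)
The plan is to show that if $X$ is a modulator with $\mathscr{D}_k^{(\mathsf{h},\mathsf{c})}-X$ embeddable in $\Sigma$, then $X$ must be ``spread out'' across the grid-like structure of $\mathscr{D}_k^{(\mathsf{h},\mathsf{c})}$, and this spreading can be converted into a large grid rooted at $X$. First I would pass to the simplified graph $\widetilde{\mathscr{D}}_k^{(\mathsf{h},\mathsf{c})}$ via \cref{mediocrity}, so it suffices to bound $\bdim_\Sigma(\widetilde{\mathscr{D}}_k^{(\mathsf{h},\mathsf{c})})$ from below. The key topological fact is that $\Sigma\in\Sbbb_{\mathsf{h},\mathsf{c}}$ means $\Sigma$ does not contain $\Sigma^{(\mathsf{h},\mathsf{c})}$, hence, by \cref{territorial}, $\widetilde{\mathscr{D}}_m^{(\mathsf{h},\mathsf{c})}$ is \emph{not} embeddable in $\Sigma$ once $m$ exceeds some constant $m_0=m_0(\mathsf{h},\mathsf{c})$ (a copy of $\widetilde{\mathscr{D}}_m^{(\mathsf{h},\mathsf{c})}$ contains, as a minor, every graph on a bounded number of vertices that embeds in $\Sigma^{(\mathsf{h},\mathsf{c})}$, in particular graphs not embeddable in $\Sigma$). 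Consequently, for any set $X$ with $\widetilde{\mathscr{D}}_k^{(\mathsf{h},\mathsf{c})}-X\in\Ecal_\Sigma$, the set $X$ must ``hit'' every subdivision of $\widetilde{\mathscr{D}}_{m_0}^{(\mathsf{h},\mathsf{c})}$ sitting inside $\widetilde{\mathscr{D}}_k^{(\mathsf{h},\mathsf{c})}$.

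Next I would exploit the half-integral packing from \cref{prerequisite}: writing $k = x\cdot m_0$ with $x=\lfloor k/m_0\rfloor$, the graph $\widetilde{\mathscr{D}}_k^{(\mathsf{h},\mathsf{c})}$ contains a half-integral packing $\{H_1,\dots,H_x\}$ of $x$ copies of $\widetilde{\mathscr{D}}_{m_0}^{(\mathsf{h},\mathsf{c})}$. Since $X$ must intersect each $H_i$ and no vertex lies in more than two of the $H_i$, we get $|X|\geq x/2 = \Omega_{\mathsf{h},\mathsf{c}}(k)$. But a large modulator is not by itself enough — I need \emph{bidimensionality} of $X$, i.e. a large $(\ell\times\ell)$-grid as an $X$-minor. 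The bridge is to iterate the packing argument at a \emph{finer} scale inside one of the many spanning grids of $\widetilde{\mathscr{D}}_k^{(\mathsf{h},\mathsf{c})}$ (recall it contains $h$ spanning $(2k\times 2k)$-grids, or in the crosscap case a spanning $(2k\times 2k)$-grid). The idea: tile such a spanning grid $\Gamma_{2k}$ into a $(t\times t)$ array of sub-blocks, each of side $\Theta(2k/t)$; if each sub-block, together with the attached transactions, contains a subdivision of $\widetilde{\mathscr{D}}_{m_0}^{(\mathsf{h},\mathsf{c})}$ (which holds once $2k/t \geq$ const, again by \cref{territorial}), then $X$ must contain a vertex in each of the $t^2$ blocks. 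Placing one such vertex per block and routing through the sub-block grids and the ambient grid structure yields a $(\Omega(t)\times\Omega(t))$-grid as an $X$-minor — here I would invoke \cref{subjection} to make the routing rigorous, as the hit vertices, being spread through a central portion of a large grid, give a grid minor of side $\Omega(t^{1/2})$ rooted at them. Optimizing $t$ against $k$ (balancing $t = \Theta(k)$ blocks of constant size, then taking the $\sqrt[4]{\cdot}$ loss from \cref{subjection} plus possibly a further constant-factor loss from the half-integrality and from converting the half-integral packing into genuinely disjoint blocks) gives $\bdim(\widetilde{\mathscr{D}}_k^{(\mathsf{h},\mathsf{c})}, X) = \Omega_{\mathsf{h},\mathsf{c}}(k^{1/5})$, uniformly over all valid modulators $X$, which is the claim.

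The main obstacle I expect is the last step: turning ``$X$ hits every block'' into ``$X$ supports a large grid minor''. Merely knowing $X$ meets each block of a tiling does not immediately produce a grid rooted at $X$, because the hit vertices could be clustered near block boundaries or the connecting paths could be forced to reuse vertices. The cleanest route is to choose the blocks to be the sub-grids of one genuine spanning grid $\Gamma_{2k}\subseteq\widetilde{\mathscr{D}}_k^{(\mathsf{h},\mathsf{c})}$ (so the blocks are honestly vertex-disjoint, sidestepping the half-integrality at this stage), argue via \cref{territorial} that each block-plus-incident-transactions already fails to embed in $\Sigma$ once the block side is a suitable constant, deduce $X$ meets a constant-fraction of an $\Omega(k)\times\Omega(k)$ pattern of blocks lying in the central part of $\Gamma_{2k}$, and then feed these $\Omega(k^2)$-many $X$-vertices (or one representative per block, so $\Omega(k^2)$ of them) into \cref{subjection} to extract an $(\Omega(k^{1/2})\times\Omega(k^{1/2}))$-grid as an $X$-minor. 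The exponent $1/5$ (rather than $1/2$) is the safety margin that absorbs the losses from the cases where only the handle/crosscap-bearing part of $\widetilde{\mathscr{D}}_k^{(\mathsf{h},\mathsf{c})}$ can be used and from the fact that $m_0$ and the per-block constants depend on $(\mathsf{h},\mathsf{c})$; no attempt is made to optimize it.
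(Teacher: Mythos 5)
There is a genuine gap, and it sits exactly where your plan pivots from a one-dimensional to a two-dimensional packing. Your claim that each constant-side sub-block of a spanning grid of $\widetilde{\mathscr{D}}_{k}^{(\mathsf{h},\mathsf{c})}$, ``together with the attached transactions,'' contains a subdivision of $\widetilde{\mathscr{D}}_{m_0}^{(\mathsf{h},\mathsf{c})}$ is false: a local block is a planar piece of the grid, most interior blocks are incident to no transaction at all, and a copy of $\widetilde{\mathscr{D}}_{m_0}^{(\mathsf{h},\mathsf{c})}$ must wrap around the entire cyclic/handle/crosscap structure (it is non-planar). Consequently $X$ is \emph{not} forced to meet $\Theta(k^{2})$ disjoint blocks. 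Indeed, there are modulators of size $O(k)$ (e.g., the vertices of a few radial crossing paths, which cut the handles and crosscaps), so no argument can force $\Omega(k^{2})$ many $X$-vertices, and such a modulator can moreover be concentrated along a single column or pushed into the outermost layers. The only packing available is the one of \cref{prerequisite}: $\Theta(k)$ \emph{concentric} copies, packed half-integrally in the radial direction, which yields only $|X|=\Omega(k)$. Your fallback paragraph doubles down on the same false premise (``each block-plus-incident-transactions already fails to embed in $\Sigma$''), which is wrong for instance when $\Sigma$ is the sphere.

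The paper's proof is built around precisely the difficulty you dismiss as a ``safety margin.'' After obtaining $|X_b|=\Omega(k)$ for the part of $X$ avoiding the $b$ outermost cycles (via \cref{prerequisite}), it performs a careful case analysis (handle vs.\ crosscap) to construct an $(m\times m)$-grid minor in which $\Omega(k/b)$ of these $X$-vertices land in the \emph{central} $(m-2b)\times(m-2b)$-subgrid — the centrality hypothesis of \cref{subjection} is what forces the loss of a factor $b$ (one must find $2b$ consecutive crossing paths missed by the chosen vertices, by pigeonhole). Balancing $\lfloor\sqrt[4]{k/b}\rfloor\geq 2b$ gives $b=\Theta(k^{1/5})$, and this trade-off is the actual source of the exponent $1/5$; it is not slack absorbing constant-factor losses. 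To repair your argument you would need to abandon the $t\times t$ tiling entirely and instead control where the $\Omega(k)$ guaranteed $X$-vertices sit relative to the boundary of a suitable grid minor, which is essentially the paper's route.
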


\begin{proof}
Notice that by  \cref{mediocrity} and the minor-monotonicity of $\bdim_{\Sigma}$, it is enough to prove that $\bdim_{\Sigma}(\widetilde{\mathscr{D}}_{k}^{(\mathsf{h},\mathsf{c})})\in  \Omega_{\mathsf{h},\mathsf{c}}(k^{1/5}).$
Again by  \cref{mediocrity} we know that  there is no surface in $\Sbbb_{\mathsf{h},\mathsf{c}}$ where all graphs in $\widetilde{\mathscr{D}}^{(\mathsf{h},\mathsf{c})}$ 
can be embedded. Let 
$k_{0}$ be minimum such that  $\widetilde{\mathscr{D}}^{(\mathsf{h},\mathsf{c})}_{k_0}$ is not embeddable
in any of the  surfaces in $\Sbbb_{\mathsf{h},\mathsf{c}}.$ Observe also that 
$|\Sbbb_{\mathsf{h},\mathsf{c}}|$ is bounded by some function of $\mathsf{h}$ and $\mathsf{c}$.  Clearly, $k_0$ depends exclusively on the choice of $\mathsf{h},\mathsf{c}.$
Let $X$ be a modulator of $\mathscr{D}_{k}^{(\mathsf{h},\mathsf{c})}$ to the {embeddability} in $\Sigma.$

Our target is to prove that $\bdim(\widetilde{\mathscr{D}}^{(\mathsf{h},\mathsf{c})}_{k},X)\in \Omega_{\mathsf{h},\mathsf{c}}(k^{1/5}).$

For each $b\in[0,k],$ we set $\mathcal{C}_{b}\coloneqq b$-\textsf{outer}$(\widetilde{\mathscr{D}}^{(\mathsf{h},\mathsf{c})}_k)$, and we 
define $B_{b}=V(\cupall \mathcal{C}_{b})$ and  $X_{b}=X\setminus B_b.$
The value of $b$ will be determined later.
\smallskip

\begin{figure}[ht]
\begin{center}
\includegraphics[height = 6.8cm]{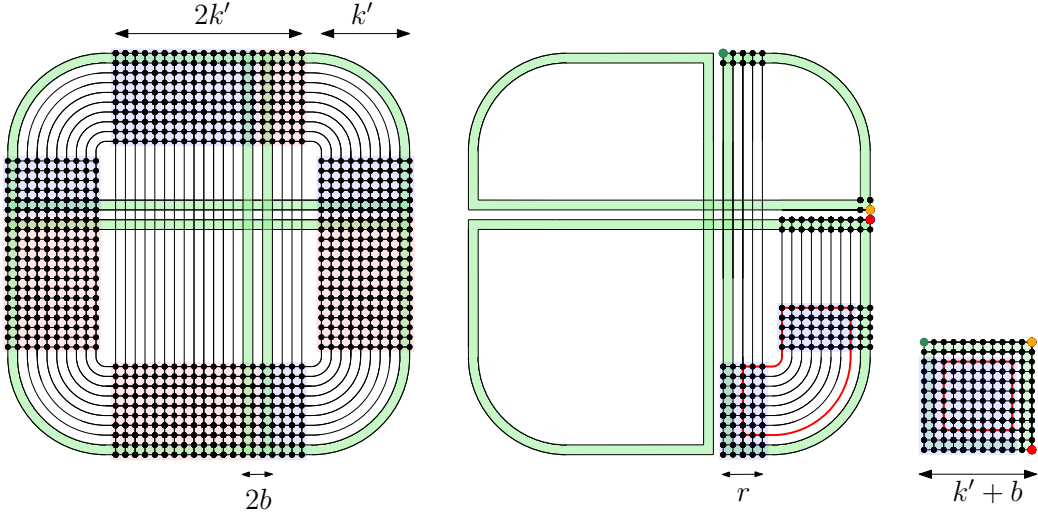}
\end{center}
  \caption{A visualization of the first subcase of Case 1 in the proof of \cref{antisthenes}, where $k'=10,$ $b=2,$ $m=12,$ and $r=5.$  The $b$ {outermost cycles} are colored \textcolor{AO}{green}.
  We depict the partition of $G^{\star}_i.$ to four pairwise disjoint $(2k'\times k')$-grids, and the $(m\times m)$-grid $\Gamma_{m}$ as a minor in it. The perimeter of the central $(m-2b) \times (m-2b)$-sub-grid is depicted \red{red}. The \red{red} and the \violet{violet} squares, as well as the attached arrows indicate the correspondence between vertices.}
\label{nesickness}
\end{figure}

By \cref{prerequisite}, there is a set $\mathcal{P}_b$ of $\lfloor \frac{k-b}{k_0}\rfloor$ subgraphs of $\widetilde{\mathscr{D}}^{(\mathsf{h},\mathsf{c})}_{k}$ that is a half-integral packing of $\widetilde{\mathscr{D}}^{(\mathsf{h},\mathsf{c})}_{k_0}$ in $\widetilde{\mathscr{D}}^{(\mathsf{h},\mathsf{c})}_{k}$ and such that the graphs in $\mathcal{P}_b$ do not contain vertices in $B_{b}.$
Notice that $X_{b}$ should contain at least one vertex from every graph in $\mathcal{P}_b$ and no vertex of $X_{b}$ can belong to more than two graphs in $\mathcal{P}_b.$
This implies that 
\begin{eqnarray}
    |X_{b}|\geq  \lfloor \frac{k-b}{2k_0}\rfloor. \label{this_zer}
\end{eqnarray}
Observe now that the vertex set of $\widetilde{\mathscr{D}}^{(\mathsf{h},\mathsf{c})}_{k}$ can be partitioned into $h+c$ vertex sets $V_{1},\ldots,V_{h+c}$ such that, for each $i\in[h+c],$ $\widetilde{\mathscr{D}}^{(\mathsf{h},\mathsf{c})}_{k}$ contains 
 $G_{i}$ as a $V_{i}$-minor where ${G}_{i}$ is the copy of either $\mathscr{H}_{k}$ or $\mathscr{C}_{k}$ and $V_{i}=V({G}_{i})$ (this partition is indicated in \cref{formalistic}).
As before, we define $\mathcal{C}_{b}^i$ to be the set of the $b$ {outermost} cycles of $G_{i}$, and we define $B_{b}^i\coloneqq V(\cupall \mathcal{C}_{b}^{i})$ and  $X_{b}^{i}=X\setminus B_b^i.$ We now choose $i$ such that $|X_{b}^{i}|$ is maximized and  this means, because of \eqref{this_zer}, that  
\begin{eqnarray}
|X_{b}^i|\geq \lfloor \frac{k-b}{2k_0(h+c)}\rfloor.\label{first_one}
\end{eqnarray}
We distinguish two cases, depending on whether ${G}_{i}=\mathscr{H}_{k}$ or ${G}_{i}=\mathscr{C}_{k}.$
For simplicity we drop $i$ in the notation of $X_{b}^i$, as $i$ is fixed from now on.
\medskip

\noindent\emph{Case 1}. ${G}_{i}=\mathscr{H}_{k}.$ Let $k'=\lfloor k/2\rfloor.$
Let $\overline{\Ccal}_{b}^{i}$ be the cycles of $\mathscr{H}_{k}$ that are not in $\mathcal{C}_{b}^i$.
As a first step, among the $k-b$ cycles of $\overline{\Ccal}_{b}^{i}$ we consider the set $\mathcal{C}$ of $k'-b$ such cycles whose union meets the biggest possible number of vertices in $X_{b}.$
Observe that the cycles of $\mathcal{C}$ meet at least  $|X_{b}|(\frac{1}{2}-\frac{b}{2k-2b})$ vertices of $X_{b}.$ 
Let $X_{b}^{\star}=X_{b}\cap V(\cupall \mathcal{C})$ and, using \eqref{first_one}, observe that 
\begin{eqnarray}
|X_{b}^{\star}|\geq \lfloor (\frac{1}{2}-\frac{b}{2k-2b})\frac{k-b}{2k_0(h+c)}\rfloor\geq \lfloor \frac{k-2b}{4k_0(h+c)}-\frac{b}{4k_0(h+c)}\rfloor=\lfloor \frac{k-3b}{4k_0(h+c)}\rfloor.\label{second_one}    
\end{eqnarray}

We now consider the minor $G^{\star}_i$ of 
${G}_{i}$ that is obtained by contracting every edge that is not in a transaction,  
has endpoints in different cycles of $\overline{\Ccal}_{b}^{i}$ 
and  do not have both endpoints in $V(\cupall \mathcal{C})$ (we assume that, during contractions, vertices in $X^{\star}_{b}$ prevail).
Notice that  $G_{i}^{\star}$ is a minor of $G$ containing the vertices of $X_{b}^{\star}$
(that are also vertices of $G_{i}$).

Observe  that $G^{\star}_i$ contains a $(8k' \times  k')$-cylindrical grid
as a spanning subgraph that contains $k'$ cycles and $8k'$ paths that we call
 \emph{rails}. Anti-diametrical pairs of rails can be seen as 
 subpaths of paths of length $2k'-1,$ we call them  \emph{crossing paths}, whose endpoints are in the outer cycle of  $G^{\star}_i.$
 Notice that, the vertex set of   $G^{\star}_i$ 
 is the union of two disjoint $2k'\times 2k'$-grids, say $\Gamma^1$ and $\Gamma^{2}.$ We call a crossing path of $G^{\star}_i.$ 
\emph{vertical} if it belongs to $\Gamma^{1}$
 and  \emph{horizontal} if it belongs to $\Gamma^{2}.$
 
Also, $G^{\star}$ contains four pairwise disjoint $(2k'\times k')$-grids.
We call them \emph{north, south, west}, and \emph{east grid}.
(In Figures \ref{nesickness} and \ref{echibition} each these grids is bicolored by \red{red}/\blue{blue} frames.)

We now assume, without loss of generality, that the south grid has the majority of the 
vertices of $X_b^{\star}.$ Let this vertex set be  $\widehat{X}_{b}^{\star}$ and observe, using \eqref{second_one},  that 
\begin{eqnarray}
|\widehat{X}_{b}^{\star}|\geq  \lfloor |{X}_{b}^{\star}|/4\rfloor\geq  \lfloor \frac{k-3b}{16k_0(h+c)}\rfloor.\label{one_tho_g}    
\end{eqnarray}

Next we observe that there 
is a subset $S_{b}$ of $\widehat{X}_{b}^{\star}$ and a set $\mathcal{Q}$ of $2b$ consecutive  {vertical crossing paths} 
such that  $|S_{b}|\geq  \lfloor\frac{|\widehat{X}_{b}^{\star}|}{2b}\rfloor-1$ and $S_{b}\cap \cupall\mathcal{Q}=\emptyset.$
W.l.o.g. we assume that the majority of the 
vertices of $S_{b}$ is ``on the right''  of the vertical crossing paths in $\mathcal{Q}$, and we denote these vertices by  $S_{b}'.$
Clearly, using \eqref{one_tho_g},  
\begin{eqnarray}
|S_{b}'|\geq  \lfloor |S_{b}|/2\rfloor\geq  \lfloor \frac{k-3b}{64bk_0(h+c)}\rfloor-1.\label{more_olio}
\end{eqnarray}

\begin{figure}[ht]
\begin{center}
\includegraphics[height = 6.8cm]{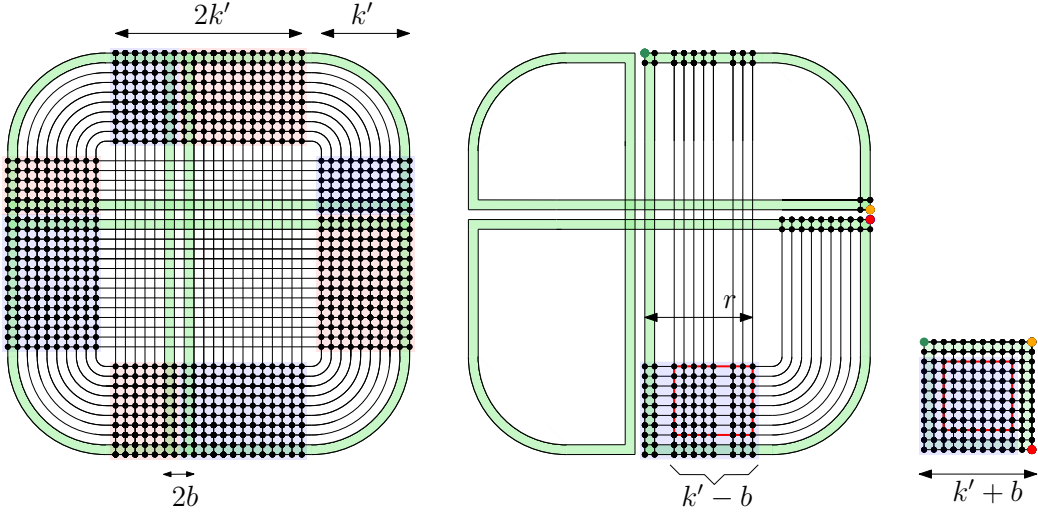}
\end{center}
  \caption{
  A visualization of the second subcase of Case 1 in the proof of \cref{antisthenes}, where $k'=10,$ $b=2,$ $m=12,$ and $r=13.$ The $b$ {outermost cycles} are colored \textcolor{AO}{green}.
  We depict the partition of $G^{\star}_i.$ to four pairwise disjoint $(2k'\times k')$-grids, and the $(m\times m)$-grid $\Gamma_{m}$   as a minor in it. The perimeter of the central $(m-2b) \times (m-2b)$-subgrid is depicted \red{red}.}
\label{echibition}
\end{figure}

Let  $r$ be the number of consecutive {rails} in the south grid starting from the second half of the paths in $\mathcal{Q}$ (notice that  $r\geq  b+1$ as at least one rail should be intersected by $S_{b}'$). In \cref{nesickness} and \cref{echibition} we have that $r=5$ and $r=13$ respectively.
Let $m=k'+b.$
We will consider a {subdivision} of the $(m\times m)$-grid $\Gamma_{m}$ by considering rails of the south grid and, possibly, some of the east one. For this,  we distinguish two subcases as follows:

\begin{itemize}
\item If $r<k'$ then we complete the $r$ rails with the $k'-r+b$ lower rails in the east grid and construct the $(m\times m)$-grid $\Gamma_{m}$ (see \cref{nesickness}). Set $S_{b}''\coloneqq S_{b}'.$

\item If $r\geq  k,$ then among all $((k'-b)\times k)$-grids that can be made 
using $k'-b$  rails of the south grid and on the right of 
$\mathcal{Q},$ consider those that intersect the biggest number of vertices in $S_{b}'.$ Also complete the  $(m\times m)$-grid $\Gamma_{m}$  by adding $b$ rails from the rightmost grid (see \cref{echibition}).
Let  $S_{b}''$ be the vertices of $S_{b}'$ 
that belong to the $k'-b$ chosen  rails 
and observe that $|S_{b}''|\geq  \lfloor |S_{b}'|/2\rfloor.$
\end{itemize}
Let $f(b,k)\coloneqq \lfloor \frac{k-3b}{128bk_0(h+c)}\rfloor-1$. 
Notice that, in any case, 
(recalling \eqref{more_olio}), it holds that 
\begin{eqnarray}
|S_{b}''|\geq  \lfloor |S_{b}'|/2\rfloor\geq  f(b,k)\text{}   
\end{eqnarray}

\begin{figure}[ht]
\begin{center}
\includegraphics[height = 6.6cm]{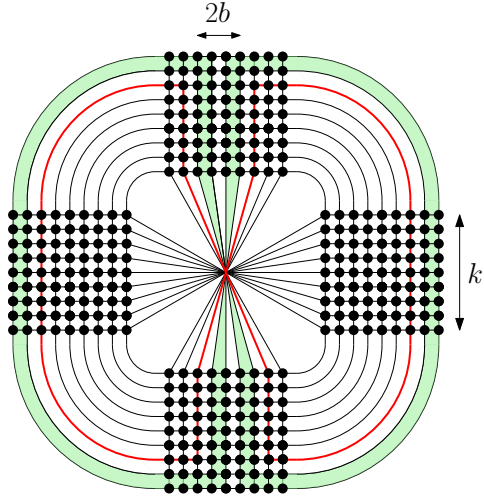}
\end{center}
  \caption{
  A visualization of Case 2 in the proof of \cref{antisthenes}, where $k=9,$   $b=2,$ and  $m=18.$ The $b$ {outermost cycles} are colored \textcolor{AO}{green}.
  We depict the partition of ${G}_{i}=\mathscr{C}_{k}$ to four pairwise disjoint $(k\times k)$-grids and the $(m\times m)$-grid as a spanning subgraph of $\mathscr{C}_{k}.$ The perimeter of the central $\big((m-2b) \times (m-2b)\big)$-subgrid is depicted \red{red}.}
\label{scrupulously}
\end{figure}

In other words, the central $(m-2b) \times (m-2b)$-subgrid of this $(m\times m)$-grid    contains at least $f(b,k)$ vertices whose precursors contain vertices from $X_{b}.$
\bigskip

\noindent\emph{Case 2}. ${G}_{i}=\mathscr{C}_{k}.$
As we did before, we refer to the $4k$ paths of $\mathscr{C}_{k}$ as \emph{rails}. Also, anti-diametrical pairs of rails are seen as subpaths of $2k$ paths of length $2k-1$ and, as before, we call them  \emph{crossing paths} (see 
 Figure \ref{scrupulously}). 
Notice that the vertex set of  $\mathscr{C}_{k}$ is the union of two disjoint   $(2k\times k)$-grids, say $\Gamma^1$ and $\Gamma^{2}.$ We call a crossing path of $\mathscr{C}_{k}$ \emph{vertical} if it belongs to $\Gamma^{1}$ and \emph{horizontal} if it belongs to $\Gamma^{2}.$

Observe that there is a subset $S_{b}$ of $X_{b}^i$ and a set $\mathcal{Q}$ of $2b$ consecutive  crossing paths 
such that  $|S_{b}|\geq  \lfloor\frac{|X_b^i|}{2b}\rfloor-1$ and $S_{b}\cap \,\cupall\mathcal{Q}=\emptyset.$
We have that $|S_{b}|\geq  \lfloor \frac{k-3b}{4bk_0(h+c)}\rfloor-1≥f(b,k).$

Let $m=2k.$
Observe that  $\mathscr{C}_{k}$ contain as  subgraph an $(m\times m)$-grid $\Gamma_{m}$ spanning all of its vertices.
To see this, it is enough to remove all the edges between the vertices of the $b$-th and the vertices of the $(b+1)$-th path of $\mathcal{Q}$ from $\mathscr{C}_{k}$.

Notice that the subgraph of $\Gamma_{m}$ induced by its $b$ {outermost} vertices is exactly $\Gamma_{m}\setminus (B_{b}\cup \cupall\mathcal{Q}).$
Moreover, the central  $\big((m-2b)\times (m-2b)\big)$-subgrid of  $\Gamma_{m}$ contains at least $f(b,k)$ vertices from $X_b.$
\bigskip

In both of the two cases above, we choose the margin $b_k$ as the maximum 
integer $b$ such that $\lfloor \sqrt[4]{f(b,k)}\rfloor\geq  2b.$
This implies that  $\lfloor\sqrt[4]{|S_{b_k}|}\rfloor\geq  2b_{k},$ therefore, by \cref{subjection}, $\bdim({\widetilde{\mathscr{D}}^{(\mathsf{h},\mathsf{c})}_{k}},X)\geq  \lfloor\sqrt[4]{|S_{b_k}|} \rfloor\geq  2b_{k}.$   
Observe now that in both cases $b_{k}\in Θ_{\mathsf{h},\mathsf{c}}(k^{1/5}),$ therefore, by \cref{imperiously}
 $\bdim({\widetilde{\mathscr{D}}^{(\mathsf{h},\mathsf{c})}_{k}},X)\in \Omega_{\mathsf{h},\mathsf{c}}(k^{1/5}).$
\end{proof}

\subsection{Lower bounds under the presence of clique-sums}
\label{righteousness}

It now remains to prove that, given that $\bdim_{\Sigma}(\widehat{\mathscr{D}}_{k}^{(\mathsf{h},\mathsf{c})})$ is big, then also $\bdim_{\Sigma}^{\star}(\widehat{\mathscr{D}}_{k}^{(\mathsf{h},\mathsf{c})})$ is also big. For this we need to introduce some new concepts and some preliminary results.

A \emph{partially triangulated $(k\times k)$-grid} is any graph that can be obtained from a $(k\times k)$-grid (we call it the {\em underlying grid}) by adding edges in a way that the resulting graph remains planar.
\\

We say that a graph $G$ with a vertex set $X$, can 
be \emph{$X$-contracted} to a graph $H$ 
if $H$ is an $X$-minor of $H$, certified 
by some collection $\mathcal{S}=\{S_{v}\mid v\in V(H)\}$ of pairwise vertex-disjoint {connected} vertex sets, as defined in \cref{recommendation}, with the additional 
demand that for every edge of $G$ between vertices 
belonging  in two distinct sets $S_{v}$ anf $S_{u}$, there is an edge between $v$ an $u$ in $H$.

 The following lemma follows easily from the results of \cite{KawarabayashiTW18anewp} (see also \cite{SauStamoulisThilikos2024}). We provide a sketch of the proof as the arguments are quite standard  and a detailed description would require
lengthy definitions  of concepts that do not offer anything in the rest of this paper.


%
%
%

%

\begin{proposition}
\label{rebellious}
There is a polynomial function $f_{\ref{rebellious}}: \Nbbb^{3}\to\Nbbb$ such that 
if $G$ is a connected graph and $X\subseteq V(G)$
where  $\bdim(G,X)≥f_{\ref{rebellious}}(\hw(G),\Delta(G),r)$, then  $G$ 
can be $X$-contracted to a partially triangulated 
$(r\times r)$-grid. 
\end{proposition}

\begin{proof}[Proof (sketch)]
From \cite[Theorem 1.5]{KawarabayashiTW18anewp} (see also \cite{SauStamoulisThilikos2024})
we known that there are polynomial functions $f_{1}:\Nbbb^{2}\to\Nbbb$ and $f_{2}:\Nbbb\to\Nbbb$
such that if $G$ contains a $f_{1}(k,\hw(G))$-wall $W$ as a subgraph then 
there is a set $A\subseteq V(G)$, $|A|≤f_{2}(\hw(G))$ and a subwall $W'$ 
of $W$ such that $W'$ is a ``flat'' $k$-wall in $G-A$.
We also know that the $≤f_{2}(\hw(G))$ vertices in $A$ have at most $f_{2}(\hw(G))\cdot\Delta(G)$
neighbors in $G-A$ which, in turn implies  that, if we set $k\coloneqq 8r\lceil \sqrt{f_{2}(\hw(G))\cdot\Delta+1}\rceil$,
then there is a subwall $W''$ of $W'$ that is a flat $8r$-wall in $G$.

By \cref{uncoordinated} we know that 
if  $\bdim(G,X)≥ck$ then  $G$ contains 
a rooted $k$-wall $(W^+,X)$  as a subgraph.
Let $W$ be the $k$-wall obtained if we remove from $W$ its hairs.
By applying the conclusion of the previous paragraph on $W$,
we  may find a subwall $W''$ that is a flat $8r$-wall of $G$.
We next attach 
to each maximally induced path  of $W''$ one of the hairs from $W^+$.  The flatness of  the $8r$-wall 
$W''$ and the existence of a hair attached to every maximally induced path of $W$
permits to contract edges in $G$ so to contract 
$W''$ to a partially triangulated  $(r\times r)$ where each vertex has a precursor in $X$. This can be done by using the concept of ``canonical partitions'' as in \cite[Subsection 3.1]{SauST23kapex}.
\end{proof}

For $k\in\mathbb{N}_{{\geq 1}},$ a graph is a \emph{crossed $k$-grid} if it can be obtained as follows:
first take the $\big((k+2)\times (k+2)\big)$-grid, then remove all outermost layer  edges (edges with both endpoints  of degree $<4$), subdivide each of the remaining edges at least once and then take the line graph of the resulting graph, i.e.,
correspond a vertex to each edge and make two vertices adjacent if the corresponding edges share an endpoint (see \cref{liquidating}).

\begin{observation}
\label{identifies}
If $G$ has a tree decomposition where all torsos have Hadwiger number at most $h,$ then also $G$ has Hadwiger number at most $h.$
\end{observation}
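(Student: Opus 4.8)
The plan is to prove the cleaner statement that, for every $n\in\mathbb{N}$ and every tree-decomposition $(T,\beta)$ of $G$, if $K_{n}$ is a minor of $G$ then $K_{n}$ is a minor of one of the torsos $G_{t}$; the Observation then follows by applying this with $n=h+1$. So I would fix $(T,\beta)$ together with a model $\{B_{1},\dots,B_{n}\}$ of a $K_{n}$-minor in $G$, that is, pairwise disjoint connected subsets of $V(G)$ with an edge of $G$ between every two of them, and look for a single node whose torso carries the whole model.

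First I would locate a bag meeting every branch set. For each $v$ the node set $T_{v}\coloneqq\{t\in V(T)\mid \beta(t)\cap B_{v}\ne\emptyset\}$ induces a subtree of $T$ (since $G[B_{v}]$ is connected), and $T_{v}\cap T_{w}\ne\emptyset$ whenever $v\ne w$ because the bag containing an edge between $B_{v}$ and $B_{w}$ lies in both. By the Helly property of subtrees of a tree, $\bigcap_{v\in[n]}T_{v}\ne\emptyset$; I fix a node $t$ in this intersection and set $B_{v}^{\star}\coloneqq B_{v}\cap\beta(t)$; this is nonempty because $t\in T_{v}$, and the sets $B_{v}^{\star}$ are pairwise disjoint since the $B_{v}$ are.

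The substantial step is to verify that $B_{1}^{\star},\dots,B_{n}^{\star}$ is a $K_{n}$-minor model in the torso $G_{t}$. Both properties that need checking rest on the standard fact that each adhesion $S'=\beta(t)\cap\beta(d)$ of $t$ (over a neighbour $d$ of $t$ in $T$) is a clique of $G_{t}$ and separates $G$. To see that $B_{v}^{\star}$ is connected in $G_{t}$, I would take a path of $G[B_{v}]$ between two of its vertices and shortcut every maximal subpath whose interior avoids $\beta(t)$: such a subpath lives entirely inside one component of $T-t$, so its two endpoints lie in the corresponding adhesion of $t$ and are adjacent in $G_{t}$, which turns the trace of the path on $\beta(t)$ into a walk of $G_{t}[B_{v}^{\star}]$. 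To obtain an edge of $G_{t}$ between $B_{v}^{\star}$ and $B_{w}^{\star}$, I would take an edge $ab$ of $G$ with $a\in B_{v}$, $b\in B_{w}$, contained in some bag $\beta(s)$; if $s=t$ then $ab\in E(G_{t})$ already, and otherwise, with $d$ the neighbour of $t$ on the path in $T$ from $t$ to $s$ and $S'$ the associated adhesion, the edge $ab$ is confined to the $d$-side of $t$, so the connected sets $B_{v}$ and $B_{w}$ --- each of which also meets $\beta(t)$ --- must both meet the separator $S'\subseteq\beta(t)$; since $S'$ is a clique of $G_{t}$ this gives an edge of $G_{t}$ between $B_{v}\cap S'\subseteq B_{v}^{\star}$ and $B_{w}\cap S'\subseteq B_{w}^{\star}$.

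The main obstacle is the routine but fiddly bookkeeping in this last step: one must use the subtree axiom of $(T,\beta)$ to argue that a vertex of $B_{v}$ lying in $\beta(s)\setminus\beta(t)$ occurs only in bags on the $d$-side of $t$, so that the ``adhesion is a separator'' statement applies; with that in hand everything else is immediate. An alternative is a short induction on $|V(T)|$ based on the one-step identity $\hw(G_{1}\oplus_{S}G_{2})=\max\{\hw(G_{1}),\hw(G_{2})\}$ for a clique-sum over a clique $S$, applied at a leaf bag of $T$, but the Helly argument above packages the same idea without stripping off leaves.
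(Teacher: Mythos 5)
Your proof is correct. The paper states this as an observation and gives no proof at all, so there is no argument to compare against; your Helly-property construction (intersecting the branch sets of a $K_{n}$-model with a common bag, rerouting connectivity through adhesion cliques, and recovering pairwise adjacency via the separator property of adhesions) is the standard and complete way to justify it, and all the steps you outline go through.
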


\begin{lemma}
\label{commoditized}
There is some universal constant $c_{\ref{commoditized}}$ such that 
if a graph $G$ contains a crossed $t$-grid as a subgraph, for some $t\in\mathbb{N}_{\geq 1},$ then $t\leq  c_{\ref{commoditized}}\cdot (\hw(H))^2.$
\end{lemma}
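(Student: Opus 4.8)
The plan is to show that a crossed $t$-grid contains a large clique minor, so that a graph containing it must have large Hadwiger number. Recall the construction: start from the $\big((t+2)\times(t+2)\big)$-grid, delete the outermost-layer edges, subdivide every surviving edge at least once, and take the line graph. The key observation is that the line graph of a subdivided graph $H'$ contains, for every vertex $v$ of the original graph $H'$ of degree $d$, a clique $K_d$ on the edges incident to $v$ (in $H'$, hence on the corresponding vertices of $L(H')$); and moreover adjacent such cliques are joined in a controlled way because a subdivision vertex of degree $2$ contributes an edge of $L(H')$ linking the two cliques at its endpoints. So the crossed $t$-grid, viewed through its line-graph structure, behaves like the (subdivided) interior grid $\Gamma$ of the $\big((t+2)\times(t+2)\big)$-grid — which is essentially a $(t\times t)$-grid of maximum degree $4$ — with a $K_4$ planted at each branch vertex and these $K_4$'s linked along the grid edges.

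First I would make the reduction precise: let $\Gamma$ be the graph obtained from the $(t\times t)$-grid by subdividing each edge once (this is what survives the edge-deletion and subdivision steps, up to taking at-least-once subdivisions, which only helps). Then the crossed $t$-grid contains $L(\Gamma)$ as a subgraph (the extra subdivisions just lengthen induced paths in $L(\Gamma)$, which we can contract). Second, inside $L(\Gamma)$ I would exhibit a clique minor of size $\Omega(t)$: this is the standard fact that the line graph of a wall-like or grid-like graph of treewidth $\Theta(t)$ has a clique minor of size $\Theta(t)$. Concretely, one can take the $t$ "rows" of the grid; branch-set number $i$ is the union of the line-graph vertices corresponding to edges on the $i$-th row together with half of the $i$-th column's edges, arranged so that every pair of rows is connected through the column edges — this is a routine grid-routing argument giving a $K_{\Theta(t)}$ minor. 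Hence $\hw(\text{crossed }t\text{-grid}) \ge c' t$ for a universal constant $c'$.

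Finally, since the crossed $t$-grid is a subgraph of $G$ and $\hw$ is minor-monotone (in particular subgraph-monotone), we get $\hw(G) \ge c' t$, i.e. $t \le (1/c')\,\hw(G)$. This is in fact a \emph{linear} bound in $\hw(G)$, which is stronger than the quadratic bound claimed; the statement as written (with $(\hw(G))^2$) therefore follows a fortiori, so I would either prove the linear bound directly or simply note that the weaker quadratic form is immediate. The main obstacle is the second step: writing down the explicit $K_{\Theta(t)}$-minor model inside $L(\Gamma)$ cleanly — one must be careful that the branch sets are connected and pairwise adjacent, which requires dedicating disjoint portions of the grid's rows and columns to the different "meeting points" of pairs of branch sets, exactly as in the classical proof that an $(n\times n)$-grid has a $K_{\Theta(n)}$-minor, transported to the line graph. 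Everything else (the reduction to $L(\Gamma)$, contracting away extra subdivisions, invoking minor-monotonicity of $\hw$) is bookkeeping.
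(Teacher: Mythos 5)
Your proof is correct in substance but takes a genuinely different route from the paper's. The paper's proof is a two-line reduction: it contracts the subdivision paths of a crossed $2t(t-1)$-grid to recover the singly-crossed grid $H^1_{t(t-1)}$ of Kawarabayashi, Thomas, and Wollan and then cites their Lemma~3.2 to extract a $K_t$-minor; since only one row of crossings is used, this costs a square and yields exactly the quadratic bound in the statement. You instead use \emph{all} the crossings: taking as branch set $B_i$ the line-graph vertices of the horizontal edges of row $i$ together with those of (half of) the vertical edges of column $i$, the $B_i$ are pairwise disjoint (they are disjoint edge sets of the base graph), each is connected (row $i$ and column $i$ meet in the $K_4$ at grid position $(i,i)$), and $B_i,B_j$ are adjacent via the $K_4$ at position $(i,j)$ --- so a crossed $t$-grid in fact contains $K_t$ as a minor, giving the \emph{linear} bound $t\leq c\cdot\hw(G)$, which implies the stated quadratic one a fortiori (and would marginally improve the constants in \cref{blackbox}, without changing any asymptotic claim). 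What the paper's route buys is brevity via a black-box citation; what yours buys is a sharper bound at the price of writing out the branch-set model, which your sketch does correctly modulo routine verification. One caveat: the ``standard fact'' you invoke --- that the line graph of a \emph{wall-like or} grid-like graph of treewidth $\Theta(t)$ has a $K_{\Theta(t)}$-minor --- is false for walls, since the line graph of a subcubic planar graph is planar and hence has no $K_6$-minor; your construction genuinely needs the degree-$4$ vertices (whose incident edges form the crossing $K_4$'s), so you should drop that phrasing and rely only on the explicit row--column argument.
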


\begin{proof}
If, in a crossed $2t(t-1)$-grid, we contract all edges that do not belong to $4$-cliques we obtain a graph that contains the grid-like graph $H^1_{t(t-1)}$ defined\footnote{Let $r ≥ 1$ be an integer, and let $H_{2r}$ be the $(2r \times 2r)$-grid with vertex set $[2r] \times [2r]$. The graph $H^1_{2r}$ is defined as the graph obtained from $H_{2r}$  by adding all edges
with endpoints $(i, r)$ and $(i + 1, r + 1)$, and all edges with endpoints $(i, r + 1)$ and $(i + 1, r)$ for all
$i\in[2r-1]$.  In other words, $H^{1}_{2r}$ is constructed from the $(2r \times  2r)$-grid by adding a pair of crossing edges in each face of the middle row of faces.}.
by Kawarabayashi, 
Thomas, and Wollan in \cite{KawarabayashiTW18anewp} as a subgraph.
According to \cite[Lemma 3.2.]{KawarabayashiTW18anewp}, $H^1_{t(t-1)}$ contains $K_{t}$ as a minor.
\end{proof}

Given a graph class $\Gcal$, we define $$\bdim_{\Gcal}(G)=\min\{\bdim(G,X)\mid X\subseteq V(G \mbox{~and~}G-X\in \Gcal\}.$$
\begin{lemma}
\label{neutrality}
Let ${\cal G}$ be a graph class of Hadwiger number at most $η$.
Then, for every graph $G$,  $\hw(G)\leq (\bdim_{\Gcal}(G))^2+η+1$.
\end{lemma}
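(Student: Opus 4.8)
The plan is to extract the bound from a single largest clique minor of $G$, splitting its branch sets according to whether they meet an optimally chosen modulator. First I would fix a set $X\subseteq V(G)$ realising $\bdim_{\Gcal}(G)$ and write $k:=\bdim_{\Gcal}(G)$, so that $\bdim(G,X)\le k$ and $G-X\in\Gcal$; by the hypothesis on $\Gcal$ this gives $\hw(G-X)\le\eta$. Let $t:=\hw(G)$ and fix a model $\{S_1,\dots,S_t\}$ of $K_t$ in $G$, i.e.\ pairwise disjoint connected vertex sets with $S_i\cup S_j$ connected for all $i\ne j$. I then partition the index set into $J:=\{i\mid S_i\cap X=\emptyset\}$ and $I:=[t]\setminus J$.

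Two estimates finish the argument. For the first, if $i,j\in J$ then $S_i,S_j\subseteq V(G-X)$ and the edge of $G$ witnessing connectivity of $S_i\cup S_j$ avoids $X$ entirely, so $\{S_i\mid i\in J\}$ is a model of $K_{|J|}$ inside $G-X$, whence $|J|\le\hw(G-X)\le\eta$. For the second, picking $x_i\in S_i\cap X$ for each $i\in I$ exhibits $\{S_i\mid i\in I\}$ as a model of $K_{|I|}$ in which every branch set meets $X$ --- that is, $K_{|I|}$ is an $X$-rooted minor of $G$. Restricting this model to any subcollection of $r^2\le|I|$ branch sets and placing them bijectively on the vertices of the $(r\times r)$-grid $\Gamma_r$ produces an $X$-rooted model of $\Gamma_r$ in $G$, since any two retained branch sets have connected union and so every grid edge is realised. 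Hence $\bdim(G,X)\ge\lfloor\sqrt{|I|}\rfloor$ --- equivalently, apply \cref{imperiously} to the annotated graph $(K_{|I|},V(K_{|I|}))$ together with $\bdim(K_m)=\lfloor\sqrt m\rfloor$. Combining $\lfloor\sqrt{|I|}\rfloor\le\bdim(G,X)\le k$ with $|J|\le\eta$ and tracking the floor then yields $\hw(G)=|I|+|J|\le(\bdim_{\Gcal}(G))^2+\eta+1$.

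The step I expect to require the most care is the very last one --- squeezing the inequality $\lfloor\sqrt{|I|}\rfloor\le k$ so that it contributes exactly the claimed additive constant when combined with $|J|\le\eta$; the extremal configuration to keep in mind is the one where the $K_{|I|}$-minor is realised ``flat'' as an actual clique, which pins down precisely how large a square grid a $K_{|I|}$ is forced to contain. Everything else is a mechanical unfolding of the definitions of $X$-rooted minor and of $\bdim$, and the only input about $\Gcal$ that is used is $\hw(\Gcal)\le\eta$; in particular no minor-monotonicity is needed. Before writing the details I would double-check that the grid extracted from $K_{|I|}$ is genuinely $X$-\emph{rooted} (each of its branch sets inherits one of the $x_i$) and that choosing $X$ to realise $\bdim_{\Gcal}(G)$ is exactly what permits replacing $\bdim(G,X)$ by $\bdim_{\Gcal}(G)$ at the end.
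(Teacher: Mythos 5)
Your decomposition of a maximum clique model $\{S_1,\dots,S_t\}$ into the branch sets meeting $X$ (your $I$) and those avoiding $X$ (your $J$) is essentially the paper's argument in different clothing: the paper contracts $G$ onto $K_t$ and takes the heir $X'$ of $X$, which plays the role of your index set $I$, bounds $t-|X'|\leq \eta$ via $\hw(G-X)\leq \eta$ just as you bound $|J|$, and lower-bounds $\bdim(G,X)$ by $\lfloor\sqrt{|X'|}\rfloor$ exactly as you lower-bound it by $\lfloor\sqrt{|I|}\rfloor$. Up to that point your write-up is sound, and in fact slightly cleaner than the paper's, since you never need the loose claim that $G$ itself ``can be identified via contractions'' with $K_t$.

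The gap is precisely the last step you flagged. From $\lfloor\sqrt{|I|}\rfloor\leq k$ you can only conclude $|I|\leq (k+1)^2-1=k^2+2k$, hence $\hw(G)\leq k^2+2k+\eta$; no amount of ``tracking the floor'' yields $k^2+\eta+1$. Indeed the extremal configuration you propose to keep in mind refutes the stated constant rather than pinning it down: take $\Gcal$ to be the class of all graphs of Hadwiger number at most $\eta$ and $G=K_{\eta+k^2+2k}$; every modulator of $G$ to $\Gcal$ has at least $k^2+2k$ vertices and $\bdim(K_t,X)=\lfloor\sqrt{|X|}\rfloor$, so $\bdim_{\Gcal}(G)=\lfloor\sqrt{k^2+2k}\rfloor=k$, while $\hw(G)=k^2+2k+\eta>k^2+\eta+1$ for $k\geq 1$. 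So the inequality as stated in \cref{neutrality} is itself slightly too strong, and the paper's own proof commits the corresponding slip in the step $\lfloor(t-\eta)^{1/2}\rfloor\geq(t-\eta-1)^{1/2}$, which is false already for $t-\eta=8$. The correct conclusion of your argument (and of the paper's) is $\hw(G)\leq(\bdim_{\Gcal}(G))^2+2\,\bdim_{\Gcal}(G)+\eta$, which is all that is needed downstream: \cref{victorious} only requires a polynomial function $h$ with $\hw(G)\leq h(\bdim_{\Sigma}(G))$ when invoking \cref{letas_s}, so replacing $x^2+\eta+1$ by $(x+1)^2+\eta$ changes nothing of substance.
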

\begin{figure}[ht]
\begin{center}
\includegraphics[height = 6.5cm]{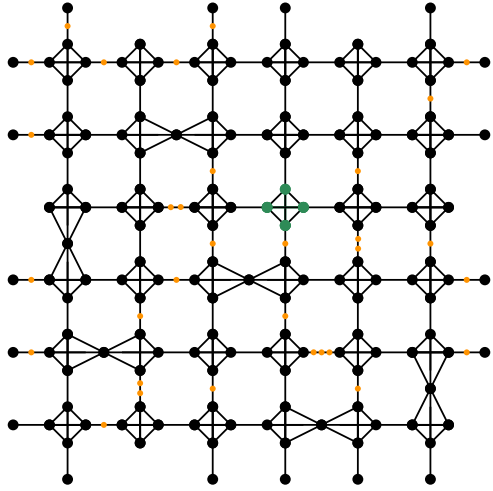}
\end{center}
  \caption{A crossed $6$-grid. Subdivision vertices are depicted in \textcolor{DeepMagenta}{magenta}.}
  \label{liquidating}
\end{figure}
\begin{proof}
Suppose that $\hw(G)= t.$
Let $X$ be a modulator of $G$ to $\mathcal{G}$. 
Notice that $G$ can be identified, via contractions, to the clique $K_{t}.$ Let $X'$ be the {heir} of  $X$ during these contractions. Notice that  $X'$ remains a modulator of $K_{t}$  to $\mathcal{G},$ therefore $|X'|\geq  t-η.$
Notice that $\bdim(G,X)\geq  \bdim(K_{t},X')$ and that $\bdim(K_{t},X')\geq  \bdim(K_{t}[X'])\geq   \lfloor |X'|^{1/2}\rfloor \geq   \lfloor(t-η)^{1/2}\rfloor\geq(t-η-1)^{1/2}.$ This implies that $\bdim_{\Gcal}(G)\geq  (t-η-1)^{1/2}$, and the lemma follows.
\end{proof}

\paragraph{A new variant of the Dyck-grid.}
For the purposes of our proofs, we will consider an ``enhanced version'' of $\widetilde{\mathscr{D}}_{k}^{(\mathsf{h},\mathsf{c})},$ namely $\widehat{\mathscr{D}}^{(\mathsf{h},\mathsf{c})}=\langle \widehat{\mathscr{D}}_{k}^{(\mathsf{h},\mathsf{c})}\rangle$ which is obtained from $\widetilde{\mathscr{D}}_{k}^{(\mathsf{h},\mathsf{c})}$ as follows:
Consider a set of $k(h+c)$ pairwise disjoint paths on $4$ vertices of the {outermost cycle}.
We take them so that the paths are parts of the  perimeters of the (pairwise disjoint) $2h$ $(2k\times k)$-grids and $c$  $(2k\times 2k)$-grids
corresponding to the handles and the crosscaps of 
$\widetilde{\mathscr{D}}_{k}^{(\mathsf{h},\mathsf{c})}$ {(there is a unique way these paths can be  considered).}
Then, for each of these paths,  introduce a new vertex and make it adjacent with its four vertices. We call the $k(h+c)$ new vertices in this construction {\em satellite} vertices (see \cref{tolerantly}).

A \emph{separation} of a graph $G$ is a pair $(B_{1},B_{2})$
such that there is no edge from $B_{1}\setminus B_{2}$ to $B_{2}\setminus B_{1}$.
We say that a parametric graph $\mathscr{H}=\langle \mathscr{H}_k \rangle_{k\in \Nbbb}$ is \emph{$f$-tightly connected} for some non-decreasing function $f:\Nbbb\to\Nbbb$, if 
for every separation $(B_1,B_2)$ of $\mathscr{H}_{k}$ of order $q<k$ such that both $G[B_{1}\setminus B_{2}]$ and $G[B_{2}\setminus B_{1}]$ are connected, it holds that one of $B_1,$ $B_2$ has at most $f(q)$ vertices.

\begin{lemma}
\label{letas_s}
Let $\p:\Gall\to\Nbbb$ be some minor-monotone graph parameter and let $h:\Nbbb\to\Nbbb$ such that for every graph $G$, $\hw(G)\leq h(\p(G))$.
Let also  $\mathscr{H}=\langle \mathscr{H}_k \rangle_{k\in \Nbbb}$ be a parametric graph
that is $g$-tightly connected, for some $g:\Nbbb\to\Nbbb$.
Let  $r\in\Nbbb$ and let ${(T,\beta)}$  be  a tree decomposition of $G=\mathscr{H}_k$ where, for each torso $G_{t}$, $t\in V(T)$
it holds that $\p(G_{t})\leq r$.
If $2(g(h(r)))<|V(\mathscr{H}_k)|$ and $h(r)<k$, then 
$G$ admits a tree decomposition $(\widehat{T},\widehat{\beta})$ where 
$\widehat{T}$ is a star with center $d$, where $\p(G_{d})\leq r$  ($G_{d}$ is the torso of $(\widehat{T},\widehat{\beta})$ on $d$) and where for every  $e=dd''\in E(\widehat{T})$, $\mathscr{H}_k[\widehat{β}(d')\setminus \widehat{β}(d)]$ is a connected graph and $|\widehat{β}(d')|\leq g(h(r))$.
\end{lemma}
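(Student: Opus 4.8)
The plan is to take the given tree decomposition $(T,\beta)$ of $G=\mathscr{H}_k$ and merge almost all of it into a single central bag, leaving only small ``peripheral'' bags. First I would invoke the hypothesis on $\p$ and $h$: since $\hw(G_{\bar t})\le h(\p(G_{\bar t}))\le h(r)$ for every torso, and since having a tree decomposition all of whose torsos have Hadwiger number at most $h(r)$ forces $\hw(G)\le h(r)$ by \cref{identifies}, we get $\hw(\mathscr{H}_k)\le h(r)$. In particular $G$ has no $K_{h(r)+1}$ minor, so every bag-to-bag adhesion of $(T,\beta)$ of size $\ge h(r)$ cannot be a clique in the torso in an ``unbreakable'' way — more usefully, I want to locate, for each edge $tt'$ of $T$, the separation $(B_1,B_2)$ of $G$ it induces, where $B_1$ is the union of bags on the $t$-side and $B_2$ the union of bags on the $t'$-side, with $B_1\cap B_2$ the adhesion.

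The key step is to use $g$-tight connectivity of $\mathscr{H}$. After first massaging $(T,\beta)$ so that for each edge $tt'$ both sides induce connected subgraphs minus the adhesion (standard: contract edges of $T$ whose adhesion side is disconnected, absorbing the offending component into the neighbouring bag — this does not increase any torso, so $\p$ of each torso stays $\le r$; one has to check that the parameter being minor-monotone is what makes the torso of a merged node no worse, which is the delicate bookkeeping point), I look at each such separation $(B_1,B_2)$. Its order is the adhesion size $q$. If $q<k$, then $g$-tight connectivity says one of $B_1,B_2$ has at most $g(q)\le g(h(r))$ vertices (using monotonicity of $g$ once we know $q\le h(r)$; and $q\le h(r)<k$ because an adhesion larger than $h(r)$ would, via the fact that $G$ is highly connected, force a large clique minor or contradict the Hadwiger bound — this is where I need an auxiliary argument that adhesions are bounded by $h(r)$, coming again from $\hw(\mathscr{H}_k)\le h(r)$ together with the connectivity of the two sides). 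So every edge of $T$ has a ``small side.'' Orienting each edge towards its big side, and using the hypothesis $2g(h(r))<|V(\mathscr{H}_k)|$ to rule out both sides being small, gives a consistent orientation with a unique sink node $t$; all branches hanging off $t$ have at most $g(h(r))$ vertices.

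Finally I would build $(\widehat T,\widehat\beta)$: let $\widehat\beta(t)$ be the union of all bags on the sink side — equivalently contract $T$ to the star centred at $t$ — and for each subtree $T'$ hanging off $t$ across an edge $tt'$, let the corresponding leaf bag be $\bigcup_{s\in V(T')}\beta(s)$, which has at most $g(h(r))$ vertices by the previous paragraph. One checks the tree-decomposition axioms are preserved under this contraction (they always are), that $\mathscr{H}_k[\widehat\beta(t')\setminus\widehat\beta(t)]$ is connected (this is exactly the connectedness we arranged in the massaging step, since $\widehat\beta(t')\setminus\widehat\beta(t)$ is the non-adhesion part of the small side), and that $\p(G_t)\le r$: the torso $G_t$ of the star at $t$ is obtained from the torso of $t$ in the (refined) original decomposition by adding clique-fill on adhesions, but since all the absorbed material sits on the big side, $G_t$ is a minor of $G=\mathscr{H}_k$ with all the small sides contracted appropriately — more carefully, $G_t$ is a torso of a coarsening of $(T,\beta)$, and torsos of coarsenings are minors of $G$ only after we argue they don't create new structure; the clean way is to note $\p(G_t)\le r$ follows because $G_t$ itself has a tree decomposition (the part of $(T,\beta)$ on the sink side) all of whose torsos have $\p\le r$, and then apply minor-monotonicity plus \cref{identifies}-style reasoning for $\p$ — which is legitimate provided $\p$ behaves under clique-sums the way $\hw$ does.

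The main obstacle I expect is precisely this last point: controlling $\p(G_t)$ of the merged central torso. The statement only gives us that $\p$ is minor-monotone and that $\hw\le h\circ\p$; it does not a priori give a clique-sum composition law for $\p$. So the real work is to show that merging the ``big side'' of each peripheral edge into one bag does not blow up $\p$ — this should follow because the central torso $G_t$ is a topological minor (indeed a minor) of $\mathscr{H}_k$ after suppressing each small peripheral side (each small side, being attached along an adhesion of size $q$, can be replaced by a clique on $\le q$ vertices, and this clique is realizable as a minor of that side — here one needs that the small side, being connected off its adhesion and attached to $G_t$, actually contains the required connectivity to form that clique minor, which may require strengthening ``$q<k\Rightarrow$ small side $\le g(q)$'' to also guarantee the adhesion is a clique-minor inside the small side; alternatively one sidesteps this by only ever contracting, never filling, and defining $G_t$ as an actual minor of $G$). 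Getting this bookkeeping exactly right — so that $G_t$ is genuinely a minor of $\mathscr H_k$ and hence $\p(G_t)\le \p(\mathscr H_k)$, and separately $\p(\mathscr H_k)\le r$ is inherited — is the crux; everything else is the standard ``orient edges of $T$ toward the heavy side and contract'' manoeuvre.
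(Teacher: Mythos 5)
Your skeleton matches the paper's proof: refine $(T,\beta)$ so that both sides of every $T$-edge induce connected (or empty) subgraphs, bound the adhesions by $h(r)$, invoke $g$-tight connectivity to get a small side for every edge, orient each edge to its heavy side, and take the star at the unique sink $t$. However, the step you yourself flag as the crux is a genuine gap, and your proposed fixes for it do not work. You define the central bag as the union of all bags on the heavy side and then need to control $\p$ of the resulting merged torso; for this you invoke either $\p(\mathscr{H}_k)\le r$ (which is not known --- indeed the lemma would be pointless if it were) or a clique-sum composition law for $\p$ (which is not a hypothesis; only minor-monotonicity and $\hw\le h\circ\p$ are given). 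The resolution is that no merging is needed: set $\widehat\beta(t)=\beta(t)$ for the sink node $t$ and $\widehat\beta(t')=B_{t'}$ for each neighbour. This is a valid tree-decomposition (if $v\in B_{t'}\cap B_{t''}$ then $v\in\beta(t)$ by the interpolation property), and since $\beta(t)\cap B_{t'}=\beta(t)\cap\beta(t')=A_{t,t'}$, the torso of the star at $t$ coincides with the torso of $t$ in $(T,\beta)$, so $\p(G_t)\le r$ is immediate from the hypothesis.

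Two further points. First, your massaging operation --- ``contract edges of $T$ whose adhesion side is disconnected, absorbing the offending component into the neighbouring bag'' --- enlarges bags and hence torsos, which you cannot afford. The correct operation goes the other way: if $G[B_t\setminus B_{t'}]$ has components $H_1,\dots,H_r$, replace the subtree on the $t$-side by $r$ copies, restricting the bags of the $i$-th copy to $V(H_i)$ (plus the adhesion); every new torso is then a subgraph of an old one, so $\p$ does not increase by minor-monotonicity. Second, the adhesion bound you leave as an ``auxiliary argument'' needs no connectivity argument at all: each adhesion $A_{t,t'}$ induces a clique in the torso $G_{t}$, so $|A_{t,t'}|\le\hw(G_{t})\le h(\p(G_{t}))\le h(r)<k$, after which $g$-tight connectivity applies directly.
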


\begin{proof}
Let $e=tt'\in E(T)$ and let $T_t$ and $T_{t'}$ be the two connected components of $T-e,$ where $t\in V(T_t)$ and $t'\in V(T_{t'}).$
We define $B_{t}\coloneqq\bigcup_{s\in V(T_{t})}β(s)$, $B_{t'}\coloneqq\bigcup_{s\in V(T_{t'})}β(s)$, and $A_{t,t'}\coloneqq B_{t}\cap B_{t'},$ i.e., $A_{t,t'}$ is the common adhesion of $t$ and $t'.$ We next modify $(T,β)$ as follows.
\medskip

\noindent\emph{Claim.} We can assume that, for every $e=tt',$ every graph in $\{G[B_{t}\setminus B_{t'}],G[B_{t'}\setminus B_{t}]\}$ is  void or connected.\smallskip

\noindent\emph{Proof of Claim}:  
Suppose that this is not the case and assume that for some edge $e=tt'\in E(T),$  $G'=G[B_{t}\setminus B_{t'}]$ is not connected and has $r$ components $H_{1},\ldots,H_{r}.$
Let $Y$ be the component of $T-tt'$ containing $t.$
We replace this component in $T$ with $r$ copies of $Y,$ say $Y_{1},\ldots,Y_{r},$ and for each one of them make the leaf $t^{i}$ corresponding to $t$ adjacent to $t'.$
The new tree is denoted by $\widehat{T}.$
We define $\widehat{β}\colon V(\widehat{T})\to 2^{V(G)}$ such that, if $s\not\in V(Y),$ then $\widehat{β}(s)\coloneqq β(s),$ and if $s$ belongs to $Y_{i},i\in[r],$ then we set $\widehat{β}(s)\coloneqq β(s)\cap V(H_{i}).$
It is easy to see that $(\widehat{T},\widehat{β})$ is a treedecomposition of $G.$
Notice that each torso $G_{\widehat{t}}$ of $(\widehat{T},\widehat{β})$ is the subgraph of  some torso $G_{t}$ of $(T,β).$
This means that if $\p(G_{t})\leq r$ holds for each torso $G_{t}$ of 
$(T,β),$ then the same holds for the torsos of  $(\widehat{T},\widehat{β})$.
By repeating the above improvements as long as there are edges violating the statement of the claim, we finally have the required tree decomposition.\medskip

Let $tt'\in E(T)$.
The set $A_{t,t'}$ induces 
a clique in $G_{t}$, therefore, $|A_{t,t'}|\leq \hw(G_{t})\leq h(\p(G_{t}))\leq h(r)<k$.
As $G$ is $g$-tightly connected, it holds hat  $\min\{|B_{t}|,|B_{t'}|\}\leq  g(|A_{t,t'}|)\leq g(h(r)),$ therefore $$\max\{|B_{t}|,|B_{t'}|\}\geq  |G|-g(h(r))> g(h(r))\geq \min\{|B_{t}|,|B_{t'}|\}.$$
This permits us to orient  every edge $tt'$ towards $t,$ if $|B_{t}|>|B_{t'}|$ and towards  $t',$ if $|B_{t'}|>|B_{t}|.$ 
This orientation of $T$ implies that there is a unique node $d$ where all incident edges are oriented towards it.
This means that, for every $t'\in N_{T}(d)$, we have that $|B_{t'}| \leq    g(h(r)).$

We now build the claimed tree decomposition $(\widehat{T},\widehat{\beta})$
where $\widehat{T}$ is the subtree of $T$ 
that is induced by $d$ and any of its neighbors $t'$ in $T$ where $B_{t'}\setminus A_{d,t'}\neq\emptyset$. Finally, for each $t'\in V(\widehat{T}\setminus\{d\})$, we set $\widehat{β}(t')=B_{t'}$.
\end{proof}

\begin{figure}[ht]
  \begin{center}
  \scalebox{1.11}{\includegraphics{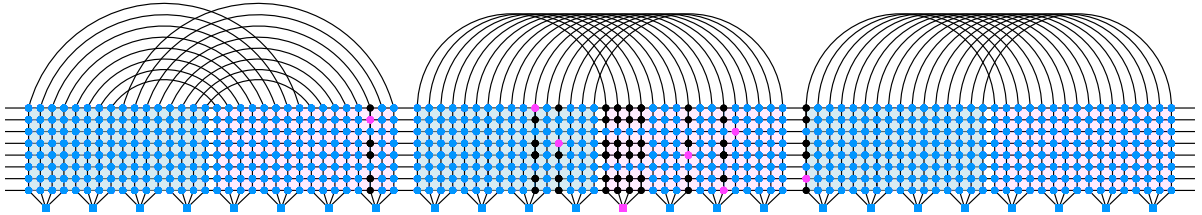}}
  \end{center}
    \caption{The graph $\widehat{\mathscr{D}}_{8}^{1,2}$. The satellite vertices are depicted as squares. The \textcolor{DeepMagenta}{magenta} and the \textcolor{CornflowerBlue}{blue} vertices correspond to the vertices in $A$ and $Y$ respectively, in the proof of \cref{vegetables}.}
  \label{tolerantly}
\end{figure}

\begin{lemma}
\label{vegetables}
Let $(\mathsf{h},\mathsf{c})\in \mathbb{N}\times[0,2]\setminus\{(0,0)\}$.
Then $\widehat{\mathscr{D}}^{(\mathsf{h},\mathsf{c})}$ is $f_{\ref{vegetables}}$-tightly connected for $f_{\ref{vegetables}}(q)=(2q+1)^2$.
\end{lemma}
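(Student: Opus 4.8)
I want to show that in $\widehat{\mathscr{D}}^{(\mathsf{h},\mathsf{c})}_{k}$, any separation $(B_1,B_2)$ of small order $q<k$ whose two sides induce (after removing the common part) connected subgraphs must have one side of bounded size. The intuition is that the non-satellite part of $\widehat{\mathscr{D}}^{(\mathsf{h},\mathsf{c})}_{k}$ is built from $(2k\times k)$- and $(2k\times 2k)$-grids glued in a cyclic fashion (it contains $\widetilde{\mathscr{D}}^{(\mathsf{h},\mathsf{c})}_{k}$), and a grid-like graph of ``width'' roughly $k$ cannot be split by a separator of order $q<k$ unless one side is tiny — a separator of order $q$ can only ``bite off'' a region of radius about $q$ in the grid before it would have to grow. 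The satellite vertices, each of degree $4$ attached to four consecutive vertices of the outermost cycle, only add a controlled amount: each satellite can be on at most one side beyond what its four neighbours force.

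\textbf{Key steps.} First I would set $G = \widehat{\mathscr{D}}^{(\mathsf{h},\mathsf{c})}_{k}$, fix a separation $(B_1,B_2)$ of order $q$ with $q<k$ and with $G[B_1\setminus B_2]$, $G[B_2\setminus B_1]$ connected, and let $A = B_1\cap B_2$, so $|A|=q$. Second, I would reduce to the ``skeleton'' graph $G' = \widetilde{\mathscr{D}}^{(\mathsf{h},\mathsf{c})}_{k}$ (or even to one of its constituent cylindrical/handle/crosscap grids): the separation restricts to a separation $(B_1\cap V(G'), B_2\cap V(G'))$ of $G'$ of order at most $q$, and I would argue that if $W_1 := B_1\setminus B_2$ and $W_2 := B_2\setminus B_1$ are both nonempty in $G'$, then one of them is small — this is the heart of the matter. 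Third, I would handle the satellites: a satellite $s$ with neighbour set $N(s)$ of four consecutive outer-cycle vertices lies in $W_1$ only if $N(s)\subseteq B_1$, and because $G[W_1\cup A]$ and the grid structure force $A$ to ``surround'' $W_1$, the number of satellites attached entirely inside the small side is bounded by the number of outer-cycle vertices in that side plus a constant — so if the grid side has $\le g_0(q)$ vertices, the full side has $\le g_0(q) + g_0(q) = $ some polynomial in $q$. Fourth, I would bound $g_0(q)$: in a $(2k\times k)$-grid (or $(2k\times 2k)$-grid, or the handle/crosscap variants), a balanced-ish separator has order $\ge k$, so with $q<k$ the small side is confined; a clean way is to observe that the small side, being connected and separated from the rest by $A$ with $|A|=q$, together with $A$ fits inside a subgrid of side $O(q)$, hence has $O(q^2)$ vertices. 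Tuning constants, one gets the claimed bound $(2q+1)^2$ — i.e. the small side lies within a $((2q+1)\times(2q+1))$ region.

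\textbf{Main obstacle.} The delicate point is the grid-separator argument across the \emph{handle and crosscap transactions} and across the cyclic gluing: $\widetilde{\mathscr{D}}^{(\mathsf{h},\mathsf{c})}_{k}$ is not literally a grid, and the transaction edges of $\mathscr{H}_k$ and $\mathscr{C}_k$ create ``wrap-around'' adjacencies, so a small separator might a priori disconnect the graph into two large pieces by cutting such a handle. I expect the resolution to be that each handle/crosscap grid $\mathscr{H}_k$, $\mathscr{C}_k$ is itself highly connected in the relevant sense — it contains two internally-disjoint $(2k\times k)$-grids (as noted after the definition of $\widetilde{\mathscr{D}}$, resp. the remark on $\mathscr{H}_9$, $\mathscr{C}_9$ in Figure~\ref{supplanted}) sharing their full boundary, so to split it with order $q<k$ one side must be swallowed into a bounded subgrid — and the cyclic concatenation of these pieces along their common boundary cycles keeps the whole graph ``width $\ge k$''. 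Formalizing this probably requires: (i) picking, inside the large side, a connected spanning-ish grid-minor structure of width $>q$ that $A$ cannot hit in more than one row/column, forcing $A$ to lie ``locally'', and (ii) a careful accounting of which cycles of $\widehat{\mathscr{D}}^{(\mathsf{h},\mathsf{c})}_{k}$ can meet $A$. Once that localization is in hand, bounding the small side by $(2q+1)^2$ is a routine subgrid-area count together with the at-most-one-satellite-per-outer-vertex observation.
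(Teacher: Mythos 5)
Your plan has the right intuition and even lands on the right final count (one side trapped in a ball of radius $q$, hence at most $(2q+1)^2$ vertices), but the central step --- deciding which side is ``large'' and actually confining the other side near $A$ --- is exactly the part you defer (``I expect the resolution to be\dots'', ``Formalizing this probably requires\dots''), and that is where the real content of the lemma lies. The worry you raise about the handle/crosscap transactions and the cyclic gluing is legitimate, and your proposed fix (a local grid-separator argument inside each constituent $(2k\times k)$- or $(2k\times 2k)$-grid, plus an accounting of which cycles meet $A$) is not developed far enough to count as a proof; in particular, nothing in your sketch explains why the separator $A$ cannot split the graph into two large pieces by cutting along a transaction, which is precisely the obstacle you identify.

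The paper's proof resolves this with one concrete structural device that is absent from your proposal: $\widehat{\mathscr{D}}^{(\mathsf{h},\mathsf{c})}_{k}$ has a spanning $(4k(\mathsf{h}+\mathsf{c})\times k)$-annulus grid $Q$ consisting of $k$ concentric cycles and $4k(\mathsf{h}+\mathsf{c})$ ``tracks'' (each extended to its satellite), and \emph{every track shares a vertex with every cycle}. Hence the union $Y$ of all cycles and all tracks avoided by $A$ is a single connected subgraph, meeting at least $k-q$ cycles and at least $4k(\mathsf{h}+\mathsf{c})-4q$ tracks; since $q<k$, $Y$ is nonempty and must lie entirely in one side, say $D_1=B_1\setminus B_2$. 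Then any path inside $G[B_2]$ from a vertex $x$ to $A$ can only use the at most $q$ cycles and the tracks hit by $A$ (every untouched cycle/track contains a vertex of $D_1$, which $B_2$ avoids), which pins every vertex of $B_2$ within distance $q$ of $x$ and yields $|B_2|\le(2q+1)^2$. This global ``untouched skeleton'' argument is what simultaneously handles the wrap-around adjacencies, the cyclic gluing, and the satellites, and it is the idea your proposal is missing. Without it (or an equivalent), your step ``the small side together with $A$ fits inside a subgrid of side $O(q)$'' is an assertion, not a deduction.
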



%
%
%
\begin{proof}
Let $G\coloneqq \widehat{\mathscr{D}}_{k}^{(\mathsf{h},\mathsf{c})}.$ 
Let $(B_1,B_2)$ be a separation of $G$ of order $q<k.$
Let  $A=B_{1}\cap B_{2},$ $D_{1}=B_{1}
\setminus A,$ and $D_{2}=B_{2}\setminus A.$ Also we assume that both $G[D_{1}]$ and  $G[D_{2}]$ are connected.
Let $Q$ be the spanning $(4k(h+c)\times  k)$-annulus grid of $G.$

Clearly $Q$ contains $k$ cycles. Also it contains  $4k(h+c)$ paths, each  on $k$ vertices, which we call {\emph{tracks}} (these paths are the ``vertical'' paths as the one depicted by an arrow in \cref{tolerantly}.).
We also use $R$ for the set of the satellites of $\widehat{\mathscr{D}}_{k}^{(\mathsf{h},\mathsf{c})}$.
We  enhance the tracks by extending each of them to the unique satellite vertex that is adjacent to one of its {endpoints}.
Let $Y$ be the union of all the $\geq  k-q$ cycles that are \textsl{not} met by $A$ and 
of all $\geq 4k(h+c)-4q$ tracks that are \textsl{not} met by $A$ (observe that  if a vertex of $A$ belongs to $R$, then it meets four tracks). See \cref{tolerantly} for a visualization of $A$ and $Y$ in the graph $\widehat{\mathscr{D}}_{8}^{1,2}$.

As every track in $Y$ has a common endpoint to every cycle in $Y,$  we obtain that $Y$ is connected, therefore $V(Y)$ is either a subset of $D_{1}$ or a subset of $D_{2}.$
W.l.o.g., we assume that $V(Y)\subseteq D_{1}.$ 
We next prove that $|B_{2}|\leq  (2q+1)^2.$

Let $x$ be a vertex of $D_{2}$
and let $P_x$ be some path of $G[B_{2}]$ starting from $x$ and finishing 
to some vertex of $A$ and with all internal vertices in $D_{2}.$
This path cannot meet more than $q=|A|$ cycles of  $Q$ because  each such cycle contains some vertex of $S\subseteq D_{1}.$ Similarly, $P_{x}$ cannot meet more than $q=|A|$  tracks as each such track contains some vertex of  $Y\subseteq D_{1}.$
This implies that every vertex of $G[B_{2}]$ should be at distance 
at most $q$ from $x.$
It is now easy to verify that, in $G,$ the vertices at distance at most $q$
from some $x\in A$ is upper bounded by $(2q+1)^2.$
As all vertices of $B_{2}$ are accessible from $x$ at this distance in $G[B_{2}],$ we conclude that   $|B_{2}|\leq  (2q+1)^2.$
\end{proof}

\begin{lemma}\label{blackbox} 
Let $G$ be a 4-connected  graph of maximum degree $\Delta$.
Let $r\in\Nbbb$ and let $(T,\beta)$ be a tree decomposition of $G$ where $T$ is a star
with center $t$ and such that for every  $e=tt'\in E(T)$, $G[β(t')\setminus \beta(t)]$ is a connected graph on at most $l$ vertices. Let $G_{t}$ be the torso of $(T,\beta)$ at $t$ and 
we denote  $G^c=(V(G),E(G)\cup E(G_{t}))$, $m=\hw(G^c)$, 
and $B=\bigcup_{t'\in V(T)\setminus\{t\}}β(t')$.
There is a function $f_{\ref{blackbox}}:\Nbbb^4\to\Nbbb$ such that
if  $X$ is a subset of $V(G_t)$ where $\bdim(G_t,X)\leq   r$, then  $\bdim(G^c,X\cup B)\leq  f_{\ref{blackbox}}(r,m,l,\Delta)$.
Moreover, $f_{\ref{blackbox}}(r,m,l,\Delta)\in$  $\poly(m+l+r+\Delta)$.
\end{lemma}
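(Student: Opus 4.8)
The plan is to show that a modulator $X$ to $\Ecal_\Sigma$ in the torso $G_t$, together with the ``outer'' vertex set $B = \bigcup_{t'\neq t}\beta(t')$, still has small bidimensionality in $G^c$. The obstacle to a direct argument is that $B$ can be large (the number of leaves of the star, and hence $|B|$, is unbounded a priori), so we cannot simply invoke \cref{unrestricted}. Instead we must exploit that each leaf bag $\beta(t')$ is a small ($\le l$) connected set, that $G$ is $4$-connected of bounded degree, and that $G^c$ has bounded Hadwiger number $m$; these constraints will force that a large grid $X{\cup}B$-minor, if it existed, would already imply a clique minor or a grid modulator larger than allowed.

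\medskip

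\noindent\textbf{Step 1 (reduce to the torso via \cref{rebellious}).} Suppose toward a contradiction that $q \coloneqq \bdim(G^c, X\cup B)$ is large. Apply \cref{rebellious} to $G^c$ with parameter $r$ replaced by a suitable fraction of $q$: since $\hw(G^c)=m$, there is a set $A^{*}$ of size $c\cdot m^{24}$ such that $G^c-A^{*}$ contracts to a dusted partially triangulated $(\rho\times\rho)$-grid with $\rho \in \Omega(q^{?}) - O(m^{48})$. Using \cref{uncoordinated}/\cref{subjection} in the style of the proof of \cref{acknowledged}, this yields a large rooted grid whose roots are the heirs of the vertices of $X\cup B$; after deleting $A^*$ we still retain $\Omega$ of these roots. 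Thus it suffices to analyze how $B$ and $X$ can spread inside a (dusted) partially triangulated grid of side $\rho$.

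\medskip

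\noindent\textbf{Step 2 (control the $B$-part using connectivity and degree).} The point is that in $G^c$ every $\beta(t')$ is a connected set of $\le l$ vertices attached to the rest only through the $\le 3$ (by the adhesion bounds inherited from \cref{thm_globalstructure}, now promoted to a clique in $G_t$) or more generally $\le l$ vertices of $\beta(t)\cap\beta(t')$; and $G$ has maximum degree $\Delta$. Consequently, in any minor model of a grid, the branch sets meeting $B$ but not $X$ can be ``rerouted'' into $G_t$: contracting each connected leaf chunk $\beta(t')\setminus\beta(t)$ to a single vertex turns $G^c$ into a minor of $G_t$ with at most one extra apex-like vertex per leaf of bounded degree $O(\Delta l)$. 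Feeding this through \cref{subjection} (exactly as in \cref{acknowledged}, where $r$ branch-set families are merged at a cost of $r^4\ell^2$) bounds the bidimensionality contributed by $B$ in terms of $\bdim(G_t, X)\le r$, the degree $\Delta$, the chunk size $l$, and $m$. This is where the $m^{8}l^{4}$ and $\Delta m^{24}l^{2}$ terms of $f_{\ref{blackbox}}$ arise: the $m^{24}$ is the apex-set size from \cref{rebellious}, squared-then-rooted interplay with \cref{subjection} gives the stated exponents.

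\medskip

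\noindent\textbf{Step 3 (assemble).} Combining Steps 1 and 2: a $(q\times q)$-grid $(X\cup B)$-minor in $G^c$ forces either $q \le c\, m^{48}$ (the ``\cref{rebellious} threshold'' term) or a $(\ell\times\ell)$-grid $X$-minor in $G_t$ with $\ell \in \Omega\!\big(q^{1/2}/\mathrm{poly}(m,l,\Delta)\big)$, i.e.\ $\ell \le r$ gives $q \in O(r^{1/2}\cdot(m^8l^4+\Delta m^{24}l^2))$. Taking the maximum of the two bounds yields
\[
\bdim(G^c, X\cup B) \;\in\; \mathcal{O}\big(m^{48} + r^{1/2}\cdot (m^8 l^4 + \Delta m^{24} l^2)\big)\;\subseteq\;\poly(m+l+r+\Delta),
\]
as claimed. \textbf{The hard part} is Step 2: making precise the rerouting that replaces each small connected leaf bag by a single bounded-degree vertex while preserving enough of the grid structure, and doing the bookkeeping so that the exponents of $m$, $l$, $\Delta$ come out as stated rather than worse; the $4$-connectedness and bounded degree of $G$ are exactly what prevent a leaf chunk from being a high-degree ``hub'' that could fake a large grid on its own.
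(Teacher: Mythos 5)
Your overall architecture (apply \cref{rebellious} to extract a large partially triangulated grid from $G^c$, show that only few grid vertices can be "contaminated" by $B$, then find a clean subgrid whose roots come from $X$ alone) matches the paper's proof. However, Step~2 --- which you correctly identify as the hard part --- has a genuine gap: you never actually prove the key counting claim that only $\mathrm{poly}(m,l,\Delta)$ vertices of the extracted grid can have branch sets meeting $B\setminus\beta(t)$. Your proposed mechanism, contracting each connected leaf chunk $\beta(t')\setminus\beta(t)$ to a single vertex and then "feeding this through \cref{subjection} exactly as in \cref{acknowledged}", does not work: \cref{acknowledged} bounds the bidimensionality of a union of a \emph{bounded} number $r$ of sets, whereas the number of leaves of the star (and hence of contracted chunks) is unbounded, which is precisely the obstacle you flagged at the outset. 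Moreover, contracting the chunks yields a minor of $G^c$, not a graph whose bidimensionality is controlled by $\bdim(G_t,X)\le r$; and the "$\le 3$ adhesion" you import from \cref{thm_globalstructure} is not part of the hypotheses of \cref{blackbox}, which is a standalone statement about a $4$-connected bounded-degree graph with a star decomposition.

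The missing idea is the argument of the paper's \cref{claimmarked}: call a grid vertex \emph{marked} if its branch set meets $B\setminus\beta(t)$. If there were too many marked vertices, one greedily selects a large subset that are pairwise far apart in the grid (distance $>2l$), so that the corresponding leaf bags $\beta(t')$ are pairwise disjoint; \cref{subjection} then gives an $(\ell\times\ell)$-grid rooted at these vertices. Now $4$-connectivity of $G$ guarantees $|A_{t'}|\ge 4$ for each relevant leaf, so from each root one routes four internally disjoint paths inside $\beta(t')$ into four distinct vertices of the clique $A_{t'}$ of $G^c$; replacing each degree-$4$ root of the subdivided grid by these four clique vertices (plus the six clique edges among them) produces a crossed $(\ell-2)$-grid in $G^c$, and \cref{commoditized} then forces $\ell-2\le c'\,(\hw(G^c))^2 = c'm^2$, a contradiction. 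This is where $4$-connectivity, the bounded Hadwiger number $m$, and the bound $l$ actually enter, and it is what produces the exponents in $f_{\ref{blackbox}}$; the degree bound $\Delta$ enters separately, to control the (few) grid vertices whose branch sets meet a leaf bag touching the apex set $\widehat{A}$ from \cref{rebellious}. Without this claim your Step~3 has nothing to assemble.
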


\begin{proof}
We need to introduce  some constants (all depending on $\Delta$, $m$, $l$, and $r$).  \cref{rebellious}  \cref{commoditized} . 
We set 
\begin{align*}
\widehat{h}  &\coloneqq 2l+1, \\
d & \coloneqq (c_{\ref{commoditized}}m^2+3)^4\cdot (1+2l)^2, \\
t & \coloneqq 25d,\\
a & \coloneqq \lceil{(t+1)^{1/2}}\rceil\cdot (r+1)+4+2\widehat{h}  \mbox{~and}\\
a' &\coloneqq f_{\ref{rebellious}}(m,\Delta,a)-1\in \poly(m+l+r+\Delta)\end{align*}

We set $f_{\ref{blackbox}}(r,m,l,\Delta)\coloneqq a'$.
Assume that  $\bdim(G^c,X\cup B)>a'$, therefore, 
$G^{\mathsf{c}}$  contains an $(\big(a'+1)\times(a'+1)\big)$-grid as a $(X\cup B)$-minor. We claim that  it is enough to prove that  
\begin{eqnarray}
\bdim(G^{\mathsf{c}}-B,X\setminus B) & > & r.\label{eq_all_trages}
\end{eqnarray}
Indeed, as  $G^{\mathsf{c}}-B$ is a subgraph  of $G_t$, this implies   {$\bdim(G_t,X)>  r$}, which yields the lemma.

{The rest of the proof is dedicated to the proof of \eqref{eq_all_trages}.}

\begin{figure}[ht]
  \begin{center}
  \scalebox{1}{\includegraphics{figures/ex_op_r}}
  \end{center}
    \caption{The star decomposition $(T,\beta)$. Left: the tree $T$ with the central vertex $t$ of $T$  and some neighbor $t'$ of $t$. Right: the graph $G$ (or $G^{\mathsf{c}}$)
    and the sets $A_{t'}$, $C_{t'}$, and $\beta(t')$ corresponding to the vertex $t'$.}
    
  \label{tolersantly}
\end{figure}

By \cref{rebellious}, we have that $G^{\mathsf{c}}$  can be $(X\cup B)$-contracted to a  dusted partially triangulated $(a\times a)$-grid $D_{a}.$ 
Keep in mind that the fact that $D_{a}$ is a contraction of $G^{\mathsf{c}}$
means that if two vertices of $D_{a}$ are at  distance $\geq \delta $ in $D_{a}$, then 
their precursors in $G^{\mathsf{c}}$ are also at distance $\geq \delta $.

Let $q=a-2\widehat{h}$ and consider  the central  $(q\times q)$-grid $D_{q}$ of $D_{a}$
We say that a vertex $v$ of  $D_{q}$ is 
\emph{marked} by  some vertex $t'\in N\coloneqq V(T)\setminus\{t\},$  if its {precursor}  in
$H$, and also in $G^\mathsf{c}$, contains {some} 
{vertex}  of $C_{t'}\coloneqq \beta(t')\setminus β(t)$. 
Notice that $v$ may be marked by more than one vertex $t'$ of $N.$

We  set, for each $t'\in N,$  $A_{t'}=β(t)\cap β(t')$. 
As an important ingredient of the proof, we prove that there cannot be many marked vertices.

\begin{claim}\label{claimmarked}
$D_{q}$ has at most  $d$  marked vertices.
\end{claim}

\begin{cproof}
Let ${M}$ be the set of   marked
vertices of $D_{q}$ and suppose to the contrary that 
\begin{eqnarray}
    |{M}|>d=(c'm^2+3)^4\cdot (1+4l)^2.\label{more_one}
\end{eqnarray}

We say that two  marked vertices $v_{1}$ and $v_{2}$ of $D_{q}$ are \emph{independent}
if there is no $t'\in N$ such that both $v_{1}$ and $v_{2}$  are marked by $t'.$ 
We know that each $C_{t'}$ has at most $|β(t')|\leq l$ vertices.
Therefore, if  $v_{1}$ (resp. $v_{2}$) is  marked by $t'_{1}$ (resp. $t'_{2}$)  and  $\mathsf{dist}_{D_{a}}(v_{1},v_{2})\geq  2l+1$
then $C_{t_{1}'}\cap C_{t_{2}'}=\emptyset$ which, in turn, implies that $v_{1}$ and $v_{2}$ are independent.

We now construct a set  $M'\subseteq M$ of  pairwise-independent strongly marked vertices of $D_{q}$
by greedily picking from $M$  
 a strongly  vertex $x$
and then excluding as possible choices all vertices of $M$ that are at  distance $<2l+1$ from $x$ in $D_{a}$.

Recall that $2l+1\leq  \widehat{h},$ therefore,
a vertex in the partially triangulated grid $D_{q}$
has at most $(1+4l)^2$ vertices at distance $\leq  2l$ in $D_{a}.$ We obtain that 
\begin{eqnarray}
    |M'|\geq  | M|/(1+4l)^2\geq (c'm^2+3)^4.\label{other_more_two}
\end{eqnarray}
Let $\ell\coloneqq c'm^2+3$ and recall that $\ell\leq  \widehat{h}.$

Taking \eqref{other_more_two} in mind, we have just found  a set of $\ell^4$ vertices of $M'$  that
belong to the central $(a-2\ell)\times(a-2\ell)$-grid of $D_{a}.$
By \cref{subjection}, $D_{a}$ contains an $(\ell\times\ell)$-grid $\Gamma_{\ell}$ as an $M'$-minor.
This also implies that $G^{\mathsf{c}}$ contains $\Gamma_{\ell}$ as a $M'$-minor.
 
 For every vertex $u\in \Gamma_{\ell},$ we know that the set of its precursors in  $G^{\mathsf{c}}$, contains some vertex $z_{u}$ of some set $C_{t'_{u}}$.
As there may be many possible choices of $z_{u},$ from now on, we arbitrarily fix one so to  uniquely correspond to every vertex $u$ of $\Gamma_{\ell}$, a vertex 
$t'_{u}\in N$ and a subgraph $C_{t'_{u}}$ of $G^{\mathsf{c}}$.

By the way we defined $M',$ it holds that, for distinct $u$'s in $\Gamma_{\ell},$ the corresponding  $C_{t'_{u}}$'s are {pairwise disjoint} and moreover, the corresponding 
$t'_{u}$'s as {well as the corresponding  $z_{u}$'s are also pairwise distinct}. 
Let $u\in V(\Gamma_{\ell}).$
As $G$, and therefore also $G^{\mathsf{c}}$,  is 4-connected, we have that $|A_{t'_u}|\geq  4$. Moreover, in $G^{\mathsf{c}}$, there are four internally disjoint paths in $G[β(t'_{u})]$ starting from $z_{u}\in C_{t'_{u}},$ finishing at any possible choice of four distinct vertices of $A_{t'_{u}}$ and such that the internal vertices of these paths all belong to $C_{t'_{u}}.$

\begin{figure}[h!]
\begin{center}
\includegraphics[height = 6.3cm]{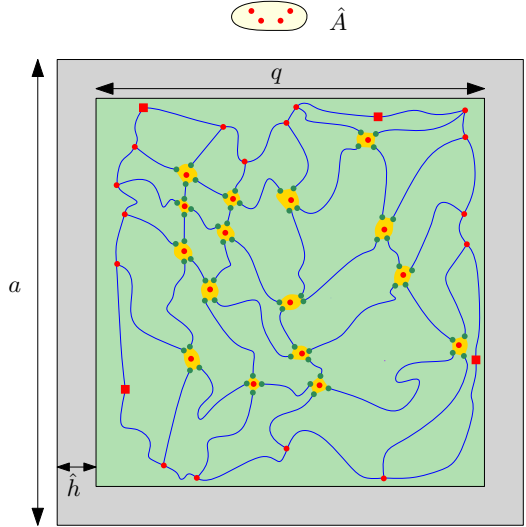}
\end{center}
  \caption{A visualization of the 
grids $D_{a}$ and $D_{q}$ in the proof of \cref{claimmarked}.
In \red{red} we depict the vertices $z_{t'_{u}}$ as they appear in the $(\ell\times\ell)$-grid $\Gamma_{\ell}$ (the corners are squares).
In \blue{blue} we depict the subdivided paths of $\Gamma_{\ell}$ that are also paths of $G^{\mathsf{c}}.$ The corresponding  $C_{t'_{u}}$'s (resp. $A_{t'}$'s) are in \textcolor{chromeyellow}{yellow} (resp. \darkgreen{green}).}
  \label{drunkenness}
\end{figure}

 The above imply that the $4$-connected graph $G^{\mathsf{c}}$ contains as a subgraph a subdivision $L$ of the $(\ell\times \ell)$-grid 
where the vertices of degree four are all vertices of 
$$Z\coloneqq \{z_{u} \mid u\in V(\Gamma_{\ell})\}$$ 
(see \cref{drunkenness}). 
Let $x\coloneqq z_{u}\in Z$ and assume that, in the subdivided grid $L,$ 
$x$ is the intersection of the ``horizontal'' path $H_{x}$ and the ``vertical'' path $V_{x}.$ 

We traverse $H_{x}$ from left to right and we define  $x^{\textrm{left}}$ to be the first vertex of $A_{t'_{u}}$ appearing on $H_{x}$ and  $x^{\textrm{right}}$ to be the last vertex of $A_{t'_{u}}$ appearing on $H_{x}.$
Similarly, we traverse $V_{x}$ from up to down and we define  $x^{\textrm{up}}$ to be the first vertex of $A_{t'_u}$ appearing on $V_{x}$ and  $x^{\textrm{down}}$ to be the last vertex of $A_{t'_u}$ appearing on $V_{x}.$
We now construct a subgraph $L'$ of $L$ by removing,  for each $x=z_{t'_{u}}\in Z,$ all  internal vertices of the path between $x^{\textrm{left}}$ and $x^{\textrm{right}}$ and all internal vertices of path between $x^{\textrm{up}}$ and $x^{\textrm{down}}.$ 

Notice, that by the choice of the quadruples $\{x^{\textrm{left}},x^{\textrm{right}},x^{\textrm{up}},x^{\textrm{down}}\},$ the remaining graph $L'$ is a subgraph of $G^{\mathsf{c}}.$
Moreover, if, for every $x=z_{t'_{u}}\in Z,$ we add all six pairs from $\{x^{\textrm{left}},x^{\textrm{right}},x^{\textrm{up}},x^{\textrm{down}}\}$ as edges to $L'$ then we have an $(\ell-2)$-crossed grid (depicted in \cref{liquidating}) as a subgraph of $G^{\mathsf{c}}$ (these six edges exist in $G^{\mathsf{c}}$ as they are edges of the clique induced by $A_{t'_{u}}$).
Recall that $\ell=c'm^2+3.$
From~\cref{commoditized} we obtain that $c'm^2+1=\ell-2\leq  c'\cdot (\hw(G^{\mathsf{c}}))^2\leq  ^{\eqref{precensored}}c'm^2,$ a contradiction and the claim follows.
\end{cproof}

\begin{figure}[h!]
\begin{center}
\includegraphics[height = 6.3cm]{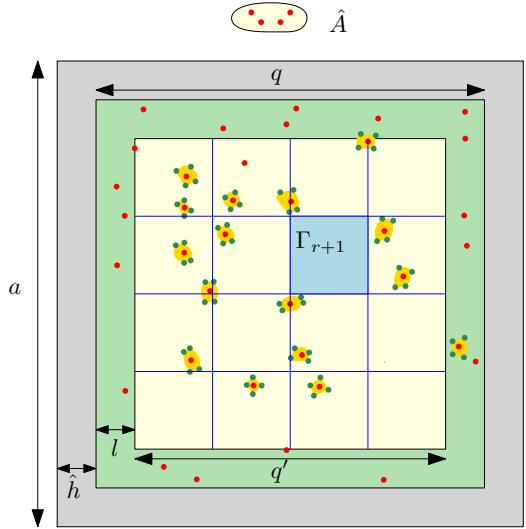}
\end{center}
  \caption{A visualization of the last part of the proof of \cref{blackbox}. The \blue{blue} lines indicate the packing of the $t+1$ pairwise vertex-disjoint $(r+1,r+1)$-grids in $D_{q'}$}
  \label{flattering}
\end{figure}

We set $$q'\coloneqq q-4=   
a-2\widehat{h}-4=\lceil{(r+1)^{1/2}}\rceil\cdot (t+1).$$
We also denote by $D_{q'}$ the central 
$(q' \times q')$-grid   of $D_{q}.$
Let $B^\star$ be the vertices of  $V(D_{q'})$ 
whose set of precursors in $G^{\mathsf{c}}$ contains a vertex of some $β(t'), t'\in N$. 

If $u\in B^\star$ and $u$ is not a marked vertex of $D_{q}$, then 
there is a precursor $x$  of $u$ that belongs in $A_{t'}$ for some $t'\in N$.
Moreover, as $u$ is not marked, none of the precursors of $u$ belongs in $C_{t'}$.
Notice now that as $C_{t'}$ is non-empty and   $G[\beta(t')]$ is connected, 
there is a vertex $x''\in C_{t'}$ that  has as a neighbor a vertex $x'$ of $A_{t'}$. 
The vertex $x'$ is, in turn, is either $x$ or is a neighbor of  
$x$ in the clique $G^{\mathsf{c}}[A_{t'}]$. This means that the distance of $x$ and $x''$ in $G^{\mathsf{c}}$
is at most two. 
As $x''$   is not a precursor of $u$, it should be one of the precursors of a marked vertex, say $v$ of $D_{q}$. This implies that every vertex $u\in B^\star$
should be within distance at most two from a marked vertex of   $D_{q}.$  As a consequence, 
$|B^\star|\leq  25d=t.$  

Notice that, by their definition, the  vertices in $V(D_{q'})\setminus B$ have precursors  in $H$ that  are all vertices of $G^{\mathsf{c}}-B.$

As $q'=\lceil{(t+1)^{1/2}}\rceil\cdot (r+1),$ we may consider a packing of 
$t+1$ pairwise vertex-disjoint $((r+1)\times (r+1))$-grids of $D_{q'}$.
Clearly,  for some  of these $((r+1)\times (r+1))$-grids, 
it holds that the precursors in $G^{\mathsf{c}}$ of all of its vertices 
are all vertices outside $B$ (see \cref{flattering}). This implies that $\bdim(G^{\mathsf{c}}-B,X\setminus B)>r$, therefore \eqref{eq_all_trages} holds, as required.
\end{proof}
We are now ready to prove the  main result of this section, that  is the following.

\begin{lemma}
\label{victorious}
Let $(\mathsf{h},\mathsf{c})\in \mathbb{N}\times[0,2]\setminus\{(0,0)\}$
and let $\Sigma\in\Sbbb_{\mathsf{h},\mathsf{c}}.$
Then $\bdim_{\Sigma}^{\star}(\mathscr{D}_{k}^{(\mathsf{h},\mathsf{c})})\in \Omega_{\mathsf{h},\mathsf{c}}(k^{1/480}).$
\end{lemma}

\begin{proof}
Let $η=\max\{\hw(G)\mid G\in\Ecal_{\Sigma}\}.$
Notice that $\widetilde{\mathscr{D}}_{k}^{(\mathsf{h},\mathsf{c})}\leq  \widehat{\mathscr{D}}_{k}^{(\mathsf{h},\mathsf{c})}\leq  \widetilde{\mathscr{D}}_{k+1}^{(\mathsf{h},\mathsf{c})},$ therefore%
$\widetilde{\mathscr{D}}^{(\mathsf{h},\mathsf{c})}\sim_{{\mathsf{L}}} \widehat{\mathscr{D}}^{(\mathsf{h},\mathsf{c})}.$

By \cref{mediocrity}, we obtain that ${\mathscr{D}}^{(\mathsf{h},\mathsf{c})}\sim_{{\mathsf{L}}} \widehat{\mathscr{D}}^{(\mathsf{h},\mathsf{c})}.$  
By \cref{antisthenes} and the fact that ${\mathscr{D}}^{(\mathsf{h},\mathsf{c})}\sim_{{\mathsf{L}}} \widehat{\mathscr{D}}^{(\mathsf{h},\mathsf{c})},$ it is enough to prove that 

\begin{eqnarray}
\bdim_{\Sigma}(\widehat{\mathscr{D}}_{k}^{(\mathsf{h},\mathsf{c})})\in\mathcal{O}_{\mathsf{h},\mathsf{c}}\big(\bdim_{\Sigma}^{\star}(\widehat{\mathscr{D}}_{k}^{(\mathsf{h},\mathsf{c})})^{96}\big).\label{thidko_8iufo}
\end{eqnarray}
We prefer to work with $\widehat{\mathscr{D}}_{k}^{(\mathsf{h},\mathsf{c})}$ (depicted in \cref{tolerantly}), instead of $\widetilde{\mathscr{D}}_{k+1}^{(\mathsf{h},\mathsf{c})}$ or ${\mathscr{D}}_{k+1}^{(\mathsf{h},\mathsf{c})}$, because it is $4$-connected. 

For  simplicity, we set $G\coloneqq \widehat{\mathscr{D}}_{k}^{(\mathsf{h},\mathsf{c})}$, and we set $r\coloneqq \bdim_{\Sigma}^{\star}(\widehat{\mathscr{D}}_{k}^{(\mathsf{h},\mathsf{c})}),$ therefore there exists a tree decomposition ${(T,\beta)}$ of $G$ such that, for every $t\in V(T)$,
\begin{eqnarray}
\max\{{\bdim_{\Sigma}(G_{t})\mid t\in V(T)}\} & \leq   & r.\label{conservative}
\end{eqnarray}

Our objective is to prove that 
$\bdim_{\Sigma}(G)\in \mathcal{O}_{\mathsf{h},\mathsf{c}}(r^{96})$.
We set 
\begin{eqnarray}
m & = &  r^2+η+1\label{naturphilosophie}
\end{eqnarray} and we may assume that 
$2(2m+1)^{2}  <  4k^2$, otherwise 
$\bdim_{\Sigma}(G)\leq  |V(G)|=4k^2(h+c)\leq 2(2m+1)^{2}(h+c)$$\in \mathcal{O}_{\mathsf{h},\mathsf{c}}(r^4)\subseteq \mathcal{O}_{\mathsf{h},\mathsf{c}}(r^{96})$, and we are done.  This in turn implies that
\begin{eqnarray}
2(2m+1)^{2} & < & 4k^2({h}+c),  \mbox{~and} \label{assurances}\\
m & < & k \label{introjected}
\end{eqnarray}

Notice that the assumptions of \cref{letas_s} hold
for $\p=\bdim_{\Sigma}$ (for the function $h(x)=x^2+η+1$, because of \cref{neutrality})
and for  $\mathscr{H}=\widehat{\mathscr{D}}^{(\mathsf{h},\mathsf{c})}$ for  the function $g(x)=(2x+1)^2$ (because of  \cref{vegetables}).
Moreover,  given \eqref{conservative} and \eqref{naturphilosophie}, the conditions $2(g(h(r)))<|V(\mathscr{H}_k)|$
 and $h(r)<k$ are satisfied, because of \eqref{assurances} and \eqref{introjected}
 respectively. Therefore, $G$ has a tree decomposition $(\widehat{T},\widehat{\beta})$ where 
$\widehat{T}$ is a star with center $t$.
Let $G_{t}$ be the torso of $(\widehat{T},\widehat{\beta})$ on $t$. From  \cref{letas_s}, it  holds that $\bdim_{\Sigma}(G_{t})\leq r$, therefore there is some $X\subseteq V(G_{t})$, such that 
\begin{eqnarray}
\bdim_{\Sigma}(G_t,X)\leq r
\mbox{~and~}\\
G_t-X\in \Ecal_{\Sigma}.\label{ofkol_t89io}
\end{eqnarray}
Moreover, for every  $e=tt'\in E(\widehat{T})$, $\mathscr{H}_k[\widehat{β}(t')\setminus \widehat{β}(t)]$ is a connected graph and $|\widehat{β}(t')|\leq g(h(r))=(2m+1)^{2}$.
\medskip

For all $t'\in N\coloneqq N_{\widehat{T}}(t),$ we define $A_{t'}=\widehat{β}(t)\cap\widehat{β}(t')$, $B_{t'}=\widehat{β}(t')$,
and   $C_{t'}=B_{t'}\setminus A_{t'}$ and keep in mind that each $C_{t'}$ induces in $G$ a connected graph where
\begin{eqnarray}
|C_{t'}|\leq  |B_{t'}| \leq    (2m+1)^{2}.\label{discontinuities}
\end{eqnarray}
\smallskip
Let $G^{\mathsf{c}}\coloneqq \bigcup_{t\in V(T)}G_{t},$ $B\coloneqq \bigcup_{t'\in N}B_{t'},$ and $C\coloneqq\bigcup_{t'\in N}C_{t'}.$
By \cref{identifies} and \cref{neutrality}, we have
\begin{eqnarray}
\hw(G^{\mathsf{c}}) & \leq  & m.\label{precensored}
\end{eqnarray}

 Clearly $G$ is a (spanning) subgraph of $G^{\mathsf{c}}.$
 Instead of proving $\bdim_{\Sigma}(G)\in \mathcal{O}_{\mathsf{h},\mathsf{c}}(r^{96}),$ we will prove the stronger statement $\bdim_{\Sigma}(G^{\mathsf{c}})\in \mathcal{O}_{\mathsf{h},\mathsf{c}}(r^{96}).$ 
 For this, we observe that $G^\mathsf{c}-(X\cup B)$ is a subgraph of $G_{t}-X\in^{\eqref{ofkol_t89io}}\Ecal_{\Sigma},$ therefore the stronger statement will follow if we prove that {$\bdim(G^{\mathsf{c}},X\cup B) \in \mathcal{O}_{\mathsf{h},\mathsf{c}}(r^{96}).$}
 
 For this notice that \cref{blackbox} applies for $\Delta=4$, $(T,\beta)=(\widehat{T},\widehat{\beta})$, and
 $l=(2m+1)^{2}\in\Ocal(m^2)$.  As $m\in\Ocal_{\mathsf{h},\mathsf{c}}(r^2)$, \cref{blackbox} implies that $\bdim(G^c,X\cup B)\in\Ocal_{\mathsf{h},\mathsf{c}}(r^{96})$. As we proved \eqref{thidko_8iufo}, this completes the proof of the lemma.
\end{proof}

We are now in position to prove the lower bound in \cref{unrelatedly}.

\begin{corollary}
\label{predominant}
For every  non-empty finite and closed set of surfaces $\Sbbb,$
$\mathsf{sobs}(\Sbbb)\mbox{-}\bg\preceq_\mathsf{P}\Sbbb\mbox{-}\mathsf{\tw}.$
\end{corollary}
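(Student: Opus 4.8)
The plan is to reduce the corollary to a lower bound on $\Sbbb\mbox{-}\tw$ evaluated on the Dyck-grids themselves, and then to bootstrap the single-surface result \cref{victorious}. First I would record that $\Sbbb\mbox{-}\tw$ coincides with the ``family version'' $\bdim^{*}_{\Ecal_{\Sbbb}}$ of the parameter in \eqref{bidimsdef}, obtained by using $\Ecal_{\Sbbb}$ as the target class in \eqref{bidimsdefpre}; since $\Ecal_{\Sbbb}$ is minor-closed, the argument behind \cref{objectivity} shows that $\bdim_{\Ecal_{\Sbbb}}$ and $\bdim^{*}_{\Ecal_{\Sbbb}}=\Sbbb\mbox{-}\tw$ are minor-monotone, and since $\Ecal_{\Sbbb}$ is a finite union of classes of bounded Hadwiger number it has bounded Hadwiger number $\eta_{\Sbbb}$, so \cref{neutrality} yields $\hw(G)\le(\bdim_{\Ecal_{\Sbbb}}(G))^{2}+\eta_{\Sbbb}+1$. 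As $\mathsf{sobs}(\Sbbb)$ is finite and $\Sbbb\mbox{-}\tw$ is minor-monotone, it then suffices to show that $\Sbbb\mbox{-}\tw(\mathscr{D}^{\Sigma'}_{k})\in\Omega_{\Sbbb}(k^{1/480})$ for every $\Sigma'\in\mathsf{sobs}(\Sbbb)$, since then a graph containing a large $\mathscr{D}^{\Sigma'}_{k}$-minor must have large $\Sbbb\mbox{-}\tw$, which is precisely $\mathsf{sobs}(\Sbbb)\mbox{-}\bg\preceq_{\mathsf{P}}\Sbbb\mbox{-}\tw$.

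I would handle the degenerate case first: if $\Sigma'=\Sigma^{(0,0)}$ then closedness of $\Sbbb$ forces $\Sbbb=\{\Sigma^{\varnothing}\}$, and there $\mathscr{D}^{\Sigma'}=\mathscr{A}$ is linearly equivalent to the grids while $\Sbbb\mbox{-}\tw$ is polynomially equivalent to treewidth, so the desired bound follows from the Grid Theorem together with the fact that a large annulus-grid minor forces large treewidth. So assume $\Sigma'=\Sigma^{(\mathsf{h},\mathsf{c})}$ with $(\mathsf{h},\mathsf{c})\neq(0,0)$. The structural point I would use is that $\Sbbb\subseteq\Sbbb_{\mathsf{h},\mathsf{c}}$: were some $\Sigma''\in\Sbbb$ not in $\Sbbb_{\mathsf{h},\mathsf{c}}$, then $\Sigma''$ would contain $\Sigma^{(\mathsf{h},\mathsf{c})}=\Sigma'$, whence $\Sigma'\in\Sbbb$ by closedness, contradicting $\Sigma'\in\mathsf{sobs}(\Sbbb)$.

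To prove $\Sbbb\mbox{-}\tw(\mathscr{D}^{(\mathsf{h},\mathsf{c})}_{k})\in\Omega_{\mathsf{h},\mathsf{c}}(k^{1/480})$ I would re-run the argument in the proof of \cref{victorious} on the graph $\widehat{\mathscr{D}}^{(\mathsf{h},\mathsf{c})}_{k}$, now with $\bdim_{\Ecal_{\Sbbb}}$ (hence $\bdim^{*}_{\Ecal_{\Sbbb}}=\Sbbb\mbox{-}\tw$) in the role of $\bdim_{\Sigma}$ (resp. $\bdim^{*}_{\Sigma}$). Every ingredient survives: \cref{letas_s} applies with $\p=\bdim_{\Ecal_{\Sbbb}}$ by \cref{neutrality} and the tight connectedness of $\widehat{\mathscr{D}}^{(\mathsf{h},\mathsf{c})}_{k}$ from \cref{vegetables}, its $4$-connectedness and bounded degree are unchanged, and \cref{blackbox} and \cref{identifies} are invoked as before with $\eta_{\Sbbb}$ playing the role of $\eta$. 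The only new point is the last step: \cref{letas_s} produces a star decomposition whose centre torso $G_{t}$ satisfies $\bdim_{\Ecal_{\Sbbb}}(G_{t})\le r$, so its modulator $X$ has $G_{t}-X\in\Ecal_{\Sbbb}$, hence $G_{t}-X\in\Ecal_{\Sigma^{*}}$ for one single surface $\Sigma^{*}\in\Sbbb\subseteq\Sbbb_{\mathsf{h},\mathsf{c}}$; therefore \cref{antisthenes} applies to $\Sigma^{*}$ and gives $\bdim_{\Sigma^{*}}(\widetilde{\mathscr{D}}^{(\mathsf{h},\mathsf{c})}_{k})\in\Omega_{\mathsf{h},\mathsf{c}}(k^{1/5})$, which through \cref{blackbox} --- exactly as in \cref{victorious} --- is incompatible with $r=\Sbbb\mbox{-}\tw(\widehat{\mathscr{D}}^{(\mathsf{h},\mathsf{c})}_{k})$ being $o(k^{1/480})$. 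Passing back from $\widehat{\mathscr{D}}^{(\mathsf{h},\mathsf{c})}$ to $\mathscr{D}^{(\mathsf{h},\mathsf{c})}$ via \cref{mediocrity} and $\widetilde{\mathscr{D}}^{(\mathsf{h},\mathsf{c})}_{k}\le\widehat{\mathscr{D}}^{(\mathsf{h},\mathsf{c})}_{k}\le\widetilde{\mathscr{D}}^{(\mathsf{h},\mathsf{c})}_{k+1}$, and then maximising over the at most two surfaces of $\mathsf{sobs}(\Sbbb)$, gives $\mathsf{sobs}(\Sbbb)\mbox{-}\bg(G)\le\mathcal{O}_{\Sbbb}\big((\Sbbb\mbox{-}\tw(G))^{480}\big)$.

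The step I expect to be the main obstacle is convincing oneself that the proof of \cref{victorious} really localises to a \emph{single} torso of a $\Sbbb\mbox{-}\tw$-decomposition, and hence to a single member of $\Sbbb$: it is precisely the tight connectedness of $\widehat{\mathscr{D}}^{(\mathsf{h},\mathsf{c})}_{k}$ (\cref{vegetables}), fed into \cref{letas_s}, that forces all torsos but one to have bounded size, so that the a priori many surfaces of $\Sbbb$ used by the various torsos collapse to the single $\Sigma^{*}\in\Sbbb$ used by the one large torso; without this one cannot handle the union $\Ecal_{\Sbbb}$ surface-by-surface and cannot invoke \cref{antisthenes}, which is stated for one fixed surface. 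The remaining points --- that $\eta_{\Sbbb}<\infty$, that the minor-monotonicity of $\bdim_{\Ecal_{\Sbbb}}$ and $\bdim^{*}_{\Ecal_{\Sbbb}}$ carries over from the single-surface case, and the arithmetic with exponents --- are routine.
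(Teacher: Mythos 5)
Your proposal is correct and its skeleton coincides with the paper's proof of \cref{predominant}: reduce to a polynomial lower bound on ${\Sbbb}\mbox{-}\tw$ of the Dyck-grids of the (at most two) surfaces in $\mathsf{sobs}(\Sbbb)$, dispose of the sphere case via the Grid Theorem, observe that $\Sigma^{(\mathsf{h},\mathsf{c})}\in\mathsf{sobs}(\Sbbb)$ together with closedness forces $\Sbbb\subseteq\Sbbb_{\mathsf{h},\mathsf{c}}$, and conclude by minor-monotonicity. Where you genuinely diverge is in how the union $\Ecal_{\Sbbb}$ is reduced to a single surface. The paper does this by asserting the identity \eqref{underworld}, namely ${\Sbbb}\mbox{-}\tw(G)=\min\{\bdim^{*}_{\Sigma}(G)\mid \Sigma\in\Sbbb\}$, and then invoking \cref{victorious} for every $\Sigma\in\Sbbb$; note that the direction of \eqref{underworld} actually used there is the non-obvious one, since a tree-decomposition witnessing ${\Sbbb}\mbox{-}\tw(G)\leq r$ a priori lets each torso choose its own surface from $\Sbbb$, whereas $\bdim^{*}_{\Sigma}$ fixes one surface globally. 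You instead re-run the proof of \cref{victorious} for the family parameter $\bdim_{\Ecal_{\Sbbb}}$ and use \cref{vegetables} fed into \cref{letas_s} to collapse the decomposition to a star with a single large torso, whose modulator targets one concrete $\Sigma^{*}\in\Sbbb\subseteq\Sbbb_{\mathsf{h},\mathsf{c}}$, to which \cref{antisthenes} then applies. This buys a self-contained justification of exactly the point that the paper compresses into \eqref{underworld}; the price is that you must check, as you do, that \cref{letas_s}, \cref{neutrality}, \cref{identifies}, and \cref{blackbox} are indifferent to replacing $\Ecal_{\Sigma}$ by $\Ecal_{\Sbbb}$, which holds because they only use minor-closedness and bounded Hadwiger number of the target class. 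The remaining bookkeeping (the inclusion $\Sbbb\subseteq\Sbbb_{\mathsf{h},\mathsf{c}}$, the passage between $\mathscr{D}^{(\mathsf{h},\mathsf{c})}$, $\widetilde{\mathscr{D}}^{(\mathsf{h},\mathsf{c})}$ and $\widehat{\mathscr{D}}^{(\mathsf{h},\mathsf{c})}$ via \cref{mediocrity}, and the exponent $480$) matches the paper.
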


\begin{proof}
Recall that, $\mathsf{sobs}(\{\Sigma^{\varnothing}\})=\{\Sigma^{(0,0)}\}$ and,  by the Grid Theorem,  
$\{\Sigma^{\varnothing}\}\mbox{-}\tw\sim  \{\Sigma^{(0,0)}\}\mbox{-}\bg.$
Therefore, we may examine the case where $\mathsf{sobs}(\Sbbb)$
contains one or two surfaces $\Sigma^{(\mathsf{h}_{i},\mathsf{c}_{i})},i\in[t], t\in\{1,2\}$ different from the sphere: one orientable (assuming $h_{1}>0$ and $c_{1}=0$) and, possibly, one non-orientable, in case it also contains a non-orientable surface (assuming, that  $\mathsf{h}_{1}>\mathsf{h}_{2}\geq 0$ and $\mathsf{c}_{2}>0$).
Recall also that, by the definitions of $\Sbbb\mbox{-}\mathsf{\tw}$ in \eqref{attempting} and $\bdim_{\Sigma}^{\star}$ in \eqref{bidimsdefpre} and \eqref{bidimsdef}, we have that,  
for every graph $G,$ 
\begin{eqnarray}
\Sbbb\mbox{-}\mathsf{\tw}(G) & = & \min\{\bdim_{\Sigma}^{\star}\mid \Sigma\in\Sbbb\}.
\label{underworld}
\end{eqnarray}

Let $G$ be a graph where $\mathsf{sobs}(\Sbbb)\mbox{-}\bg(G)\geq k.$
From the definition of $\mathsf{sobs}(\Sbbb)\mbox{-}\bg$ in  \eqref{travestied}, this means that $G$ contains the graph $\mathscr{D}_{k}^{\Sigma^{(\mathsf{h}_{i},\mathsf{c}_{i}})}$ as a minor for some $i\in[t].$
Notice  that the fact that $\Sigma^{(\mathsf{h}_{i},\mathsf{c}_{i})}\in \mathsf{sobs}(\Sbbb)$ implies that $\Sbbb\subseteq\Sbbb_{h_i,c_i}$. 
Indeed, $\Sbbb\subseteq\Sbbb_{\mathsf{h}_i,\mathsf{c}_i}$ follows by \cref{territorial} and the definition of $\Sbbb_{\mathsf{h},\mathsf{c}}$ (that is the set containing every surface where it is not possible to embed all graphs in $\mathscr{D}^{(\mathsf{h},\mathsf{c})},$ defined in the beginning of \cref{overturning}).

By \cref{victorious}, for every $\Sigma\in\Sbbb_{\mathsf{h},\mathsf{c}}$ it holds that $\bdim_{\Sigma}^{\star}(\mathscr{D}_{k}^{(\mathsf{h},\mathsf{c})})\in \Omega(k^{1/480}),$  by \eqref{underworld} and \cref{objectivity}, implies that  $\Sbbb\mbox{-}\mathsf{\tw}(G)\in \Omega(k^{1/480})$ and we are done.\end{proof}

\section{Putting everything together}\label{beneficial}
We define a more refined version of 
${\Sbbb}\mbox{-}\mathsf{\hw},$ namely ${\Sbbb}\mbox{-}\mathsf{\mathsf{rhw}},$ 
where:
\begin{quote}
 ${\Sbbb}\mbox{-}\mathsf{\mathsf{rhw}}(G)$
is the minimum $k$ for which   $G$ has a tree decomposition $(T,\beta)$ of adhesion  at most $k$ such that  for every $d\in V(T),$
\begin{itemize}
\item either $|\beta(d)|\leq  k,$ or 
\item there exist a set $A_d\subseteq\beta(d)$ of size at most $k$ and a surface $\Sigma\in\Sbbb$ such that $G_d-A_d$ has a $\Sigma$-decomposition $\delta $ of width at most $k$ and breadth at most $k,$ and 

\!\!\!\!\!\!\!${\blacktriangleright}$ if $X_d$ is the set of all vertices of $G_d-A_d$ drawn in the interior of the vortices of $\delta$ together with the vertices from $A_{d}$, then
    \begin{enumerate}
        \item there is no vertex of $G_d-A_d$ which is drawn in the interior of a non-vortex cell of $\delta$,
        \item $G_d-X_d$ is a minor of $G$, and
        \item for every  {neighbor} $d'$ of $d$ in $T$ it holds that $|\big(\beta(d)\cap\beta(d')\big)\setminus\big(A_d\cup X_d\big)|\leq 3$.
    \end{enumerate}
\end{itemize}
\end{quote}

Notice that the main difference of the above definition with the one of $\Sbbb\mbox{-}\hw$ in  \eqref{eradication} is the imposition of the bound to the adhesion and the extra conditions 1, 2, and 3. The first condition
says that, outside the vortices, the $\Sigma$-decomposition $\delta$ outside its vortices is just an embedding.
The second condition says  that the ``surface part'' of each adhesion should not have more than 3 vertices, and the
third condition says that the ``surface part'' part of each 
torso is a minor of the graph.
We stress that these two conditions are important for algorithmic applications  of the GMST. 

Our next result implies that
by removing from the above definition the 
demand for a bound adhesion or the three extra conditions 
the defined parameter is the same (up to parameter equivalence).

We now give the following most extended version of \cref{annihilation}.
\begin{theorem}
  \label{abominations}
  For every finite and  closed set of surfaces $\Sbbb,$ it holds that the parameters ${\Sbbb}\mbox{-}\mathsf{\tw},$ ${\Sbbb}\mbox{-}\mathsf{\hw},$ ${\Sbbb}\mbox{-}\mathsf{\mathsf{rhw}},$ and ${\mathsf{sobs}(\Sbbb)}\mbox{-}\bg$ are equivalent, i.e., $\Sbbb\mbox{-}\mathsf{\tw}\sim{\Sbbb\mbox{-}\mathsf{\hw}}\sim{\Sbbb}\mbox{-}\mathsf{\mathsf{rhw}}\sim {\mathsf{sobs}(\Sbbb)\mbox{-}\bg}.$
\end{theorem}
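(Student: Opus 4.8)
The plan is to establish the four-way equivalence in \cref{abominations} by closing a cycle of inequalities among the parameters, following the chain indicated in the proof overview:
$$
{\mathsf{sobs}(\Sbbb)\mbox{-}\bg}\preceq \Sbbb\mbox{-}\mathsf{\tw}\preceq \Sbbb\mbox{-}\mathsf{\hw}\preceq \Sbbb\mbox{-}\mathsf{\mathsf{rhw}}\preceq {\mathsf{sobs}(\Sbbb)\mbox{-}\bg},
$$
where $\p\preceq\p'$ means $\p(G)\le f(\p'(G))$ for some function $f$. Once this cycle is in place, all four parameters pin each other from both sides, hence all are $\sim$-equivalent. The case $\Sbbb\subseteq\{\Sigma^\varnothing\}$ is handled separately by the Grid Theorem (as in the proof of \cref{predominant}), so throughout we may assume $\mathsf{sobs}(\Sbbb)$ consists of one or two surfaces of positive Euler-genus.

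\textbf{The four steps.} First, ${\mathsf{sobs}(\Sbbb)\mbox{-}\bg}\preceq \Sbbb\mbox{-}\mathsf{\tw}$ is exactly the ``lower bound'' \cref{predominant}, already proven via \cref{victorious}. Second, $\Sbbb\mbox{-}\mathsf{\tw}\preceq \Sbbb\mbox{-}\mathsf{\hw}$ is the ``upper bound'' \cref{tiko_news}, which is the immediate corollary of \cref{abstraction}: a $\Sigma$-decomposition of $G_d-A_d$ of width and breadth at most $k$ yields, by \cref{abstraction}, a set $X$ of bidimensionality $\Ocal(k^{10})$ in $G_d$ whose removal leaves a graph embeddable in $\Sigma\in\Sbbb$, so each torso acquires a $\poly(k)$-bidimensional modulator to $\Ecal_\Sbbb$, i.e.\ $\Sbbb\mbox{-}\tw(G)\le f(k)$. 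Third, $\Sbbb\mbox{-}\mathsf{\hw}\preceq \Sbbb\mbox{-}\mathsf{\mathsf{rhw}}$ is \emph{trivial}: the definition of ${\Sbbb}\mbox{-}\mathsf{rhw}$ is strictly more demanding than that of $\Sbbb\mbox{-}\hw$ (it imposes the adhesion bound and conditions 1--3), so any decomposition witnessing ${\Sbbb}\mbox{-}\mathsf{rhw}(G)\le k$ witnesses $\Sbbb\mbox{-}\hw(G)\le k$ as well; hence $\Sbbb\mbox{-}\hw(G)\le\Sbbb\mbox{-}\mathsf{rhw}(G)$. Fourth, $\Sbbb\mbox{-}\mathsf{\mathsf{rhw}}\preceq {\mathsf{sobs}(\Sbbb)\mbox{-}\bg}$ is the content of \cref{thm_globalstructure}: if $G$ does not contain $\mathscr{D}^{\Sigma'}_k$ as a minor for any $\Sigma'\in\sobs(\Sbbb)$ --- that is, if ${\mathsf{sobs}(\Sbbb)\mbox{-}\bg}(G)<k$ --- then outcome~1 of \cref{thm_globalstructure} fails, so outcome~2 gives precisely a tree-decomposition of the kind required by the definition of ${\Sbbb}\mbox{-}\mathsf{rhw}$, with all parameters (adhesion, width, breadth, $|A_d|$) bounded by $f^1_{\ref{thm_globalstructure}}(g,k)$ and $f^2_{\ref{thm_globalstructure}}(g,k)$, and with conditions 1--3 verbatim. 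Hence ${\Sbbb}\mbox{-}\mathsf{rhw}(G)\le \max\{f^1_{\ref{thm_globalstructure}}(g,k),f^2_{\ref{thm_globalstructure}}(g,k)\}$, which is a function of ${\mathsf{sobs}(\Sbbb)\mbox{-}\bg}(G)$.

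\textbf{Assembling and the main obstacle.} Chaining the four $\preceq$'s gives a single function $f$ with ${\mathsf{sobs}(\Sbbb)\mbox{-}\bg}(G)\le f(\Sbbb\mbox{-}\tw(G))\le f(f(\Sbbb\mbox{-}\hw(G)))\le\cdots$, and since the cycle returns to ${\mathsf{sobs}(\Sbbb)\mbox{-}\bg}$, every pair among the four parameters is bounded by a function of the other, i.e.\ all are pairwise $\sim$-equivalent, proving \cref{abominations} (and, specialising $\Sbbb$ as in the derivation of \cref{consoudation}, also \cref{annihilation}, \cref{unrelatedly}, and \cref{consoudation}). The genuine mathematical work lives in two places that are already dispatched earlier in the paper: \cref{abstraction} (bounding the bidimensionality of the apex-plus-vortex set --- the delicate radial-radius and linear-decomposition argument) for the upper bound, and \cref{victorious} (the clique-sum lower bound via \cref{blackbox}, \cref{letas_s}, \cref{vegetables}, and the crossed-grid machinery) for the lower bound. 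Within the present theorem the only remaining subtlety is bookkeeping: checking that the conventions for degenerate $\Sbbb$ (the empty set, $\{\Sigma^\varnothing\}$) are consistent across all four definitions, and that \cref{thm_globalstructure}'s outcome~2 really matches the $\blacktriangleright$-clause of ${\Sbbb}\mbox{-}\mathsf{rhw}$ clause by clause --- which it does by construction, since the definition of ${\Sbbb}\mbox{-}\mathsf{rhw}$ was tailored to that outcome. So the hard part is entirely outsourced to \cref{abstraction} and \cref{victorious}; the proof of \cref{abominations} itself is the short act of closing the loop.
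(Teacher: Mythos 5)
Your proposal is correct and follows essentially the same route as the paper: the same cycle ${\mathsf{sobs}(\Sbbb)\mbox{-}\bg}\preceq \Sbbb\mbox{-}\mathsf{\tw}\preceq \Sbbb\mbox{-}\mathsf{\hw}\preceq {\Sbbb}\mbox{-}\mathsf{\mathsf{rhw}}\preceq {\mathsf{sobs}(\Sbbb)\mbox{-}\bg}$, discharged by \cref{predominant}, \cref{tiko_news}, the definitional comparison, and \cref{thm_globalstructure}, respectively, with the degenerate cases treated separately. The paper additionally records that the first two links are polynomial and the third linear, but this does not change the argument.
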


\begin{proof}
Recall that $\mathsf{sobs}(\emptyset)=\{\Sigma^{\varnothing}\}$
and, by definition, all values of 
$\emptyset\mbox{-}\mathsf{\tw},$
$\emptyset\mbox{-}\mathsf{\hw},$
$\emptyset\mbox{-}\mathsf{\mathsf{rhw}},$ and 
$\{\Sigma^{\varnothing}\}\mbox{-}\bg$ are equal to $\infty.$ Therefore,  we may assume that $\Sbbb$ is non-empty.
We prove  (in order) 
\begin{eqnarray}
{\mathsf{sobs}(\Sbbb)\mbox{-}\bg}\preceq_{\mathsf{P}} \Sbbb\mbox{-}\mathsf{\tw}\preceq_{\mathsf{P}} \Sbbb\mbox{-}\mathsf{\hw}\preceq_{\mathsf{L}} \Sbbb\mbox{-}\mathsf{\mathsf{rhw}}\preceq {\mathsf{sobs}(\Sbbb)\mbox{-}\bg}.\label{final_meop}
\end{eqnarray}

\noindent \emph{Proof of} ${\mathsf{sobs}(\Sbbb)\mbox{-}\bg}\preceq_{\sf P} \Sbbb\mbox{-}\mathsf{\tw}.$ This is \cref{predominant}.

\noindent \emph{Proof of} $\Sbbb\mbox{-}\mathsf{\tw}\preceq_{\mathsf{P}} \Sbbb\mbox{-}\mathsf{\text{$\hw$}}.$ This follows from \cref{tiko_news}.

\noindent \emph{Proof of}  $\Sbbb\mbox{-}\mathsf{\hw}\preceq_{\mathsf{L}} \Sbbb\mbox{-}\mathsf{\mathsf{rhw}}.$ This follows directly because the definition of $\Sbbb\mbox{-}\mathsf{\hw}$ is the definition of $\Sbbb\mbox{-}\mathsf{\mathsf{rhw}}$ without the adhesion bound and the additional conditions 1, 2, and 3.

\noindent \emph{Proof of} $\Sbbb\mbox{-}\mathsf{\mathsf{rhw}}\preceq {\mathsf{sobs}(\Sbbb)\mbox{-}\bg}.$ Follows directly from \cref{thm_globalstructure}.
\end{proof}

Let us now present  the general statement of the main structural result of the Graph Minors Series. 

\begin{proposition}[Graph Minors Structure Theorem~\cite{robertson2003graph,KawarabayashiTW20Quicklyexcluding}]\label{intimation} 
 There exists a function $f:\mathbb{N}\to\mathbb{N}$ such that 
for every graph $G,$ if $\hw(G)\leq  k,$ then  $G$ has a tree decomposition~${(T,\beta)}$ 
of adhesion $≤k$ such that for all~${t \in V(T)},$ 
there exist a set $A_t\subseteq\beta(t)$ of size at most $f(k)$  such that 
there exists a surface of Euler-genus at most $f(k)$ 
and a $\Sigma$-decomposition $\delta $ of $G_{d}-A_d$ with breadth at most~$f(k)$ and width at most~$f(k).$
\end{proposition}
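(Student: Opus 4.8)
The plan is to obtain \cref{intimation} directly from \cref{thm_globalstructure} (which is itself a reinterpretation of \cref{thm_pre_globalstructure}, i.e., of the results of \cite{thilikos2023excluding}), by choosing the set of surfaces $\Sbbb$ rich enough that the Dyck-grid outcome of \cref{thm_globalstructure} becomes incompatible with $\hw(G)\le k$.

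First I would fix the surface family. Using the classical estimate $\eg(K_{n})=\Theta(n^{2})$, pick a function $g\colon\Nbbb\to\Nbbb$ with $g(k)\in\mathcal{O}(k^{2})$ large enough that $K_{k+1}$ embeds in every surface of Euler-genus at least $g(k)$, and let $\Sbbb\coloneqq\Sbbb_{g(k)}$ be the finite and closed set of all surfaces of Euler-genus at most $g(k)$ (as in the derivation of \cref{consoudation}). Since $\eg(\Sbbb)=g(k)$, every surface in $\mathsf{sobs}(\Sbbb)$ has Euler-genus $g(k)+1$ or $g(k)+2$, hence $K_{k+1}$ embeds in each of the at most two surfaces $\Sigma'\in\mathsf{sobs}(\Sbbb)$. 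By \cref{territorial} there is then some $k_{0}=k_{0}(k)\in 2^{\mathcal{O}(g(k))}(k+1)^{2}$ such that $K_{k+1}$ is a minor of $\mathscr{D}^{\Sigma'}_{k_{0}}$ for every $\Sigma'\in\mathsf{sobs}(\Sbbb)$.

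Next I would apply \cref{thm_globalstructure} to $G$ with parameter $k_{0}$ and the set $\Sbbb$. Were its first outcome to hold, $G$ would contain a $\mathscr{D}^{\Sigma'}_{k_{0}}$-expansion for some $\Sigma'\in\mathsf{sobs}(\Sbbb)$, hence (by transitivity of the minor relation together with the previous paragraph) a $K_{k+1}$-minor, contradicting $\hw(G)\le k$. So its second outcome holds: $G$ has a tree-decomposition $(T,\beta)$ of adhesion at most $f_{\ref{thm_globalstructure}}^{1}(g(k),k_{0})$ in which, for every node $d$, there are a set $A_{d}\subseteq\beta(d)$ with $|A_{d}|\le f_{\ref{thm_globalstructure}}^{1}(g(k),k_{0})$ and a surface $\Sigma\in\Sbbb$ (so $\eg(\Sigma)\le g(k)$) for which $G_{d}-A_{d}$ has a $\Sigma$-decomposition of width at most $f_{\ref{thm_globalstructure}}^{1}(g(k),k_{0})$ and breadth at most $f_{\ref{thm_globalstructure}}^{2}(g(k),k_{0})$. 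Setting $f(k)\coloneqq\max\{g(k),\,f_{\ref{thm_globalstructure}}^{1}(g(k),k_{0}),\,f_{\ref{thm_globalstructure}}^{2}(g(k),k_{0})\}$ and discarding items (a)--(c) and the auxiliary set $X_{d}$ of \cref{thm_globalstructure}, which are not part of the statement of \cref{intimation}, yields exactly its conclusion.

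The only point I expect to require care is the adhesion bound ``$\le k$'' in \cref{intimation}: \cref{thm_globalstructure} yields adhesion only $\le f_{\ref{thm_globalstructure}}^{1}(g(k),k_{0})$, so one reads \cref{intimation} with adhesion $\le f(k)$ (the form in which the Robertson--Seymour and Kawarabayashi--Thomas--Wollan theorems are usually stated), or, if the bound $\le k$ is insisted upon, one first normalizes the decomposition so that every adhesion induces a clique of $G$ and then bounds it by $\omega(G)\le\hw(G)\le k$ (compatibly with \cref{identifies}), which does not disturb the $f(k)$-bounds on the apex sets, the Euler-genus, the width, and the breadth. Everything else is bookkeeping: the only quantitative ingredient is the Euler-genus of $K_{k+1}$, fed into \cref{territorial} and then into \cref{thm_globalstructure}.
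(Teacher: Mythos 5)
The paper offers no proof of \cref{intimation} at all: it is imported as a known result of Robertson--Seymour and Kawarabayashi--Thomas--Wollan, and the citation is the entire justification. Your derivation from \cref{thm_globalstructure} is mechanically sound within the paper's framework: taking $\Sbbb$ to be the (finite, closed) family of all surfaces of Euler-genus at most $g(k)\in\Theta(k^{2})$ does guarantee that $K_{k+1}$ embeds in every $\Sigma'\in\sobs(\Sbbb)$, so by \cref{territorial} the Dyck-grid outcome of \cref{thm_globalstructure} would produce a $K_{k+1}$-minor, and the decomposition outcome must hold; discarding the extra conditions gives the statement with all bounds $f(k)$. The problem is that, as a proof of the GMST, this is circular in substance: \cref{thm_pre_globalstructure} (hence \cref{thm_globalstructure}) is Theorem~6.2 of \cite{thilikos2023excluding}, whose proof rests on the GMST machinery of \cite{KawarabayashiTW20Quicklyexcluding}. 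You have rederived a weaker upstream theorem from a downstream refinement of it --- a legitimate consistency check, and arguably the cleanest way to see that \cref{intimation} is subsumed by the tools already in the paper, but not an independent proof, which is presumably why the authors state it as a cited proposition rather than proving it.

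One secondary point: you are right to be suspicious of the adhesion bound ``$\leq k$'' in the statement (both cited sources give adhesion bounded by a function of $k$, so this is best read as $\leq f(k)$, exactly as your first reading does). However, your fallback repair --- normalizing the decomposition so that every adhesion induces a clique of $G$ and then invoking $\omega(G)\leq\hw(G)\leq k$ --- is not available: adhesions of a tree-decomposition with prescribed torsos are cliques of the torsos by construction, not of $G$, and they cannot in general be made cliques of $G$ without destroying the torso structure (already a clique-sum of two planar graphs over an independent $3$-set shows this). Stick with the $f(k)$ reading and drop the clique-normalization remark.
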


The above already implies that  $\hw''\preceq\hw,$ where 
$\hw''$ is the parameter defined in \eqref{differences} (\cref{labyrinths}).

Let us now finish this section by going back in the parameter
$\hw'$ defined  in the introduction (\eqref{stereotypically} in \cref{recommendation}). Given \cref{intimation},
it appears that \cref{abstraction} is the only essential 
ingredient for proving the asymptotic equivalence of $\hw'$ with the Hadwiger number $\hw.$

\begin{theorem}
\label{retribution}
$\hw\sim\hw'\sim\hw''.$
\end{theorem}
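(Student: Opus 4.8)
The plan is to establish the cycle of inequalities
$\hw\preceq\hw'\preceq\hw''\preceq\hw$, where throughout "$\preceq$" means "bounded above by a function of", and to keep track of the fact that each bound is at worst $2^{\poly(\cdot)}$ so that \cref{main_mainl} (with $f(k)=2^{\poly(k)}$) also drops out. The three parameters are: $\hw(G)=$ Hadwiger number; $\hw'(G)=$ least $k$ with $G$ in the clique-sum closure of graphs having a $k$-bidimensional modulator to $\Ecal_k$; and $\hw''(G)=$ least $k$ with $G$ in the clique-sum closure of graphs that, after deleting $\le k$ vertices, admit a $\Sigma$-decomposition of breadth and width $\le k$ with $\eg(\Sigma)\le k$.

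\medskip
\noindent\emph{Step 1: $\hw''\preceq\hw$.} This is exactly what \cref{intimation} (the Graph Minors Structure Theorem, in the Kawarabayashi--Thomas--Wollan form) gives: if $\hw(G)\le k$ then $G$ has a tree-decomposition of adhesion $\le k$ whose torsos, after removal of an apex set of size $\le f(k)$, admit a $\Sigma$-decomposition with $\eg(\Sigma),\text{breadth},\text{width}\le f(k)$. Taking $k'=\max\{k,f(k)\}$ witnesses $\hw''(G)\le k'$, so $\hw''(G)\le f_1(\hw(G))$ for a suitable $f_1$. (This direction is noted already in the text right after \cref{intimation}.)

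\medskip
\noindent\emph{Step 2: $\hw'\preceq\hw''$.} This is the step where \cref{abstraction} does the work, and I expect it to be the main obstacle — not because it is deep given \cref{abstraction}, but because it requires care in matching up the two "clique-sum closure" descriptions. Suppose $\hw''(G)\le k$, so $G$ has a tree-decomposition $(T,\beta)$ with every torso $G_t$ satisfying: there is $A_t\subseteq\beta(t)$, $|A_t|\le k$, and a surface $\Sigma_t$ with $\eg(\Sigma_t)\le k$, such that $G_t-A_t$ has a $\Sigma_t$-decomposition $\delta_t$ of width $\le k$ and breadth $\le k$. Apply \cref{abstraction} to $G_t$, with the set $A=A_t$, the decomposition $\delta_t$ of $G_t-A_t$ of width $w\le k$ and breadth $b\le k$, and $g=k$: it yields $\bdim(G_t, X_t)\in\Ocal(b^4(bgw)^2+|A|)=\Ocal(k^{10})$, where $X_t=A_t\cup\bigcup\{\mathsf{neigh}_{\delta_t}(c)\mid c\text{ a vortex of }\delta_t\}$. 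By construction $G_t-X_t$ is the part of $G_t-A_t$ drawn in non-vortex cells together with vortex boundaries removed, hence it embeds in $\Sigma_t$ and so lies in $\Ecal_k$ (since $\eg(\Sigma_t)\le k\le\Ocal(k^{10})$). Thus $X_t$ is an $\Ocal(k^{10})$-bidimensional modulator of $G_t$ to $\Ecal_{\Ocal(k^{10})}$. Since this holds for every torso of the fixed tree-decomposition $(T,\beta)$, the graph $G$ lies in the clique-sum closure of graphs with an $\Ocal(k^{10})$-bidimensional modulator to $\Ecal_{\Ocal(k^{10})}$, i.e.\ $\hw'(G)\in\Ocal(k^{10})=\poly(\hw''(G))$. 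The one subtlety to address is that the clique-sum closure in the definition of $\hw'$ quantifies over \emph{all} tree-decompositions of members of the class, so exhibiting the single decomposition $(T,\beta)$ with the torso property suffices; I would spell this out using the definition in \cref{inphasized}.

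\medskip
\noindent\emph{Step 3: $\hw\preceq\hw'$.} Suppose $\hw'(G)\le k$, so $G$ is in the clique-sum closure of the class $\Ccal_k$ of graphs admitting a $k$-bidimensional modulator to $\Ecal_k$. First bound $\hw$ on $\Ccal_k$ itself: if $H\in\Ccal_k$ with modulator $X$, $\bdim(H,X)\le k$ and $H-X\in\Ecal_k$, then by \cref{neutrality} applied with $\Gcal=\Ecal_k$ (whose Hadwiger number $\eta$ is $\Ocal(\sqrt{k})$ by Euler's formula, so $\eta\le\poly(k)$) we get $\hw(H)\le(\bdim_{\Ecal_k}(H))^2+\eta+1\le k^2+\poly(k)=\poly(k)$, using $\bdim_{\Ecal_k}(H)\le\bdim(H,X)\le k$. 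Then, since $G$ has a tree-decomposition all of whose torsos lie in $\Ccal_k$ and hence have Hadwiger number $\le\poly(k)$, \cref{identifies} gives $\hw(G)\le\poly(k)=\poly(\hw'(G))$. Chaining the three steps: $\hw\preceq\hw'\preceq\hw''\preceq\hw$, so all three are equivalent; moreover, tracing constants, $\hw'(G),\hw''(G)\le 2^{\poly(\hw(G))}$ — indeed $\hw''$ is already $2^{\poly}$ by \cref{intimation}, Step 2 is polynomial, and Step 3 is polynomial — which also yields \cref{main_mainl} with $f(k)=2^{\poly(k)}$, and in particular $\hw(G)=\Ocal((\hw'(G))^2)$ via Step 3.
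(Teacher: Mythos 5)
Your proposal is correct and follows the paper's proof essentially step for step: $\hw''\preceq\hw$ via \cref{intimation}, $\hw'\preceq\hw''$ via \cref{abstraction} applied torso-by-torso (as in \cref{tiko_news}), and $\hw\preceq\hw'$ via the clique computation $\bdim(K_k,X)\geq\lfloor\sqrt{|X|}\rfloor$ together with $\eg(K_{k-|X|})\in\Omega(\sqrt{k-|X|})$. The only cosmetic difference is that you route the last step through \cref{identifies} and \cref{neutrality} rather than stating the clique bounds directly, which is the same underlying argument packaged through the lemmas of \cref{righteousness}.
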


\begin{proof}
As we already mentioned, $\hw''\preceq\hw$  follows from  \cref{intimation}.
The inequality $\hw'\preceq_{\mathsf{P}}\hw''$ follows similarly to \cref{tiko_news}, by seeing (using \cref{abstraction}) the set $A_{d}$ along with the vertices of  the vortices of $\delta $ as a low bidimensionality  modulator.
Finally, $\hw\preceq_{\mathsf{P}}\hw'$  follows because, for every $X\subseteq V(K_{k}),$ it holds that $\bdim(G,X)\geq  \lfloor \sqrt{|X|}\rfloor\in \Omega(\sqrt{k})$ and that $\eg(K_{k-|X|})\in \Omega(\sqrt{k-|X|}).$ 
\end{proof}

The above is  a restatement of \cref{main_mainl} that we presented in the introduction section.

\section{Discussion}\label{predecessors}

In this paper we defined a hierarchy of parameters that bijectively reflects the containment-relation  hierarchy of all two-dimensional surfaces without boundaries. 
This permitted us to reveal the  structure of graphs excluding surface embeddable graphs.
This structure is determined by tree decompositions 
whose torsos contain some set of low bidimensionality modulators to the embeddability 
to a surface where the excluded graph cannot be embedded. Also, our results imply that 
this decomposition characterization is 
optimal in the sense that there are graphs
embeddable in the same surface as the excluded graph where no such decomposition is possible.

\paragraph{Annotated extensions of graph parameters.}
Notice that our notion of bidimensionality, at its very core, extends the parameter ``biggest grid'', which asks for the
biggest $k$ such that $G$ contains $\Gamma_{k}$ as a minor, to annotated graphs.
That is, instead of asking for the largest grid in all of $G$, we now ask for the largest grid \textsl{rooted} at the annotation vertices of an annotated graph.
We stress here, that this idea can be put into a more general context as follows.

An \emph{annotated graph} is a pair $(G,X),$ where $X\subseteq V(G)$. We also denote by $\Acal_\mathsf{all}$ the class of all annotated graphs.
Given some minor-monotone graph parameter $\p:\Gall\to\Nbbb$, we define the 
\emph{annotated extension} of $\p$ as the 
annotated graph parameter $\p:\Acal_\mathsf{all}\to\Nbbb$ where for every $(G,X)\in\Acal_\mathsf{all}$,  
$$\p(G,X)=\max\{\p(H)\mid \text{$H$ is an $X$-minor of $G$}\}$$
Using the terminology above and the Grid Theorem, we 
observe that the annotated extension 
of treewidth is equivalent to the bidimensionality, i.e., there is some function $f:\Nbbb\to\Nbbb$ such that,  
for every annotated 
graph $(G,X)$, it holds that $\bdim(G,X)\leq \tw(G,X)≤f(\bdim(G,X))$. Moreover, because of the result of Chuzhoy and Tan in \cite{chuzhoy2021towards}, $f$ is a polynomial function. That way one may see bidimensionality as the annotated analogue of treewidth (with a polynomial functional gap).

\medskip
We should stress that the term ``bidimensionality'' has already been used in the context of graph algorithms as a  property of parametrized problems on graphs, introduced in \cite{DeFoHaTh2005}.
Such a problem is called \emph{bidimensional} if its \textsf{yes} (or \textsf{no})
instances are minor-closed (or contraction closed)
and if its \textsf{yes} (or \textsf{no}) instances exclude a $(O(k)\times O(k))$-grid as a minor (see also \cite{DeHa2005,DeHa2008theb,DemaineFHT04bidim,FoLoRaSa2011,FominLST20,FominGT11contr,DeHaTh06,FominGT11contr,FoLoSa2012,BasteT22contr} for related results).
In this paper we use the term ``bidimensionality'' as a parameter on annotated graphs. We believe that both uses of the term are justified as they refer to 
the way some set is spread in a 2-dimensional area. Also the newly introduced use is in a sense a descendant of the previous one: the main combinatorial result where our proofs are based, i.e.,  \cref{subjection},  originates from \cite{DemaineFHT04bidim}, where the bidimensionality of parameterized problems has been studied.

\paragraph{Bidimensionality and MSOL.} Recently bidimensionality provided an important ingredient for the statement of algorithmic meta-theorems. 
In \cite{SauST25}, a fragment of Monadic Second Order Logic (\textsl{MSOL})
has been introduced, namely $\mathsf{MSOL}/\tw$ by restricting set quantification
to sets of bounded bidimensionality.
According to the main result of  \cite{SauST25},
model checking for such formulas can be done in quadratic time on classes of graphs excluding some clique as a minor.

\paragraph{Torso-extensions vs annotated extensions.} 
Given an annotated graph $(G,X),$ we define $\mathsf{torso}(G,X)$ as the graph obtained by adding, for each connected component $C$ of $G-X,$ all edges between the vertices of $N_{G}(V(C))$ in $G[X]$. 
An alternative  way to extend a minor-monotone parameter $\p:\gall\to\Nbbb$  to annotated graphs  is to define the \emph{torso-extension} $\p^{\sf t}:\mathcal{A}_{\mathsf{all}}\to\Nbbb$ of $\p$   so that  $$\p^{\textsf{t}}(G,X)=\min\{\p(\textsf{torso}(G,Z))\mid X\subseteq Z\subseteq V(G)\}.$$ 
 This extension was suggested  by  Eiben, Ganian, Hamm, and Kwon \cite{EibenGHK21} and 
 had  interesting algorithmic applications in \cite{JansenK021verte,AgrawalKLPRSZ22delet,EibenGHK21}.
Notice that every  $X$-rooted minor of $G$ is a minor of $\torso(G,X)$ and this readily implies that  
$\p(G,X)\leq  \p^{\textsf{t}}(G,X).$ One may ask whether the other direction also holds, i.e., whether $\p$ and $\p^{\mathsf{t}}$ are equivalent for certain instantiations of $\p$. 
Clearly, in  case of treewidth, $\tw(G,X)$ (that is the bidimensionality of $X$ in $G$) can be much smaller than $\tw^{\textsf{t}}(G,X)$: denote by $\Gamma_{k}$   the $(k\times k)$-grid and  $X$  the vertices of its perimeter. Then, $\tw(\Gamma_{k},X)\in \mathcal{O}(1),$ while $\tw^{\mathsf{t}}(\Gamma_{k},X)\in \Omega(k^2).$
\\ 

For recent advances on structural theorems on the torso and the annotated extension of treewidth,  see \cite{protopapas2026colorfulminors}. It is an open problem for which parameters $\p$
the annotated extension of $\p$ is equivalent to the 
torso extension of $\p$.

\paragraph{Clique-sum extension.}
Another ingredient of structural theorems is the clique-sum closure. Given a graph parameter $\p\colon\mathcal{G}_{\text{all}}\to\mathbb{N}$ we define its \emph{clique-sum extension} as the 
graph parameter $\p^{\star}\colon\mathcal{G}_{\text{all}}\to\mathbb{N}$ where, for every graph $G,$
$\p^{\star}(G)$ is the minimum $k$ such that $G$ is in the clique-sum closure 
of the class of graphs where the value of $\p$ is at most $k.$ Notice that $\p^{\star}=(\p^{\star})^{\star}.$
It is easy to see that if $\textsf{size}$ is the parameter returning the number of vertices of a graph, then $\tw\sim_{\mathsf{L}}\textsf{size}^{\star}.$
Also it is easy to see that if $\p$ is a minor-monotone graph parameter, then  $\p^{\star}$ is also minor-monotone. Finally it also holds that if $\p$ 
is a graph parameter where $\tw\preceq \p\preceq \size$, then $\p^{\star}\sim\tw$.

Given a graph parameter $\p\colon\mathcal{G}_{\text{all}}\to\mathbb{N}$ and a minor-closed graph class $\mathcal{G}$ we define the \emph{modulator composition} of $\p$ and $\mathcal{G}$, as the parameter $\p_\mathcal{G}:\mathcal{G}_{\text{all}}\to\mathbb{N}$, so that 
\begin{eqnarray}
\p_\mathcal{G}(G) & \coloneqq & \min\{k\mid \mbox{$ \mbox{there exists an $X\subseteq V(G)$ such that~} \p(G,X)\leq  k\mbox{~and~}G-X\in \mathcal{G}$}\}.\label{resemblance}
\end{eqnarray} 
Notice that, given that $\p$ is minor-monotone and $\Gcal$ is minor closed, we have that $\p_\Gcal$ is minor-monotone as well.
By the above definitions, because of the result of Chuzhoy {and Tan}, it follows  that  $\tw_{\Ecal_\Sigma}\sim_{\mathsf{P}}\bdim_{\Sigma}$ and moreover that
\begin{eqnarray}
\tw^{\star}_{\Ecal_\Sigma} & \sim_{\mathsf{P}}  & \bdim^{\star}_{\Sigma}\label{foundastion}
\end{eqnarray}
\eqref{foundastion} implies that, in the definition of our treewidth extension in \eqref{attempting}, our results would be the same  if we 
write instead:
\begin{eqnarray} 
{\Sbbb}\mbox{-}\tw(G)\coloneqq \tw^{\star}_{\Ecal_{\Sbbb}}(G). \nonumber
\end{eqnarray}
It would be an interesting project to investigate the behavior, in particular with respect to universal obstructions, of the parameter $\p^{\star}_{\Ecal_{\Sbbb}}(G)$ for minor-monotone parameters $\p$ where $\tw\preceq\p$.

\paragraph{Parametric dependencies.}
Notice that in the proof of \cref{abominations} the only relation in \eqref{final_meop} that is not polynomial is $\Sbbb\mbox{-}\mathsf{\mathsf{rhw}}\preceq \mathsf{sobs}(\Sbbb)\mbox{-}\bg$, which is conditioned by the exponential bounds in \cref{thm_globalstructure}. Also the dependencies of the bounds 
of the GMST \cref{intimation} are super-polynomial; in fact they are of  order $2^{\poly(k)}$, according to \cite{KawarabayashiTW18anewp}.
To improve these dependencies to polynomial ones has been a running challenge in the field of structural graph theory
 for a long time.
Very recently this problem has been resolved affirmatively by Gorsky, Seweryn, and Wiederrecht \cite{GorskySW25Polynomial}.
In light of this result it seems reasonable to expect that also the final relation,  i.\@ e.\@~        $\Sbbb\mbox{-}\mathsf{\mathsf{rhw}}\preceq \mathsf{sobs}(\Sbbb)\mbox{-}\bg,$ can become polynomial.
This indicates that  \cref{abominations} should also hold with polynomial bounds.

\paragraph{Acknowledgements:} We are thankful to Laure Morelle, Christophe Paul, Evangelos Protopapas,  and Giannos Stamoulis, for their helpful comments on the containments and the presentation of this paper.

\end{document}